\tikzset{
  trim node/.default=1cm,
  trim node/.style={
    overlay,
    append after command={% restore smaller bounding box
      ([xshift={+#1}]\tikzlastnode.north west)
      ([xshift={+-#1}]\tikzlastnode.south east)}},
  down and trim/.default=1cm,
  down and trim/.style={
    yshift=-(\pgfmatrixcurrentcolumn-1)*1.5\baselineskip,
    trim node={#1}},
  downup and trim/.default=1cm,
  downup and trim/.style={
    yshift=iseven(\pgfmatrixcurrentcolumn) ? -1.5\baselineskip : 0pt,
    trim node={#1}},
  -|/.style={to path={-|(\tikztotarget)\tikztonodes}},
  |-/.style={to path={|-(\tikztotarget)\tikztonodes}},
  -| sl/.style={-|, xslant=-1},
  |- sl/.style={|-, xslant= 1},
  center picture/.style={
    trim left=(current bounding box.center),
    trim right=(current bounding box.center)}}
\newtheorem{thm}{Theorem}[section]
\newtheorem{cor}[thm]{Corollary}
\newtheorem{lem}[thm]{Lemma}
\newtheorem{prop}[thm]{Proposition}
\newtheorem{defin}[thm]{Definition}
\newtheorem{def-lem}[thm]{Definition-Lemma}
\newtheorem{conj}[thm]{Conjecture}
\theoremstyle{remark}
\newtheorem{notation}{Notation}
\newtheorem{warning}[thm]{Warning}
\newtheorem{rem}[thm]{Remark}
\newtheorem*{claim}{Claim}
\newtheorem{ques}{Question}
\newtheorem{assu}{Assumption}
\numberwithin{equation}{section}
\newcommand{\bbB}{\mathbb{B}}
\newcommand{\bbC}{\mathbb{C}}
\newcommand\bbD{\mathbb{D}}
\newcommand{\bbF}{\mathbb{F}}
\newcommand{\bbG}{\mathbb{G}}
\newcommand{\bbH}{\mathbb{H}}
\newcommand{\bbI}{\mathbb{I}}
\newcommand{\bbM}{\mathbb{M}}
\newcommand{\bbP}{\mathbb{P}}
\newcommand{\bbR}{\mathbb{R}}
\newcommand{\bbS}{\mathbb{S}}
\newcommand{\bbW}{\mathbb{W}}
\newcommand{\bbX}{\mathbb{X}}
\newcommand{\bbY}{\mathbb{Y}}
\newcommand{\bbZ}{\mathbb{Z}}
\newcommand{\bfJ}{\mathbf{J}}
\newcommand{\scrD}{\mathscr{D}}
\newcommand{\scrF}{\mathscr{F}}
\newcommand{\scrM}{\mathscr{M}}
\newcommand{\scrO}{\mathscr{O}}
\newcommand{\scrR}{\mathscr{R}}
\newcommand{\calA}{\mathcal{A}}
\newcommand{\calG}{\mathcal{G}}
\newcommand{\calL}{\mathcal{L}}
\newcommand{\calR}{\mathcal{R}}
\newcommand{\calW}{\mathcal{W}}
\newcommand{\frakc}{\mathfrak{c}}
\newcommand{\frakD}{\mathfrak{D}}
\newcommand{\frakE}{\mathfrak{E}}
\newcommand{\frakF}{\mathfrak{F}}
\newcommand{\frakM}{\mathfrak{M}}
\newcommand{\frako}{\mathfrak{o}}
\newcommand{\frakS}{\mathfrak{S}}
\newcommand{\frakX}{\mathfrak{X}}
\DeclareMathOperator{\coker}{coker}
\DeclareMathOperator{\crit}{Crit}
\DeclareMathOperator{\grad}{grad}
\DeclareMathOperator{\ind}{ind}
\DeclareMathOperator{\ob}{ob}
\newcommand{\eval}{\mathrm{Eval}}
\newcommand{\eneval}{\mathrm{EnEval}}
\newcommand{\flow}{\mathrm{Flow}}
\newcommand{\fr}{\mathrm{fr}}
\newcommand{\identity}{\mathrm{Id}}
\newcommand{\lelogpss}{\mathrm{LePSS}_{\log}}
\newcommand{\spectra}{\mathrm{Sp}}
\newcommand{\std}{\mathrm{std}}
\newcommand{\topological}{\mathrm{top}}
\newcommand{\unit}{\mathds{1}}
\newcommand{\abs}[1]{\lvert#1\rvert}
\title[Ample divisor complements, Floer spectra, and relative GW theory]{Ample divisor complements, Floer spectra, \\ and relative Gromov-Witten theory}
\author{Kenneth Blakey}
\address{Department of Mathematics, MIT, 182 Memorial Drive, Cambridge, MA 02139, U.S.A.} 
\email{kblakey@mit.edu}
\begin{document}

\begin{abstract}
We spectrally lift Ganatra-Pomerleano's low-energy log PSS morphism to compute the associated graded of Floer homotopy types of ample smooth divisor complements. Moreover, we show the obstruction to splitting into the associated graded is encoded in a stable homotopy class defined via (higher-dimensional) genus 0 relative Gromov-Witten moduli spaces. We compute numerous examples of splittings, including the affine part of all smooth projective hypersurfaces of degree at least 2.
\end{abstract}

\maketitle
\tableofcontents

\section{Introduction}
Symplectic cohomology, a variant of Hamiltonian Floer cohomology, is a powerful yet difficult to compute invariant of exact symplectic manifolds with contact type boundary. Let $(M,D)$ be a smooth complex projective variety $M$ together with an ample smooth divisor $D$. In this case, the complement $X\equiv M-D$ is Liouville (in fact, Stein), therefore we may define its symplectic cohomology. It was originally an idea of Seidel \cite{Sei02a} that the symplectic cohomology of $X$ should be related to the topology and (relative) Gromov-Witten invariants of $(M,D)$. This has been explored in work of Diogo \cite{Dio12} and Diogo-Lisi \cite{DL19}. Work of Ganatra-Pomerleano \cite{GP20,GP21} has extended this relationship to the case of an ample strict normal crossings divisor. One main takeaway from the aforementioned works is that there is a spectral sequence, with $E_1$-page related to the topology of $(M,D)$, which converges to the symplectic cohomology of $X$. Moreover, degeneration at the $E_1$-page can be detected by the vanishing of certain genus 0 relative Gromov-Witten invariants of $(M,D)$.

Meanwhile, Floer homotopy is an idea, originally due to Cohen-Jones-Segal \cite{CJS95}, which defines a spectral refinement of various Floer-type (co)homologies. In the exact setting, and under additional assumptions, it was sketched in \emph{loc. cit.}, with details eventually given by Large \cite{Lar21}, how coherent stable framings on the compactified moduli spaces of Floer trajectories leads to a spectral refinement of symplectic cohomology called the \emph{(framed) Floer homotopy type}. In fact, Floer homotopy has seen immense recent interest and rapid development, c.f. \cite{AB21,AB24,ADP24,ADP25,BB25,Bla1,Bla2,Bla24,Bon24,Bon25,BP26,CK23,PS24b,PS25a,PS25b,PS25c,Rez24}. However, most work has focused on foundations or applications -- not on computations.\footnote{Note, the work of Porcelli and the present author \cite{BP26} may be seen as both foundational and computational. \emph{Loc. cit.}, which appeared after the present article first appeared as a preprint, answers a natural perturbation of a question asked here, cf. Question \ref{ques:splittingques} and Remark \ref{rem:bp26}.} The purpose of the present article is to compute Floer homotopy types of ample smooth divisor complements using genus 0 relative Gromov-Witten moduli spaces. Cf. Theorems \ref{thm:main} and \ref{thm:main2} for our main results and cf. Subsection \ref{subsec:introcomputations} for an overview of our computations.

\subsection{Main results}
\begin{assu}\label{assu:main}
Our setup is the following. 
\begin{enumerate}
\item $M$ is a complex $n$-dimensional projective variety.
\item $D\subset M$ is an ample smooth divisor. Recall, this means there exists an ample complex line bundle $\calL_D\to M$ together with a choice of global section $s_D\in H^0(M,\calL_D)$ whose divisor of zeroes is $\kappa D$, where $\kappa\in\bbZ_{>0}$.
\item For any $n'\in\bbZ$, we define 
    \begin{equation}
    \calL_{n'}\equiv\scrO_M(n'D)\equiv\calL_D^{\otimes_\bbC n'}.
    \end{equation}
We will assume there exists a non-negative integer $d\in\bbZ_{\geq0}$, integers $m,n_1,\ldots,n_\alpha\in\bbZ$, oriented real vector bundles $F_1,\ldots,F_\alpha\to M$ whose restriction to $X\equiv M-D$ are spin (i.e., we fix a choice of orientation and spin structure), and a choice of isomorphism of complex vector bundles 
    \begin{equation}
    TM\oplus\underline{\bbC}^d\oplus\calL_m\cong\bigoplus_{\nu=1}^\alpha E_\nu\otimes_\bbC\calL_{n_\nu},\;\;E_\nu\equiv F_\nu\otimes_\bbR\underline{\bbC};
    \end{equation}
here, an underline denotes a trivial vector bundle with the corresponding fiber, e.g., 
    \begin{equation}
    \underline{\bbC}\equiv M\times\bbC\to M.
    \end{equation}
\end{enumerate}
\end{assu}

\begin{rem}[Positivity of intersection]
Observe, part (2) of Assumption \ref{assu:main} implies that, if $v:\bbC P^1\to D$ is any (pseudo)holomorphic sphere, then we have
    \begin{equation}
    v_*[\bbC P^1]\cdot D>0.
    \end{equation}
\end{rem}

\begin{rem}
If we were in the situation when, in addition to parts (1) and (2) of Assumption \ref{assu:main}, no (pseudo)holomorphic sphere intersects the divisor transversely in a single point, then we would not have to assume part (3) of Assumption \ref{assu:main} since the situation is simpler, cf. Lemma \ref{lem:main}.
\end{rem}

Observe, $s_D$ induces an identification $\calL\vert_X\cong\underline{\bbC}$; hence, we have a fixed $\bbR$-polarization $\calL\vert_X\cong\calL_\bbR\otimes_\bbR\underline{\bbC}$, where $\calL_\bbR\to X$ is a real line bundle with a fixed identification $\calL_\bbR\cong\underline{\bbR}$. Let $v:\bbD\to M$ be any embedded disk whose image is the fiber over a point in $D$ of the disk bundle $D_DM$ associated to the normal bundle $N_DM$ such that $v\vert_{S^1}$ goes once \emph{counterclockwise} around $D$ (in particular, $v_p\cdot D=1$). We define
    \begin{equation}
    \mu_m\equiv\mu(v^*\calL_m,v\vert_{S^1}^*\calL_{m,\bbR}),\;\;\mu_\nu\equiv\mu(v^*\calL_{n_\nu},v\vert_{S^1}^*\calL_{n_\nu,\bbR}),
    \end{equation}
where $\mu(\cdot,\cdot)$ is the boundary Maslov index. Note, we have that 
    \begin{equation}
    \mu_m=2m,\;\;\mu_\nu=2n_\nu. 
    \end{equation}
Under Assumption \ref{assu:main}, we have a stable $\bbR$-polarization $\Lambda$ of $X$:
    \begin{equation}
    TX\oplus\underline{\bbC}^{d+1}\cong\Bigg(\bigoplus_{\nu=1}^\alpha F_\nu\otimes_\bbR\calL_{n_\nu,\bbR}\Bigg)\otimes_\bbR\underline{\bbC}\equiv\Lambda\otimes_\bbR\underline{\bbC};
    \end{equation}
hence, the Floer homotopy type $\frakF^\Lambda$ (associated to $\Lambda$) of $X$ exists. We define the following two real virtual bundles on $M$:
    \begin{align}
    V_k&\equiv\underline{\bbR}^{-d-1+2k(d+1-m)}+TM^{\oplus k}+\sum_{\nu=1}^\alpha F_\nu^{\oplus 1+k(2n_\nu-2)}-N_DM^{\oplus k}, \\
    V^{\bbG\bbW}&\equiv TM+\sum_{\nu=1}^\alpha F_\nu^{\oplus 2n_\nu}-\underline{\bbR}^{2m}-N_DM,
    \end{align}
with $k\in\bbZ_{\geq0}$. 

\begin{notation}
We use the following notation throughout the present article. 
\begin{itemize}
\item We often abuse notation and think of $N_DM$ as a vector bundle defined on the entirety of $M$ since $\scrO_M(D)\vert_D=N_DM$.
\item Moreover, we often abuse notation to also use $V_k$ and $V^{\bbG\bbW}$ to denote the restriction to $X$.
\item For any dualizable spectrum $\frakX$, we denote by $\frakD\frakX$ its dual.
\item For any (virtual) vector bundle $E$ over a space $Y$, we denote by $Y^E$ its associated Thom spectrum.
\item Finally, we denote by $S_DM$ the circle bundle of $N_DM$. 
\end{itemize}
\end{notation}

Our main result is the following.

\begin{thm}\label{thm:main}
Assume Assumption \ref{assu:main}. There exists an element 
    \begin{equation}
    \calG\calW\in\pi_0\Big((S_DM)^{-V^{\bbG\bbW}+TX}\wedge\Sigma\frakD X_+\Big),
    \end{equation}
referred to as a \emph{spectral Gromov-Witten obstruction}, with the following property: suppose $\calG\calW=0$, then $\frakF^\Lambda$ splits as a wedge sum of Spanier-Whitehead duals of Thom spectra:
    \begin{equation}\label{eqn:mainthm}
    \frakF^\Lambda\simeq\frakD X^{-V_0}\vee\bigvee_{k\geq1}\frakD(S_DM)^{-V_{\overline{w}(k)}},\;\;\overline{w}(k)\equiv\kappa k.
    \end{equation}
\end{thm}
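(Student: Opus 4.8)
\section*{Proof proposal}

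The plan is to spectrally refine Ganatra--Pomerleano's low-energy log PSS morphism and then resolve the extension problem it leaves behind. I would first equip the framed Floer flow category underlying $\frakF^\Lambda$ with its action filtration, thereby presenting $\frakF^\Lambda$ as a filtered spectrum --- a tower whose associated graded must be identified. Recall that, after a Morse--Bott perturbation of the contact form on $\partial X = S_DM$, the Hamiltonian orbits relevant to symplectic cohomology organize into families indexed by the winding number $k\in\bbZ_{\geq0}$ about $D$ (the constant orbits being $k=0$), and that the action is proper and increasing in $k$ up to arbitrarily small error. Hence the filtration is exhausting, with only finitely many orbit families below each level, and the $k$-th graded piece is the stable homotopy type built from the winding-$k$ family.

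Next I would identify these graded pieces; this is the spectral lift of the low-energy log PSS morphism. For $k=0$ one runs the PSS construction together with Poincar\'e--Lefschetz duality and trivializes the linearized index data against the stable $\bbR$-polarization $\Lambda$ of Assumption~\ref{assu:main}; the outcome is $\frakD X^{-V_0}$, the Thom twist $V_0$ recording exactly the discrepancy between $TX$ and the chosen trivialization in terms of $(d,m,F_\nu,n_\nu)$. For $k\geq1$ the winding-$k$ family, with its asymptotic markers, is naturally the circle bundle $S_DM$; computing the linearized Floer operator along it --- feeding in the boundary Maslov indices $\mu_\nu=2n_\nu$, $\mu_m=2m$ recorded before the theorem, and using that such an orbit wraps $\kappa k$ times around $\calL_D$ because $s_D$ vanishes to order $\kappa$ along $D$ --- identifies the $k$-th graded piece with $\frakD(S_DM)^{-V_{\overline{w}(k)}}$, $\overline{w}(k)=\kappa k$. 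It is precisely the orientations and spin structures on the $F_\nu\vert_X$ from Assumption~\ref{assu:main}(3) that make these Thom spectra well defined and let one fix signs coherently across the whole flow category.

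With the graded pieces understood, I would package the comparison as a map $\bigvee_k\frakX_k\to\frakF^\Lambda$ (with $\frakX_0=\frakD X^{-V_0}$ and $\frakX_k=\frakD(S_DM)^{-V_{\overline{w}(k)}}$) and measure its defect through the attaching maps of the tower. By positivity of intersection with $D$ together with the action identity for Floer cylinders, the rigid configurations that contribute to this defect all arise as limits, under neck-stretching along $S_DM$, of genus $0$ curves in $M$ meeting $D$ at a single point to prescribed order of tangency --- genus $0$ relative Gromov--Witten configurations. Framing their moduli spaces stably via $\Lambda$ and the complex polarization of Assumption~\ref{assu:main}(3) and passing to the appropriate Thom spectrum produces the class $\calG\calW\in\pi_0\big((S_DM)^{-V^{\bbG\bbW}+TX}\wedge\Sigma\frakD X_+\big)$. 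I would then argue that every attaching map in the tower is obtained from $\calG\calW$ by composition with the (now explicit) structure maps of the graded pieces, so that $\calG\calW=0$ forces all attaching maps to be null and the tower splits as the asserted wedge \eqref{eqn:mainthm}.

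The hard part will be this last step: upgrading Ganatra--Pomerleano's statement that the relevant differential \emph{vanishes} when the genus $0$ relative invariants vanish to the statement that the corresponding stable maps are \emph{coherently null-homotopic}, simultaneously at every stage of the tower --- i.e. that the single vanishing $\calG\calW=0$ propagates with no secondary obstructions. This rests on (a) a compactness and gluing analysis ruling out contributions from every other boundary stratum and confirming the relative Gromov--Witten moduli space as the unique source of extension data, and (b) a bookkeeping argument identifying all higher attaching maps as iterates of $\calG\calW$. Subsidiary but nontrivial points are pinning down the virtual bundles $V_k$ and $V^{\bbG\bbW}$ via the index and orientation computations of the second paragraph, and setting up the Morse--Bott (family) version of the Floer homotopy construction for the winding-$k$ families over $S_DM$.
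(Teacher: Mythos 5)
Your architecture matches the paper's: filter $\frakF^\Lambda$ by winding/weight, identify the associated graded via a spectral lift of the low-energy log PSS morphism, and show that the Puppe connecting maps of the resulting tower all factor through a single class $\calG\calW$ built from genus $0$ relative Gromov--Witten moduli. However, two of the steps you flag as "the hard part" are resolved in the paper by specific devices that your sketch either omits or would actively undermine. First, you propose to feed in curves meeting $D$ at a single point "to prescribed order of tangency." The paper deliberately avoids higher tangency everywhere: the thimbles carry $\overline{w}(x)$ \emph{distinct transverse} marked points, and the obstruction class involves only spheres with $A\cdot D=1$. This is not cosmetic --- compactified high-energy moduli with tangency constraints acquire orbifold points from multiply-covered sphere bubbles, which obstructs the construction of twisted stable framings; and the multi-marked model is what makes the identification of the graded pieces provable, via the "trade intersection points for multiplicity" argument that reduces the $k$-marked thimble count to the $\overline{w}(k)$-fold pair-of-pants product of GP's $1$-morphisms (using that $\lelogpss^{GP}$ is a ring isomorphism). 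Your proposal gives no mechanism for proving the graded-piece identification, and the tangency route would make one harder to supply.

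Second, the higher attaching maps are not "iterates" of $\calG\calW$. The actual mechanism is that for every $k\geq1$ the Puppe map $\frakF^\Lambda_{k,k-1}\to\Sigma\frakF^\Lambda_{k-1}$, transported through the equivalence $\lelogpss^k$, factors as $\bbB^k_2\circ\bbB^k_1$, where $\bbB^k_1$ is the \emph{same} bordism class $\calG\calW$ (only the Thom twist changes, using $V_k\cong V^{\bbG\bbW}+V_{k-1}-TX$) and $\bbB^k_2$ is a separate framed flow bimodule built from partially-incident marked thimbles. The factorization itself is not formal: it is proved by an explicit bordism (flowing the incidence condition along $-\nabla f_X$ for time $s\in[0,+\infty]$) interpolating between the fiber product defining the Puppe composite and the broken configuration sphere-then-thimble. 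So the single vanishing $\calG\calW=0$ kills every connecting map at once because each one \emph{is} $\calG\calW$ post-composed with something, not because secondary obstructions are separately controlled. A further technical point you do not address: to make the sphere-bubbling stratum appear in codimension $1$ with a well-defined enhanced evaluation to $S_DM$, one must work with \emph{enhanced} spheres (quotient by $\bbR$ rather than $\bbC^*$), which requires the real Deligne--Mumford compactification and relative phase parameters; without this the boundary stratum carrying the obstruction is codimension $2$ and the framed-bordism bookkeeping does not close up.
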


\begin{rem}
Essentially, $\calG\calW$ is defined via considering (twisted) framed bordism classes constructed from genus 0 relative Gromov-Witten moduli spaces, cf. Section \ref{sec:sgwo}. Also, the pieces in \eqref{eqn:mainthm} arise as the associated graded of the action filtration of $\frakF^\Lambda$, i.e., $\calG\calW$ is the obstruction to this filtration splitting.
\end{rem}

\begin{rem}
We refer the reader to \cite{AMS24,AB25} for other instances of ``spectral (symplectic) Gromov-Witten theory'' appearing in the literature.
\end{rem}

In Part \ref{subsubsec:auxenhancedspheres}, we will see that $\calG\calW$ equivalently gives an element of 
    \begin{equation}
    \pi_0\Big((S_DM)^{-V_{\overline{w}(1)}}\wedge\Sigma\frakD X^{-V_0}\Big)=H_0\Big(\Sigma\frakD X^{-V_0};(S_DM)^{-V_{\overline{w}(1)}}\Big).
    \end{equation}
In particular, $\calG\calW$ may be viewed as the image of the coevaluation 
    \begin{equation}
    \bbS\to(S_DM)^{-V_{\overline{w}(1)}}\wedge\frakD(S_DM)^{-V_{\overline{w}(1)}}
    \end{equation}
under the map 
    \begin{equation}
    H_0\Big(\frakD(S_DM)^{-V_{\overline{w}(1)}};(S_DM)^{-V_{\overline{w}(1)}}\Big)\to H_0\Big(\Sigma\frakD X^{-V_0};(S_DM)^{-V_{\overline{w}(1)}}\Big)
    \end{equation}
induced by $\calG\calW$ itself. Utilizing this, we prove another criterion for splitting.

\begin{thm}\label{thm:main2}
Given a Lagrangian $L\subset X$, we denote by $\frakF^{\Lambda\vert_{T^*L}}$ the Floer homotopy type of $T^*L$ using the restricted stable $\bbR$-polarization $\Lambda\vert_{T^*L}$ of $T^*L$. Suppose there exists exact Lagrangians $L_1,\ldots,L_\mu\subset X$ such that:
\begin{enumerate}
\item the inclusions induce an injection
    \begin{equation}
    H_0\Big(\Sigma\frakD X^{-V_0};(S_DM)^{-V_{\overline{w}(1)}}\Big)\to \bigoplus_{\rho=1}^\mu H_0\Big(\Sigma\frakD L^{-V_0}_\rho;(S_DM)^{-V_{\overline{w}(1)}}\Big),
    \end{equation}
\item and the (suspension of the) PSS morphism is injective: 
    \begin{equation}
    H_0\Big(\Sigma\frakD L^{-V_0}_\rho;(S_DM)^{-V_{\overline{w}(1)}}\Big)\to H_0\Big(\Sigma\frakF^{\Lambda\vert_{T^*L_\rho}};(S_DM)^{-V_{\overline{w}(1)}}\Big),\;\;\rho\in\{1,\ldots,\mu\};
    \end{equation}
\end{enumerate}
then
    \begin{equation}
    \frakF^\Lambda\simeq\frakD X^{-V_0}\vee\bigvee_{k\geq1}\frakD(S_DM)^{-V_{\overline{w}(k)}}.
    \end{equation}
\end{thm}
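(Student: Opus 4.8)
The plan is to deduce the statement from Theorem \ref{thm:main} by showing $\calG\calW=0$, using the reinterpretation of $\calG\calW$ recorded just above together with a naturality property of the spectral log PSS morphism. Recall that $\calG\calW$, viewed in $H_0(\Sigma\frakD X^{-V_0};(S_DM)^{-V_{\overline{w}(1)}})$, is $\gamma_*(\mathrm{coev})$, where, writing $F_0=\frakD X^{-V_0}\subset F_1\subset\frakF^\Lambda$ for the first stage of the action filtration with $F_1/F_0=\frakD(S_DM)^{-V_{\overline{w}(1)}}$, the map $\gamma\colon F_1/F_0\to\Sigma F_0$ is the connecting map of the cofiber sequence $F_0\to F_1\to F_1/F_0$ and $\mathrm{coev}$ is the coevaluation. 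The first step is the formal observation that $\calG\calW$ dies under PSS: if $\pss_X\colon F_0=\frakD X^{-V_0}\hookrightarrow\frakF^\Lambda$ denotes the (low-energy log) PSS morphism onto the bottom of the filtration, then $\Sigma\pss_X$ factors through $\Sigma i_1\colon\Sigma F_0\to\Sigma F_1$, and $\Sigma i_1\circ\gamma\simeq 0$ because two consecutive maps in a cofiber sequence compose to zero; hence $\pss_{X,*}(\calG\calW)=0$ in $H_0(\Sigma\frakF^\Lambda;(S_DM)^{-V_{\overline{w}(1)}})$.

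Next I would transport this vanishing to the Lagrangians. By hypothesis (1) it suffices to show that the restriction $\calG\calW_\rho:=r_{\rho,*}(\calG\calW)$ vanishes for each $\rho$, where $r_\rho\colon\frakD X^{-V_0}\to\frakD L^{-V_0}_\rho$ is the map induced by $L_\rho\hookrightarrow X$ appearing in hypothesis (1). The inclusion $L_\rho\hookrightarrow X$ realizes a Weinstein neighborhood $T^*L_\rho$ as a Liouville subdomain of $X$, compatibly with the polarization (by construction $\Lambda\vert_{T^*L_\rho}$ is the restriction of $\Lambda$), so there is a Viterbo/transfer map of Floer homotopy types $\mathrm{Vit}_\rho\colon\frakF^\Lambda\to\frakF^{\Lambda\vert_{T^*L_\rho}}$, and the spectral PSS morphism is natural for it: the square with top edge $r_\rho$, bottom edge $\mathrm{Vit}_\rho$, and vertical edges $\pss_X$ and $\pss_{T^*L_\rho}$ commutes up to homotopy. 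Granting this, $\pss_{T^*L_\rho,*}(\calG\calW_\rho)=\pss_{T^*L_\rho,*}\bigl(r_{\rho,*}(\calG\calW)\bigr)=\mathrm{Vit}_{\rho,*}\bigl(\pss_{X,*}(\calG\calW)\bigr)=\mathrm{Vit}_{\rho,*}(0)=0$. By hypothesis (2) the map $\pss_{T^*L_\rho,*}$ is injective, so $\calG\calW_\rho=0$ for every $\rho$; by hypothesis (1) this forces $\calG\calW=0$; and Theorem \ref{thm:main} then yields $\frakF^\Lambda\simeq\frakD X^{-V_0}\vee\bigvee_{k\geq1}\frakD(S_DM)^{-V_{\overline{w}(k)}}$.

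The main obstacle is the naturality square of the second paragraph. One must first construct the Viterbo/transfer map $\mathrm{Vit}_\rho$ on Floer homotopy types, and then produce, via the interpolating moduli spaces relating PSS solutions in $X$, PSS solutions in $T^*L_\rho$, and the continuation data realizing $\mathrm{Vit}_\rho$, a coherent homotopy between the two composites --- all at the spectrum level, and keeping track of the stable framings so that the polarization $\Lambda$ of $X$ is seen to restrict to $\Lambda\vert_{T^*L_\rho}$ compatibly with the Thom-spectrum map $X^{-V_0}\to L_\rho^{-V_0}$ dual to $r_\rho$; the framing and twist bookkeeping is where essentially all the work lies. Two secondary points also need attention: (i) identifying, for both $X$ and $T^*L_\rho$, the PSS morphism in the statement with the inclusion of the bottom of the action filtration, so that the cofiber-sequence argument of the first paragraph applies and so that the $\pss_{T^*L_\rho}$ of hypothesis (2) is literally the vertical edge of the square; and (ii) checking that $\mathrm{Vit}_\rho$ is defined and has the required naturality even though $T^*L_\rho$ carries no ample divisor and hence no relative Gromov--Witten theory of its own. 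Alternatively, one can bypass $\mathrm{Vit}_\rho$ altogether by arguing directly on moduli: any configuration obtained by gluing a genus $0$ relative stable map through $D$ onto a $\pss_{T^*L_\rho}$-solution violates the low-energy bound, so the twisted framed bordism class representing $\pss_{T^*L_\rho,*}(\calG\calW_\rho)$ is null for energy reasons --- but carrying this out requires re-examining the compactness and gluing statements underlying the construction of $\calG\calW$.
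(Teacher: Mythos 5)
Your proposal is correct and follows essentially the same route as the paper: both arguments first show $\pss_{X,*}(\calG\calW)=0$ via the cofiber sequence $\frakF^\Lambda_0\to\frakF^\Lambda_1\to\frakF^\Lambda_{1,0}$ (the paper phrases this as exactness of the long exact sequence in $\frakE$-homology, you phrase it as two consecutive maps of a cofiber sequence composing to zero), then invoke the commutativity of PSS with Viterbo restriction and the two injectivity hypotheses to force $\calG\calW=0$, and finally apply Theorem \ref{thm:main}. The paper likewise leaves the spectrum-level naturality of PSS under Viterbo restriction as an assertion, so your flagging of that square as the main technical point is consistent with, not a deviation from, the paper's treatment.
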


\begin{rem}
In particular, we would like to emphasize that the splitting criterion detailed in Theorem \ref{thm:main2} does not require a computation of $\calG\calW$ itself.
\end{rem}
    
\subsection{Sneak peak of computations}\label{subsec:introcomputations}
In Section \ref{sec:computations}, we use Theorem \ref{thm:main} to compute the Floer homotopy type of many ample smooth divisor complements.

\begin{itemize}
\item $M$ is $\bbC P^{n+1}$ and $D$ is a smooth hypersurface of degree at least 2, such that $X$ admits a stable $\bbR$-polarization, cf. Proposition \ref{prop:warmup1}. (Observe, in the case that $D$ is degree 2, $X$ is Weinstein equivalent to $T^*\bbR P^n$; in particular, this splitting, in the degree 2 case, is already known via algebro-topological methods, c.f. Remark \ref{rem:algebrotopological}.)
\item $M$ is a smooth hypersurface in $\bbC P^{n+1}$ of degree at least 2, and $D$ is the intersection of $M$ and a smooth hypersurface of degree at least 2, such that $X$ admits a stable $\bbR$-polarization, cf. Proposition \ref{prop:warmup2}.
\item $M$ is the standard projective quadric hypersurface $\{z_0^2=z_1^2+\cdots+z_{n+1}^2\}$ in $\bbC P^{n+1}$ and $D$ is the hyperplane section $\{z_0=0\}$, cf. Proposition \ref{prop:spheresplitting}. (Observe, $X$ is Weinstein equivalent to $T^*S^n$; in particular, this splitting is already known via algebro-topological methods, c.f. Remark \ref{rem:algebrotopological}.)
\item $M$ is either: (1) an even degree del Pezzo surface (which is not $\bbC P^1\times\bbC P^1$ -- this case is handled by the $T^*S^2$ computation) or (2) a degree 3 del Pezzo surface, and $D$ is a generic smooth representative of $M$'s anticanonical divisor, cf. Proposition \ref{prop:delpezzo1} resp. \ref{prop:delpezzo2}. The missing cases of degree 1, 5, and 7 del Pezzo surfaces are slightly out of reach of the present article, cf. Conjecture \ref{conj:main} and Subsection \ref{subsec:splittings}.
\end{itemize}

By combining our computation for $T^*S^n$ with Theorem \ref{thm:main2}, we may compute the Floer homotopy type of the affine part of any smooth projective hypersurface of degree at least 3, cf. Proposition \ref{prop:affine}. This answers (the spectral extension of) a question of Ganatra-Pomerleano, at least in the case of a smooth divisor, cf. \cite[Question 1.8]{GP20}.

\begin{rem}
Let $H\bbZ$ be the $\bbZ$-Eilenberg MacLane spectrum. It is a classical result of Adams that, for any $H\bbZ$-module spectra $\frakX$, there is a homotopy equivalence $\bigvee\Sigma^jH\pi_j\frakX\simeq\frakX$. The reason for this digression is that there is no difference between having a splitting of $SH^{*+n}(X;\bbZ)=H_{-*}(\frakF^\Lambda;\bbZ)$ and a splitting of $\frakF^\Lambda_X\wedge H\bbZ$; hence, we will elide the distinction.
\end{rem}

\subsection{Questions, speculations, etc}
An immediate (and arguably the most interesting) question is the following. 

\begin{ques}\label{ques:splittingques}
\emph{A priori}, it may be the case that $\frakF^\Lambda\wedge H\bbZ$ splits but $\calG\calW$ does not vanish (i.e., $\frakF^\Lambda$ does not split). Does this happen?
\end{ques}

\begin{rem}
Of course, if we can find such an example, Theorem \ref{thm:main2} provides restrictions on the types of Lagrangians which can arise in such an example. Moreover, given that the obstruction to splitting is $\calG\calW$, such examples are lurking in situations where higher-dimensional (relative) Gromov-Witten moduli spaces are ``interesting bordism-theoretically''.
\end{rem}

\begin{rem}\label{rem:bp26}
In \cite[Part 1.3.2]{BP26}, which appeared after the present article first appeared as a preprint, Porcelli and the present author produced an example of an ample smooth divisor complement whose canonical $MU$ Floer homotopy type $\frakF^{MU}$ satisfies $\frakF^{MU}\wedge_{MU} H\bbZ$ splits but $\calG\calW^{MU}$ does not vanish. The example is $M=\bbC P^n\times\bbC P^n$, $D$ the ample $(1,1)$-hypersurface 
    \begin{equation}
    \bigg\{\big([x_0:\cdots:x_n],[y_0:\cdots:y_n]\big):\sum_{i=0}^n x_iy_i=0\bigg\},
    \end{equation}
and $X=T^*\bbC P^n$, where $n\in\bbZ$ is odd and at least 3. In particular, the non-splitting in \emph{loc. cit.} is detected by a necessarily higher-dimensional complex bordism class of genus 0 relative Gromov-Witten moduli space. Note, this example does not satisfy Assumption \ref{assu:main} of the present article since it fails part (3); in particular, this computation essentially relies on repeating the present article using the canonical $MU$ Floer homotopy type, instead of a framed Floer homotopy type, to obtain the analogous splitting/obstruction results.
\end{rem}

Due to the computations performed in the present article, it seems reasonable to conjecture the following. (We have some remarks in Part \ref{part:remarks}.)

\begin{conj}\label{conj:main}
Let $M$ be a del Pezzo surface of degree 1, 5, or 7 and $D$ a generic smooth representative of $M$'s anticanonical divisor. There exists a stable $\bbR$-polarization $\Lambda$ of $X$ and real vector bundles $V_1,\ldots,V_k,\ldots\to M$ such that
    \begin{equation}
    \frakF^\Lambda\simeq\frakD X^{-V_0}\vee\bigvee_{k\geq1}\frakD(S_DM)^{-V_k}.
    \end{equation}
\end{conj}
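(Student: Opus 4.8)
The plan is to treat the three cases $d\in\{1,5,7\}$ by applying Theorem \ref{thm:main} with $\calL_D=-K_M$ and $s_D$ the tautological section, so that $\kappa=1$ and $\overline{w}(k)=k$, which is exactly the form of the conjectured splitting (parts (1) and (2) of Assumption \ref{assu:main} are then immediate). Theorem \ref{thm:main} reduces the conjecture to two tasks: \textbf{(A)} producing a stable $\bbR$-polarization $\Lambda$ of $X=M-D$ fitting into Assumption \ref{assu:main}(3); and \textbf{(B)} showing that the resulting spectral Gromov--Witten obstruction $\calG\calW$ vanishes, after which one takes $V_k\equiv V_{\overline{w}(k)}$. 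One may alternatively trade (B) for the Lagrangian criterion of Theorem \ref{thm:main2}.

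For task (A): since $X$ is an affine, hence Stein, complex surface it has the homotopy type of a $2$-complex, and since $(M,D)$ is log Calabi--Yau we have $c_1(X)=0$; a rank-$2$ complex bundle over a $2$-complex with vanishing $c_1$ is already trivial, so $TX$ is honestly trivial and the abstract existence of a stable $\bbR$-polarization of $X$ is not an issue. The substance of Assumption \ref{assu:main}(3) is rather the requirement that the \emph{complex} bundle $TM\oplus\underline{\bbC}^d\oplus\calL_m$ on all of $M$ split as a sum of complexifications of oriented real bundles $F_\nu$ twisted by powers of $-K_M$, with the $F_\nu$ spin over $X$; this is what pins down the bundles $V_k$ in the statement. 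I would extract such a decomposition from the birational geometry: for $d=7$, $M=\mathrm{Bl}_2\bbC P^2$ is toric and $TM$ is read off the fan; for $d=5$, $M$ is the $\mathrm{Gr}(2,5)$ linear-section model and one pulls back the corresponding short exact sequences; for $d\geq3$ in general one can pull back the Euler sequence along the anticanonical embedding $M\hookrightarrow\bbC P^d$ and peel off the normal bundle. Each case then reduces to a finite characteristic-class computation in $H^2(M;\bbZ)\cong\bbZ^{10-d}$ (detected by $c_1,c_2$ on the complex side and by $w_2,p_1$ on the real side), tuning $d,m,n_\nu$ and stabilizing by trivial summands if needed; for $d=1$ the pencil $\abs{-K_M}$ has a base point $p_0\in D$ and there is no anticanonical embedding, so one must instead exploit, e.g., the rational elliptic surface obtained by blowing up $p_0$.

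For task (B) the key observation is that the relevant genus $0$ relative Gromov--Witten moduli --- curves of class $\beta$ with $\beta\cdot D=\overline{w}(1)=1$ --- are highly constrained on a del Pezzo surface. Since $-K_M$ is ample such a $\beta$ is irreducible and reduced, and a Hodge-index estimate against $-K_M$ forces its arithmetic genus to be $0$ when $d\geq2$, so that $\beta$ is a $(-1)$-class with a single reduced $\bbC P^1$ representative meeting $D$ transversely once; when $d=1$ the estimate additionally allows arithmetic genus $1$, and the unique extra class is $-K_M$ itself. Hence for $d\in\{5,7\}$ the obstruction $\calG\calW$ is assembled from exactly $10$, respectively $3$, isolated $(-1)$-curves (the classical count of lines), just as in the cubic case already treated in Section \ref{sec:computations}, and one shows the associated twisted framed bordism classes cancel in the (small, explicitly computable) target group --- this should go through essentially as for $d=3$. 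For $d=1$ one must additionally control the $240$ $(-1)$-curves and, crucially, the contribution of the anticanonical pencil $\abs{-K_M}$: its members all meet $D$ at $p_0$, it has $12$ nodal rational fibers, and one must compute the genus $0$ relative Gromov--Witten class of this positive-dimensional family --- including the stable-map bubbling at $p_0$ --- all while $D$ is itself a member of the pencil. Alternatively, for any of the three cases one may apply Theorem \ref{thm:main2}: the complements carry exact Lagrangians --- for instance the vanishing cycles of a Lefschetz fibration on $X$ --- and one would choose a collection $L_1,\dots,L_\mu$ of these through which $H_0\big(\Sigma\frakD X^{-V_0};(S_DM)^{-V_{\overline{w}(1)}}\big)$ injects and on which the PSS maps are injective, which sidesteps computing $\calG\calW$ altogether.

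The main obstacle I anticipate is $d=1$, and specifically the Gromov--Witten input: the anticanonical pencil gives a genuinely positive-dimensional piece of the relevant moduli space, sitting in a degenerate configuration ($D$ a pencil member, base point on $D$), and controlling its twisted framed bordism class is qualitatively harder than anything needed for the even-degree or cubic cases. By contrast, I expect $d=5$ and $d=7$ to follow the template of Propositions \ref{prop:delpezzo1} and \ref{prop:delpezzo2} once the bundle bookkeeping of task (A) is in place, so the genuine difficulty --- and the reason the statement is left as a conjecture here --- is concentrated in the degree $1$ case.
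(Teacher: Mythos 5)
This statement is a \emph{conjecture} in the paper: there is no proof to compare against, and the author explicitly flags degrees $1$, $5$, and $7$ as out of reach. Your outline correctly isolates the two tasks, but the genuine gap sits in your task (A), and it is exactly the gap the paper itself identifies in Part \ref{part:remarks}. The issue is not the abstract existence of a stable $\bbR$-polarization of $X$ (you are right that this is easy), but that Theorem \ref{thm:main} --- and the entire construction of the twisted stable framings on the thimble and sphere moduli spaces --- requires the rigid form of Assumption \ref{assu:main}(3): a global isomorphism $TM\oplus\underline{\bbC}^d\oplus\calL_m\cong\bigoplus_\nu E_\nu\otimes_\bbC\calL_{n_\nu}$ in which \emph{every} twist is by a power of the single line bundle $\scrO_M(D)$. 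None of your proposed models delivers this. The toric Euler sequence for degree $7$ and the $\mathrm{Gr}(2,5)$ model for degree $5$ produce decompositions twisted by the torus-invariant divisors, respectively the Pl\"ucker and tautological bundles, which are not powers of $-K_M$; the paper's own complete-intersection models in $(\bbC P^1)^\ell\times\bbC P^2$ have the same defect, which is precisely why the author proposes the weakened hypothesis (4) and leaves the conjecture open. ``Peeling off the normal bundle'' of the anticanonical embedding only works when $M$ is a hypersurface cut out by a power of $\scrO_M(D)$, i.e., in the degree $3$ case; for degree $5$ the anticanonical model has codimension $3$ in $\bbC P^5$ with a normal bundle that is not a sum of powers of $\scrO(1)$, and for degree $1$ there is no anticanonical embedding at all. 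Moreover, the direct route of Proposition \ref{prop:delpezzo1} is closed off: by the Noether computation in that proof, $c_2(M)-c_1(M)^2=2\ell-6\equiv2\pmod4$ whenever the degree $9-\ell$ is odd, so the obstruction to a rank-$3$ real $\Lambda$ with $\Lambda\otimes_\bbR\underline{\bbC}\cong TM\oplus\calL_{-1}$ does not vanish. Task (A) is therefore not ``finite bookkeeping''; it requires either a decomposition nobody has exhibited or an extension of the framework.

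Your task (B) analysis is essentially sound and is the genuinely valuable part of the proposal: the adjunction/Hodge-index argument showing that for degree at least $2$ the only classes with $\beta\cdot D=1$ are $(-1)$-curves ($10$ and $3$ of them for degrees $5$ and $7$), and that for degree $1$ the class $-K_M$ itself enters, is correct, and the resulting positive-dimensional, base-pointed anticanonical pencil (with $D$ itself a member) is a qualitatively new difficulty absent from the cases the paper handles. Two caveats, though: (i) Theorem \ref{thm:main2} does not let you sidestep task (A), since $\frakF^\Lambda$ and all the framed moduli spaces are only constructed under Assumption \ref{assu:main}; and (ii) even for degrees $5$ and $7$, ``cancellation in a small target group'' must be upgraded from a homological statement to a compatible twisted stably framed null-bordism, as in the proof of Proposition \ref{prop:delpezzo1} --- which again presupposes (A). As written, this is a reasonable research plan rather than a proof, and the missing step is the one the paper itself names.
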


As was shown in \cite{BB25}, considering the Floer homotopy type associated to different stable $\bbR$-polarizations can yield more geometric information. An interesting question is the following.

\begin{ques}
Does there exist a pair $(M,D)$ such that: 
\begin{enumerate}
\item $SH^*(X;\bbZ)$ splits, 
\item there exists a choice of $\Lambda$ such that $\frakF^\Lambda$ splits, 
\item and there exists a choice of $\Lambda'$ such that $\frakF^{\Lambda'}$ does not split?
\end{enumerate}
\end{ques}

As stated earlier, we provide splittings of Floer homotopy types associated to $T^*S^n$ and $T^*\bbR P^n$. These are (cotangent bundles of) two examples of \emph{Zoll manifolds}, i.e., smooth manifolds whose geodesic flow is a free $S^1$-action. Two other examples are $T^*\bbC P^n$ and $T^*\bbH P^n$.

\begin{ques}
Does there exist an algebro-geometric model which realizes $T^*\bbC P^n$ resp. $T^*\bbH P^n$ as the complement of an ample smooth divisor such that the results of the current article (or some generalization of) apply, and can be computed, to produce a splitting of the associated Floer homotopy type?
\end{ques}

\begin{rem}\label{rem:algebrotopological}
The author believes this can be accomplished. Recall, the Floer homotopy type of a cotangent bundle is related to the stable homotopy type of the free loop space of the base; this is known as the spectral Viterbo isomorphism. Hence, once one finds a suitable algebro-geometric model, hope for proving a splitting is provided by the fact that $\calL\bbC P^n$ resp. $\calL\bbH P^n$ admit stable splittings via algebro-topological methods, cf. \cite{BO18, Coh87}. (This is actually what led the author to consider $T^*S^n$ and $T^*\bbR P^n$, i.e., it is known $\calL S^n$ resp. $\calL\bbR P^n$ admit stable splittings via algebro-topological methods, c.f. \cite{BCS01,Coh87}.) 
\end{rem}

\subsection*{Acknowledgments}
The author would like to thank his advisor Paul Seidel for suggesting and mentoring this project. The author would also like to thank Shaoyun Bai for reading an early draft of the present article. Finally, the author would like to also thank Ciprian Bonciocat, Liam Keenan, Daniel Pomerleano, and Noah Porcelli for helpful discussions. This work was partially supported by an NSF Graduate Research Fellowship award.

\section{Recollections of the cohomological story, after Ganatra-Pomerleano}
In this section, we will recall the construction of the spectral sequence which converges to the symplectic cohomology of an ample smooth divisor complement. Moreover, we will recall the construction of Ganatra-Pomerleano's (GP's) \cite{GP20,GP21} ``low-energy log PSS morphism'' which is used to compute this spectral sequence's $E_1$-page. Cf. \cite{Dio12,DL19,Sei02a} for related discussions.

\subsection{Topological limit model for symplectic cohomology}
\subsubsection{Log-smooth pairs}
Let $(M,D)$ be a smooth complex $n$-dimensional projective variety together with an ample smooth divisor $D\subset M$, i.e., there exists an ample complex line bundle $\calL_D\to M$ together with a choice of global section $s_D\in H^0(M,\calL_D)$ whose divisor of zeroes is $\kappa D$:
    \begin{equation}\label{eqn:kappa}
    \operatorname{div}(s_D)=\kappa D,\;\;\kappa\in\bbZ_{>0}.
    \end{equation}
We denote by $X$ the complement $M-D$; we call $M$ a \emph{(log-)smooth compactification} of $X$.\footnote{In general, \cite[Main Theorem I]{Hir64} says any smooth affine variety admits a \emph{log-smooth compactification}, i.e., a compactification to a smooth complex projective variety such that the original affine variety is the complement of a strict normal crossings divisor which supports an ample complex line bundle.} Moreover, we will assume the canonical bundle of $M$ is supported on $D$:
    \begin{equation}\label{eqn:nutilde}
    K_M\equiv\Lambda^n_\bbC T^*M\cong\scrO_M(-\widetilde{\nu}D),\;\;\widetilde{\nu}\in\bbZ_{>0}.
    \end{equation}

\begin{rem}
This is subsumed by Assumption \ref{assu:main}: 
    \begin{equation}
    K_M\cong\scrO_M\big(-(-m+n_1\operatorname{rank}_\bbR F_1+\cdots+n_\alpha\operatorname{rank}_\bbR F_\alpha)D\big).
    \end{equation}
\end{rem}

We have that $M$ admits a symplectic structure $\omega$. Consider the potential $-\log\abs{\abs{s_D}}$; we have the following equality after restricting to $X$:
    \begin{equation}
    \omega=-dd^c\big(-\log\abs{\abs{s_D}}\big).
    \end{equation}
By setting
    \begin{equation}
    \theta\equiv-d^c\big(-\log\abs{\abs{s_D}}\big),
    \end{equation}
we induce the structure of a finite-type convex symplectic manifold on $X$, cf. \cite[Appendix A]{McL12}. On the other hand, by holomorphically embedding $X$ into $\bbC^N$, $N\gg0$, we may give $X$ the structure of a Stein manifold by taking the restriction to $X$ of the ambient standard K\"ahler form on $\bbC^N$. Observe, the former finite-type convex symplectic manifold structure on $X$ and the latter ``canonical'' Stein structure on $X$ are deformation equivalent. 

\cite[Theorem 5.20]{McL12} shows this Stein structure, perhaps after deforming, is ``nice'' (or ``adapted to $D$''); this means the following. Let $\pi:N_DM\to D$ be the normal bundle of $D$ inside of $M$. There exists a tubular neighborhood $D_{D,\delta}M\subset N_DM$ such that: 
\begin{itemize}
\item $\pi\vert_{D_{D,\delta}M}:D_{D,\delta}M\to D$ is a symplectic fibration with structure group $U(1)$,
\item each fiber is a disk of radius $\delta\in\bbR_{>0}$, 
\item and the restriction of $\theta$ to a fiber is 
    \begin{equation}
    \dfrac{1}{2\pi}(\pi r^2-\kappa)d\varphi,
    \end{equation}
where $(r,\varphi)$ are polar coordinates on the disk (with $\partial_\varphi$ oriented \emph{clockwise}\footnote{This convention, which is opposite of the usual one, follows \cite{GP20}.}).
\end{itemize}
We denote by $\rho\equiv r^2:D_{D,\delta}M\to[0,\delta^2]$ the extension of the various squared radial functions on the fibers to the entire tubular neighborhood. Three convenient properties of a nice Stein structure are the following, cf. \cite[Lemma 2.4]{GP20}.

\begin{enumerate}
\item The symplectic orthogonal complement of the tangent space of a fiber of $D_{D,\delta}M$ is contained in a level set of $\rho$.
\item If $f\in C^\infty(D_{D,\delta}M)$ is a Hamiltonian which is a smooth function of $\rho$, then the associated Hamiltonian vector field $X_f$ (satisfying $\omega(X_f,\cdot)=-df$) is tangent to the fibers of $D_{D,\delta}M$ and of the form $2(\partial f/\partial\rho)\partial_\varphi$.
\item Any two Hamiltonians which are smooth functions of $\rho$ have commuting Hamiltonian vector fields.
\end{enumerate}

\subsubsection{Divisorially-adapted Floer data}
Let $\epsilon\in\bbR_{>0}$ be sufficiently small; we define 
    \begin{equation}
    D_{D,\epsilon}M\equiv\rho^{-1}\big([0,\epsilon\kappa/2\pi]\big).
    \end{equation}
Moreover, we denote by $\widehat{X}_\epsilon$ the region bounded by $\partial(M-D_{D,\epsilon}M)$ which is contained in $X$. \cite[Lemma 3.11]{GP21} says $\theta\vert_{\widehat{X}_\epsilon}$ determines a Liouville domain structure on $\widehat{X}_\epsilon$. Let $\widehat{X}^\circ_\epsilon$ denote the complement, in $\widehat{X}_\epsilon$, of a collar neighborhood of $\partial\widehat{X}_\epsilon$ determined by flowing along the Liouville vector field $Z$ of $\theta$ for some sufficiently small fixed negative time $t_\epsilon\in\bbR_{<0}$. We have a map 
    \begin{align}
    r_\epsilon:X-\widehat{X}^\circ_\epsilon&\to\bbR \\
    p&\mapsto e^{t_p}, \nonumber
    \end{align}
where $t_p$ is the time it takes to flow along $Z$ from $\partial\widehat{X}_\epsilon$ to $p$. By \cite[Lemmas 3.14 \& 3.15]{GP21}, $r_\epsilon$ is a function of $\rho$ and smoothly extends across $D$. Let $h\in C^\infty(\bbR)$ be a non-negative smooth function; we say $h$ is \emph{linear adapted to $r_\epsilon$} of slope $\lambda\in\bbR_{\geq0}$ if the following three conditions hold: 
\begin{enumerate}
\item $h(r)=0$ for $r\leq e^{t_\epsilon}$, 
\item $\partial_rh\geq0$ and $\partial_r^2h\geq0$ for $r>e^{t_\epsilon}$, 
\item and there exists $K_\epsilon\in(1,\min_Dr_\epsilon)$, which is sufficiently close to 1, such that $h(r)=\lambda(r-1)$ for $r\geq K_\epsilon$.\footnote{Observe, $\min_Dr_\epsilon>1$ since $r_\epsilon\vert_{\partial\widehat{X}_\epsilon}=1$.}
\end{enumerate}
We may extend $h\circ r_\epsilon$ smoothly to a non-negative Hamiltonian on all of $M$, denoted 
    \begin{equation}
    h:M\to\bbR_{\geq0};
    \end{equation}
note, $h$ is linear outside the compact subset 
    \begin{equation}
    \widehat{X}^\circ_\epsilon\cup r_\epsilon^{-1}\big((-\infty,K_\epsilon]\big)
    \end{equation}
and its Hamiltonian flow preserves $D$. We refer to $K_\epsilon$, which we will assume is fixed, as the \emph{linearity level} of $h$. Finally, we fix an auxiliary constant $\mu_\epsilon\in(K_\epsilon,\min_Dr_\epsilon)$ to define two open neighborhoods of $D$:
    \begin{equation}
    V^1_\epsilon\equiv r^{-1}_\epsilon\big((K_\epsilon,+\infty)\big),\;\; V^0_\epsilon\equiv r^{-1}_\epsilon\big((\mu_\epsilon,+\infty)\big);
    \end{equation}
we refer to $V_\epsilon\equiv V^1_\epsilon-V^0_\epsilon$ as the \emph{shell region}.

Observe, the 1-periodic Hamiltonian orbits of $h$ come in two families. First, the \emph{divisorial} 1-periodic orbits $\chi(D;h)$ are completely contained in $D$. Second, the \emph{non-divisorial} 1-periodic orbits $\chi(X;h)$, which are all of the other 1-periodic orbits, come in two subfamilies themselves: 
\begin{itemize}
\item the constant 1-periodic orbits, which are seen to be the subset 
    \begin{equation}
    \chi_0(X;h)\equiv M-\Bigg\{p\in M:r_\epsilon(p)\geq\sup_{q\in M-\widehat{X}^\circ_\epsilon}\Big\{r_\epsilon(q):h\big(r_\epsilon(q)\big)=0\Big\}\Bigg\}\subset X;
    \end{equation}
\item and the non-constant 1-periodic orbits, which ``wind'' around $D$. These correspond to multiply-covered circles in any $D_{D,\delta}M\vert_p$. We will denote by $\chi_k(X;h)$ the 1-periodic orbits which wind exactly $k$ times. We will also denote by $\chi_{\leq k}(X;h)$ the 1-periodic orbits which wind at most $k$ times.
\end{itemize}
For any $x\in\chi(X;h)$, we refer to the non-negative integer 
    \begin{equation}
    \overline{w}(x)\equiv\kappa k(x)\in\bbZ_{\geq0},
    \end{equation}
such that $x\in\chi_{k(x)}(X;h)$, as the \emph{weighted winding number} of $x$, where we recall the definition of $\kappa$ in \eqref{eqn:kappa}. Of course, $k(x)$ is simply the \emph{winding number} of $x$.

By \cite[Section 4.1]{GP20}, we may perturb $h$, via a $C^2$-small time-dependent perturbation, into a(n $S^1$-dependent) Hamiltonian $H\in C^\infty(S^1\times M)$ with non-degenerate 1-periodic orbits such that 
\begin{enumerate}
\item the perturbation is disjoint from $V_\epsilon$ and, inside $M-V^1_\epsilon$, is supported in sufficiently small isolating neighborhoods of each $\chi_k(X;h)$, 
\item and the Hamiltonian flow of $H$ preserves $D$.
\end{enumerate}
We denote by $\chi(D;H)$ resp. $\chi(X;H)$ the set of divisorial resp. non-divisorial 1-periodic orbits; moreover, we call $H$ \emph{admissible} of slope $\lambda$.

Meanwhile, let $J$ be an $\omega$-tame almost complex structure on $M$. We say that $J$ is \emph{admissible} for $\widehat{X}_\epsilon$ if: 
\begin{enumerate}
\item $J$ preserves $TD$, 
\item the image of the Nijenhuis tensor, when restricted to $TM\vert_D$, is contained in $TD$, 
\item and $J$ is of contact type on the closure of $V_\epsilon$, i.e., $\theta\circ J=-dr_\epsilon$.
\end{enumerate}
Note, the space of such structures is non-empty and contractible via standard arguments, cf. for example \cite[Appendix A]{Ion15}.

\subsubsection{Symplectic cohomology}
We continue to look at data 
    \begin{equation}
    \big(\widehat{X}_\ell\equiv\widehat{X}_{\epsilon_\ell},H^\ell,J^\ell\big),\;\;\ell\in\bbZ_{\geq0}
    \end{equation}
as just defined; we will assume $\lambda_\ell$ is not the length of a Reeb chord on $\partial\widehat{X}_\ell$. For any $x\in\chi(X;H^\ell)$, we define
    \begin{equation}
    \deg(x)\equiv n-\operatorname{CZ}(x),
    \end{equation}
where $\operatorname{CZ}(\cdot)$ is the Conley-Zehnder index. Given any two $x,y\in\chi(X;H^\ell)$, we consider the moduli space $\widetilde{\scrF}^{H^\ell,J^\ell}(y,x)$ of Floer trajectories connecting $x$ to $y$, i.e., maps $u:\Theta\equiv\bbR_s\times S^1_t\to X$ satisfying 
    \begin{equation}
    \begin{cases}
    \partial_su+J^\ell\big(\partial_tu-X_{H^\ell}(u)\big)=0 & \\
    \lim_{s\to-\infty}u(s,t)=x(t) & \\
    \lim_{s\to+\infty}u(s,t)=y(t) &
    \end{cases}
    .
    \end{equation}
This is, for generic data, a smooth manifold of dimension $\deg(x)-\deg(y)$ whose tangent bundle is classified by the index bundle of the family of surjective Fredholm operators 
    \begin{equation}
    D\overline{\partial}_{H^\ell,J^\ell}\equiv\big\{D(\overline{\partial}_{H^\ell,J^\ell})_u:W^{1,2}(\Theta;u^*TM)\to L^2(\Theta;u^*TM)\big\}
    \end{equation}
given by linearizing. When $x\neq y$, there is a free proper $\bbR$-action on $\widetilde{\scrF}^{H^\ell,J^\ell}(y,x)$ given by time-shift in the $s$-coordinate; we denote by $\scrF^{H^\ell,J^\ell}(y,x)$ the $\bbR$-quotient. Let $\bbF^{H^\ell,J^\ell}(y,x)$ be the Gromov-compactification of $\scrF^{H^\ell,J^\ell}(y,x)$ given by allowing breakings at non-divisorial 1-periodic orbits. 

Let $\frako_x$ be the orientation line associated to $x$, i.e., the determinant line of the operator $D(\overline{\partial}_{H^\ell,J^\ell})_x$ given by linearizing at $x$, cf. \cite[Appendix C.6]{Abo10}. Note, $D(\overline{\partial}_{H^\ell,J^\ell})_x$ is the asymptotic operator of $D(\overline{\partial}_{H^\ell,J^\ell})_u$ at $-\infty$, for any Floer trajectory $u$ starting at $x$. We denote by $\abs{\frako_x}$ the free $\bbZ$-module generated by the two possible orientations of $\frako_x$ modulo the relation that the sum of opposite orientations vanishes. The $(H^\ell,J^\ell)$-Floer cochain complex 
    \begin{equation}
    CF^j(X;H^\ell,J^\ell;\bbZ)\equiv\bigoplus_{\deg(x)=j}\abs{\frako_x}
    \end{equation}
has the codifferential whose $\abs{\frako_y}-\abs{\frako_x}$ component is given by
    \begin{equation}
    \delta_{y,x}\equiv\sum_{\substack{u\in\bbF^{H^\ell,J^\ell}(y,x) \\ \deg(x)=\deg(y)+1}}\mu_u, 
    \end{equation}
where $\mu_u:\frako_y\xrightarrow{\sim}\frako_x$ is the isomorphism induced on orientation lines by a Floer trajectory; we denote by $HF^*(X;H^\ell,J^\ell;\bbZ)$ the $(H^\ell,J^\ell)$-Floer cohomology.

\begin{rem}
\emph{A priori}, $(H^\ell,J^\ell)$-Floer cohomology depends on the linearity level (i.e., $K_\ell$) and the shell region (i.e., $V_\ell$); \cite[Lemma 2.14]{GP20} shows it does not.
\end{rem}

One reason for introducing the topological limit model for symplectic cohomology is the following. For any $\overline{w}\in\bbR$, we define 
    \begin{equation}
    F_{\overline{w}}CF^*(X;H^\ell,J^\ell;\bbZ)\equiv\bigoplus_{x\in\chi(X;H^\ell),\overline{w}(x)\leq\overline{w}}\abs{\frako_x}.
    \end{equation}
A straightforward action estimate (which holds when $K_\ell$ is sufficiently close to 1 and $t_{\epsilon_\ell}$ is sufficiently small, cf. \cite[Lemma 2.10]{GP20}),
    \begin{equation}
    \calA_\ell(x)\equiv-\int_{S^1}x^*\theta+\int^1_0 H^\ell\big(t,x(t)\big)dt\approx-\overline{w}(x)\Bigg(1-\dfrac{\epsilon^2_\ell}{2}\Bigg),
    \end{equation}
where ``$\approx$'' indicates equality up to arbitrarily small error, shows the submodules \{$F_{\overline{w}}CF^*(X;H^\ell,J^\ell;\bbZ)\}$ give a filtration of $CF^*(X;H^\ell,J^\ell;\bbZ)$ when $\epsilon_\ell$ is sufficiently small. We refer to this filtration as the \emph{weight filtration}.

Now, consider two tuples of data:
    \begin{equation}
    \big(\widehat{X}_\ell,H^\ell,J^\ell\big),\;\;\big(\widehat{X}_{\ell'},H^{\ell'},J^{\ell'}\big),
    \end{equation}
where $\lambda_\ell<\lambda_{\ell'}$. We fix positive integers $\overline{w}_\ell,\overline{w}_{\ell'}\in\bbZ_{>0}$, $\overline{w}_\ell<\overline{w}_{\ell'}$, such that 
    \begin{equation}
    \overline{w}_\ell<\lambda_\ell<\overline{w}_\ell+1.
    \end{equation}
Finally, we may assume that $X_\ell\subset X_{\ell'}$ and $V_\ell\cap V_{\ell'}=\emptyset$. In the usual way, we may define a Floer continuation map 
    \begin{equation}
    \frakc_{\ell,\ell'}:HF^*(X;H^\ell,J^\ell;\bbZ)\to HF^*(X;H^{\ell'},J^{\ell'};\bbZ)
    \end{equation}
by looking at the isomorphisms induced on orientations lines from $x\in\chi(X;H^\ell)$ to $y'\in\chi(X;H^{\ell'})$ via Floer continuation cylinders, i.e., maps $u:\Theta\to X$ satisfying
    \begin{equation}
    \begin{cases}
    \partial_su+J_s\big(\partial_tu-X_{H_s}(u)\big)=0 & \\
    \lim_{s\to-\infty}u(s,t)=x(t) & \\
    \lim_{s\to+\infty}u(s,t)=y'(t) &
    \end{cases}
    .
    \end{equation}
Here, $(H_s,J_s)$ is Floer continuation data connecting $(H^\ell,J^\ell)$ to $(H^{\ell'},J^{\ell'})$. We denote by $\overline{\frakc}_{\ell,\ell'}(y',x)$ the moduli space of broken Floer continuation cylinders connecting $x$ to $y'$. 

\begin{rem}
Note, proving the existence of Floer continuation data in this case is slightly more subtle; \emph{a priori}, $\widehat{X}_\ell$ and $\widehat{X}_{\ell'}$ are different, so the continuation data is only monotonic ``up to arbitrarily small error''. Also, proving the Gromov-compactness statement in this case is slightly more subtle; it involves using the standard bubbling analysis in $M$, which is a closed symplectic manifold, to build a Gromov-compactification and then arguing (1) no sphere bubbling can occur and (2) no breaking along divisorial 1-periodic orbits can occur; cf. \cite[Section 2.3]{GP20}.
\end{rem}

Observe, the existence of Floer continuation maps immediately implies $(H^\ell,J^\ell)$-Floer cohomology is independent of $\epsilon_\ell$ once it is sufficiently small. We define 
    \begin{equation}
    SH^*(X;\bbZ)\equiv\varinjlim_\ell HF^*(X;H^\ell,J^\ell;\bbZ);
    \end{equation}
clearly, the Floer continuation maps respect the weight filtration, i.e., 
    \begin{equation}
    F_{\overline{w}}SH^*(X;\bbZ)\equiv\varinjlim_\ell F_{\overline{w}}HF^*(X;H^\ell,J^\ell;\bbZ)
    \end{equation}
gives a filtration of $SH^*(X;\bbZ)$ by weight. Finally, there is a straightforward isomorphism 
    \begin{equation}
    SH^*(X;\bbZ)\cong SH^*(\widehat{X}_\ell;\bbZ),
    \end{equation}
for any $\ell$, which is essentially constructed by considering Floer continuation data.\footnote{I.e., the topological limit model for symplectic cohomology actually computes symplectic cohomology.}

\subsection{Log cohomology}
Let $S_{D,\overline{\epsilon}}M\subset D_{D,\overline{\epsilon}}M$, where $\overline{\epsilon}\in\bbR_{>0}$ is sufficiently small, denote the associated circle bundle. Note, 
    \begin{equation}
    \partial\widehat{X}_{\overline{\epsilon}}=S_{D,\overline{\epsilon}}M;
    \end{equation}
moreover, we have the following chain of homotopy equivalences:
    \begin{equation}
    \widehat{X}_{\overline{\epsilon}}\simeq\widehat{X}^\circ_{\overline{\epsilon}}\simeq X.
    \end{equation}

\begin{rem}
For brevity, we sometimes will abuse notation and write $(X,\partial X)$ for $(\widehat{X}_{\overline{\epsilon}},\partial\widehat{X}_{\overline{\epsilon}})$ and $S_DM$ for $S_{D,\overline{\epsilon}}M$. Also, let $\pi_S:S_DM\to D$ be the projection.
\end{rem}
    
We fix a Morse function $f_X\in C^\infty(\widehat{X}_{\overline{\epsilon}})$ resp. $f_S\in C^\infty(S_{D,\overline{\epsilon}}M)$ and a (sufficiently generic) Riemannian metric $g_X$ on $\widehat{X}_{\overline{\epsilon}}$ resp. $g_S$ on $S_{D,\overline{\epsilon}}M$. We will assume $\grad f_X$ points strictly outwards on $\partial\widehat{X}_{\overline{\epsilon}}$. We will now recall the Morse-theoretic model for the log cohomology of $(M,D)$, cf. \cite[Section 3.2]{GP20}.

Given any critical point $a\in\crit(f_X)$, we denote by $I(a)$ the Morse index. We denote by $W^u(a;f_X)$ resp. $W^s(a;f_X)$ the unstable resp. stable manifold of $a$. Given any two $a,b\in\crit(f_X)$, we consider the moduli space $\widetilde{\scrD\scrM}_X(b,a)$ of Morse trajectories connecting $a$ to $b$, i.e., maps $\gamma:\bbR_s\to\widehat{X}_{\overline{\epsilon}}$ satisfying 
    \begin{equation}
    \begin{cases}
    \partial_s\gamma+\grad f(\gamma)=0 & \\
    \lim_{s\to-\infty}\gamma(s)=a & \\
    \lim_{s\to+\infty}\gamma(s)=b &
    \end{cases}
    .
    \end{equation}
This is, for generic data, a smooth manifold of dimension $I(a)-I(b)$ whose tangent bundle is classified by the index bundle of the family of surjective Fredholm operators 
    \begin{equation}
    D\Xi\equiv\big\{D\Xi_\gamma:W^{1,2}(\bbR;\gamma^*T\widehat{X}_{\overline{\epsilon}})\to L^2(\bbR;T\widehat{X}_{\overline{\epsilon}})\big\}
    \end{equation}
given by linearizing. When $a\neq b$, there is a free proper $\bbR$-action on $\widetilde{\scrD\scrM}_X(b,a)$ given by time-shift; we denote by $\scrD\scrM_X(b,a)$ the $\bbR$-quotient. Let $\bbD\bbM_X(b,a)$ be the Gromov-compactification of $\scrD\scrM_X(b,a)$ given by allowing breakings at critical points. The Morse cohomology $HM^*(X;\bbZ)$ is the homology of the cochain complex 
    \begin{equation}
    CM^j(X;\bbZ)\equiv\bigoplus_{I(a)=j}\abs{\frako_a}
    \end{equation}
with the standard codifferential using the induced isomorphism on orientation lines. Note, we have an identification
    \begin{equation}
    HM^*(X;\bbZ)\cong H^*(X;\bbZ).
    \end{equation}

The \emph{log cohomology} of $(M,D)$, denoted $H^*_\mathrm{log}(M,D)$, is the homology of the cochain complex 
    \begin{equation}
    C^*_\mathrm{log}(M,D)\equiv CM^*(X;\bbZ)\oplus\Bigg(\bigoplus_{k\geq1}CM^*(S_DM;\bbZ)\cdot t^k\Bigg),
    \end{equation}
where the $t^k$'s are formal variables, with the natural codifferential induced by the Morse codifferentials of the summands and grading given by
    \begin{equation}
    \begin{cases}
    \deg(\abs{\frako_a})\equiv I(a), & a\in\crit(f_X) \\
    \deg(\abs{\frako_a}t^k)\equiv I(a)+2k(1-\widetilde{\nu}), & a\in\crit(f_S)
    \end{cases}
    ,
    \end{equation}
where we recall the definition of $\widetilde{\nu}$ in \eqref{eqn:nutilde}. Finally, we observe the log cochain complex is filtered in a rather simple way: 
    \begin{equation}
    F_kC^*_\mathrm{log}(M,D)\equiv CM^*(X;\bbZ)\oplus\Bigg(\bigoplus_{k'\leq k}CM^*(S_DM;\bbZ)\cdot t^{k'}\Bigg).
    \end{equation}
   
\subsection{The spectral sequence}
We have already observed that $SH^*(X;\bbZ)$ has a weight filtration; there is a cochain-level construction underlying this as follows. Consider the cochain complex 
    \begin{equation}
    SC^*(X;\bbZ)\equiv\bigoplus_\ell CF^*(X;H^\ell,J^\ell;\bbZ)[\mathfrak{t}],
    \end{equation}
where $\mathfrak{t}$ is a formal variable of degree $-1$ satisfying $\mathfrak{t}^2=0$, together with the codifferential 
    \begin{equation}
    x_1+\mathfrak{t}x_2\mapsto(-1)^{\deg(x_1)}\delta_\ell x_1+(-1)^{\deg(x_2)}\big(\mathfrak{t}\delta_\ell x_2+\frakc_{\ell,\ell+1}(x_2)-x_2\big)
    \end{equation}
for $x_1+\mathfrak{t}x_2\in CF^*(X;H^\ell,J^\ell;\bbZ)[\mathfrak{t}]$. It is straightforward to see we have an identification  
    \begin{equation}
    H_*(SC^*(X;\bbZ))\cong SH^*(X;\bbZ).
    \end{equation}
In particular, the weight filtration determines a (cohomologically graded\footnote{We use the convention $F^pSC^*(X;\bbZ)\equiv F_{-p}SC^*(X;\bbZ)$.} and multiplicative) spectral sequence $E^{p,q}_1\Rightarrow SH^*(X;\bbZ)$ with $E_1$-page given by the associated graded of the weight filtration:
    \begin{equation}
    \bigoplus_q E^{p,q}_1\equiv\varinjlim_\ell Gr_{-p}HF^*(X;H^\ell,J^\ell;\bbZ).
    \end{equation}
    
\subsection{The low-energy log PSS morphism}
We are finally ready to recall the construction GP's low-energy log PSS morphism, cf. \cite[Section 3.3]{GP20}. We define $\Sigma$ to be the punctured Riemann surface $\bbC P^1-\{0\}$ with a negative cylindrical end $\epsilon^-$ near the puncture. Let $\eta\in\Omega^1(\Sigma)$ be a 1-form which (1) vanishes in a neighborhood of $+\infty$ and (2) restricts to $dt$ at $\epsilon^-$. Let $J^\ell_\Sigma\equiv\{J^\ell_z\}_{z\in\Sigma}$ be a $\Sigma$-dependent $\omega$-tame almost complex structure; we say it is \emph{admissible} for higher-order $H^\ell$-thimbles if: 
\begin{enumerate}
\item $J^\ell_\Sigma$ preserves $TD$,
\item the image of the Nijenhuis tensor, when restricted to $TM\vert_D$, is contained in $TD$,
\item $J^\ell_\Sigma$ is $z$-independent near $+\infty$,
\item and $J^\ell\equiv J^\ell_\Sigma\vert_{\epsilon^-}$ is admissible for $\widehat{X}_\ell$. 
\end{enumerate}
Note, the space of such structures is non-empty and contractible.

For any $x\in\chi(X;H^\ell)$ and $k\in\bbZ_{\geq0}$, we consider the moduli space $\scrR^{H^\ell,J^\ell_\Sigma}_{GP,k}(x)$ of maps $u:\Sigma\to M$ satisfying
    \begin{equation}
    \begin{cases}
    \big(du-X_\ell\otimes\eta\big)^{0,1}=0 & \\
    \lim_{s\to-\infty}u(\epsilon^-(s,t))=x(t) & \\
    u^{-1}(D)=\{+\infty\} & \\
    \textrm{$u$ intersects $D$ with multiplicity $k$ at $+\infty$} &
    \end{cases}
    .
    \end{equation}
This is, for generic data, a smooth manifold. For $k=0$, we have a natural evaluation map   
    \begin{equation}
    \eval^0_{+\infty}:\scrR^{H^\ell,J^\ell_\Sigma}_{GP,0}(x)\to\widehat{X}_{\overline{\epsilon}}.
    \end{equation}
For $k\geq1$, we have an enhanced evaluation map 
    \begin{equation}
    \eneval^k_{+\infty}:\scrR^{H^\ell,J^\ell_\Sigma}_{GP,k}(x)\to S_{D,\overline{\epsilon}}M
    \end{equation}
given by fixing an element in $S_{+\infty}\Sigma\equiv\big(T_{+\infty}\Sigma-\{0\}\big)/\bbR_{>0}$ and looking at the real-oriented projectivization of the $k$-th normal jet of $u$. 

We consider the various hybrid moduli spaces:
    \begin{equation}
    \begin{cases}
    \scrR^{H^\ell,J^\ell_\Sigma}_{GP,0}(a,x)\equiv(\eval^0_{+\infty})^{-1}\big(W^s(a;f_X)\big), & \\
    \scrR^{H^\ell,J^\ell_\Sigma}_{GP,k}(a,x)\equiv(\eneval^k_{+\infty})^{-1}\big(W^s(a;f_S)\big), & k\geq1
    \end{cases}
    .
    \end{equation}
These are, for generic data, smooth manifolds of dimension $\deg(x)-\deg(\abs{\frako_a}t^k)$. We have an energy estimate (which holds when $\epsilon_\ell$ is sufficiently small, cf. \cite[Lemma 3.14]{GP20}):
    \begin{equation}
    E_\mathrm{top}(u)\equiv\int_\Sigma u^*\omega\approx\overline{w}(k)-\overline{w}(x)\Bigg(1-\dfrac{\epsilon_\ell^2}{2}\Bigg),\;\;u\in\scrR^{H^\ell,J^\ell_\Sigma}_{GP,k}(a,x).
    \end{equation}
In particular, we have the following result.

\begin{lem}[Lemma 3.15 in \cite{GP20}]
Suppose $\overline{w}(k)<\lambda_\ell$ and $\epsilon_\ell$ is sufficiently small, then, for any $\abs{\frako_a}t^k\in C^*_\mathrm{log}(M,D)$ and $x\in\chi_{\overline{w}(k)}(X;H^\ell)$, we see: 
\begin{itemize}
\item if $\deg(x)=\deg(\abs{\frako_a}t^k)$, then $\scrR^{H^\ell,J^\ell_\Sigma}_{GP,k}(a,x)$ is compact;
\item and if $\deg(x)=\deg(\abs{\frako_a}t^k)+1$, then $\scrR^{H^\ell,J^\ell_\Sigma}_{GP,k}(a,x)$ admits a Gromov compactification by allowing 
\begin{enumerate}
\item breaking along non-divisorial 1-periodic orbits of weighted winding number equal to $\overline{w}(k)$, 
\item and breaking along critical points (of $f_X$ resp. $f_S$ if $k=0$ resp. $k\geq1$).
\end{enumerate}
\end{itemize}
\end{lem}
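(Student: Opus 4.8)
\emph{Overall plan.} The plan is to compactify by running Gromov compactness for the principal thimble component inside the closed manifold $M$, and then to eliminate every limiting configuration except the two advertised degeneration types, using the energy estimate recalled above together with positivity of intersection with $D$. So, given a sequence $u_i\in\scrR^{H^\ell,J^\ell_\Sigma}_{GP,k}(a,x)$, pass to a Gromov-convergent subsequence. Since $M$ is closed and the almost complex structures in play (the $J^\ell_\Sigma$, the Floer data on the negative cylindrical end $\epsilon^-$, and the continuation data) vary over a compact family, the limit is a stable building: a principal thimble $u_\infty\colon\Sigma\to M$ solving the same inhomogeneous Cauchy--Riemann equation, finitely many nonconstant $J$-holomorphic sphere bubbles in $M$, finitely many $(H^\ell,J^\ell)$-Floer cylinders broken off at $\epsilon^-$, and finitely many broken negative gradient trajectory fragments of $f_X$ (if $k=0$), resp.\ $f_S$ (if $k\geq1$), produced by the defining constraint $W^s(a;f_X)$, resp.\ $W^s(a;f_S)$. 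The $\omega$-energy is conserved; and since $E_\mathrm{top}(u_i)\approx\overline{w}(k)-\overline{w}(x)(1-\epsilon_\ell^2/2)$ while $\overline{w}(x)=\overline{w}(k)$ (the content of $x\in\chi_{\overline{w}(k)}(X;H^\ell)$) and $\overline{w}(k)<\lambda_\ell$ (which keeps us within the validity range of that estimate), the total energy of the limit is $\approx\overline{w}(k)\,\epsilon_\ell^2/2$, hence arbitrarily small once $\epsilon_\ell$ is small.

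\emph{No sphere bubbling.} By Gromov monotonicity there is an $\hbar>0$, uniform over the compact family of admissible almost complex structures on $M$, below which $M$ carries no nonconstant $J$-holomorphic sphere. For $\epsilon_\ell$ small enough that the total energy of the limit is $<\hbar$, there are therefore no sphere bubbles: the limit consists only of the principal thimble, broken Floer cylinders at $\epsilon^-$, and broken gradient fragments.

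\emph{Every remaining component stays in $X$; winding is $\overline{w}(k)$; the tangency along $D$ survives.} A breaking chain emanating from the non-divisorial orbit $x$ cannot reach $D$: the contact-type condition $\theta\circ J^\ell=-dr_{\epsilon_\ell}$ over the shell region gives a maximum principle for $r_{\epsilon_\ell}$ that confines each broken Floer cylinder to $X=M-D$, so inductively every asymptotic orbit of the building other than $x$ itself is again non-divisorial --- no divisorial $1$-periodic orbit appears (cf.\ \cite[Section 2.3]{GP20}). The standard action computation of \cite[Lemmas 2.10 \& 3.14]{GP20} --- applying the energy estimate to $u$, to each broken Floer cylinder, and to each shortened thimble, and using that weighted winding numbers are separated by multiples of $\kappa\geq1$ whereas all the $\epsilon_\ell^2$-errors are far smaller --- then forces every non-divisorial orbit appearing in the building to have weighted winding number exactly $\overline{w}(k)$. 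Finally, since no component other than $u_\infty$ meets $D$ and the total homological intersection with $D$ equals $k$, we get $u_\infty^{-1}(D)=\{+\infty\}$ with multiplicity exactly $k$; as $J^\ell_\Sigma$ preserves $TD$ and $\eta$ vanishes near $+\infty$ (so $u_\infty$ is genuinely $J$-holomorphic there and all local intersections with $D$ are positive), the real-oriented projectivized normal $k$-jet $\eneval^k_{+\infty}$ --- resp.\ $\eval^0_{+\infty}$ when $k=0$ --- behaves continuously along the limit, so the hybrid constraint passes to the limit.

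\emph{Dimension count, conclusion, and the main difficulty.} The moduli space $\scrR^{H^\ell,J^\ell_\Sigma}_{GP,k}(a,x)$ has dimension $\deg(x)-\deg(\abs{\frako_a}t^k)$, and, after the preceding steps, for generic data each surviving degeneration --- breaking a Floer cylinder at a non-divisorial orbit of weighted winding $\overline{w}(k)$, or breaking a gradient trajectory at a critical point of $f_X$, resp.\ $f_S$ --- is of codimension exactly $1$. Hence when $\deg(x)=\deg(\abs{\frako_a}t^k)$ the space is a compact (thus finite) $0$-manifold, and when $\deg(x)=\deg(\abs{\frako_a}t^k)+1$ it is the interior of a compact $1$-manifold with boundary, the boundary consisting precisely of the two listed families of broken configurations --- i.e.\ the asserted Gromov compactification. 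I expect the genuinely delicate point to be the third step: $D$ lies ``at infinity'' of $X$ and carries its own family of arbitrarily high-winding $1$-periodic orbits, so both excluding divisorial breakings and preventing the $k$-fold tangency with $D$ at $+\infty$ from ``leaking away'' in the limit rely on the positivity-of-intersection and contact-type maximum principle analysis of Ganatra--Pomerleano, not on a soft energy bound alone.
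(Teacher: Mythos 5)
Your argument is correct and follows essentially the same route as the source this lemma is quoted from, \cite[Lemma 3.15]{GP20}, and as the paper's own parallel compactness arguments (cf.\ Propositions \ref{prop:lowenergycompactification} and \ref{prop:naivecompactification}): Gromov compactness inside the closed $M$, smallness of the topological energy to exclude sphere bubbling, positivity of intersection together with the contact-type/maximum-principle analysis to exclude divisorial breakings and to pin all intermediate weighted winding numbers at $\overline{w}(k)$, and a dimension count to identify the codimension-one strata. No further comment is needed.
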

Hence, in the usual way, the various moduli spaces $\scrR^{H^\ell,J^\ell_\Sigma}_{GP,k}(a,x)$ assemble into a cochain map 
    \begin{equation}
    \lelogpss^{GP,k,\ell}:Gr_kC^*_\mathrm{log}(M,D)\to Gr_{\overline{w}(k)}CF^*(X;H^\ell,J^\ell;\bbZ)
    \end{equation}
which is compatible with Floer continuation maps, i.e., we have constructed GP's \emph{low-energy log PSS $k$-morphism}:
    \begin{equation}
    \lelogpss^{GP,k}:Gr_kH^*_\mathrm{log}(M,D)\to\bigoplus_qE^{\overline{w}(k),q}_1,
    \end{equation}
cf. \cite[Lemma 3.16]{GP20}. Finally, by taking a direct sum over $k$, we have constructed GP's \emph{low-energy log PSS morphism}:
    \begin{equation}
    \lelogpss^{GP}:H^*_\mathrm{log}(M,D)\to\bigoplus_{k,q}E^{\overline{w}(k),q}_1.
    \end{equation}
\cite[Theorem 1.1]{GP20} says both the low-energy log PSS $k$-morphism and the low-energy log PSS morphism are isomorphisms. 

\begin{rem}
In fact, \emph{loc. cit.} shows the low-energy log PSS morphism is an isomorphism of rings, where $\oplus_{p,q}E^{p,q}_1$ is endowed with the usual pair-of-pants product and $H^*_\mathrm{log}(M,D)$ is endowed with the ``usual'' product counting ``Y-shaped'' gradient flow trees which takes into account the $t^k$ variables, cf. \cite[Section 3.2]{GP20}.
\end{rem}

\section{Rapid review of Floer homotopy on Liouville manifolds}
In order to set notation, we will quickly review the basics of Floer homotopy.
\begin{enumerate}
\item We work in the Abouzaid-Blumberg \cite{AB24} framework for Floer homotopy.
\item By a smooth manifold with corners, we will always mean a ``$\langle k\rangle$-manifold'' (in the sense of \cite{Lau00}).
\item In the remainder of the present article, we work under Assumption \ref{assu:main}.
\end{enumerate}

\subsection{Morse homotopy}
We have that $\bbD\bbM_X(c,a)$ can be given the structure of a compact smooth manifold with corners whose codimension 1 boundary strata are enumerated by gluing maps of the form
    \begin{equation}
    \bbD\bbM_X(c,b)\times\bbD\bbM_X(b,a)\to\bbD\bbM_X(c,a),
    \end{equation}
cf. for instance \cite{Weh12}. Let $\bbD\bbM_X$ be the (unstructured) flow category\footnote{Cf. \cite[Definition 3.4]{AB24} or \cite[Definition 4.5]{Bla24}.} with objects $\crit(f_X)$ and morphism spaces $\bbD\bbM_X(b,a)$. Since we will use similar ideas in the sequel when constructing (twisted) stable framings on other moduli spaces, we will explain how to lift $\bbD\bbM_X$ to a framed flow category\footnote{Cf. \cite[Definition 3.7]{AB24} or \cite[Definition 4.7]{Bla24}.} a bit pedantically and skip the explanations later. There are standard stable framings
    \begin{equation}\label{eq:morseframing}
    T\bbD\bbM_X(b,a)+\underline{\bbR}+\underline{\bbR}^{-I(a)}\cong\underline{\bbR}^{-I(b)}
    \end{equation}
constructed as follows, cf. \cite[Section 4.3]{Bla24}. 

For notational convenience, we define 
    \begin{equation}
    \partial_b\equiv\bbD\bbM_X(c,b)\times\bbD\bbM_X(b,a).
    \end{equation}
We have the following two basic relations. First, a short exact sequence of real vector bundles
    \begin{equation}
    0\to\underline{\bbR}\to T\bbD\bbM_X(c,a)\vert_{\partial_b}\to T\bbD\bbM_X(c,b)\times T\bbD\bbM_X(b,a)\to0
    \end{equation}
given by taking a collar neighborhood of a codimension 1 boundary stratum. Second, a short exact sequence of real vector bundles 
    \begin{equation}
    0\to\underline{\bbR}\to\ind D\Xi^{ca}\to T\bbD\bbM_X(c,a)\vert_{\operatorname{int}\bbD\bbM_X(c,a)}\to0
    \end{equation}
given by the translational direction. There are straightforward relations when passing to higher codimension boundary strata.

\begin{prop}\label{prop:morseindex}
There is an extension of $\ind D\Xi^{ca}$ to $\bbD\bbM_X(c,a)$ whose restriction to the interior of a codimension 1 boundary stratum is of the form 
    \begin{equation}
    \ind D\Xi^{ca}\vert_{\operatorname{int}\partial_b}=\ind D\Xi^{cb}+\ind D\Xi^{ba}.
    \end{equation}
The natural diagrams associated to codimension 1 boundary strata,
    \begin{equation}
    \begin{tikzcd}[column sep=tiny]
    & & \underline{\bbR}\arrow[d] \\
    \underline{\bbR}\arrow[d,"\Delta"]\arrow[r] & \ind D\Xi^{ca}\vert_{\operatorname{int}\partial_b}\arrow[r]\arrow[d,equals] & T\bbD\bbM_X(c,a)\vert_{\operatorname{int}\partial_b}\arrow[d] \\
    \underline{\bbR}^2\arrow[r] & \ind D\Xi^{cb}\vert_{\operatorname{int}\partial_b}+\ind D\Xi^{ba}\vert_{\operatorname{int}\partial_b}\arrow[r] & T\bbD\bbM_X(c,b)\vert_{\operatorname{int}\partial_b}\times T\bbD\bbM_X(b,a)\vert_{\operatorname{int}\partial_b},
    \end{tikzcd}
    \end{equation}
commute. Moreover, the natural diagrams associated to higher codimension boundary strata commute.
\end{prop}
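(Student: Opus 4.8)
The plan is to construct the extension by \emph{linear gluing}, following the standard approach to coherently orienting Morse and Floer moduli spaces, cf.~\cite{Weh12} and \cite[Section 4.3]{Bla24}. Two remarks cut down the bookkeeping. First, for generic $(f_X,g_X)$ every $D\Xi_\gamma$ is surjective, and this persists in a neighborhood of the boundary (it is implicit in the gluing construction below); hence $\ind D\Xi^{ca}$ is the honest kernel bundle $\ker D\Xi$ wherever it is a priori defined, and there are no cokernels to track at any stratum. Second, over $\operatorname{int}\bbD\bbM_X(c,a)$ the translational short exact sequence identifies it with $T\bbD\bbM_X(c,a)+\underline{\bbR}$. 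So the task is to extend the bundle $\ker D\Xi$ across the boundary of $\bbD\bbM_X(c,a)$.

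First I would invoke the linear gluing theorem for the linearized gradient-flow operators: for each $(\gamma_1,\gamma_2)\in\partial_b=\bbD\bbM_X(c,b)\times\bbD\bbM_X(b,a)$ and each sufficiently large gluing length it produces a pre-glued trajectory $\gamma_1\#\gamma_2$ lying in a collar neighborhood of $\partial_b$ inside $\operatorname{int}\bbD\bbM_X(c,a)$, together with a canonical isomorphism $\ker D\Xi_{\gamma_1\#\gamma_2}\cong\ker D\Xi_{\gamma_1}\oplus\ker D\Xi_{\gamma_2}$ which is smooth in $(\gamma_1,\gamma_2)$, compatible with the collar coordinate, and independent of the gluing length up to canonical homotopy. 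Using the $\langle k\rangle$-manifold structure of $\bbD\bbM_X(c,a)$ to fix a collar of $\partial_b$, I would assemble these fibrewise isomorphisms into a bundle isomorphism over that collar and use it to glue $\ind D\Xi^{ca}|_{\operatorname{int}}$ onto the pullback, along the two projections of $\partial_b$, of $\ind D\Xi^{cb}\oplus\ind D\Xi^{ba}$. This defines the extension over the codimension $1$ stratum, and the decomposition $\ind D\Xi^{ca}|_{\operatorname{int}\partial_b}=\ind D\Xi^{cb}+\ind D\Xi^{ba}$ then holds tautologically.

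It remains to check the commuting squares, which come down to the usual bookkeeping of the two trivial $\underline{\bbR}$-summands. Under the gluing identification the ambient $\bbR$-action (time shift) on $\widetilde{\scrD\scrM}_X(c,a)$ corresponds to translating \emph{both} broken pieces simultaneously, so the translational direction of $(c,a)$ goes to the diagonal of the two translational directions of $(c,b)$ and $(b,a)$ --- this is the left-hand square, with the map $\Delta$. Dually, the inward collar coordinate near $\partial_b$ corresponds to the gluing length, i.e.\ to the \emph{relative} translation of the two pieces, hence goes to the complementary line, so that $\underline{\bbR}_{\mathrm{collar}}\oplus\underline{\bbR}_{\mathrm{transl}}\cong\underline{\bbR}_{\mathrm{transl},\,cb}\oplus\underline{\bbR}_{\mathrm{transl},\,ba}$. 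The right-hand square is then the assertion that the gluing isomorphism descends compatibly to the quotients by these $\bbR$-actions, i.e.\ to the tangent bundles: quotienting $\ker D\Xi_{\gamma_1\#\gamma_2}$ by the translation and then by the collar agrees with quotienting each $\ker D\Xi_{\gamma_i}$ by its own translation.

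Finally, the coherence over higher codimension strata reduces to the associativity of linear gluing: gluing $\gamma_1\#\cdots\#\gamma_r$ in any order yields the same isomorphism of kernel bundles, so the comparison diagrams attached to nested strata commute provided the collar structures of the various $\bbD\bbM_X(\cdot,\cdot)$ are chosen compatibly, which is part of fixing the $\langle k\rangle$-structure coherently, cf.~\cite{Lau00,Weh12}. I expect the only genuinely fiddly part to be exactly this coherent choice of collars together with keeping the two trivial $\underline{\bbR}$'s straight across all the diagrams; everything else is formal. In fact this whole argument is the template the author then reuses on the Floer-trajectory moduli spaces in the sequel, which is why the analogous verifications there can be abbreviated.
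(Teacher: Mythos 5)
Your proposal is correct and is essentially the argument the paper invokes: the paper's proof is a one-line citation to \cite[Section 7]{Lar21} (and the analogous sections of \cite{PS24b,PS25c}), which carry out exactly this linear-gluing construction of the extended index bundle, the identification of the collar and translational $\underline{\bbR}$-directions with the relative and simultaneous translations of the broken pieces, and the associativity of gluing over higher codimension strata. Your sketch fills in the details that the citation delegates, with no substantive deviation.
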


\begin{proof}
This follows from \cite[Section 7]{Lar21} (also, cf. \cite[Section 8]{PS24b} and \cite[Section 6]{PS25c}) which handles the more complicated Floer-theoretic case.
\end{proof}

Let $\gamma\in\bbD\bbM_X(b,a)$ and consider $D\Xi_\gamma$ together with its asymptotic operator $D\Xi_a$ resp. $D\Xi_b$ at $-\infty$ resp. $+\infty$. Consider a Fredholm operator 
    \begin{equation}
    T_{\frakM,a}:W^{1,2}(\bbR;a^*TX)\to L^2(\bbR;a^*TX)
    \end{equation}
with asymptotic operator $D\Xi_a$ resp. $\tau\cdot\identity$ at $-\infty$ resp. $+\infty$, where $\tau\in\bbR_{>0}$ is sufficiently small. Clearly, these define a constant family $T_{\frakM,a}$ of Fredholm operators on $\bbD\bbM_X(b,a)$. We have a canonical\footnote{Here, and always, canonical will mean up to a contractible space of choices.} isomorphism of real virtual bundles 
    \begin{equation}
    \ind T_{\frakM,b}\cong\underline{\bbR}^{I(a)}.
    \end{equation}
Consider the glued together operator 
    \begin{equation}
    T_{\frakM,a}^\vee\#D\Xi_\gamma\#T_{\frakM,b},
    \end{equation}
where the $\vee$ indicates the ``dual'' operator:
    \begin{equation}
    T_{\frakM,a}^\vee:W^{1,2}(\bbR;a^*TX)\to L^2(\bbR;a^*TX)
    \end{equation}
with asymptotic operator $\tau\cdot\identity$ resp. $D\Xi_a$ at $-\infty$ resp. $+\infty$. The glued together operator is canonically homotopic, in the space of index 0 Fredholm operators with fixed asymptotics, to the standard invertible operator $\partial_t+\tau\cdot\identity$. I.e., we have a canonical isomorphism of real virtual vector spaces
    \begin{equation}
    \ind D\Xi_\gamma+\bbR^{-I(a)}\cong\bbR^{-I(b)}\implies T_\gamma\bbD\bbM_X(b,a)+\bbR+\bbR^{-I(a)}\cong\bbR^{-I(b)}.
    \end{equation}
Since this isomorphism is canonical, we may upgrade it to families, i.e., we have a canonical isomorphism of real virtual bundles
    \begin{equation}
    T\bbD\bbM_X(b,a)+\underline{\bbR}+\underline{\bbR}^{-I(a)}\cong\underline{\bbR}^{-I(b)}.
    \end{equation}
Finally, this stable framing agrees with the already constructed stable framing on a codimension 1 boundary stratum when restricted to that boundary stratum by Proposition \ref{prop:morseindex} and canonical isomorphisms of the form 
    \begin{equation}
    \ind T_{\frakM,a}^\vee+\ind T_{\frakM,a}\cong0.
    \end{equation}

By \cite[Proposition 1.10]{AB24}, there is an equivalence of stable $\infty$-categories between the stable $\infty$-category $\spectra$ of spectra and the stable $\infty$-category $\flow^\fr$ of framed flow categories. We have that $\bbD\bbM_X$ corresponds to $\frakD X_+$, where we recall $\frakD$ represents taking the Spanier-Whitehead dual. Finally, given any real virtual bundle $E:X\to BO$, we may twist the standard framing on $\bbD\bbM_X$ to get a new framed flow category $\bbD\bbM_{X,E}$ by adding $E\vert_a$ resp. $E\vert_b$ to the left resp. right side of \eqref{eq:morseframing}; we have that $\bbD\bbM_{X,E}$ corresponds to $\frakD X^{-E}$.

\begin{rem}
The analogous statements in this subsection hold for $\bbD\bbM_{S_DM}$.
\end{rem}

\subsection{Spectral symplectic cohomology}\label{subsec:spectralSH}
We begin with the following lemma.

\begin{lem}\label{lem:index-theoretic}
Let $S_k$, $k\in\bbZ_{>0}$, be a genus 0 Riemann surface with $k$ (unordered) boundary components $\{\partial_1S_k,\ldots,\partial_kS_k\}$. Suppose $E\equiv F\otimes_\bbR\underline{\bbC}\to S_k$ is a complex vector bundle, of complex rank at least 3, such that $F$ is oriented and spin, then, for any Cauchy Riemann operator (with totally real boundary conditions)
    \begin{equation}
    D:W^{m,p}(S_k;E,F)\to W^{m-1,p}(S_k;E),
    \end{equation}
we have a canonical isomorphism of real virtual vector spaces
    \begin{equation}
    \ind D\cong F\vert_{z_0}^{\oplus 1-k},\;\;z_0\in S_k.
    \end{equation}
\end{lem}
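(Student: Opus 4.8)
The plan is to compute the index via a deformation/additivity argument reducing to the disk case. First I would observe that a genus $0$ surface $S_k$ with $k$ boundary components is diffeomorphic to a disk with $k-1$ open subdisks removed; equivalently, one can degenerate $S_k$ (by pinching $k-1$ interior circles) to a nodal configuration consisting of one disk with a single boundary component together with $k-1$ copies of $\mathbb{C}P^1$ glued on at interior nodes. By the standard gluing/additivity formula for indices of Cauchy–Riemann operators under such degenerations (real linear gluing at interior nodes contributes the index of a $\mathbb{C}$-linear CR operator on $\mathbb{C}P^1$, which for a complex bundle of rank $r$ and degree $d$ is $2(d+r)$), the index of $D$ on $S_k$ equals the index on the disk piece plus a sum of even-dimensional contributions from the sphere bubbles. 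Crucially, since we only want the answer as a \emph{real virtual vector space} and the contributions from the $\mathbb{C}P^1$ pieces are honestly complex vector spaces, after taking virtual differences they can be absorbed: the degree of $E$ over $S_k$ only affects the complex summand, and since $F$ has rank $\geq 3$ one has enough stable room to trivialize. Concretely, $F$ being oriented and spin with $\operatorname{rank}_\mathbb{C}E\geq 3$ means any two totally real subbundles of $E$ restricting to the boundary are homotopic (the relevant classifying space $BO \to BU$ argument: real bundle data on a surface with boundary of rank $\geq 3$ is classified by $w_1$ and $w_2$, both of which vanish by hypothesis), so the boundary condition $F$ may be replaced by the constant totally real subbundle $\underline{\mathbb{R}}^{\operatorname{rank}F}\subset\underline{\mathbb{C}}^{\operatorname{rank}F}$ after stabilization.

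Having reduced to a trivial complex bundle with constant real boundary condition, the computation is the model case: for the standard CR operator on $\underline{\mathbb{C}}\to\mathbb{D}$ with boundary condition $\underline{\mathbb{R}}\subset\underline{\mathbb{C}}$ the index is the real $1$-dimensional space of constant real functions, i.e. $\mathbb{R}$; tensoring with $F|_{z_0}$ gives $F|_{z_0}$ for the disk ($k=1$), matching $F|_{z_0}^{\oplus 1-k}$ at $k=1$. Each additional boundary component, obtained by removing an open disk (equivalently, inserting a boundary puncture with the small-slope convention as in the Morse discussion above, or gluing in a sphere bubble and then noting the sphere contribution is complex hence cancels virtually against a dual stabilization), changes the index by $-F|_{z_0}$: this is the standard fact that capping off a boundary component of a Riemann surface with a disk increases the Euler characteristic by $1$ and correspondingly shifts the real index of a CR operator with constant totally real boundary by $+\operatorname{rank}_\mathbb{R}F$. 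Iterating $k-1$ times yields $\ind D\cong F|_{z_0}^{\oplus 1-k}$.

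Finally I would upgrade this to a \emph{canonical} isomorphism, i.e. one well-defined up to a contractible space of choices. The point is that all the moves above — the deformation to the nodal configuration, the homotopy of totally real boundary conditions permitted by the spin/orientation hypotheses and the rank bound, the linear gluing isomorphisms, and the identification of the model disk index — are each canonical in the sense of the footnote (parametrized by contractible spaces: spaces of homotopies of totally real subbundles in rank $\geq 3$ are contractible, the space of gluing parameters is contractible, the space of admissible CR operators with fixed asymptotics/boundary data is contractible). Composing canonical isomorphisms is canonical, so the resulting identification $\ind D\cong F|_{z_0}^{\oplus 1-k}$ is canonical, and in particular independent (up to contractible choice) of the basepoint $z_0\in S_k$ since $S_k$ is connected.

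The main obstacle I expect is bookkeeping the sphere-bubble (equivalently, degree) contributions so that they genuinely cancel at the level of \emph{virtual} vector spaces while keeping everything canonical: one must be careful that the complex-linear contributions from the $\mathbb{C}P^1$ components, which a priori depend on the degree of $E$ over $S_k$, are killed by a canonical stabilization rather than merely by a dimension count. The rank $\geq 3$ hypothesis is exactly what is needed here (it is what makes the relevant stable homotopy of totally real boundary conditions contractible and lets one absorb the complex summands), and I would emphasize that the whole argument is the surface-with-boundary analogue of the closed-surface index computations already invoked for Proposition \ref{prop:morseindex}, citing \cite[Section 7]{Lar21}, \cite[Section 8]{PS24b}, and \cite[Section 6]{PS25c} for the technical gluing and orientation details.
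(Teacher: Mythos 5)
Your overall strategy -- degenerate $S_k$ along interior push-offs of the boundary circles, use additivity of the index under gluing at nodes, reduce to disk pieces, and invoke the orientation/spin/rank~$\geq 3$ hypotheses to make the resulting identifications canonical -- is the same as the paper's. But the specific degeneration you describe is wrong, and the error propagates. Pinching $k-1$ interior circles, each parallel to one of $\partial_2 S_k,\ldots,\partial_k S_k$, does \emph{not} produce ``one disk together with $k-1$ copies of $\bbC P^1$ glued at interior nodes'': that configuration has only one boundary component, whereas pinching interior circles preserves all $k$ boundary circles. The correct degeneration yields $k$ \emph{disks} joined at interior nodes, each disk carrying one of the original boundary circles with its totally real boundary condition $F$. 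Consequently there are no closed sphere components, no ``honestly complex contributions that cancel virtually,'' and no degree bookkeeping to worry about (indeed $H^2(S_k;\bbZ)=0$, so $E$ has no degree to begin with). What actually produces the answer is: (i) each disk piece has index canonically isomorphic to $F\vert_{z_j}$, $z_j\in\partial\bbD_j$, because the totally real boundary condition extends over the disk (surjective operator, kernel the covariant constant sections); (ii) each interior node imposes a matching condition that subtracts $E\vert_{0}\cong F\vert_{z_0}^{\oplus 2}$, using that $E$ is the complexification of $F$; (iii) a flat connection on the oriented spin bundle $F$ (rank $\geq 3$) canonically identifies the various fibers. Your second-paragraph fallback -- ``capping off a boundary component shifts the index by $\pm\operatorname{rank}_\bbR F$'' -- is only a numerical Riemann--Roch statement and does not by itself supply the canonical isomorphism of virtual vector spaces that the lemma (and all of its later uses in twisted stable framings) requires; the node-correction mechanism in (ii) is the missing ingredient.

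Separately, your own arithmetic does not close: you correctly compute the disk index to be $F\vert_{z_0}$, but then assert this ``matches $F\vert_{z_0}^{\oplus 1-k}$ at $k=1$,'' which is false since $F^{\oplus 0}=0$. Iterating ``$-F$ per additional boundary component'' from the disk value $F$ produces $F\vert_{z_0}^{\oplus 2-k}$, not $F\vert_{z_0}^{\oplus 1-k}$. (For what it is worth, the paper's own $k=2$ computation likewise lands on $F\vert_{z_1}+F\vert_{z_2}-F\vert_{z_0}^{\oplus2}\cong 0=F^{\oplus 2-k}$, and the downstream applications -- e.g.\ $\ind T_{\frakF,x}+\ind T_{\frakF,x}^\vee\cong 0$ for the glued annulus -- are consistent with the exponent $2-k$; the exponent $1-k$ in the statement appears to be an off-by-one in the statement itself. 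But you should not paper over the mismatch by declaring that $F$ equals $F^{\oplus 0}$.) Please redo the argument with the correct disk-plus-nodes degeneration and track the exponent honestly.
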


\begin{proof}
Since we will use similar ideas in the sequel when gluing operators at interior points, we will explain the following a bit pedantically and skip the explanations later. First, consider the case $k=2$, i.e., $S_2\cong[-1,1]\times S^1$. Let $\Gamma_1:S^1\to S_2$ be a closed loop which is a transverse push-off of $\partial_1S_2$. Since $F\vert_{\Gamma_1}\to\Gamma_1$ is an oriented real vector bundle over a circle, we may choose a trivialization of $F\vert_{\Gamma_1}$ (observe, this is actually a choice; however, our proof will be independent of this choice). Since $F\vert_{\Gamma_1}$ is trivialized, we may collapse $\Gamma_1$ to a point in order to identify $D$ with the gluing of two operators 
    \begin{equation}
    D_j:W^{m,p}(\bbD_j;E\vert_{\bbD_j},F\vert_{\bbD_j})\to W^{m-1,p}(\bbD_j;E\vert_{\bbD_j}),\;\;j=1,2,
    \end{equation}
where (1) $\bbD_j$ is biholomorphic to the unit disk $\bbD\subset\bbC$ and (2) $\bbD_1$ and $\bbD_2$ glue together at the origin 0 to recover $S_2$. When gluing $D_1$ to $D_2$ at 0, we can only glue sections which agree at 0. In particular, the glued together operator 
    \begin{equation}
    D_1\#D_2:W^{m,p}(S_2;E,F)\to W^{m-1,p}(S_2;E),
    \end{equation}
which we may canonically deform to obtain $D$, has index bundle canonically isomorphic to 
    \begin{equation}
    \ind D\cong\ind D_1+\ind D_2-\Delta^\perp\cong F\vert_{z_1}+F\vert_{z_2}-E\vert_0,\;\;z_j\in\partial\bbD_j,
    \end{equation}
where the first isomorphism uses the canonical deformation and the second isomorphism uses the facts that (1) the totally real boundary conditions of $D_j$ extends over $\bbD_j$ and (2) the short exact sequence 
    \begin{equation}
    0\to E\vert_0\xrightarrow{\Delta} E\vert_0^{\oplus2}\to\Delta^\perp\to0.
    \end{equation}
When performing the connect sum $\bbD_1\#\bbD_2\cong S_2$, we have an identification 
    \begin{equation}
    E\vert_0\cong E\vert_{z_0}\cong F\vert_{z_0}^{\oplus2},\;\;z_0\in\Gamma_1,
    \end{equation}
where the second isomorphism uses the fact that $E$ is the complexification of $F$. Finally, since $F$ is a real spin vector bundle over a surface of real rank at least 3, we may choose a flat connection $\nabla$ on $F$ in order to parallel transport: 
    \begin{equation}
    F\vert_{z_1}+F\vert_{z_2}-F\vert_{z_0}^{\oplus2}\cong0.
    \end{equation}
We would like to remark that, as promised, the end result is independent of the choice of trivialization of $F\vert_{\Gamma_1}$ since each isomorphism of index bundles along the way was canonical. The lemma now follows in the case $k=2$; the general case follows by induction using analogous considerations.
\end{proof}

\begin{rem}
In the present article, we will make much use of Lemma \ref{lem:index-theoretic}. In particular, by stabilizing, we may assume $\Lambda$ has real rank at least 3.
\end{rem}

We have that $\bbF^{H^\ell,J^\ell}(z,x)$ can be given the structure of a compact smooth manifold with corners whose codimension 1 boundary strata are enumerated by gluing maps of the form
    \begin{equation}
    \bbF^{H^\ell,J^\ell}(z,y)\times\bbF^{H^\ell,J^\ell}(y,x)\to\bbF^{H^\ell,J^\ell}(z,x),
    \end{equation}
cf. for instance \cite[Section 6]{Lar21} (also, cf. \cite[Section 8]{PS24b} and \cite[Section 6]{PS25c}). Let $\bbF^{H^\ell,J^\ell}$ be the (unstructured) flow category with objects $\chi(X;H^\ell)$ and morphism spaces $\bbF^{H^\ell,J^\ell}(y,x)$. There are standard ($\Lambda$-dependent) stable framings
    \begin{equation}\label{eqn:floerframingstandard}
    T\bbF^{H^\ell,J^\ell}(y,x)+\underline{\bbR}+\ind T_{\frakF,x}\cong\ind T_{\frakF,y},
    \end{equation}
where $T_{\frakF,x}$ resp. $T_{\frakF,y}$ is to be defined, constructed as follows, cf. \cite[Section 5.4]{Bla24} for the open-string construction. 

For notational convenience, we define 
    \begin{equation}
    \partial_y\equiv\bbF^{H^\ell,J^\ell}(z,y)\times\bbF^{H^\ell,J^\ell}(y,x).
    \end{equation}
We have the following two basic relations. First, a short exact sequence of real vector bundles
    \begin{equation}
    0\to\underline{\bbR}\to T\bbF^{H^\ell,J^\ell}(z,x)\vert_{\partial_y}\to T\bbF^{H^\ell,J^\ell}(z,y)\times T\bbF^{H^\ell,J^\ell}(y,x)\to0
    \end{equation}
given by taking a collar neighborhood of a codimension 1 boundary stratum. Second, a short exact sequence of real vector bundles 
    \begin{equation}
    0\to\underline{\bbR}\to\ind D\overline{\partial}^{zx}_{H^\ell,J^\ell}\to T\bbF^{H^\ell,J^\ell}(z,x)\vert_{\operatorname{int}\bbF^{H^\ell,J^\ell}(z,x)}\to0
    \end{equation}
given by the translational direction. There are straightforward relations when passing to higher codimension boundary strata. 

\begin{prop}\label{prop:floerindex}
There is an extension of $\ind D\overline{\partial}^{zx}_{H^\ell,J^\ell}$ to $\bbF^{H^\ell,J^\ell}(z,x)$ whose restriction to the interior of a codimension 1 boundary stratum is of the form 
    \begin{equation}
    \ind D\overline{\partial}^{zx}_{H^\ell,J^\ell}\vert_{\operatorname{int}\partial_y}=\ind D\overline{\partial}^{zy}_{H^\ell,J^\ell}+\ind D\overline{\partial}^{yx}_{H^\ell,J^\ell}.
    \end{equation}
The natural diagrams associated to codimension 1 boundary strata,
    \begin{equation}
    \begin{tikzcd}[column sep=small, center picture]
    & & \underline{\bbR}\arrow[d] \\
    \underline{\bbR}\arrow[d,"\Delta"]\arrow[r] & \ind D\overline{\partial}^{zx}_{H^\ell,J^\ell}\vert_{\operatorname{int}\partial_y}\arrow[r]\arrow[d,equals] & T\bbF^{H^\ell,J^\ell}(z,x)\vert_{\operatorname{int}\partial_y}\arrow[d] \\
    \underline{\bbR}^2\arrow[r] & \ind D\overline{\partial}^{zy}_{H^\ell,J^\ell}\vert_{\operatorname{int}\partial_y}+\ind D\overline{\partial}^{yx}_{H^\ell,J^\ell}\vert_{\operatorname{int}\partial_y}\arrow[r] & T\bbF^{H^\ell,J^\ell}(z,y)\vert_{\operatorname{int}\partial_y}\times T\bbF^{H^\ell,J^\ell}(y,x)\vert_{\operatorname{int}\partial_y}
    ,
    \end{tikzcd}
    \end{equation}
commute. Moreover, the natural diagrams associated to higher codimension boundary strata commute.
\end{prop}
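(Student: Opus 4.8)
The plan is to repeat, for the linearized Floer operators, the linear-gluing argument that establishes Proposition~\ref{prop:morseindex}; the analytic content is exactly what is carried out in \cite[Section 7]{Lar21} (cf.\ also \cite[Section 8]{PS24b} and \cite[Section 6]{PS25c}), and the only point that needs checking is that the divisorially-adapted data $(H^\ell,J^\ell)$ introduces no new behavior. As recalled above, the Gromov compactification $\bbF^{H^\ell,J^\ell}(z,x)$ is built inside the closed manifold $M$ but, by the compactness discussion for the topological limit model, no sphere bubbling occurs and no breaking along divisorial $1$-periodic orbits occurs, so $\bbF^{H^\ell,J^\ell}(z,x)$ is a compact $\langle k\rangle$-manifold whose codimension-$1$ boundary is the disjoint union of the products $\partial_y=\bbF^{H^\ell,J^\ell}(z,y)\times\bbF^{H^\ell,J^\ell}(y,x)$. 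Thus the compactification is structurally identical to the one in the Morse (or exact Floer) setting, and the Morse argument applies essentially verbatim.

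First I would define $\ind D\overline{\partial}^{zx}_{H^\ell,J^\ell}$ over the open moduli space as the index bundle of the family of (surjective, after genericity) linearized operators $D(\overline{\partial}_{H^\ell,J^\ell})_u$, and extend it across a collar of $\partial_y$ via the linear gluing theorem. Concretely, the pre-gluing and Newton-iteration construction identifies a collar of $\partial_y$ with $(R_0,\infty)\times\partial_y$ (the first coordinate being the neck length) and, on that collar, canonically deforms $D\overline{\partial}^{zx}_{H^\ell,J^\ell}$ to the glued operator $D\overline{\partial}^{zy}_{H^\ell,J^\ell}\#D\overline{\partial}^{yx}_{H^\ell,J^\ell}$. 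Since the $1$-periodic orbit $y$ is non-degenerate, its asymptotic operator is invertible, so linear gluing is index-preserving and yields the canonical isomorphism $\ind D\overline{\partial}^{zx}_{H^\ell,J^\ell}\vert_{\operatorname{int}\partial_y}\cong\ind D\overline{\partial}^{zy}_{H^\ell,J^\ell}+\ind D\overline{\partial}^{yx}_{H^\ell,J^\ell}$. Iterating the same identification over deeper strata produces the claimed relations in higher codimension.

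Next I would verify the commuting squares. The ingredients are the collar short exact sequence $0\to\underline{\bbR}\to T\bbF^{H^\ell,J^\ell}(z,x)\vert_{\partial_y}\to T\bbF^{H^\ell,J^\ell}(z,y)\times T\bbF^{H^\ell,J^\ell}(y,x)\to0$, the translational short exact sequences $0\to\underline{\bbR}\to\ind D\overline{\partial}\to T\bbF\to0$ on the total space and on each factor, and the diagonal $\Delta\colon\underline{\bbR}\to\underline{\bbR}^2$. Commutativity is the statement that, in the gluing picture, as the neck length tends to $\infty$ the single $\bbR$-translation on the glued cylinder degenerates to the pair of $\bbR$-translations on the two broken pieces modulo the diagonal, compatibly with the collar direction coming from the gluing parameter. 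This is precisely the diagram chase of \cite[Section 7]{Lar21}, which I would transcribe; the higher-codimension squares then follow from the associativity of iterated gluing.

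The main obstacle is coherence rather than any single computation: the extension of $\ind D\overline{\partial}^{zx}_{H^\ell,J^\ell}$ together with all the identifications above must be made simultaneously and compatibly across \emph{all} boundary strata, so that every diagram — including the higher-codimension ones — commutes up to a contractible space of choices. This ``structured'' part of linear gluing is exactly what the packages of \cite{Lar21,PS24b,PS25c} provide, which is why I would defer the hard analysis to those references; and nothing in the divisorially-adapted setup interferes, since every operator in sight acts on $W^{1,2}$- and $L^2$-sections of $u^*TM$ over a cylinder and the relevant compactness (no bubbling, no divisorial breaking) has already been established.
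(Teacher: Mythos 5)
Your proposal is correct and takes essentially the same route as the paper, which simply defers the entire statement to \cite[Section 7]{Lar21} (cf. also \cite[Section 8]{PS24b} and \cite[Section 6]{PS25c}); your additional remarks on why the divisorially-adapted data causes no new behavior are consistent with the compactness discussion the paper establishes beforehand.
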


\begin{proof}
\cite[Section 7]{Lar21} (also, cf. \cite[Section 8]{PS24b} and \cite[Section 6]{PS25c}).
\end{proof}

Let $u\in\bbF^{H^\ell,J^\ell}(y,x)$ and consider $D(\overline{\partial}_{H^\ell,J^\ell})_u$ together with its asymptotic operator $D(\overline{\partial}_{H^\ell,J^\ell})_x$ resp. $D(\overline{\partial}_{H^\ell,J^\ell})_y$ at $-\infty$ resp. $+\infty$. First, consider a Cauchy Riemann operator 
    \begin{equation}
    \Phi^{\overline{\partial}}:W^{1,2}(\Theta;\underline{\bbC}^{d+1})\to L^2(\Theta;\underline{\bbC}^{d+1})
    \end{equation}
given by $\partial_s+i\partial_t+\tau\cdot\identity$. Clearly, these define a constant family $\Phi^{\overline{\partial}}$ of Fredholm operators on $\bbF^{H^\ell,J^\ell}(y,x)$. We have a canonical isomorphism 
    \begin{equation}
    \ind\Phi^{\overline{\partial}}\cong0.
    \end{equation}
Let $\Theta_+\equiv[0,+\infty)\times S^1$. Second, consider a Cauchy Riemann operator
    \begin{equation}
    T_{\frakF,x}:W^{1,2}\big(\Theta_+;x^*(TX\oplus\underline{\bbC}^{d+1}),x^*\Lambda\big)\to L^2\big(\Theta_+;x^*(TX\oplus\underline{\bbC}^{d+1}),x^*\Lambda\big)
    \end{equation}
with asymptotic operator $D(\overline{\partial}_{H^\ell,J^\ell})_x\oplus\Phi^{\overline{\partial}}$ at $+\infty$ and totally real boundary condition $x^*\Lambda$, where we abuse notation and also denote by $\Phi^{\overline{\partial}}$ its asymptotic operators. Clearly, these define a constant family $T_{\frakF,x}$ of Fredholm operators on $\bbF^{H^\ell,J^\ell}(y,x)$. Let $\Theta_-\equiv(-\infty,0]\times S^1$. We may also consider the ``dual'' Cauchy Riemann operator:
    \begin{equation}
    T_{\frakF,x}^\vee:W^{1,2}\big(\Theta_-;x^*(TX\oplus\underline{\bbC}^{d+1}),x^*\Lambda\big)\to L^2\big(\Theta_-;x^*(TX\oplus\underline{\bbC}^{d+1}),x^*\Lambda\big)
    \end{equation}
with asymptotic operator $D(\overline{\partial}_{H^\ell,J^\ell})_x\oplus\Phi^{\overline{\partial}}$ at $-\infty$ and totally real boundary condition $x^*\Lambda$. Again, these clearly define a constant family $T_{\frakF,x}^\vee$ of Fredholm operators on $\bbF^{H^\ell,J^\ell}(y,x)$. Lemma \ref{lem:index-theoretic} shows we have a canonical isomorphism 
    \begin{equation}\label{eqn:floerframingaux}
    \ind T_{\frakF,x}+\ind T_{\frakF,x}^\vee\cong0.
    \end{equation}
Consider the glued together operator 
    \begin{equation}
    T_{\frakF,x}\#D(\overline{\partial}_{H^\ell,J^\ell})\#T_{\frakF,y}^\vee,
    \end{equation}
then Lemma \ref{lem:index-theoretic} shows we have a canonical isomorphism of real virtual vector spaces 
    \begin{equation}
    \ind D(\overline{\partial}_{H^\ell,J^\ell})_u+\ind T_{\frakF,y}\cong\ind T_{\frakF,x}\implies T_u\bbF^{H^\ell,J^\ell}(y,x)+\bbR+\ind T_{\frakF,y}\cong\ind T_{\frakF,x}.
    \end{equation}
Since this isomorphism is canonical, we may upgrade it to families: 
    \begin{equation}\label{eqn:floerframingaux2}
    T\bbF^{H^\ell,J^\ell}(y,x)+\underline{\bbR}+\ind T_{\frakF,y}\cong\ind T_{\frakF,x}.
    \end{equation}
Finally, this stable framing agrees with the already constructed stable framing on a codimension 1 boundary stratum when restricted to that boundary stratum by Proposition \ref{prop:floerindex} and the canonical isomorphisms of the form \eqref{eqn:floerframingaux}.

We denote by $\bbF^{H^\ell,J^\ell,\Lambda}$ the framed flow category just described. Under the equivalence between spectra and framed flow categories, $\bbF^{H^\ell,J^\ell,\Lambda}$ determines a stable homotopy type $\frakF^{H^\ell,J^\ell,\Lambda}$ whose integral homology is the integral $(H^\ell,J^\ell)$-Floer cohomology with \emph{grading reversed and shifted up by $n$}: 
    \begin{equation}
    H_*(\frakF^{H^\ell,J^\ell,\Lambda};\bbZ)\cong HF^{-*+n}(X;H^\ell,J^\ell;\bbZ).
    \end{equation}

\begin{rem}\label{rem:shiftconvention}
Of course, we could remove the shifting up by $n$ via adding $-\underline{\bbR}^n$ to both sides of \eqref{eqn:floerframingstandard}; but this would just add annoyances for later since we would have to stabilize many future twisted stable framings by adding $-\underline{\bbR}^n$ to both sides.
\end{rem}

\begin{rem}
In the case that $\Lambda$ is not oriented/spin, our Floer homotopy type would recover $(H^\ell,J^\ell)$-Floer cohomology, with grading reversed and shifted up by $n$, with coefficients twisted by a local system, cf. \cite[Proposition 4.3]{Bla1}.
\end{rem}

The Floer continuation maps also admit framed flow bimodule\footnote{Cf. \cite[Definition 4.19]{AB24}.} lifts
    \begin{equation}
    \overline{\frakc}_{\ell,\ell'}:\bbF^{H^\ell,J^\ell,\Lambda}\to\bbF^{H^{\ell'},J^{\ell'},\Lambda}.
    \end{equation}
Explicitly, this means $\overline{\frakc}_{\ell,\ell'}(x,y)$ can be given the structure of a compact smooth manifold with corners whose codimension 1 boundary strata are enumerated by gluing maps of the form 
    \begin{align}
    \bbF^{H^\ell,J^\ell,\Lambda}(x,x')\times\overline{\frakc}_{\ell,\ell'}(x',y)&\to\overline{\frakc}_{\ell,\ell'}(x,y), \\
    \overline{\frakc}_{\ell,\ell'}(x,y')\times\bbF^{H^{\ell'},J^{\ell'},\Lambda}(y',y)&\to\overline{\frakc}_{\ell,\ell'}(x,y);
    \end{align}
moreover, there are standard ($\Lambda$-dependent) stable framings
    \begin{equation}\label{eqn:floercontinuationframing}
    T\overline{\frakc}_{\ell,\ell'}(x,y')+\ind T_{\frakF,y'}\cong\ind T_{\frakF,x}
    \end{equation}
which agree with the already constructed stable framing on a codimension 1 boundary stratum when restricted to that boundary stratum. These determine a stable homotopy type 
    \begin{equation}
    \frakF^\Lambda\equiv\varinjlim_\ell\frakF^{H^\ell,J^\ell,\Lambda}
    \end{equation}
whose integral homology is the integral symplectic cohomology of $X$ with \emph{grading reversed and shifted up by $n$}.

\begin{rem}
Again, in the case that $\Lambda$ is not oriented/spin, our Floer homotopy type would recover symplectic cohomology, with grading reversed and shifted up by $n$, with coefficients twisted by a local system.
\end{rem}

Finally, the weight filtration $F_{\overline{w}}SH^*(X;\bbZ)$ of $SH^*(X;\bbZ)$ lifts to a weight filtration $\frakF^\Lambda_{\overline{w}}$ of $\frakF^\Lambda$, as follows. Let $\bbF^{H^\ell,J^\ell,\Lambda}_{\overline{w}}$ be the framed flow subcategory of $\bbF^{H^\ell,J^\ell,\Lambda}$ such that the objects of $\bbF^{H^\ell,J^\ell,\Lambda}_{\overline{w}}$ are $x\in\chi_{\leq\overline{w}}(X;H^\ell)$; this determines a weight filtration of $\bbF^{H^\ell,J^\ell,\Lambda}$. Clearly, the Floer continuation framed flow bimodules respect the weight filtration:
    \begin{equation}
    \overline{\frakc}_{\ell,\ell'}:\bbF^{H^\ell,J^\ell,\Lambda}_{\overline{w}}\to\bbF^{H^{\ell'},J^{\ell'},\Lambda}_{\overline{w}},
    \end{equation}
i.e., 
    \begin{equation}
    \frakF^\Lambda_{\overline{w}}\equiv\varinjlim_\ell\frakF^{H^\ell,J^\ell,\Lambda}_{\overline{w}}
    \end{equation}
gives a filtration of $\frakF^\Lambda$ by weight.

\section{Spectral low-energy log PSS morphism}
In this section, we will describe how to spectrally lift GP's low-energy log PSS morphism to compute the associated graded of the weight filtration of $\frakF^\Lambda$.

\subsection{Canonical transverse operators}
There is a natural family of Fredholm operators $D^\Lambda$ on $S_DM$ defined as follows. Let $p\in S_DM$; we denote by $\gamma_p:S^1\to S_DM$ the loop going \emph{clockwise} around $D$ which starts at $p$ and covers the fiber $S_DM\vert_{\pi_S(p)}$ exactly once. Also, we denote by $v_p:\bbD\to D_DM$ the disk filling of $\gamma_p$ which covers the fiber $D_DM\vert_{\pi_S(p)}$ exactly once; note, $v_p(1)=p$ and $v_p\cdot D=1$.

\begin{defin}\label{defin:canonicaltransverseoperator}
A \emph{canonical transverse operator} at $p$ is a Cauchy Riemann operator 
    \begin{equation}
    D^\Lambda_p:W^{2,2}_0\big(\bbD;v_p^*(TM\oplus\underline{\bbC}^d\oplus\calL_m),\gamma_p^*\Lambda\big)\to W^{1,2}\big(\bbD;v_p^*(TM\oplus\underline{\bbC}^d\oplus\calL_m)\big),
    \end{equation}
where 
    \begin{equation}
    W^{2,2}_0\big(\bbD;v_p^*(TM\oplus\underline{\bbC}^d\oplus\calL_m),\gamma_p^*\Lambda\big)\subset W^{2,2}\big(\bbD;v_p^*(TM\oplus\underline{\bbC}^d\oplus\calL_m),\gamma_p^*\Lambda\big)
    \end{equation}
is the subspace of $W^{2,2}$-sections of $v_p^*(TM\oplus\underline{\bbC}^d\oplus\calL_m)$ over $\bbD$ with totally real boundary conditions $\gamma_p^*\Lambda$ and a constraint on sections at 0 to lie in $(TD\oplus\underline{\bbC}^d\oplus\calL_m)\vert_{\pi_S(p)}$. Observe, the totally real boundary conditions of $D^\Lambda_p$ go \emph{clockwise} around the disk since the restriction of $v_p$ to the boundary is oriented \emph{clockwise}.
\end{defin}

\begin{rem}
Observe, the (numerical) index of $D^\Lambda$ is equal to
    \begin{equation}
    n+d-1-2\sum_{\nu=1}^\alpha n_\nu\operatorname{rank}_\bbR F_\nu.
    \end{equation}
\end{rem}

\begin{prop}\label{prop:canonicaltransverseoperator}
We have a canonical isomorphism of real virtual bundles
    \begin{equation}
    \ind D^\Lambda\cong\pi_S^*\Bigg(-N_DM+\sum_{\nu=1}^\alpha F_\nu^{\oplus 1-2n_\nu}\Bigg).
    \end{equation}
\end{prop}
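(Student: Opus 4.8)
The plan is to compute the index bundle $\ind D^\Lambda$ fiberwise over $S_DM$ by degenerating each canonical transverse operator at a point $p$ along the lines of the proof of Lemma \ref{lem:index-theoretic}, and then checking that all identifications are canonical so that the fiberwise computation globalizes to an isomorphism of real virtual bundles over $S_DM$. Concretely, $D^\Lambda_p$ is a Cauchy--Riemann operator on the disk $\bbD$ with totally real boundary condition $\gamma_p^*\Lambda$ and an interior point constraint at $0$ forcing sections to land in $(TD\oplus\underline{\bbC}^d\oplus\calL_m)\vert_{\pi_S(p)}$. Using the short exact sequence $0\to E\vert_0\xrightarrow{\Delta}E\vert_0^{\oplus 2}\to\Delta^\perp\to 0$ exactly as in Lemma \ref{lem:index-theoretic}, the point constraint contributes a $-N_DM$-term (the normal complement of $TD$ in $TM\vert_D$, which we are abusively writing as a bundle on all of $M$), while the complex summands $\underline{\bbC}^d$ and $\calL_m$ pulled back along $v_p$ extend over $\bbD$ and hence contribute trivially to the virtual index once the constraint is accounted for.

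The main computation is therefore to identify $\ind D^\Lambda$ with a twist of $\Lambda$ read off from Assumption \ref{assu:main}(3). First I would use the complex isomorphism $TM\oplus\underline{\bbC}^d\oplus\calL_m\cong\bigoplus_\nu E_\nu\otimes_\bbC\calL_{n_\nu}$ to rewrite the operator, summand by summand, as a direct sum of Cauchy--Riemann operators on $v_p^*(E_\nu\otimes_\bbC\calL_{n_\nu})$ with boundary condition the corresponding piece $\gamma_p^*(F_\nu\otimes_\bbR\calL_{n_\nu,\bbR})$ of $\Lambda$. For each $\nu$, one has a complexified real bundle $E_\nu=F_\nu\otimes_\bbR\underline{\bbC}$ twisted by the line bundle $\calL_{n_\nu}$; restricted to the disk $v_p$, the line bundle $\calL_{n_\nu}$ has relative degree $v_p\cdot(\text{div of }\calL_{n_\nu})$, and on the boundary circle its boundary Maslov index against $\calL_{n_\nu,\bbR}$ is $\mu_\nu=2n_\nu$ (this is recorded in the excerpt, taking into account that $v_p$ traverses the boundary clockwise, i.e., with the opposite orientation). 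Applying Lemma \ref{lem:index-theoretic}-style degeneration — collapse a pushed-off boundary loop, split into disks, glue at an interior point, and use spinness of $F_\nu$ plus a flat connection to parallel transport — gives, for the $\nu$-th summand with the interior constraint, a canonical identification of the index with $F_\nu^{\oplus 1-2n_\nu}\vert_{\pi_S(p)}$: the ``$1$'' is the $k=1$ case of the lemma (a single disk), and the ``$-2n_\nu$'' is the Maslov/degree correction from the twist by $\calL_{n_\nu}$ together with the clockwise boundary orientation. Summing over $\nu$ and adding the $-N_DM$ from the $TD$-constraint yields $\ind D^\Lambda\cong\pi_S^*\bigl(-N_DM+\sum_\nu F_\nu^{\oplus 1-2n_\nu}\bigr)$. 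As a sanity check, taking numerical ranks gives $-\operatorname{rank}_\bbR N_DM+\sum_\nu(1-2n_\nu)\operatorname{rank}_\bbR F_\nu = -2 + \operatorname{rank}_\bbR\Lambda - 2\sum_\nu n_\nu\operatorname{rank}_\bbR F_\nu$; since $\operatorname{rank}_\bbR\Lambda$ matches $\dim_\bbR X + 2 = 2n+2$ minus the $2(d+1)$ stabilization, one recovers $n+d-1-2\sum_\nu n_\nu\operatorname{rank}_\bbR F_\nu$, the numerical index stated in the preceding remark.

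The hard part will be bookkeeping the twists and orientations consistently: keeping straight the \emph{clockwise} boundary orientation of $v_p$ (so that Maslov indices enter with the sign making $-2n_\nu$ rather than $+2n_\nu$ appear), the relative Chern number of $v_p^*\calL_{n_\nu}$ versus the boundary Maslov index $\mu_\nu$, and the precise way the interior point constraint at $0$ interacts with the complexification $E_\nu = F_\nu\otimes_\bbR\underline{\bbC}$. As in Lemma \ref{lem:index-theoretic}, one must also verify independence of the auxiliary trivialization chosen when collapsing the pushed-off loop; this follows because every intermediate isomorphism of index bundles is canonical (unique up to contractible choice), so the end result does not see the trivialization. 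Once fiberwise canonicity is in hand, upgrading to a canonical isomorphism of virtual \emph{bundles} over $S_DM$ is formal, exactly as in the Morse and Floer framing constructions in the previous section, since the constant families of gluing and reference operators vary continuously in $p\in S_DM$.
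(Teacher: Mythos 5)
Your overall architecture matches the paper's: peel off the interior point constraint as a $-\pi_S^*N_DM$ term, use Assumption \ref{assu:main}(3) to decompose the remaining unconstrained operator into summands indexed by $\nu$ acting on $v_p^*(E_\nu\otimes_\bbC\calL_{n_\nu})$ with boundary condition $\gamma_p^*(F_\nu\otimes_\bbR\calL_{n_\nu,\bbR})$, and account for the clockwise orientation of $v_p\vert_{S^1}$ by the sign flip $1+2n_\nu\leadsto 1-2n_\nu$. The final answer and the rank check are right. (Minor quibble: your first paragraph's claim that $\underline{\bbC}^d$ and $\calL_m$ ``contribute trivially'' as separate summands is not how the bookkeeping works --- they are absorbed into the $\nu$-decomposition on the other side of the isomorphism, as your second paragraph in effect acknowledges.)

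The genuine gap is in the per-$\nu$ step. You invoke a ``Lemma \ref{lem:index-theoretic}-style degeneration'' plus a ``Maslov/degree correction from the twist'' to identify $\ind\widetilde{D}^\Lambda_\nu$ with $F_\nu^{\oplus 1-2n_\nu}$, but Lemma \ref{lem:index-theoretic} does not apply to this summand: its hypotheses require $E=F\otimes_\bbR\underline{\bbC}$ with the totally real boundary condition $F$ extending over the surface, and $E_\nu\otimes_\bbC\calL_{n_\nu}$ with boundary condition $F_\nu\otimes_\bbR\calL_{n_\nu,\bbR}$ fails this precisely because of the twist (unless $n_\nu=0$). A Maslov index computation only pins down the \emph{numerical} index $(1\pm2n_\nu)\operatorname{rank}_\bbR F_\nu$; the content of the proposition is a canonical identification of the index \emph{bundle} over $S_DM$, and your argument never produces the $2n_\nu$ extra (or missing) copies of $F_\nu$ as an actual bundle. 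The paper supplies exactly this mechanism: tensor the short exact sequence of sheaves $0\to\scrO_\bbD\to\scrO_\bbD(n_\nu)\to\scrO_\bbD(n_\nu)\vert_0\to0$ with $v_p^*E_\nu$, run the long exact sequence in sheaf cohomology with totally real boundary conditions, identify the skyscraper contribution (via the residue map at $0$, using the fixed parameterization $v_p$) with $E_\nu\vert_{\pi_S(p)}^{\oplus n_\nu}\cong F_\nu\vert_{\pi_S(p)}^{\oplus2n_\nu}$, and then apply the untwisted computation $\ker\overline{\partial}_{(v_p^*E_\nu,v_p^*F_\nu)}\cong F_\nu\vert_{\pi_S(p)}$, $\coker\cong0$. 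Your proposal needs this (or an equivalent canonical comparison between the twisted and untwisted operators) to close; without it the fiberwise identification, and hence the globalization over $S_DM$, is not established.
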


\begin{proof}
Observe, proving this proposition is equivalent to having the totally real boundary conditions of $D^\Lambda_p$ go \emph{counterclockwise} (i.e., having the restriction of $v_p$ to the boundary oriented \emph{counterclockwise}) and showing 
    \begin{equation}
    \ind D^\Lambda\cong\pi_S^*\Bigg(-N_DM+\sum_{\nu=1}^\alpha F_\nu^{\oplus 1+2n_\nu}\Bigg).
    \end{equation}
(Note, implicit in the previous statement is that switching the boundary orientation does not affect the constraint term related to sections at 0.) First, we have a canonical isomorphism of real virtual bundles
    \begin{equation}
    \ind D^\Lambda\cong-\pi_S^*N_DM+\ind\widetilde{D}^\Lambda,
    \end{equation}
where $\widetilde{D}^\Lambda$ is the family of Fredholm operators on $S_DM$ defined analogously to $D^\Lambda$ except we drop the constraint at 0. Now, we may canonically decompose $\ind\widetilde{D}^\Lambda$ as a direct sum $\oplus_\nu\ind\widetilde{D}^\Lambda_\nu$, where $\widetilde{D}^\Lambda_\nu$ is the family of Fredholm operators on $S_DM$ defined pointwise via
    \begin{equation}
    \widetilde{D}^\Lambda_{\nu,p}:W^{1,2}\big(\bbD;v^*_p(E_\nu\otimes_\bbC\calL_{n_\nu}),\gamma_p^*(F_\nu\otimes_\bbR\calL_{n_\nu,\bbR})\big)\to L^2\big(\bbD;v_p^*(E_\nu\otimes_\bbC\calL_{n_\nu})\big);
    \end{equation}
therefore, it suffices to prove we have a canonical isomorphism of real virtual bundles
    \begin{equation}
    \ind\widetilde{D}^\Lambda_\nu\cong\pi_S^*F_\nu^{\oplus 1+2n_\nu}
    \end{equation}
    
We have that $v^*_p\calL_{n_\nu}=\scrO_\bbD(n_\nu)$. Consider the following short exact sequence of locally free sheaves: 
    \begin{equation}
    0\to\scrO_\bbD\to\scrO_\bbD(n_\nu)\to\scrO_\bbD(n_\nu)\vert_0\to0,
    \end{equation}
where $\scrO_\bbD(n_\nu)\vert_0$ denotes the skyscraper sheaf supported at $0$ with stalk $\bbC^{n_\nu}$. The last map in the aforementioned short exact sequence is given by taking the residue at $0$ of a section of $\scrO_\bbD(n_\nu)$. We may now tensor the aforementioned short exact sequence by $v_p^*E_\nu$. Since tensoring via $v_p^*E_\nu$ preserves exactness, we have the associated long exact sequence in sheaf cohomology with totally real boundary conditions: 
    \begin{align}
    0\to\ker\overline{\partial}_{(v_p^*E_\nu,v_p^*F_\nu)}&\to\ker\overline{\partial}_{(v_p^*(E_\nu\otimes_\bbC\calL_{n_\nu}),v_p^* (F_\nu\otimes_\bbR\calL_{n_\nu,\bbR}))}\to \\
    H^0(\bbD,\scrO_\bbD(n_\nu)\vert_0\otimes v_p^*E_\nu)&\to\coker\overline{\partial}_{(v_p^*E_\nu,v_p^*F_\nu)}\to \nonumber \\
    \coker\overline{\partial}_{(v_p^*(E_\nu\otimes_\bbC\calL_{n_\nu}),v_p^*(F_\nu\otimes_\bbR\calL_{n_\nu,\bbR}))}&\to H^1(\bbD,\scrO_\bbD(n_\nu)\vert_0\otimes v_p^*E_\nu)\to\cdots; \nonumber
    \end{align}
here, we are using the identification between sheaf cohomology with totally real boundary conditions and twisted Dolbeut cohomology with totally real boundary conditions. Observe, $\scrO_\bbD(n_\nu)\vert_0\otimes v_p^*E_\nu$ is a skyscraper sheaf supported at 0 with stalk $E_\nu\vert_{\pi_S(p)}^{\oplus n_\nu}$, hence we may compute:
    \begin{align}
    H^1(\bbD,\scrO_\bbD(n_\nu)\vert_0\otimes v_p^*E_\nu)&=0, \\
    H^0(\bbD,\scrO_\bbD(n_\nu)\vert_0\otimes v_p^*E_\nu)&=E_\nu\vert_{\pi_S(p)}^{\oplus n_\nu};
    \end{align}
note, the latter equality depends on the parameterization of the disk (which we fixed to be $v_p$). Since the totally real boundary conditions of $v_p^*E_\nu$ extend across $\bbD$, we have the following canonical isomorphisms of real virtual vector spaces:
    \begin{align}
    \coker\overline{\partial}_{(v_p^*E_\nu,v_p^*F_\nu)}&\cong0, \\
    \ker\overline{\partial}_{(v_p^*E_\nu,v_p^*F_\nu)}&\cong F_\nu\vert_{\pi_S(p)};
    \end{align}
using the identification $E_\nu\vert_{\pi_S(p)}^{\oplus n_\nu}\cong F_\nu\vert_{\pi_S(p)}^{\oplus2n_\nu}$, the proposition follows.
\end{proof}

\subsection{Marked thimbles}
Recall, $\Sigma$ is $\bbC P^1-\{0\}$ together with a negative cylindrical end $\epsilon^-$. We define $\Sigma_k$, $k\in\bbZ_{\geq0}$, to be $\Sigma$ together with $k$ distinct marked points $z_1,\ldots,z_k\in\Sigma$. We require the marked points to be fixed. Let $\eta_k\in\Omega^1(\Sigma_k)$ be a 1-form which (1) vanishes in a neighborhood of each marked point and (2) restricts to $dt$ at $\epsilon^-$. Let $J^\ell_k\equiv\{J^\ell_{k,z}\}_{z\in\Sigma_k}$ be a $\Sigma_k$-dependent $\omega$-tame almost complex structure; we say it is \emph{admissible} for marked $H^\ell$-thimbles if: 
\begin{enumerate}
\item $J^\ell_k$ preserves $TD$,
\item the image of the Nijenhuis tensor, when restricted to $TM\vert_D$, is contained in $TD$,
\item $J^\ell_k$ is $z$-independent near each marked point,
\item and $J^\ell\equiv J^\ell_k\vert_{\epsilon^-}$ is admissible for $\widehat{X}_\ell$. 
\end{enumerate}
Note, the space of such structures is non-empty and contractible.

\begin{figure}[ht]
%% Creator: Inkscape 1.4.2 (f4327f4, 2025-05-13), www.inkscape.org
%% PDF/EPS/PS + LaTeX output extension by Johan Engelen, 2010
%% Accompanies image file '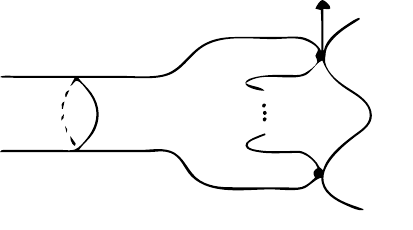' (pdf, eps, ps)
%%
%% To include the image in your LaTeX document, write
%%   \input{<filename>.pdf_tex}
%%  instead of
%%   \includegraphics{<filename>.pdf}
%% To scale the image, write
%%   \def\svgwidth{<desired width>}
%%   \input{<filename>.pdf_tex}
%%  instead of
%%   \includegraphics[width=<desired width>]{<filename>.pdf}
%%
%% Images with a different path to the parent latex file can
%% be accessed with the `import' package (which may need to be
%% installed) using
%%   \usepackage{import}
%% in the preamble, and then including the image with
%%   \import{<path to file>}{<filename>.pdf_tex}
%% Alternatively, one can specify
%%   \graphicspath{{<path to file>/}}
%% 
%% For more information, please see info/svg-inkscape on CTAN:
%%   http://tug.ctan.org/tex-archive/info/svg-inkscape
%%
\begingroup%
  \makeatletter%
  \providecommand\color[2][]{%
    \errmessage{(Inkscape) Color is used for the text in Inkscape, but the package 'color.sty' is not loaded}%
    \renewcommand\color[2][]{}%
  }%
  \providecommand\transparent[1]{%
    \errmessage{(Inkscape) Transparency is used (non-zero) for the text in Inkscape, but the package 'transparent.sty' is not loaded}%
    \renewcommand\transparent[1]{}%
  }%
  \providecommand\rotatebox[2]{#2}%
  \newcommand*\fsize{\dimexpr\f@size pt\relax}%
  \newcommand*\lineheight[1]{\fontsize{\fsize}{#1\fsize}\selectfont}%
  \ifx\svgwidth\undefined%
    \setlength{\unitlength}{197.84350538bp}%
    \ifx\svgscale\undefined%
      \relax%
    \else%
      \setlength{\unitlength}{\unitlength * \real{\svgscale}}%
    \fi%
  \else%
    \setlength{\unitlength}{\svgwidth}%
  \fi%
  \global\let\svgwidth\undefined%
  \global\let\svgscale\undefined%
  \makeatother%
  \begin{picture}(1,0.55539015)%
    \lineheight{1}%
    \setlength\tabcolsep{0pt}%
    \put(0.0199441,0.2554846){\color[rgb]{0,0,0}\makebox(0,0)[lt]{\lineheight{1.25}\smash{\begin{tabular}[t]{l}$x$\end{tabular}}}}%
    \put(0.63518612,0.40469456){\color[rgb]{0,0,0}\makebox(0,0)[lt]{\lineheight{1.25}\smash{\begin{tabular}[t]{l}$z_1$\end{tabular}}}}%
    \put(0.63020467,0.12807405){\color[rgb]{0,0,0}\makebox(0,0)[lt]{\lineheight{1.25}\smash{\begin{tabular}[t]{l}$z_k$\end{tabular}}}}%
    \put(0.91026282,0.00801864){\color[rgb]{0,0,0}\makebox(0,0)[lt]{\lineheight{1.25}\smash{\begin{tabular}[t]{l}$D$\end{tabular}}}}%
    \put(0,0){\includegraphics[width=\unitlength,page=1]{markedthimble.pdf}}%
  \end{picture}%
\endgroup%

\caption{This is a $k$-marked thimble, $k\geq1$. The arrow at $z_1$ indicates our auxiliary choice of real tangent ray made in order to define the enhanced evaluation map; in future figures, we will omit this choice from the picture.}
\label{fig:markedthimble}
\end{figure}

For any $x\in\chi(X;H^\ell)$, we consider the moduli space $\scrR^{H^\ell,J^\ell_k}(x)$ of \emph{$k$-marked $H^\ell$-thimbles}, i.e., maps $u:\Sigma_k\to M$ satisfying
    \begin{equation}
    \begin{cases}
    \big(du-X_\ell\otimes\eta_k\big)^{0,1}=0 & \\
    \lim_{s\to-\infty}u(\epsilon^-(s,t))=x(t) & \\
    u^{-1}(D)=\{z_1,\ldots,z_k\} & \\
    \textrm{$u$ intersects $D$ transversely at $z_1,\ldots,z_k$} &
    \end{cases}
    ,
    \end{equation}
see Figure \ref{fig:markedthimble}. Analogously to \cite[Section 4.4]{GP21} (which follows \cite[Section 6]{CM07}), we may look at a section of the appropriate Banach bundle over the space of $W^{m,p}$-maps ($mp>2$) from $\Sigma_k$ to $M$ which are (1) asymptotic to $x$ and (2) intersect $D$ only at the marked points (such that these intersections are transverse) to see that, for generic data, $\scrR^{H^\ell,J^\ell_k}(x)$ is a smooth manifold whose tangent bundle is classified by the index bundle of the family of surjective Fredholm operators 
    \begin{equation}
    D^{\scrR,k}\equiv\big\{D^{\scrR,k}_u:W^{2,2}_k(\Sigma_k;u^*TM)\to W^{1,2}(\Sigma_k;\Lambda^{0,1}_{\Sigma_k}\otimes u^*TM)\big\},
    \end{equation}
where $W^{2,2}_k(\Sigma_k;u^*TM)$ is the subspace of $W^{2,2}$-sections of $u^*TM$ over $\Sigma_k$ with a constraint to lie in $u^*TD$ at each marked point.

We will fix an element in $S_{z_1}\Sigma_k$ in order to define an enhanced evaluation map at $z_1$:
    \begin{equation}
    \eneval_{z_1}:\scrR^{H^\ell,J^\ell_k}(x)\to S_DM.
    \end{equation}
See Figure \ref{fig:markedthimble}.
    
Let $u\in\scrR^{H^\ell,J^\ell_k}(x)$; we will require two families of Fredholm operators. First, consider a Cauchy Riemann operator 
    \begin{equation}
    D_{\calL_m,k,u}:W^{1,2}(\Sigma_k;u^*\calL_m)\to L^2(\Sigma_k;u^*\calL_m)
    \end{equation}
with asymptotic operator $\partial_t+\tau\cdot\identity$ at the negative puncture; here, we use our fixed identification $\calL_m\vert_X\cong\underline{\bbC}$ given by our choice of $s_D$ to define the asymptotic operator. Clearly, these define a family of Fredholm operators $D_{\calL_m,k}$ on $\scrR^{H^\ell,J^\ell_k}(x)$. 
    
\begin{lem}\label{lem:hybridmarkedthimbles}
We have a canonical isomorphism of real virtual bundles
    \begin{equation}
    \ind D_{\calL_m,k}\cong\eval_{z_1}^*\calL_m^{\oplus k}+\underline{\bbR}^{1+k(2m-2)}.
    \end{equation}
\end{lem}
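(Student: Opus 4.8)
The plan is to peel off from $\ind D_{\calL_m,k}$ a ``unit thimble'' contribution and, for each marked point $z_j$, a contribution supported at $z_j$, and then to identify the latter using transversality of $u$ to $D$ together with the adjunction isomorphism $\scrO_M(D)\vert_D\cong N_DM$. This is, for the moduli spaces $\scrR^{H^\ell,J^\ell_k}(x)$, the analogue of the proof of Proposition \ref{prop:canonicaltransverseoperator}, and the treatment of the point contributions in families follows the ``collapse a circle and glue disks'' template from the proof of Lemma \ref{lem:index-theoretic}. I will take $m\geq0$; the case $m<0$ follows by dualizing (working with $\calL_{-m}$, whose canonical section cuts out $-mD$).

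First I would record the algebraic input on $M$. Let $\sigma_0\in H^0(M,\calL_m)$ be the section with $\operatorname{div}(\sigma_0)=mD$, so that $\sigma_0\vert_X$ underlies the fixed identification $\calL_m\vert_X\cong\underline{\bbC}$, and let $\calP\equiv\scrO_M(mD)/\scrO_M$ be the sheaf of principal parts of order $\leq m$ along $D$ --- a rank-$m$ vector bundle on $D$ with its canonical filtration $0=\calP^{(0)}\subset\calP^{(1)}\subset\cdots\subset\calP^{(m)}=\calP$, $\calP^{(i)}/\calP^{(i-1)}\cong N_DM^{\otimes i}$. Pulling back the sequence $0\to\scrO_M\xrightarrow{\cdot\sigma_0}\scrO_M(mD)\to\calP\to0$ along $u\in\scrR^{H^\ell,J^\ell_k}(x)$, and using $u^{-1}(D)=\{z_1,\ldots,z_k\}$ with each intersection transverse (so $u^*\sigma_0$ vanishes to order exactly $m$ at each $z_j$ and $u^*$ of the sequence stays exact), gives a short exact sequence of sheaves over $\Sigma_k$
    \begin{equation*}
    0\to\underline{\bbC}\xrightarrow{\cdot u^*\sigma_0}u^*\calL_m\to\bigoplus_{j=1}^k\big(\text{skyscraper at }z_j\text{ with stalk }\calP\vert_{u(z_j)}\big)\to0;
    \end{equation*}
as $u$ varies the stalks $\calP\vert_{u(z_j)}$ form the bundle $\eval_{z_j}^*\calP$ over $\scrR^{H^\ell,J^\ell_k}(x)$. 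Replacing $D_{\calL_m,k,u}$ by the Dolbeault operator of the holomorphic structure on $u^*\calL_m$ determined by declaring $u^*\sigma_0$ holomorphic --- a deformation within Cauchy--Riemann operators with the prescribed asymptotics, hence preserving the index bundle, and a contractible choice --- this sequence becomes compatible with $\overline{\partial}$-operators: the sub-operator on $\underline{\bbC}\cdot u^*\sigma_0$ is a Cauchy--Riemann operator $D_{\mathrm{thimble}}$ on the trivial line bundle over the thimble $\Sigma_k$ with asymptotic operator $\partial_t+\tau\cdot\identity$, and the induced operator on the skyscrapers vanishes. Hence there is a canonical isomorphism of real virtual bundles over $\scrR^{H^\ell,J^\ell_k}(x)$
    \begin{equation*}
    \ind D_{\calL_m,k}\cong\ind D_{\mathrm{thimble}}+\bigoplus_{j=1}^k\eval_{z_j}^*\calP.
    \end{equation*}

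Next I would compute the two kinds of summands. For the thimble: capping the negative cylindrical end of $\Sigma_k$ produces $\bbC P^1$ with the trivial line bundle, whose Cauchy--Riemann index is canonically $\underline{\bbR}^2$ ($H^0\cong\bbC$, $H^1=0$); subtracting the index $\underline{\bbR}^1$ of the small-positive-asymptotic capping operator (a standard thimble index computation) gives $\ind D_{\mathrm{thimble}}\cong\underline{\bbR}^1$. For the $j$-th skyscraper: the natural surjection $\eval_{z_j}^*\calP\twoheadrightarrow\eval_{z_j}^*(\calP/\calP^{(m-1)})\cong\eval_{z_j}^*(N_DM^{\otimes m})\cong\eval_{z_j}^*\calL_m$ --- the last isomorphism by adjunction $\scrO_M(D)\vert_D\cong N_DM$, so $\calL_m\vert_D\cong N_DM^{\otimes m}$ --- has kernel $\eval_{z_j}^*\calP^{(m-1)}$, a rank-$(m-1)$ bundle whose canonical filtration has graded pieces $\eval_{z_j}^*(N_DM^{\otimes i})$, $1\leq i\leq m-1$. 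Transversality of $u$ to $D$ at $z_j$ gives an isomorphism $du_{z_j}\colon T_{z_j}\Sigma_k\xrightarrow{\sim}N_DM\vert_{u(z_j)}$, and $T_{z_j}\Sigma_k$ is a fixed complex line (the marked points of $\Sigma_k$ being fixed), so $\eval_{z_j}^*N_DM\cong\underline{\bbC}$ canonically; hence each such graded piece, and therefore (splitting the filtration by a choice of connection, a contractible space of choices) $\eval_{z_j}^*\calP^{(m-1)}$ itself, is $\cong\underline{\bbC}^{m-1}=\underline{\bbR}^{2m-2}$. Thus $\eval_{z_j}^*\calP\cong\eval_{z_j}^*\calL_m\oplus\underline{\bbR}^{2m-2}$, and parallel transport in $\calL_m$ along the image under $u$ of a path in $\Sigma_k$ from $z_1$ to $z_j$ (a contractible choice of path and of connection) identifies $\eval_{z_j}^*\calL_m\cong\eval_{z_1}^*\calL_m$.

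Assembling,
    \begin{equation*}
    \ind D_{\calL_m,k}\cong\underline{\bbR}^1+\bigoplus_{j=1}^k\big(\eval_{z_1}^*\calL_m+\underline{\bbR}^{2m-2}\big)=\eval_{z_1}^*\calL_m^{\oplus k}+\underline{\bbR}^{1+k(2m-2)},
    \end{equation*}
and since every identification above is canonical up to a contractible space of choices it upgrades over all of $\scrR^{H^\ell,J^\ell_k}(x)$. The main obstacle I anticipate is the first step: exhibiting the sheaf sequence on $\Sigma_k$ with the asserted skyscrapers and --- crucially --- checking that after passing to the compatible $\overline{\partial}$-operators it refines to a genuine decomposition of \emph{index bundles over the moduli space}, with the auxiliary holomorphic structures, splittings, connections and paths chosen coherently in $u$ and the outcome independent of those choices, exactly the bookkeeping performed in the proof of Lemma \ref{lem:index-theoretic}. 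The remaining identifications are routine once one keeps track of orientations and the clockwise convention inherited from \cite{GP20}.
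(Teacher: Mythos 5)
Your decomposition of the index via the short exact sequence $0\to\underline{\bbC}\xrightarrow{\cdot u^*\sigma_0}u^*\calL_m\to\bigoplus_j(\text{skyscraper at }z_j)\to0$ is, at bottom, the same computation the paper performs: the paper's proof glues the auxiliary disk operators $D^{\calL_{m,\bbR}}_{p_j}$ at the marked points and the half-cylinder operator $D^{\calL_m,+}_x$ at the negative end, closes up the surface, and applies Lemma \ref{lem:index-theoretic}; the indices of those disk operators are themselves computed (in the proof of Proposition \ref{prop:canonicaltransverseoperator}, which the paper invokes) by exactly the skyscraper-sheaf/jet argument you run directly on $\Sigma_k$. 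Your version is more direct, and your identification of the skyscraper stalks with the filtered principal-parts bundle, trivialized via $du_{z_j}$ and the fixed line $T_{z_j}\Sigma_k$, is a legitimate substitute for the paper's parameterization-dependent identification $H^0(\bbD,\scrO_\bbD(n)\vert_0\otimes E)\cong E\vert^{\oplus n}$. The trade-off is that the paper's isomorphism is manufactured as a deformation of glued Fredholm operators, which is precisely the form needed later (Corollary \ref{corollary:extendingtsf}, Corollary \ref{cor:tsfaux}) to extend the twisted stable framings over the Gromov compactifications; your $du_{z_j}$-based trivialization of $\eval_{z_j}^*N_DM$ is only available on the open stratum (it degenerates exactly where a sphere bubbles off at $z_j$), so even granting the lemma you would still owe a comparison with the operator-gluing isomorphism before anything downstream can be used.

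There is, however, one step that does not hold up as written: the claim $\ind D_{\mathrm{thimble}}\cong\underline{\bbR}^1$, justified by ``capping the negative cylindrical end \ldots produces $\bbC P^1$ \ldots index $\underline{\bbR}^2$; subtracting the index $\underline{\bbR}^1$ of the small-positive-asymptotic capping operator.'' A capping operator on a borderless cap (a plane with one cylindrical end and trivial complex line bundle) is homotopic through Fredholm operators to a $\bbC$-linear one, so its index has even rank; it cannot be $\underline{\bbR}^1$, and consequently this bookkeeping cannot produce the odd-rank summand $\underline{\bbR}^{1+k(2m-2)}$ in the statement. The rank-one real contribution can only enter through a totally real boundary or asymptotic condition, which is exactly what the paper's operator $D^{\calL_m,+}_x$ on $\Theta_+$ --- with boundary condition $x^*\calL_{m,\bbR}$ --- supplies. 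So the negative-end cap is not a step you can gloss as ``standard'': you must cap with the half-cylinder carrying the totally real condition $x^*\calL_{m,\bbR}$ (turning the capped thimble into a disk with real boundary condition), and only then does the unit contribution come out with the parity demanded by the statement. The skyscraper part of your argument is fine; it is the global, boundary-condition-carrying piece of the paper's gluing apparatus that you cannot dispense with.
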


\begin{proof}
The proof is relatively straightforward, it just requires us to set up some notation. We consider a Cauchy Riemann operator 
    \begin{equation}
    D^{\calL_m,+}_x:W^{1,2}(\Theta_+;x^*\calL_m,x^*\calL_{m,\bbR})\to L^2(\Theta_+;x^*\calL_m)
    \end{equation}
with asymptotic operator $\partial_t+\tau\cdot\identity$ at the positive puncture. Clearly, these define a constant family of Fredholm operators $D^{\calL_m,+}_x$ on $\scrR^{H^\ell,J^\ell_k}(x)$ such that 
    \begin{equation}
    \ind D^{\calL_m,+}_x\cong0.
    \end{equation}
We also consider a Cauchy Riemann operator
    \begin{equation}
    D^{\calL_{m,\bbR}}_{p_j}:W^{1,2}(\bbD;v_{p_j}^*\calL_m,\gamma_{p_j}^*\calL_{m,\bbR})\to L^2(\bbD;v_{p_j}^*\calL_m),
    \end{equation}
where $p_j\in S_DM\vert_{u(z_j)}$ with $p_1=\eneval_{z_1}(u)$; recall, the totally real boundary conditions are oriented \emph{clockwise}. Note, our argument will ultimately be independent of the auxiliary choice of $p_j$. Analogously to the proof of Proposition \ref{prop:canonicaltransverseoperator}, we have a canonical isomorphism of real virtual vector spaces
    \begin{equation}
    \ind D^{\calL_{m,\bbR}}_{p_j}\cong\bbR^{1-2m}.
    \end{equation}
We will consider the glued together operator
    \begin{equation}
    D^{\calL_m,+}_x\#D_{\calL_m,k,u}\#\oplus_jD^{\calL_{m,\bbR}}_{p_j},
    \end{equation}
where the notation indicates we glue $D_{\calL_m,k,u}$ and $D^{\calL_{m,\bbR}}_{p_j}$ at $z_j$, whose index bundle is canonically isomorphic to the real virtual vector space
    \begin{equation}
    \ind D_{\calL_m,k,u}+\bbR^{k(1-2m)}-\calL_m\vert_{\eval_{z_1}(u)}^{\oplus k}.
    \end{equation}
Now, by Lemma \ref{lem:index-theoretic}, we also have that the index bundle of the aforementioned glued together operator is canonically isomorphic to the real virtual vector space $\calL_{m,\bbR}\vert_{\eneval_{z_1}(u)}^{\oplus 1-k}$; using the identification $\calL_{m,\bbR}\vert_X\cong\underline{\bbR}$ given by our choice of $s_D$, the lemma follows.
\end{proof}

Second, consider a Cauchy Riemann operator
    \begin{equation}
    \Phi^\scrR:W^{1,2}(\Sigma_k;\underline{\bbC}^d,\underline{\bbR}^d)\to L^2(\Sigma_k;\underline{\bbC}^d),
    \end{equation}
with asymptotic operator $\partial_t+\tau\cdot\identity$ at the negative puncture. Clearly, these define a constant family of Fredholm operators $\Phi^\scrR$ on $\scrR^{H^\ell,J^\ell_k}(x)$ such that
    \begin{equation}
    \ind\Phi^\scrR\cong\underline{\bbR}^d.
    \end{equation}

We may now construct a twisted stable framing of $\scrR^{H^\ell,J^\ell_k}(x)$. For notational convenience, we define
    \begin{align}
    T_k&\equiv TM^{\oplus k}+\underline{\bbC}^{dk}-N_DM^{\oplus k}-\sum_{\nu=1}^\alpha F_\nu^{\oplus k(1-2n_\nu)}, \\
    \widetilde{T}_k&\equiv\underline{\bbR}^{-d-1+2k(1-m)}+\sum_{\nu=1}^\alpha F_\nu^{\oplus 1-k}.
    \end{align}

\begin{rem}
Observe, $V_k=\widetilde{T}_k+T_k$.
\end{rem}

Again, let $u\in\scrR^{H^\ell,J^\ell_k}(x)$. Consider the glued together operator 
    \begin{equation}
    T_{\frakF,x}\#\big(D^{\scrR,k}_u\oplus\Phi^\scrR\oplus D_{\calL_m,k,u}\big)\#\oplus_j D^\Lambda_{p_j},
    \end{equation}
where $p_j\in S_DM\vert_{u(z_j)}$ with $p_1=\eneval_{z_1}(u)$. Again, our argument will ultimately be independent of the auxiliary choice of $p_j$. Our various index bundle computations show we have the following canonical isomorphism of real virtual bundles (after using the identification $\Lambda\vert_X\cong\oplus_\nu F_\nu$ induced by our choice of $s_D$):
\begin{itemize}
\item if $k=0$:
    \begin{equation}
    T\scrR^{H^\ell,J^\ell_0}(x)+\ind T_{\frakF,x}\cong\eval_{+\infty}^*\widetilde{T}_0;
    \end{equation}
\item if $k\geq1$:
    \begin{equation}
    T\scrR^{H^\ell,J^\ell_k}(x)+\ind T_{\frakF,x}\cong\eneval_{z_1}^*\widetilde{T}_k+\eval_{z_1}^*T_k.
    \end{equation}
\end{itemize}

\subsection{Compactifying low-energy marked thimbles}
We will now construct the Gromov-compactification $\overline{\scrR}^{H^\ell,J^\ell_{\overline{w}(x)}}(x)$ of $\scrR^{H^\ell,J^\ell_{\overline{w}(x)}}(x)$; we call this the \emph{low-energy compactification}. Again, we have an energy estimate (which holds when $\epsilon_\ell$ is sufficiently small):
    \begin{equation}
    E_\topological(u)\approx\overline{w}(k)-\overline{w}(x)\Bigg(1-\dfrac{\epsilon^2_\ell}{2}\Bigg),\;\;u\in\scrR^{H^\ell,J^\ell_k}(x).
    \end{equation} 
This can be seen in the same way as \cite[Lemma 3.14]{GP20}; we first approximate $E_\topological(u)$ in the complement of small disjoint disks surrounding the marked points, then we let the radii of these disks go to 0. In particular, the topological energy of $u\in\scrR^{H^\ell,J^\ell_{\overline{w}(x)}}(x)$ is arbitrarily small.

\begin{prop}\label{prop:lowenergycompactification}
For $\overline{w}(x)<\lambda_\ell$ and $\epsilon_\ell$ sufficiently small, the moduli space $\scrR^{H^\ell,J^\ell_{\overline{w}(x)}}(x)$ admits a Gromov-compactification $\overline{\scrR}^{H^\ell,J^\ell_{\overline{w}(x)}}(x)$ by allowing breakings at non-divisorial 1-periodic orbits with weighted winding number equal to $\overline{w}(x)$ such that the corresponding cylinder components are completely contained in $X$, i.e., the interior of a codimension $r\in\bbZ_{>0}$ boundary stratum is of the form 
    \begin{equation}
    \scrR^{H^\ell,J^\ell_{\overline{w}(y_r)}}(y_r)\times\scrF^{H^\ell,J^\ell}(y_r,y_{r-1})\times\cdots\times\scrF^{H^\ell,J^\ell}(y_1,x),\;\;\overline{w}(y_j)=\overline{w}(x)\;\;\forall j.
    \end{equation}
See Figure \ref{fig:brokenmarkedthimble}.
\end{prop}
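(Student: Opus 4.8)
The plan is to adapt the proof of \cite[Lemma 3.15]{GP20} --- equivalently, the Gromov--compactness discussion for Floer cylinders recalled above --- to the present situation, where a thimble carrying the fixed marked points is attached at the negative puncture. Because the marked points are required to be fixed, the domain $\Sigma_{\overline{w}(x)}$ is rigid, so the only possible sources of non-compactness for a sequence $u_i\in\scrR^{H^\ell,J^\ell_{\overline{w}(x)}}(x)$ are sphere bubbling in the closed manifold $M$ and breaking of Floer cylinders along the negative cylindrical end $\epsilon^-$. I would therefore begin by running the standard Gromov--Floer compactness argument in $M$ (which is closed, and over which the energy estimate below is uniform): a subsequence of $u_i$ converges to a configuration consisting of a marked thimble component $u_0\colon\Sigma_{\overline{w}(x)}\to M$ still carrying all of the marked points, a finite (possibly empty) cascade of Floer cylinders in $M$ glued along $\epsilon^-$, and finitely many sphere bubbles attached at interior points of these components (\emph{a priori} including at the marked points).

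The next step is to eliminate sphere bubbling, divisorial breaking, and any component touching $D$, using the energy estimate displayed just before the proposition: once $\epsilon_\ell$ is sufficiently small, every $u\in\scrR^{H^\ell,J^\ell_{\overline{w}(x)}}(x)$ has topological --- hence geometric --- energy of size $\approx\overline{w}(x)\,\epsilon_\ell^2/2$, in particular strictly less than both the constant $\kappa$ of \eqref{eqn:kappa} and the minimal symplectic area of a non-constant $J^\ell$-holomorphic sphere in $M$ (a positive constant, which by adaptedness of $J^\ell$ also bounds the areas of sphere bubbles inside $D$). Since energy is continuous under Gromov convergence and every component of the limit carries non-negative geometric energy, no component can be a non-constant sphere; this rules out sphere bubbling, and in particular, by positivity of intersection, the limit still meets $D$ transversely at precisely the marked points of $u_0$. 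Likewise no cylinder component may enter the tubular neighborhood of $D$ (which would force its symplectic area to be at least $\kappa$, again by positivity of intersection with $D$), and no breaking along divisorial $1$-periodic orbits can occur; this is exactly the maximum-principle and energy-gap argument of \cite[Section 2.3]{GP20} for the admissible pair $(H^\ell,J^\ell)$. Hence all breaking happens along non-divisorial $1$-periodic orbits, with all cylinder components contained in $X$.

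It remains to pin down the weighted winding numbers. Writing the cascade as in the statement, $\scrF^{H^\ell,J^\ell}(y_r,y_{r-1})\times\cdots\times\scrF^{H^\ell,J^\ell}(y_1,x)$ with $u_0$ asymptotic to $y_r$, the action estimate $\calA_\ell(y)\approx-\overline{w}(y)(1-\epsilon_\ell^2/2)$ together with non-negativity of Floer energy gives (up to arbitrarily small error) the monotone chain $\overline{w}(x)\le\overline{w}(y_1)\le\cdots\le\overline{w}(y_r)$, while the sum of the cylinder energies telescopes to $\calA_\ell(x)-\calA_\ell(y_r)\approx(\overline{w}(y_r)-\overline{w}(x))(1-\epsilon_\ell^2/2)$ and is bounded above by the total energy $\approx\overline{w}(x)\,\epsilon_\ell^2/2$, which is $<\kappa$ for $\epsilon_\ell$ small. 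Since each $\overline{w}(y_j)$ is an integer multiple of $\kappa$, this forces $\overline{w}(y_r)=\overline{w}(x)$, hence $\overline{w}(y_j)=\overline{w}(x)$ for all $j$; in particular $u_0$ is again a low-energy marked thimble, lying in $\scrR^{H^\ell,J^\ell_{\overline{w}(y_r)}}(y_r)$. Here the hypothesis $\overline{w}(x)<\lambda_\ell$ is what guarantees that all of the $y_j$ lie below the slope, so that the energy and action estimates apply and no Reeb-chord breaking at $\partial\widehat{X}_\ell$ intervenes, just as in \cite{GP20}. Finally, the compact $\langle r\rangle$-manifold-with-corners structure on $\overline{\scrR}^{H^\ell,J^\ell_{\overline{w}(x)}}(x)$, with codimension-$r$ boundary strata exactly as displayed, then follows from the standard gluing analysis, cf. \cite[Section 7]{Lar21} (also \cite[Section 8]{PS24b}, \cite[Section 6]{PS25c}) together with the log-specific analysis of \cite{GP20,GP21}.

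I expect the main obstacle to be the divisor-related bubbling analysis in the second step: rigorously excluding sphere bubbles forming at the marked points (where $u$ touches $D$) and cylinder components drifting into a neighborhood of $D$, which requires marrying GP's energy and action estimates with positivity of intersection and the adaptedness of $(H^\ell,J^\ell)$ near $D$, and, relatedly, verifying that the transverse intersection conditions with $D$ genuinely persist in the Gromov limit. The winding-number and gluing steps should then be routine given the framework already in place.
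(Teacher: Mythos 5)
Your proposal is correct and follows essentially the same route as the paper's proof: standard Gromov compactness in the closed manifold $M$, exclusion of divisorial breaking via Ganatra--Pomerleano, exclusion of sphere bubbles via the low-energy estimate (using that the marked points are fixed), and positivity of intersection to pin down the strata. The only cosmetic difference is that you track the weighted winding numbers through the action/energy telescoping, whereas the paper argues via the well-defined intersection number with $D$ being equal to $\overline{w}(x)$; both yield the same conclusion.
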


\begin{proof}
Since $M$ is compact, standard Gromov-compactness shows that there is a Gromov-compactification $\overline{\scrR}^{H^\ell,J^\ell_{\overline{w}(x)}}(x)$ of $\scrR^{H^\ell,J^\ell_{\overline{w}(x)}}(x)$ whose elements consist of a marked thimble followed by a sequence of Floer trajectories together with various trees of sphere bubbles glued at various points. First, the argument of \cite[Lemma 4.14]{GP21} shows no breaking at divisorial 1-periodic orbits may occur. Second, our energy estimate shows that no sphere bubbling may occur. (Recall, our marked points on the thimble are fixed; hence, no sphere bubbling due to domain degenerations can occur.) In particular, any element of $\overline{\scrR}^{H^\ell,J^\ell_{\overline{w}(x)}}(x)$ has a well-defined intersection number with $D$ which must equal $\overline{w}(x)$. Therefore, again using our energy estimate and positivity of intersection, we see the only possible accumulation points of a sequence in $\scrR^{H^\ell,J^\ell_{\overline{w}(x)}}(x)$ are of the form specified in the statement of this proposition.
\end{proof}

\begin{figure}[ht]
%% Creator: Inkscape 1.4.2 (f4327f4, 2025-05-13), www.inkscape.org
%% PDF/EPS/PS + LaTeX output extension by Johan Engelen, 2010
%% Accompanies image file '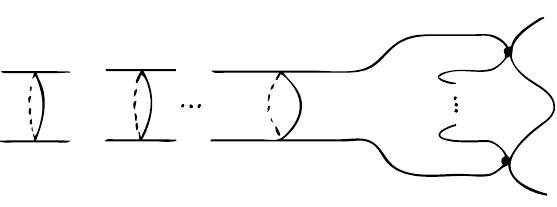' (pdf, eps, ps)
%%
%% To include the image in your LaTeX document, write
%%   \input{<filename>.pdf_tex}
%%  instead of
%%   \includegraphics{<filename>.pdf}
%% To scale the image, write
%%   \def\svgwidth{<desired width>}
%%   \input{<filename>.pdf_tex}
%%  instead of
%%   \includegraphics[width=<desired width>]{<filename>.pdf}
%%
%% Images with a different path to the parent latex file can
%% be accessed with the `import' package (which may need to be
%% installed) using
%%   \usepackage{import}
%% in the preamble, and then including the image with
%%   \import{<path to file>}{<filename>.pdf_tex}
%% Alternatively, one can specify
%%   \graphicspath{{<path to file>/}}
%% 
%% For more information, please see info/svg-inkscape on CTAN:
%%   http://tug.ctan.org/tex-archive/info/svg-inkscape
%%
\begingroup%
  \makeatletter%
  \providecommand\color[2][]{%
    \errmessage{(Inkscape) Color is used for the text in Inkscape, but the package 'color.sty' is not loaded}%
    \renewcommand\color[2][]{}%
  }%
  \providecommand\transparent[1]{%
    \errmessage{(Inkscape) Transparency is used (non-zero) for the text in Inkscape, but the package 'transparent.sty' is not loaded}%
    \renewcommand\transparent[1]{}%
  }%
  \providecommand\rotatebox[2]{#2}%
  \newcommand*\fsize{\dimexpr\f@size pt\relax}%
  \newcommand*\lineheight[1]{\fontsize{\fsize}{#1\fsize}\selectfont}%
  \ifx\svgwidth\undefined%
    \setlength{\unitlength}{266.9357444bp}%
    \ifx\svgscale\undefined%
      \relax%
    \else%
      \setlength{\unitlength}{\unitlength * \real{\svgscale}}%
    \fi%
  \else%
    \setlength{\unitlength}{\svgwidth}%
  \fi%
  \global\let\svgwidth\undefined%
  \global\let\svgscale\undefined%
  \makeatother%
  \begin{picture}(1,0.37059743)%
    \lineheight{1}%
    \setlength\tabcolsep{0pt}%
    \put(0.00472965,0.16920435){\color[rgb]{0,0,0}\makebox(0,0)[lt]{\lineheight{1.25}\smash{\begin{tabular}[t]{l}$x$\end{tabular}}}}%
    \put(0.14400279,0.1679847){\color[rgb]{0,0,0}\makebox(0,0)[lt]{\lineheight{1.25}\smash{\begin{tabular}[t]{l}$y_1$\end{tabular}}}}%
    \put(0.39450001,0.16913743){\color[rgb]{0,0,0}\makebox(0,0)[lt]{\lineheight{1.25}\smash{\begin{tabular}[t]{l}$y_r$\end{tabular}}}}%
    \put(0.81394974,0.26941565){\color[rgb]{0,0,0}\makebox(0,0)[lt]{\lineheight{1.25}\smash{\begin{tabular}[t]{l}$z_1$\end{tabular}}}}%
    \put(0.81273026,0.08089931){\color[rgb]{0,0,0}\makebox(0,0)[lt]{\lineheight{1.25}\smash{\begin{tabular}[t]{l}$z_{\overline{w}(x)}$\end{tabular}}}}%
    \put(0.98992513,0.00475865){\color[rgb]{0,0,0}\makebox(0,0)[lt]{\lineheight{1.25}\smash{\begin{tabular}[t]{l}$D$\end{tabular}}}}%
    \put(0,0){\includegraphics[width=\unitlength,page=1]{brokenmarkedthimble.pdf}}%
  \end{picture}%
\endgroup%

\caption{This is a broken (low-energy) $k$-marked thimble. Note, each $y_j$ has winding number equal to $x$'s winding number.}
\label{fig:brokenmarkedthimble}
\end{figure}

We now move on to investigating the smooth structure of $\overline{\scrR}^{H^\ell,J^\ell_{\overline{w}(x)}}(x)$.

\begin{prop}\label{prop:smoothstructure0}
$\overline{\scrR}^{H^\ell,J^\ell_{\overline{w}(x)}}(x)$ can be given the structure of a compact smooth manifold with corners whose codimension 1 boundary is enumerated by gluing maps of the form
    \begin{equation}
    \overline{\scrR}^{H^\ell,J^\ell_{\overline{w}(y)}}(y)\times\bbF^{H^\ell,J^\ell}(y,x)\to\overline{\scrR}^{H^\ell,J^\ell_{\overline{w}(x)}}(x),\;\;\overline{w}(y)=\overline{w}(x).
    \end{equation}
\end{prop}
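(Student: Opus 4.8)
The plan is to upgrade the set-theoretic stratification of Proposition \ref{prop:lowenergycompactification} to a $\langle k\rangle$-manifold structure (in the sense of \cite{Lau00}) by the standard gluing argument for Floer trajectories and PSS-type thimbles, following \cite[Section 6]{Lar21} (also cf. \cite[Section 8]{PS24b} and \cite[Section 6]{PS25c}). Recall from that proposition that the interior of a codimension $r$ boundary stratum is a product
    \[
    \scrR^{H^\ell,J^\ell_{\overline{w}(y_r)}}(y_r)\times\scrF^{H^\ell,J^\ell}(y_r,y_{r-1})\times\cdots\times\scrF^{H^\ell,J^\ell}(y_1,x),\;\;\overline{w}(y_j)=\overline{w}(x)\;\;\forall j;
    \]
here the interior thimble moduli space is already a smooth manifold, and each $\scrF$-factor already carries a manifold-with-corners structure compatible with the flow category $\bbF^{H^\ell,J^\ell}$.

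First I would construct a gluing chart near an arbitrary point $(u_r,v_r,\ldots,v_1)$ of such a stratum. For a small gluing parameter $(\rho_1,\ldots,\rho_r)\in[0,\delta)^r$ one forms a pre-glued map by splicing $u_r$ to the $v_j$'s along the negative cylindrical end of the thimble and along the ends at the breaking orbits; since all the relevant linearized operators are surjective for generic data and thimble/Floer solutions decay exponentially at the punctures, a Newton--Picard iteration produces a unique genuine solution $C^0$-close to the pre-glued map, and letting the gluing parameters vary yields a homeomorphism from a neighborhood in $\overline{\scrR}^{H^\ell,J^\ell_{\overline{w}(x)}}(x)$ onto $[0,\delta)^r$ times a ball in the stratum. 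The essential simplification in our setting is that the $\overline{w}(x)$ marked points on the thimble are \emph{fixed} and lie in the interior, away from every neck region where gluing takes place; consequently the constraint forcing sections to lie in $u^*TD$ at the marked points is untouched by the gluing construction, the domains carry no Deligne--Mumford moduli, and the analysis of \cite{Lar21} applies essentially verbatim. Smoothness and mutual compatibility of these charts, together with their compatibility with the pre-existing smooth structures on the lower-dimensional thimble moduli spaces and on $\bbF^{H^\ell,J^\ell}$, follow from the standard associativity-of-gluing arguments and produce the desired $\langle k\rangle$-manifold structure.

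Finally, reading off the codimension-1 faces (the case $r=1$) and using that breaking occurs only at orbits of weighted winding number $\overline{w}(x)$ --- so that the thimble factor is again a low-energy compactification $\overline{\scrR}^{H^\ell,J^\ell_{\overline{w}(y)}}(y)$ with $\overline{w}(y)=\overline{w}(x)$, while the single Floer-trajectory factor is $\bbF^{H^\ell,J^\ell}(y,x)$ --- gives exactly the asserted gluing maps. The only real content is the gluing analysis itself, i.e.\ establishing the quadratic estimates and uniformly bounded right inverses over the gluing region; but this runs entirely parallel to (and is slightly simpler than) the Floer-theoretic case handled in \cite{Lar21}, so the hard part is a careful invocation of that machinery rather than anything genuinely new.
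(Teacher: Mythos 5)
Your proposal is correct and follows essentially the same route as the paper, which simply invokes the gluing analysis of \cite[Section 6]{Lar21} with the observation that the breaking occurs along the negative cylindrical end in the complement of $D$, away from the fixed marked points, so the cylindrical-end gluing machinery applies verbatim. Your expanded discussion of pre-gluing, Newton--Picard iteration, and chart compatibility is just a fleshed-out version of that same citation.
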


\begin{proof}
This follows as in \cite[Section 6]{Lar21} (also, cf. \cite[Section 8]{PS24b} and \cite[Section 6]{PS25c}); when considering the gluing of a Floer trajectory to a marked thimble, since the gluing is occurring in the complement of the divisor, the gluing analysis is exactly the same as the gluing analysis of cylindrical ends in \emph{loc. cit.}
\end{proof}

For notational convenience, we define 
    \begin{equation}
    \partial_{\overline{w}(x),yx}\equiv\overline{\scrR}^{H^\ell,J^\ell_{\overline{w}(y)}}(y)\times\bbF^{H^\ell,J^\ell}(y,x).
    \end{equation}
We have the following two basic relations. First, a short exact sequence of real vector bundles
    \begin{equation}
    0\to\underline{\bbR}\to T\overline{\scrR}^{H^\ell,J^\ell_{\overline{w}(x)}}(x)\vert_{\partial_{\overline{w}(x),yx}}\to T\overline{\scrR}^{H^\ell,J^\ell_{\overline{w}(y)}}(y)\times T\bbF^{H^\ell,J^\ell}(y,x)\to0
    \end{equation}
given by taking a collar neighborhood of a codimension 1 boundary stratum. Second, a short exact sequences of vector bundles 
    \begin{equation}
    0\to\underline{\bbR}\to\ind D\overline{\partial}_{H^\ell,J^\ell}\to T\bbF^{H^\ell,J^\ell}(y,x)\vert_{\operatorname{int}\bbF^{H^\ell,J^\ell}(y,x)}\to0
    \end{equation}
given by the translational direction. There are straightforward relations when passing to higher codimension boundary strata.

\begin{prop}
There is an extension of $\ind D^{\scrR,\overline{w}(x)}$ to $\overline{\scrR}^{H^\ell,J^\ell_{\overline{w}(x)}}(x)$ whose restriction to the interior of a codimension 1 boundary stratum is of the form
    \begin{equation}
    \ind D^{\scrR,\overline{w}(x)}\vert_{\operatorname{int}\partial_{\overline{w}(x),yx}}=\ind D^{\scrR,\overline{w}(y)}\vert_{\operatorname{int}\partial_{\overline{w}(x),yx}}+\ind\overline{\partial}_{H^\ell,J^\ell}\vert_{\operatorname{int}\partial_{\overline{w}(x),yx}}.
    \end{equation}
The natural diagrams associated to codimension 1 boundary strata,
    \begin{equation}
    \begin{tikzcd}[column sep=small, center picture]
    & & \underline{\bbR}\arrow[dd] \\
    & & \\
    & \ind D^{\scrR,\overline{w}(x)}\vert_{\operatorname{int}\partial_{\overline{w}(x),yx}}\arrow[r,"\sim"]\arrow[d,equals] & T\overline{\scrR}^{H^\ell,J^\ell_{\overline{w}(x)}}(x)\vert_{\operatorname{int}\partial_{\overline{w}(x),yx}}\arrow[d] \\
    \underline{\bbR}\arrow[r] & \ind D^{\scrR,\overline{w}(y)}\vert_{\operatorname{int}\partial_{\overline{w}(x),yx}}+\ind\overline{\partial}_{H^\ell,J^\ell}\vert_{\operatorname{int}\partial_{\overline{w}(x),yx}}\arrow[r] & T\overline{\scrR}^{H^\ell,J^\ell_{\overline{w}(y)}}(y)\vert_{\operatorname{int}\partial_{\overline{w}(x),yx}}\times T\bbF^{H^\ell,J^\ell}(y,x)\vert_{\operatorname{int}\partial_{\overline{w}(x),yx}},
    \end{tikzcd}
    \end{equation}
commute. Moreover, the natural diagrams associated to higher codimension boundary strata commute.
\end{prop}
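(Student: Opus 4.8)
The plan is to follow the proof of Proposition~\ref{prop:floerindex} essentially verbatim, appealing to the linear gluing theory of \cite[Section~7]{Lar21} (also, cf.\ \cite[Section~8]{PS24b} and \cite[Section~6]{PS25c}). The only new feature is the marked-thimble operator $D^{\scrR,\overline{w}(x)}$, whose domain is non-cylindrical and carries point constraints at the (fixed) marked points; but, exactly as in the proof of Proposition~\ref{prop:smoothstructure0}, all of the gluing in Proposition~\ref{prop:lowenergycompactification} takes place along the cylindrical ends, which lie in the complement of $D$ and are disjoint from the marked points, so none of these features interacts with the gluing and the analysis is a transcription of the cylindrical-end analysis of \emph{loc.\ cit.}

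First I would build the extension of $\ind D^{\scrR,\overline{w}(x)}$ over $\overline{\scrR}^{H^\ell,J^\ell_{\overline{w}(x)}}(x)$ by assigning, to a broken configuration in the interior of the codimension $r$ stratum --- a marked thimble $v$ asymptotic to $y_r$ followed by Floer trajectories $u_j\in\scrF^{H^\ell,J^\ell}(y_j,y_{j-1})$, $j=1,\ldots,r$, with $y_0\equiv x$ (cf.\ Proposition~\ref{prop:lowenergycompactification}) --- the glued-together Fredholm operator
    \begin{equation}
    D^{\scrR,\overline{w}(y_r)}_v\#D(\overline{\partial}_{H^\ell,J^\ell})_{u_r}\#\cdots\#D(\overline{\partial}_{H^\ell,J^\ell})_{u_1}.
    \end{equation}
Since all of the $1$-periodic orbits $y_0,\ldots,y_r$ are nondegenerate, their asymptotic operators are invertible, so this operator is Fredholm and linear gluing identifies its index bundle canonically with $\ind D^{\scrR,\overline{w}(y_r)}+\sum_{j=1}^r\ind D\overline{\partial}^{y_jy_{j-1}}_{H^\ell,J^\ell}$; over the uncompactified locus it is moreover canonically homotopic, through Fredholm operators, to $D^{\scrR,\overline{w}(x)}_{v\#u_r\#\cdots\#u_1}$. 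This yields the desired extension and, on specializing to $r=1$, the stated decomposition on the interior of a codimension $1$ boundary stratum.

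It then remains to check the diagrams. The key point is that, under the linear gluing identification above, the collar direction of $\overline{\scrR}^{H^\ell,J^\ell_{\overline{w}(x)}}(x)$ normal to $\partial_{\overline{w}(x),yx}$ is exactly the gluing (neck-length) parameter, and this parameter is identified with the $\underline{\bbR}$-factor in the short exact sequence $0\to\underline{\bbR}\to\ind D\overline{\partial}^{yx}_{H^\ell,J^\ell}\to T\bbF^{H^\ell,J^\ell}(y,x)\to0$, i.e.\ with the translational direction of the glued-in Floer trajectory. Granting this, both composites around the square agree by compatibility of linear gluing with passage to collar neighborhoods (again \cite[Section~7]{Lar21}), and the higher-codimension diagrams follow from the iterated version of the same statement. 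I would also remark, to contrast with Propositions~\ref{prop:morseindex} and~\ref{prop:floerindex}, that here there is no residual $\underline{\bbR}$ separating $\ind D^{\scrR,\overline{w}(x)}$ from $T\overline{\scrR}^{H^\ell,J^\ell_{\overline{w}(x)}}(x)$, since the marked points of $\Sigma_k$ are fixed and the marked-thimble moduli space therefore carries no $\bbR$-action. The only genuinely new point needing care --- the mildest possible obstacle --- is that the asymptotic operator at the negative puncture of $v$ agrees with the one at the positive puncture of $u_r$, so that the connect sum is well-posed; but this holds by construction, since both equal the linearization $D(\overline{\partial}_{H^\ell,J^\ell})_{y_r}$ at the orbit $y_r$ for the data $(H^\ell,J^\ell)$.
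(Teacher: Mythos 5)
Your proposal is correct and matches the paper's approach, which simply cites the linear gluing theory of \cite[Section 7]{Lar21} and observes that the gluing analysis of Proposition \ref{prop:smoothstructure0} is identical to the cylindrical-end case; you have just spelled out the details of that citation, including the correct observation that the thimble factor carries no residual translational $\underline{\bbR}$ because the marked points are fixed.
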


\begin{proof}
Again, this follows as in \cite[Section 7]{Lar21} (also, cf. \cite[Section 8]{PS24b} and \cite[Section 6]{PS25c}) since the gluing analysis of Proposition \ref{prop:smoothstructure0} is identical.
\end{proof}

\begin{rem}
In words, the previous proposition is simply saying that the collar directions of a gluing map associated to a boundary stratum are identified with the translational directions in the direct sum of index bundles of that boundary stratum.
\end{rem}

\begin{cor}\label{corollary:extendingtsf}
Our constructed twisted stable framing on $\scrR^{H^\ell,J^\ell_{\overline{w}(x)}}(x)$ extends to a twisted stable framing on $\overline{\scrR}^{H^\ell,J^\ell_{\overline{w}(x)}}(x)$ which, when restricted to a codimension 1 boundary stratum, agrees with the already constructed twisted stable framing on that boundary stratum.
\end{cor}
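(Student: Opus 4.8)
The plan is to run, essentially word for word, the extension argument already carried out twice above: once for the Morse framing (the discussion following Proposition~\ref{prop:morseindex}) and once for the Floer framing (the discussion following Proposition~\ref{prop:floerindex}), now with the low-energy marked-thimble moduli spaces in place of the spaces of Floer trajectories.

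First I would record that every ingredient of the twisted stable framing on the open stratum $\scrR^{H^\ell,J^\ell_{\overline{w}(x)}}(x)$ extends over the compactification. By Proposition~\ref{prop:lowenergycompactification} no domain degeneration and no sphere bubbling occurs, so in any broken configuration the marked points $z_1,\dots,z_{\overline{w}(x)}$ stay distinct and remain on the top-level thimble component; hence the evaluation maps $\eval_{z_1}$ and $\eneval_{z_1}$, and therefore the twisting bundles $\eneval_{z_1}^*\widetilde{T}_{\overline{w}(x)}$ and $\eval_{z_1}^*T_{\overline{w}(x)}$, extend continuously to $\overline{\scrR}^{H^\ell,J^\ell_{\overline{w}(x)}}(x)$. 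The family $D^{\scrR,\overline{w}(x)}$ extends by the proposition immediately preceding this corollary, its extension classifying $T\overline{\scrR}^{H^\ell,J^\ell_{\overline{w}(x)}}(x)$; and the auxiliary families $T_{\frakF,x}$, $\Phi^\scrR$, $D_{\calL_m,\overline{w}(x)}$ together with the canonical transverse operators $D^\Lambda_{p_j}$ are all pulled back either from the asymptotic orbit $x$ or from $S_DM$ along the (now extended) evaluation maps. I would then glue these operators fiberwise over all of $\overline{\scrR}^{H^\ell,J^\ell_{\overline{w}(x)}}(x)$ and invoke Lemma~\ref{lem:index-theoretic}, Proposition~\ref{prop:canonicaltransverseoperator} and Lemma~\ref{lem:hybridmarkedthimbles} verbatim --- the interior gluing analysis of Lemma~\ref{lem:index-theoretic} is unaffected because the marked points remain distinct --- to obtain a twisted stable framing on the whole compactification, of the same form as on the interior.

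Next I would check compatibility along a codimension $1$ boundary stratum $\partial_{\overline{w}(x),yx}=\overline{\scrR}^{H^\ell,J^\ell_{\overline{w}(y)}}(y)\times\bbF^{H^\ell,J^\ell}(y,x)$, exactly as in the Morse and Floer cases. Along the neck opened by the breaking at $y$ one inserts the cancelling pair $T_{\frakF,y}\#T_{\frakF,y}^\vee$, using the canonical isomorphism $\ind T_{\frakF,y}+\ind T_{\frakF,y}^\vee\cong 0$ of \eqref{eqn:floerframingaux}; the glued operator that frames $\scrR^{H^\ell,J^\ell_{\overline{w}(x)}}(x)$ then decomposes, over the boundary, as the glued operator that frames $\scrR^{H^\ell,J^\ell_{\overline{w}(y)}}(y)$, glued to the operator $D\overline{\partial}_{H^\ell,J^\ell}$ capped by $T_{\frakF,x}$ and $T_{\frakF,y}^\vee$ that underlies the Floer framing \eqref{eqn:floerframingstandard} of $\bbF^{H^\ell,J^\ell}(y,x)$. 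Combined with the commuting square of the preceding proposition --- which identifies the collar $\underline{\bbR}$ of $\partial_{\overline{w}(x),yx}$ with the translational $\underline{\bbR}$ sitting inside $\ind D^{\scrR,\overline{w}(x)}\vert_{\operatorname{int}\partial_{\overline{w}(x),yx}}=\ind D^{\scrR,\overline{w}(y)}+\ind D\overline{\partial}_{H^\ell,J^\ell}$ --- this identifies the restriction of our framing to $\partial_{\overline{w}(x),yx}$ with the product of the already-constructed framing on $\overline{\scrR}^{H^\ell,J^\ell_{\overline{w}(y)}}(y)$ and the standard stable framing on $\bbF^{H^\ell,J^\ell}(y,x)$. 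The analogous statement for higher-codimension strata then follows from associativity of gluing together with the higher-codimension coherence already recorded in that proposition.

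The step I expect to require the most care is the only genuinely new point relative to \cite{Lar21}: the canonical transverse operators $D^\Lambda_{p_j}$ depend on auxiliary base points $p_j\in S_DM\vert_{u(z_j)}$ (only $p_1=\eneval_{z_1}(u)$ is pinned down), and one must verify that the resulting stable framing does not depend on the $p_j$ --- this is precisely what allows the boundary restriction of the framing on $\overline{\scrR}^{H^\ell,J^\ell_{\overline{w}(x)}}(x)$ to match the framing constructed separately on $\overline{\scrR}^{H^\ell,J^\ell_{\overline{w}(y)}}(y)$. As in the proofs of Proposition~\ref{prop:canonicaltransverseoperator} and Lemma~\ref{lem:hybridmarkedthimbles}, the $p_j$-dependence enters only through a trivialization of an oriented (spin) bundle over a circle or through parallel transport along a flat connection, and each intermediate index isomorphism is canonical, so none of these choices changes the isomorphism class of the index bundle. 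Granting this, all the identifications above are canonical as one varies over boundary strata, and the corollary follows.
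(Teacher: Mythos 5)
Your proposal is correct and follows essentially the same route as the paper's proof: extend the index bundles and evaluation data over the compactification (using the absence of bubbling from Proposition \ref{prop:lowenergycompactification}), and then match framings on a codimension 1 stratum by inserting the cancelling pair $T_{\frakF,y}\#T_{\frakF,y}^\vee$ via \eqref{eqn:floerframingaux} and canonically deforming the resulting glued operator to $D^{\scrR,\overline{w}(x)}_{u_1\#u_2}\oplus\Phi^\scrR\oplus D_{\calL_m,\overline{w}(x),u_1\#u_2}$. The points you flag as delicate (the $p_j$-independence and the identification of collar with translational directions) are exactly the ones the paper disposes of by the canonicity of the intermediate index isomorphisms and the preceding proposition, respectively.
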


\begin{proof}
This follows essentially because our twisted stable framings were constructed via gluing various Fredholm operators. We consider a codimension 1 boundary stratum of the form 
    \begin{equation}
    \overline{\scrR}^{H^\ell,J^\ell_{\overline{w}(y)}}(y)\times\bbF^{H^\ell,J^\ell}(y,x)\to\overline{\scrR}^{H^\ell,J^\ell_{\overline{w}(x)}}(x).
    \end{equation}
Let 
    \begin{equation}
    \big(u_2,[u_1]\big)\in\operatorname{int}\big(\overline{\scrR}^{H^\ell,J^\ell_{\overline{w}(y)}}(y)\times\bbF^{H^\ell,J^\ell}(y,x)\big).
    \end{equation}
The prescribed twisted stable framing at $\big(u_2,[u_1]\big)$ is achieved by considering the two glued together operators 
    \begin{align}
    T_{\frakF,x}\#\big(D(\overline{\partial}_{H^\ell,J^\ell})_{u_1}\oplus\Phi^{\overline{\partial}}\big)&\#T_{\frakF,y}^\vee, \\
    T_{\frakF,y}\#\big(D^{\scrR,\overline{w}(y)}_{u_2}\oplus\Phi^\scrR\oplus D_{\calL_m,\overline{w}(y),u_2}\big)&\#\oplus_j D^\Lambda_{p_j}
    \end{align}
and then adding together their index bundles. Observe, the sum of the index bundles of the two aforementioned glued together operators is canonically isomorphic to the index bundle of the glued together operator
    \begin{equation}
    T_{\frakF,x}\#\big(D(\overline{\partial}_{H^\ell,J^\ell})_{u_1}\oplus\Phi^{\overline{\partial}}\big)\# \\
    \big(D^{\scrR,\overline{w}(y)}_{u_2}\oplus\Phi^\scrR\oplus D_{\calL_m,\overline{w}(y),u_2}\big)\#\oplus_j D^\Lambda_{p_j}.
    \end{equation}
The corollary follows by observing the glued together operator 
    \begin{equation}
    \big(D(\overline{\partial}_{H^\ell,J^\ell})_{u_1}\oplus\Phi^{\overline{\partial}}\big)\#\big(D^{\scrR,\overline{w}(y)}_{u_2}\oplus\Phi^\scrR\oplus D_{\calL_m,\overline{w}(y),u_2}\big)
    \end{equation}
may be canonically deformed to the operator 
    \begin{equation}
    D^{\scrR,\overline{w}(x)}_{u_1\#u_2}\oplus\Phi^\scrR\oplus D_{\calL_m,\overline{w}(x),u_1\#u_2},
    \end{equation}
where $u_1\#u_2$ is the gluing of $u_1$ and $u_2$.
\end{proof}

\subsection{Hybrid low-energy marked thimbles}
First, for any $x\in\chi_0(X;H^\ell)$ and $a\in\crit(f_X)$, we consider the moduli space $\overline{\scrR}^{H^\ell,J^\ell_0}(a,x)$ of \emph{hybrid 0-marked $H^\ell$-thimbles}, i.e., the standard Gromov-compactification of 
    \begin{equation}
    \eval_{+\infty}^{-1}\big(W^s(a;f_X)\big),\;\;\eval_{+\infty}:\overline{\scrR}^{H^\ell,J^\ell_0}(x)\to X
    \end{equation}
given by allowing breaking at critical points. (Recall, elements of $\overline{\scrR}^{H^\ell,J^\ell_0}(x)$ \emph{do not} intersect $D$ at $+\infty$, see Figure \ref{fig:hybrid0markedthimble}.) This is, for generic data, a compact smooth manifold with corners. By splitting the short exact sequence 
    \begin{equation}
    0\to T\overline{\scrR}^{H^\ell,J^\ell_0}(a,x)\to T\overline{\scrR}^{H^\ell,J^\ell_0}(x)\to T\overline{W}^u(a;f_X)\to0,
    \end{equation}
we may endow $\overline{\scrR}^{H^\ell,J^\ell_0}(a,x)$ with a twisted stable framing, 
    \begin{equation}
    T\overline{\scrR}^{H^\ell,J^\ell_0}(a,x)+\ind T_{\frakF,x}\cong\underline{\bbR}^{-I(a)}+\eval_{+\infty}^*\widetilde{T}_0,
    \end{equation}
which, when restricted to a codimension 1 boundary stratum, agrees with the already constructed twisted stable framing on that boundary stratum. In particular, the moduli spaces of hybrid 0-marked $H^\ell$-thimbles assemble into a framed flow bimodule 
    \begin{equation}
    \lelogpss^{0,\ell}:\bbD\bbM_{X,V_0}\to\bbF^{H^\ell,J^\ell,\Lambda}_0.
    \end{equation}
In the standard way, we may check that $\lelogpss^{0,\ell}$ is compatible with Floer continuation framed flow bimodules by considering Floer data interpolating between (1) the gluing of Floer continuation data to hybrid 0-marked $H^\ell$-thimble Floer data and (2) hybrid 0-marked $H^{\ell+1}$-thimble Floer data. In particular, we have an induced map of spectra
    \begin{equation}
    \lelogpss^0:\frakD X^{-V_0}\to\frakF^\Lambda_0.
    \end{equation}

\begin{figure}[ht]
%% Creator: Inkscape 1.4.2 (f4327f4, 2025-05-13), www.inkscape.org
%% PDF/EPS/PS + LaTeX output extension by Johan Engelen, 2010
%% Accompanies image file '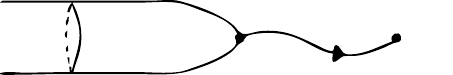' (pdf, eps, ps)
%%
%% To include the image in your LaTeX document, write
%%   \input{<filename>.pdf_tex}
%%  instead of
%%   \includegraphics{<filename>.pdf}
%% To scale the image, write
%%   \def\svgwidth{<desired width>}
%%   \input{<filename>.pdf_tex}
%%  instead of
%%   \includegraphics[width=<desired width>]{<filename>.pdf}
%%
%% Images with a different path to the parent latex file can
%% be accessed with the `import' package (which may need to be
%% installed) using
%%   \usepackage{import}
%% in the preamble, and then including the image with
%%   \import{<path to file>}{<filename>.pdf_tex}
%% Alternatively, one can specify
%%   \graphicspath{{<path to file>/}}
%% 
%% For more information, please see info/svg-inkscape on CTAN:
%%   http://tug.ctan.org/tex-archive/info/svg-inkscape
%%
\begingroup%
  \makeatletter%
  \providecommand\color[2][]{%
    \errmessage{(Inkscape) Color is used for the text in Inkscape, but the package 'color.sty' is not loaded}%
    \renewcommand\color[2][]{}%
  }%
  \providecommand\transparent[1]{%
    \errmessage{(Inkscape) Transparency is used (non-zero) for the text in Inkscape, but the package 'transparent.sty' is not loaded}%
    \renewcommand\transparent[1]{}%
  }%
  \providecommand\rotatebox[2]{#2}%
  \newcommand*\fsize{\dimexpr\f@size pt\relax}%
  \newcommand*\lineheight[1]{\fontsize{\fsize}{#1\fsize}\selectfont}%
  \ifx\svgwidth\undefined%
    \setlength{\unitlength}{216.16376982bp}%
    \ifx\svgscale\undefined%
      \relax%
    \else%
      \setlength{\unitlength}{\unitlength * \real{\svgscale}}%
    \fi%
  \else%
    \setlength{\unitlength}{\svgwidth}%
  \fi%
  \global\let\svgwidth\undefined%
  \global\let\svgscale\undefined%
  \makeatother%
  \begin{picture}(1,0.16434759)%
    \lineheight{1}%
    \setlength\tabcolsep{0pt}%
    \put(0.01928818,0.07011026){\color[rgb]{0,0,0}\makebox(0,0)[lt]{\lineheight{1.25}\smash{\begin{tabular}[t]{l}$x$\end{tabular}}}}%
    \put(0.37898451,0.06791322){\color[rgb]{0,0,0}\makebox(0,0)[lt]{\lineheight{1.25}\smash{\begin{tabular}[t]{l}$+\infty$\end{tabular}}}}%
    \put(0.92414137,0.06038325){\color[rgb]{0,0,0}\makebox(0,0)[lt]{\lineheight{1.25}\smash{\begin{tabular}[t]{l}$a$\end{tabular}}}}%
    \put(0,0){\includegraphics[width=\unitlength,page=1]{hybrid0markedthimble.pdf}}%
  \end{picture}%
\endgroup%

\caption{This is a (low-energy) hybrid 0-marked thimble; here, $x$ has winding number 0.}
\label{fig:hybrid0markedthimble}
\end{figure}

Second, for any $x\in\chi_k(X;H^\ell)$ with $k\geq1$ and $a\in\crit(f_S)$, we consider the moduli space $\overline{\scrR}^{H^\ell,J^\ell_{\overline{w}(x)}}(a,x)$ of \emph{hybrid $\overline{w}(x)$-marked $H^\ell$-thimbles}, i.e., the standard Gromov-compactification of 
    \begin{equation}
    \eneval_{z_1}^{-1}\big(W^s(a;f_S)\big),\;\;\eneval_{z_1}:\overline{\scrR}^{H^\ell,J^\ell_{\overline{w}(x)}}(x)\to S_DM
    \end{equation}
given by allowing breaking at critical points, see Figure \ref{fig:hybridkmarkedthimble}. This is, for generic data, a compact smooth manifold with corners. Again, by splitting the short exact sequence 
    \begin{equation}
    0\to T\overline{\scrR}^{H^\ell,J^\ell_{\overline{w}(x)}}(a,x)\to T\overline{\scrR}^{H^\ell,J^\ell_{\overline{w}(x)}}(x)\to T\overline{W}^u(a;f_S)\to0,
    \end{equation}
we may endow $\overline{\scrR}^{H^\ell,J^\ell_{\overline{w}(x)}}(a,x)$ with a twisted stable framing,
    \begin{equation}
    T\overline{\scrR}^{H^\ell,J^\ell_{\overline{w}(x)}}(a,x)+\ind T_{\frakF,x}\cong\underline{\bbR}^{-I(a)}+\eneval_{z_1}^*\widetilde{T}_{\overline{w}(x)}+\eval_{z_1}^*T_{\overline{w}(x)},
    \end{equation}
which, when restricted to a codimension 1 boundary stratum, agrees with the already constructed twisted stable framing on that boundary stratum. In particular, the moduli spaces of hybrid $k$-marked $H^\ell$-thimbles assemble into a framed flow bimodule 
    \begin{equation}
    \lelogpss^{k,\ell}:\bbD\bbM_{S_DM,V_{\overline{w}(k)}}\to\bbF^{H^\ell,J^\ell,\Lambda}_{k,k-1},
    \end{equation}
where $\bbF^{H^\ell,J^\ell,\Lambda}_{k,k-1}$ is the associated graded of the weight filtration of $\bbF^{H^\ell,J^\ell,\Lambda}$, cf. the end of Subsection \ref{subsec:spectralSH}; note, $\bbF^{H^\ell,J^\ell,\Lambda}_{k,k-1}$ is the framed flow category with objects $x\in\chi_k(X;H^\ell)$ and the obvious morphisms. Again, we may check that $\lelogpss^{k,\ell}$ is compatible with Floer continuation framed flow bimodules. In particular, we have an induced map of spectra
    \begin{equation}
    \lelogpss^k:\frakD (S_DM)^{-V_{\overline{w}(k)}}\to\frakF^\Lambda_{k,k-1}.
    \end{equation}

\begin{figure}[ht]
%% Creator: Inkscape 1.4.2 (f4327f4, 2025-05-13), www.inkscape.org
%% PDF/EPS/PS + LaTeX output extension by Johan Engelen, 2010
%% Accompanies image file '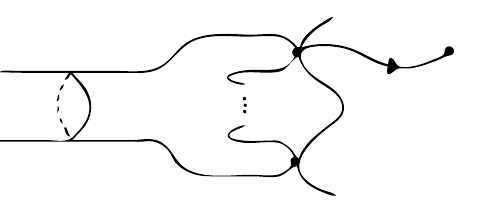' (pdf, eps, ps)
%%
%% To include the image in your LaTeX document, write
%%   \input{<filename>.pdf_tex}
%%  instead of
%%   \includegraphics{<filename>.pdf}
%% To scale the image, write
%%   \def\svgwidth{<desired width>}
%%   \input{<filename>.pdf_tex}
%%  instead of
%%   \includegraphics[width=<desired width>]{<filename>.pdf}
%%
%% Images with a different path to the parent latex file can
%% be accessed with the `import' package (which may need to be
%% installed) using
%%   \usepackage{import}
%% in the preamble, and then including the image with
%%   \import{<path to file>}{<filename>.pdf_tex}
%% Alternatively, one can specify
%%   \graphicspath{{<path to file>/}}
%% 
%% For more information, please see info/svg-inkscape on CTAN:
%%   http://tug.ctan.org/tex-archive/info/svg-inkscape
%%
\begingroup%
  \makeatletter%
  \providecommand\color[2][]{%
    \errmessage{(Inkscape) Color is used for the text in Inkscape, but the package 'color.sty' is not loaded}%
    \renewcommand\color[2][]{}%
  }%
  \providecommand\transparent[1]{%
    \errmessage{(Inkscape) Transparency is used (non-zero) for the text in Inkscape, but the package 'transparent.sty' is not loaded}%
    \renewcommand\transparent[1]{}%
  }%
  \providecommand\rotatebox[2]{#2}%
  \newcommand*\fsize{\dimexpr\f@size pt\relax}%
  \newcommand*\lineheight[1]{\fontsize{\fsize}{#1\fsize}\selectfont}%
  \ifx\svgwidth\undefined%
    \setlength{\unitlength}{238.29847441bp}%
    \ifx\svgscale\undefined%
      \relax%
    \else%
      \setlength{\unitlength}{\unitlength * \real{\svgscale}}%
    \fi%
  \else%
    \setlength{\unitlength}{\svgwidth}%
  \fi%
  \global\let\svgwidth\undefined%
  \global\let\svgscale\undefined%
  \makeatother%
  \begin{picture}(1,0.41260099)%
    \lineheight{1}%
    \setlength\tabcolsep{0pt}%
    \put(0.01838682,0.18855521){\color[rgb]{0,0,0}\makebox(0,0)[lt]{\lineheight{1.25}\smash{\begin{tabular}[t]{l}$x$\end{tabular}}}}%
    \put(0.4703086,0.29346563){\color[rgb]{0,0,0}\makebox(0,0)[lt]{\lineheight{1.25}\smash{\begin{tabular}[t]{l}$z_1$\end{tabular}}}}%
    \put(0.48160679,0.09171484){\color[rgb]{0,0,0}\makebox(0,0)[lt]{\lineheight{1.25}\smash{\begin{tabular}[t]{l}$z_{\overline{w}(k)}$\end{tabular}}}}%
    \put(0.93191478,0.29185166){\color[rgb]{0,0,0}\makebox(0,0)[lt]{\lineheight{1.25}\smash{\begin{tabular}[t]{l}$a$\end{tabular}}}}%
    \put(0.69773254,0.00665729){\color[rgb]{0,0,0}\makebox(0,0)[lt]{\lineheight{1.25}\smash{\begin{tabular}[t]{l}$D$\end{tabular}}}}%
    \put(0,0){\includegraphics[width=\unitlength,page=1]{hybridkmarkedthimble.pdf}}%
  \end{picture}%
\endgroup%

\caption{This is a (low-energy) hybrid $k$-marked thimble, $k\geq1$; here, $x$ has winding number $k$. Technically, the curve going into $a$ should lie in $S_DM$.}
\label{fig:hybridkmarkedthimble}
\end{figure}
    
\subsection{Computing the associated graded}
This subsection is dedicated to proving the following result.

\begin{prop}\label{prop:he}
$\lelogpss^k$ is a homotopy equivalence.
\end{prop}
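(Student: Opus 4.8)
The plan is to reduce the statement to \cite[Theorem~1.1]{GP20} by a homology Whitehead argument performed at each finite approximation $\ell$ and then passed through the colimit in $\ell$. I describe the case $k\geq 1$; the case $k=0$ is formally identical, replacing $S_DM$, $\chi_k$, $\eneval_{z_1}$, $\frakF^\Lambda_{k,k-1}$ by $\widehat{X}_{\overline{\epsilon}}$, $\chi_0$, $\eval_{+\infty}$, $\frakF^\Lambda_0$, and the hybrid $k$-marked thimbles by the hybrid $0$-marked thimbles.

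\emph{Finiteness at level $\ell$.} Fix $\ell$ with $\lambda_\ell>\overline{w}(k)$, so that the winding-$k$ orbit set $\chi_k(X;H^\ell)$ is finite. Then the framed flow category $\bbF^{H^\ell,J^\ell,\Lambda}_{k,k-1}$ has finitely many objects and compact manifold-with-corners morphism spaces, so the spectrum $\frakF^{H^\ell,J^\ell,\Lambda}_{k,k-1}$ it represents is a finite spectrum; and $\frakD(S_DM)^{-V_{\overline{w}(k)}}$, being the Spanier--Whitehead dual of the Thom spectrum of a virtual bundle over the closed manifold $S_DM$, is also finite. Working at fixed $\ell$ is precisely what makes this finiteness (hence boundedness) available: $\frakF^\Lambda$ itself need not be bounded below, cf.\ Remark~\ref{rem:elide}.

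\emph{Identification of the induced map.} The framed flow bimodule $\lelogpss^{k,\ell}$ is assembled from the $0$-dimensional hybrid moduli spaces $\overline{\scrR}^{H^\ell,J^\ell_{\overline{w}(k)}}(a,x)$ counted with the signs prescribed by the twisted stable framings constructed above (cf.\ Corollary~\ref{corollary:extendingtsf}); under the Abouzaid--Blumberg equivalence $\spectra\simeq\flow^{\fr}$ its effect on integral homology is the corresponding signed matrix. Combining the identification $H_*(\frakF^{H^\ell,J^\ell,\Lambda}_{k,k-1};\bbZ)\cong Gr_{\overline{w}(k)}HF^{-*+n}(X;H^\ell,J^\ell;\bbZ)$ of Subsection~\ref{subsec:spectralSH} with the Thom isomorphism and the index computations of the two preceding subsections, which pin down the isomorphism $H_*(\frakD(S_DM)^{-V_{\overline{w}(k)}};\bbZ)\cong Gr_kH^{-*+n}_{\mathrm{log}}(M,D)$ and were arranged precisely so that these two gradings match, I claim this matrix is the matrix of GP's cochain-level map $\lelogpss^{GP,k,\ell}\colon Gr_kC^*_{\mathrm{log}}(M,D)\to Gr_{\overline{w}(k)}CF^*(X;H^\ell,J^\ell;\bbZ)$, up to a sign on each generator. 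The only genuinely non-formal input is matching the orientation conventions underlying the twisted stable framings with GP's Fredholm-determinant orientations; since both are produced from the same families of Cauchy--Riemann operators and the same gluing/linearization identities used throughout this section, and since the marked-point model for thimbles is compared with GP's tangency model as in \cite[Section~4.4]{GP21}, this is a (long but routine) bookkeeping check. Granting it, $(\lelogpss^{k,\ell})_*$ agrees, after the above identifications, with $\lelogpss^{GP,k,\ell}$, which is an isomorphism by \cite[Theorem~1.1]{GP20}.

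\emph{From homology to homotopy, and the colimit.} A map of finite spectra inducing an isomorphism on $H\bbZ$-homology is a homotopy equivalence: its cofiber is a finite spectrum with vanishing integral homology, hence (universal coefficients) with vanishing homology in all coefficients, hence, being bounded below with finitely generated homotopy groups, contractible by the Hurewicz theorem. Thus each $\lelogpss^{k,\ell}$ with $\lambda_\ell>\overline{w}(k)$ is a homotopy equivalence. Finally, the source of $\lelogpss^{k,\ell}$ is independent of $\ell$ with identity transition maps, and the $\lelogpss^{k,\ell}$ are compatible with the Floer continuation framed flow bimodules; hence by two-out-of-three the relevant continuation bimodules are homotopy equivalences, so $\frakF^\Lambda_{k,k-1}=\varinjlim_\ell\frakF^{H^\ell,J^\ell,\Lambda}_{k,k-1}$ receives an equivalence from each term, and $\lelogpss^k=\varinjlim_\ell\lelogpss^{k,\ell}$ is a homotopy equivalence since colimits preserve equivalences. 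The main obstacle is the orientation/sign comparison of the second step; a secondary point worth flagging is that the passage from a homology isomorphism to a homotopy equivalence genuinely uses the finiteness established in the first step, rather than any connectivity of $\frakF^\Lambda$.
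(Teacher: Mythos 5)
Your overall scaffolding (identify the induced map on homology with GP's isomorphism, then upgrade via a Whitehead argument for bounded-below/finite spectra, then pass to the colimit in $\ell$) matches the paper's, and the first and third steps are fine. The gap is in the middle step, and it is not a sign issue. The moduli spaces defining $\lelogpss^{k,\ell}$ are hybrid $\overline{w}(k)$-marked thimbles: curves meeting $D$ transversely at $\overline{w}(k)$ distinct \emph{fixed} marked points, with an enhanced evaluation at $z_1$. GP's $\lelogpss^{GP,k,\ell}$ instead counts thimbles meeting $D$ at the single point $+\infty$ with multiplicity $k$. These are solution sets of different equations, and there is no pointwise identification of the two zero-dimensional moduli spaces; so the claim that "this matrix is the matrix of GP's map up to a sign on each generator" cannot be established by matching orientation conventions. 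The reference to \cite[Section 4.4]{GP21} only supplies the Fredholm/transversality setup for the marked-point model, not a comparison of the two counts.

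The paper's proof supplies exactly this missing geometric input. One builds a parametrized moduli space (degenerating the domain with respect to a gluing parameter $r$) exhibiting $\lelogpss^{k,\ell}$ as chain homotopic to the composition $\mu^{\overline{w}(k),\ell}_{\mathrm{prod}}\circ\otimes_k\lelogpss^{GP,1,\ell}\circ i_{\overline{w}(k)}$, where $i_{\overline{w}(k)}$ inserts the minimum of $f_S$ (the unit) into the extra tensor factors and $\mu_{\mathrm{prod}}$ is the iterated pair-of-pants product; one then uses that $\mathrm{LePSS}^{GP,1}_{\log,*}$ is a map of \emph{rings} (this is part of \cite[Theorem 1.1]{GP20}) to identify the composition with $\mathrm{LePSS}^{GP,k}_{\log,*}$, which is the isomorphism you want. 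The ring structure is essential and cannot be replaced by bookkeeping: the identity "$\overline{w}(k)$ transverse intersection points $=$ one intersection point of multiplicity $\overline{w}(k)$" is special to the low-energy regime and fails in general (Borman--Sheridan classes with higher tangency constraints are not products of lower-tangency ones). Once this identification is in place, your Whitehead/colimit argument goes through as written.
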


\begin{rem}
Before the proof, perhaps some remarks are in order.
\begin{enumerate}
\item At this point, the reader may be wondering why we opt to use thimbles with multiple marked points instead of thimbles with higher tangency constraints. The reason is simple (but perhaps will become clearer in Section \ref{sec:sgwo}): the compactification of (high-energy) moduli spaces with higher tangency constraints can exhibit orbifold points caused by sphere bubbling -- this makes building twisted stable framings more difficult.
\item Now, using thimbles with multiple marked points instead of thimbles with higher tangency constraints is good enough, at least in the present article, for the following reason: in low-energy moduli spaces, we may ``trade'' intersection points and multiplicity. In particular, the proof of Proposition \ref{prop:he} uses this idea to show our low-energy log PSS morphism is a homotopy equivalence by leveraging that GP's low-energy log PSS morphism is an isomorphism of rings. Related ideas can be found in \cite[Section 6]{PS24a}.
\item We would like to emphasize that we said ``in \emph{low-energy} moduli spaces, we may `trade' intersection points and multiplicity.'' In particular, this is simply not true in high-energy moduli spaces: Borman-Sheridan classes with higher-tangency constraints are not always products of Borman-Sheridan classes with lower tangency constraints, cf. \cite{Sei21}.
\end{enumerate}
\end{rem}

\begin{figure}
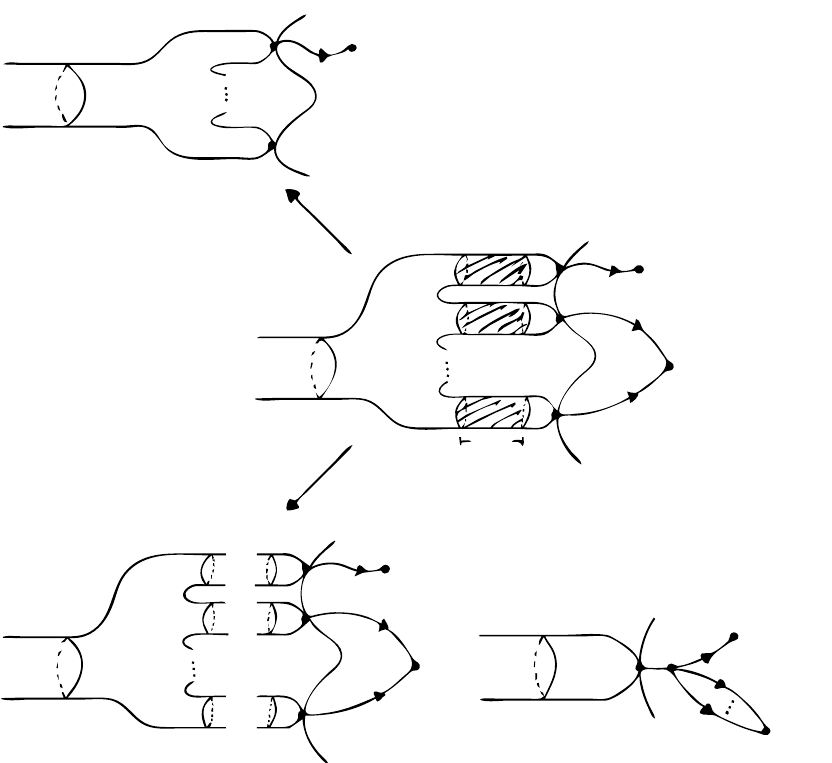
\caption{This is a schematic picture of the proof of Proposition \ref{prop:he}. In particular, the degenerations of the middle picture, with respect to the parameter $r$ measuring the length of the shaded region, to the top and bottom pictures depicts the commutativity of the diagram \eqref{eqn:commutativityfig}; moreover, the ``equality'' in the bottom picture depicts the equality \eqref{eqn:equalityfig}, i.e., the fact that GP's low-energy log PSS morphism is an isomorphism of rings. Note, on the left side of the ``equality'', each of the $\overline{w}(k)$ intersection points has multiplicity 1; meanwhile, on the right side of the ``equality'', the single intersection point has multiplicity $\overline{w}(k)$.}
\label{fig:bordismargumentlowenergy}
\end{figure}

\begin{proof}
First, given any $u\in\scrR^{H^\ell,J^\ell_k}(x)$, we denote by $c^{\mathrm{rel},k}_1$ the relative first Chern number of $u^*TM$. By the relation
    \begin{equation}
    c_1(M)=-c_1(\calL_m)+\sum_{\nu=1}^\alpha c_1(\calL_{n_\nu})\operatorname{rank}_\bbR F_\nu,
    \end{equation}
it is straightforward to see that
    \begin{equation}
    c^{\mathrm{rel},k}_1=-mk+k\sum_{\nu=1}^\alpha n_\nu\operatorname{rank}_\bbR F_\nu.
    \end{equation}

The main case of interest is when $k\geq1$; we will say a word about the $k=0$ case at the conclusion of the proof. Consider the induced map on homology:
    \begin{equation}
    \mathrm{LePSS}^k_{\log,*}:H^{-*}\big((S_DM)^{-V_{\overline{w}(k)}};\bbZ\big)\to SH^{-*+n}_{\overline{w}(k),\overline{w}(k-1)}(X;\bbZ).
    \end{equation}
Since $V_k$ is orientable, we may use the usual Thom isomorphism to view $\mathrm{LePSS}^k_{\log,*}$ as a map
    \begin{equation}
    H^{-*+2c^{\mathrm{rel},\overline{w}(k)}_1-2\overline{w}(k)}(S_DM;\bbZ)\to SH^{-*}_{\overline{w}(k),\overline{w}(k-1)}(X;\bbZ).
    \end{equation}
The above map is induced by the following cochain map. We define
    \begin{align}
    \lelogpss^{k,\ell}:CM^{-*+2c^{\mathrm{rel},\overline{w}(k)}_1-2\overline{w}(k)}(S_DM;\bbZ)&\to HF^{-*}_{\overline{w}(k),\overline{w}(k-1)}(X;H^\ell,J^\ell;\bbZ) \\
    \abs{\frako_a}&\mapsto\sum\mu_u\abs{\frako_x}, \nonumber
    \end{align}
where the sum is over all $u\in\overline{\scrR}^{H^\ell,J^\ell_{\overline{w}(k)}}(a,x)$ such that $x\in\chi_k(X;H^\ell)$ satisfies $\deg(x)+2c^{\mathrm{rel},\overline{w}(k)}_1-2\overline{w}(k)=\deg(a)$; clearly, this map (1) is compatible with the Floer continuation maps and (2),  after passing to cohomology and taking the colimit, equals $\mathrm{LePSS}^k_{\log,*}$. Meanwhile, we will consider a different cochain map. 
\begin{itemize}
\item First, we have a cochain map
    \begin{align}
    i_{\overline{w}(k)}:CM^{-*+2c^{\mathrm{rel},\overline{w}(k)}_1-2\overline{w}(k)}(S_DM;\bbZ)&\to CM^{-*+2c^{\mathrm{rel},1}_1-2}(S_DM;\bbZ)^{\otimes_\bbZ\overline{w}(k)} \\
    \abs{\frako_a}&\mapsto\sum\abs{\frako_a}\otimes\abs{\frako_{a_\mathrm{min}}}\otimes\cdots\otimes\abs{\frako_{a_\mathrm{min}}}, \nonumber
    \end{align}
where the sum is over the minima of $f$ (i.e., there is one minimum per component of $S_DM$).
\item Second, we have a cochain map
    \begin{equation}
    CM^{-*+2c^{\mathrm{rel},1}_1-2}(S_DM;\bbZ)^{\otimes_\bbZ\overline{w}(k)}\to HF^{-*}_{\overline{w}(k),\overline{w}(k-1)}(X;H^\ell,J^\ell;\bbZ)^{\otimes_\bbZ\overline{w}(k)}
    \end{equation}
given by the $k$-fold tensor product of GP's low-energy log PSS 1-morphism: 
    \begin{equation}
    \lelogpss^{GP,1,\ell}\otimes\cdots\otimes\lelogpss^{GP,1,\ell}.
    \end{equation}
\item Third, we have a cochain map
    \begin{equation}
    HF^{-*}_{\overline{w}(k),\overline{w}(k-1)}(X;H^\ell,J^\ell;\bbZ)^{\otimes_\bbZ\overline{w}(k)}\to HF^{-*}_{\overline{w}(k),\overline{w}(k-1)}(X;H^\ell,J^\ell;\bbZ)
    \end{equation}
given by the $\overline{w}(k)$-fold iterated pair-of-pants product:
    \begin{equation}
    \mu^{\overline{w}(k),\ell}_\mathrm{prod}.
    \end{equation}
\end{itemize}
We have that $\lelogpss^{k,\ell}$ is cochain homotopic to the composition 
    \begin{equation}
    \mu^{\overline{w}(k),\ell}_\mathrm{prod}\circ\otimes_k\lelogpss^{GP,1,\ell}\circ i_{\overline{w}(k)};
    \end{equation} 
this follows in the standard way using moduli spaces interpolating between hybrid $\overline{w}(k)$-marked $H^\ell$-thimbles and the composition, see Figure \ref{fig:bordismargumentlowenergy}. In particular, we have a commutative diagram 
    \begin{equation}\label{eqn:commutativityfig}
    \begin{tikzcd}[column sep=large]
    HM^{-*+2c^{\mathrm{rel},\overline{w}(k)}_1-2\overline{w}(k)}(S_DM;\bbZ)\arrow[r,"\mathrm{LePSS}^k_{\log,*}"]\arrow[d,"i_{\overline{w}(k),*}"] & HF^{-*}_{\overline{w}(k),\overline{w}(k-1)}(X;H^\ell,J^\ell;\bbZ) \\
    HM^{-*+2c^{\mathrm{rel},1}_1-2}(S_DM;\bbZ)^{\otimes_\bbZ\overline{w}(k)}\arrow[r,"\otimes_k\mathrm{LePSS}^{GP,1}_{\log,*}",swap] & HF^{-*}_{\overline{w}(k),\overline{w}(k-1)}(X;H^\ell,J^\ell;\bbZ)^{\otimes_\bbZ\overline{w}(k)}\arrow[u,"\mu^{\overline{w}(k),\ell}_\mathrm{prod}"].
    \end{tikzcd}
    \end{equation}
Finally, using the fact that $\mathrm{LePSS}^{GP,1}_{\log,*}$ is a map of rings, we compute: 
    \begin{align}\label{eqn:equalityfig}
    \Big(\mu^{\overline{w}(k),\ell}_\mathrm{prod}\circ\otimes_k\mathrm{LePSS}^{GP,1,\ell}_{\log,*}\circ i_{\overline{w}(k),*}\Big)(\alpha)&=\Big(\mu^{\overline{w}(k),\ell}_\mathrm{prod}\circ\otimes_k\mathrm{LePSS}^{GP,1}_{\log,*}\Big)(\alpha\otimes 1\cdots\otimes 1) \\
    &=\mathrm{LePSS}^{GP,k}_{\log,*}(\alpha), \nonumber
    \end{align}
i.e., we have a chain of equalities
    \begin{equation}
    \mathrm{LePSS}^k_{\log,*}=\mu^{\overline{w}(k),\ell}_\mathrm{prod}\circ\otimes_k\mathrm{LePSS}^{GP,1}_{\log,*}\circ i_{\overline{w}(k),*}=\mathrm{LePSS}^{GP,k}_{\log,*}.
    \end{equation}
Therefore, $\lelogpss^k$ is a map of spectra which induces an isomorphism on integral homology; since $\frakD (S_DM)^{-V_{\overline{w}(k)}}$ and $\frakF^\Lambda_{k,k-1}$ are bounded below spectra, this proposition follows after using a spectral Whitehead theorem for $H\bbZ$-module spectra. (The case $k=0$ is actually simpler than the case $k\geq1$ once we realize $\mathrm{LePSS}^0_{\log,*}$, after using the usual Thom isomorphism, is simply the usual PSS morphism.)
\end{proof}

\section{Spectral Gromov-Witten obstructions}\label{sec:sgwo}
\subsection{Outline of the argument}\label{subsec:outline}
This section is dedicated to proving Theorems \ref{thm:main} and \ref{thm:main2}. Since the argument for Theorem \ref{thm:main} is a bit long-winded, we will briefly describe the idea. Recall, we have the cofiber sequence
    \begin{equation}
    \frakF^\Lambda_{k-1}\to\frakF^\Lambda_k\to\frakF^\Lambda_{k,k-1},\;\;k\geq1.
    \end{equation}
A necessary and sufficient condition for this cofiber sequence to be a split cofiber sequence is that the Puppe connecting map, 
    \begin{equation}
    \frakF^\Lambda_{k,k-1}\to\Sigma\frakF^\Lambda_{k-1},
    \end{equation}
is null-homotopic. In this section, we will produce two maps 
    \begin{equation}
    \bbB^k_1:\frakD(S_DM)^{-V_{\overline{w}(k)}}\to\Sigma\frakD X^{-V_{\overline{w}(k)-1}},\;\;\bbB^k_2:\Sigma\frakD X^{-V_{\overline{w}(k)-1}}\to\Sigma\frakF^\Lambda_{k-1}
    \end{equation}
such that we have a homotopy commutative diagram
    \begin{equation}\label{eqn:outlinediagram}
    \begin{tikzcd}
    \frakD(S_DM)^{-V_{\overline{w}(k)}}\arrow[r,"\bbB^k_1"]\arrow[d,"\lelogpss^k"] & \Sigma\frakD X^{-V_{\overline{w}(k)-1}}\arrow[d,"\bbB^k_2"] \\
    \frakF^\Lambda_{k,k-1}\arrow[r] & \Sigma\frakF^\Lambda_{k-1}
    .
    \end{tikzcd}
    \end{equation}
Finally, we will observe that each $\bbB^k_2$ is actually equivalent to the same map:
    \begin{equation}\label{eqn:auxstructure}
    \calG\calW:\frakD(S_DM)^{-V^{\bbG\bbW}+TX}\to\Sigma\frakD X_+.
    \end{equation}
In particular, if the element
    \begin{equation}
    \calG\calW\in\pi_0\Big((S_DM)^{-V^{\bbG\bbW}+TX}\wedge\Sigma\frakD X_+\Big)
    \end{equation}
determined by \eqref{eqn:auxstructure} vanishes, then each Puppe connecting map is null-homotopic; Theorem \ref{thm:main} follows. We will now construct the relevant data and maps in earnest.

\subsection{Enhanced spheres}\label{subsec:enhancedspheres}
Let $A\in H_2(M;\bbZ)$ be a spherical homology class, i.e., in the image $H^S_2(M;\bbZ)$ of the Hurewicz map $\pi_2(M)\to H_2(M;\bbZ)$. Moreover, we assume $A\cdot D=1$. Let $J$ be an $\omega$-tame almost-complex structure such that 
\begin{enumerate}
\item $J$ preserves $TD$,
\item and the image of the Nijenhuis tensor, when restricted to $TM\vert_D$, is contained in $TD$.
\end{enumerate}
Note, the space of such structures is non-empty and contractible.

We consider the moduli space $\widetilde{\bbG\bbW}(A)$ of maps $u:\bbC P^1\to M$ satisfying 
    \begin{equation}
    \begin{cases}
    \partial_su+J\partial_tu=0 & \\
    u_*[\bbC P^1]=A & \\
    u^{-1}(D)=\{+\infty\} &
    \end{cases}
    .
    \end{equation}
Observe, the two conditions $A\cdot D=1$ and $u^{-1}(D)=\{+\infty\}$ implies $u$ intersects $D$ transversely at $+\infty$. This is, for generic data, a smooth manifold of dimension $2n+2c_1(A)-2$ whose tangent bundle is classified by the index bundle of the family of surjective Fredholm operators 
    \begin{equation}
    D^{\bbG\bbW}\equiv\big\{D^{\bbG\bbW}_u:W^{2,2}_{+\infty}(\bbC P^1;u^*TM)\to W^{1,2}(\bbC P^1;u^*TM)\big\},
    \end{equation}
where $W^{2,2}_{+\infty}(\bbC P^1;u^*TM)$ is the subspace of $W^{2,2}$ sections of $u^*TM$ over $\bbC P^1$ with a constraint at $+\infty$ to lie in $TD$, cf. \cite[Lemma 6.7]{CM07}; we will refer to this as the moduli space of \emph{parameterized 1-pointed relative spheres representing $A$}.

\begin{rem}
Note, we have that 
    \begin{equation}
    c_1(A)=-m+\sum_{\nu=1}^\alpha n_\nu\operatorname{rank}_\bbR F_\nu.
    \end{equation}
\end{rem}

There are two quotients of $\widetilde{\bbG\bbW}(A)$ we will consider. 
\begin{itemize}
\item First, consider the quotient 
    \begin{equation}
    \bbG\bbW(A)\equiv\widetilde{\bbG\bbW}(A)/\bbC^*,
    \end{equation}
where the $\bbC^*$-action is given by automorphisms of the domain fixing $0$ and $+\infty$. This is, for generic data, a smooth manifold of dimension $2n+2c_1(A)-4$; we will refer to this as the moduli space of \emph{(unparameterized) 1-pointed relative spheres representing $A$}.
\item Second, consider the quotient
    \begin{equation}
    \bbG\bbW^{S^1}(A)\equiv\widetilde{\bbG\bbW}(A)/\bbR,
    \end{equation}
where the $\bbR$-action is induced by the $\bbC^*$-action. This is, for generic data, a smooth manifold of dimension $2n+2c_1(A)-3$; we will refer to this as the moduli space of \emph{enhanced 1-pointed relative spheres representing $A$}. Clearly, the natural quotient map 
    \begin{equation}
    \bbG\bbW^{S^1}(A)\to\bbG\bbW(A)
    \end{equation}
turns $\bbG\bbW^{S^1}(A)$ into an $S^1$-bundle over $\bbG\bbW(A)$. Moreover, by fixing an element in $S_{+\infty}\bbC P^1$, we have an enhanced evaluation map at $+\infty$: 
    \begin{equation}
    \eneval_{+\infty}:\bbG\bbW^{S^1}(A)\to S_DM,
    \end{equation}
see Figure \ref{fig:enhancedsphere}.
\end{itemize}

\begin{figure}[ht]
%% Creator: Inkscape 1.4.2 (f4327f4, 2025-05-13), www.inkscape.org
%% PDF/EPS/PS + LaTeX output extension by Johan Engelen, 2010
%% Accompanies image file '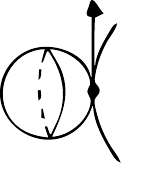' (pdf, eps, ps)
%%
%% To include the image in your LaTeX document, write
%%   \input{<filename>.pdf_tex}
%%  instead of
%%   \includegraphics{<filename>.pdf}
%% To scale the image, write
%%   \def\svgwidth{<desired width>}
%%   \input{<filename>.pdf_tex}
%%  instead of
%%   \includegraphics[width=<desired width>]{<filename>.pdf}
%%
%% Images with a different path to the parent latex file can
%% be accessed with the `import' package (which may need to be
%% installed) using
%%   \usepackage{import}
%% in the preamble, and then including the image with
%%   \import{<path to file>}{<filename>.pdf_tex}
%% Alternatively, one can specify
%%   \graphicspath{{<path to file>/}}
%% 
%% For more information, please see info/svg-inkscape on CTAN:
%%   http://tug.ctan.org/tex-archive/info/svg-inkscape
%%
\begingroup%
  \makeatletter%
  \providecommand\color[2][]{%
    \errmessage{(Inkscape) Color is used for the text in Inkscape, but the package 'color.sty' is not loaded}%
    \renewcommand\color[2][]{}%
  }%
  \providecommand\transparent[1]{%
    \errmessage{(Inkscape) Transparency is used (non-zero) for the text in Inkscape, but the package 'transparent.sty' is not loaded}%
    \renewcommand\transparent[1]{}%
  }%
  \providecommand\rotatebox[2]{#2}%
  \newcommand*\fsize{\dimexpr\f@size pt\relax}%
  \newcommand*\lineheight[1]{\fontsize{\fsize}{#1\fsize}\selectfont}%
  \ifx\svgwidth\undefined%
    \setlength{\unitlength}{72.08308843bp}%
    \ifx\svgscale\undefined%
      \relax%
    \else%
      \setlength{\unitlength}{\unitlength * \real{\svgscale}}%
    \fi%
  \else%
    \setlength{\unitlength}{\svgwidth}%
  \fi%
  \global\let\svgwidth\undefined%
  \global\let\svgscale\undefined%
  \makeatother%
  \begin{picture}(1,1.13551355)%
    \lineheight{1}%
    \setlength\tabcolsep{0pt}%
    \put(0,0){\includegraphics[width=\unitlength,page=1]{enhancedsphere.pdf}}%
    \put(0.83451038,0.01478761){\color[rgb]{0,0,0}\makebox(0,0)[lt]{\lineheight{1.25}\smash{\begin{tabular}[t]{l}$D$\end{tabular}}}}%
  \end{picture}%
\endgroup%

\caption{This is an enhanced sphere. The arrow at $z_1$ indicates our auxiliary choice of real tangent ray made in order to define the enhanced evaluation map (of course, we can not make such a choice if we had quotiented by the $S^1$-action); in future figures, we will omit this choice from the picture.}
\label{fig:enhancedsphere}
\end{figure}

We define 
    \begin{equation}
    \widetilde{\bbG\bbW}\equiv\coprod_{\substack{A\in H^S_2(M;\bbZ) \\ A\cdot D=1}}\widetilde{\bbG\bbW}(A);
    \end{equation}
we analogously define $\bbG\bbW^{S^1}$ and $\bbG\bbW$. The evaluation map at 0 resp. $+\infty$ on $\widetilde{\bbG\bbW}$, denoted $\eval_0$ resp. $\eval_{+\infty}$, straightforwardly descends to the quotients. The following result is a straightforward consequence of standard Gromov-compactness and positivity of intersection. (Cf. \cite[Lemma 3.31]{GP21}.)

\begin{lem}
$\eval_0:\bbG\bbW^{S^1}\to X$ resp. $\eval_0:\bbG\bbW\to X$ is proper.
\end{lem}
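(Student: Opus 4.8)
The plan is to prove properness by sequential compactness, deducing the $\bbG\bbW^{S^1}$ statement from the $\bbG\bbW$ one at the end; the two inputs are a uniform energy bound coming from $A\cdot D=1$ and positivity of intersection. Fix a compact $K\subseteq X$ and a sequence $[u_n]\in\eval_0^{-1}(K)$, say $[u_n]$ is the class of $u_n\in\widetilde{\bbG\bbW}(A_n)$, so $u_n^{-1}(D)=\{+\infty\}$ transversely, $(u_n)_*[\bbC P^1]=A_n$ with $A_n\cdot D=1$, and $u_n(0)\in K$; after passing to a subsequence $u_n(0)\to p\in K$, and in particular $p\notin D$.

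First I would record the energy bound. Since $[\omega]\in H^2(M;\bbR)$ is a positive multiple of $c_1(\calL_D)=\kappa\cdot\mathrm{PD}[D]$, we have $\int_{\bbC P^1}u^*\omega=\langle[\omega],A\rangle=c\,(A\cdot D)=c$ for a fixed $c>0$, uniformly over our index set. The same computation records the form of positivity of intersection I will use: for any non-constant $J$-holomorphic map $v$ from a closed (possibly nodal) curve, or from one of its irreducible components, to $M$, one has $[v]\cdot D=\tfrac{1}{c}\int v^*\omega>0$, hence $[v]\cdot D\ge1$.

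With this bound in hand, standard Gromov compactness for the fixed domain-independent almost complex structure $J$ gives, after a further subsequence, a stable genus $0$ map $u_\infty$ carrying the marked points $0$ and $+\infty$, with total homology class $A_\infty=A_n$ ($n\gg0$), hence $A_\infty\cdot D=1$, and with $u_\infty(+\infty)\in D$ and $u_\infty(0)=p\notin D$. Decomposing $A_\infty\cdot D=\sum_i[u_\infty|_{C_i}]\cdot D$ over the irreducible components $C_i$ (constant ones contributing $0$) and using the inequality above, the total being $1$ forces a unique non-constant component $C_0$, with $[u_\infty|_{C_0}]\cdot D=1$ (there is at least one, since $u_\infty(0)\ne u_\infty(+\infty)$); every other component is a ghost, and the ghosts form trees attached to $C_0$. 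Here is the one genuinely non-formal step --- a stability count. Since only two marked points are available, a ghost tree attached to $C_0$ at a single node can be stable only if it carries \emph{both} $0$ and $+\infty$, and then all its components are mapped to the image of the attaching node. So either (i) $0$ and $+\infty$ both lie on $C_0$ and there are no ghosts at all, or (ii) $0$ and $+\infty$ lie on one ghost tree, which forces $u_\infty(0)=u_\infty(+\infty)\in D$ --- impossible as $p\notin D$. Hence (i) holds; and $C_0\not\subseteq D$, else $u_\infty^{-1}(D)=C_0\ni 0$ would again put $u_\infty(0)$ in $D$. Positivity of intersection applied to $C_0\not\subseteq D$ with $[u_\infty|_{C_0}]\cdot D=1$ then shows $C_0$ meets $D$ transversely in exactly one point, which must be $+\infty$. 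Thus $u_\infty\in\widetilde{\bbG\bbW}(A_\infty)$, the Gromov convergence is convergence in the manifold $\bbG\bbW(A_\infty)\subseteq\bbG\bbW$, and $\eval_0([u_\infty])=p$. This shows $\eval_0^{-1}(K)$ is sequentially compact, i.e. $\eval_0\colon\bbG\bbW\to X$ is proper.

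For the enhanced version, $\eval_0$ is $\bbC^*$-invariant and hence factors through the $S^1$-bundle projection $\pi\colon\bbG\bbW^{S^1}\to\bbG\bbW$, which is proper with compact fibers. A sequence in $\bbG\bbW^{S^1}$ whose $\eval_0$-image lies in $K$ maps under $\pi$ into the compact set $\eval_0^{-1}(K)\subseteq\bbG\bbW$; passing to a subsequence converging downstairs and then using compactness of the circle fibers produces a subsequence converging in $\bbG\bbW^{S^1}$. Hence $\eval_0\colon\bbG\bbW^{S^1}\to X$ is proper too. I expect the stability count in the third paragraph to be the only point requiring care; everything else is bookkeeping around Gromov compactness and positivity of intersection.
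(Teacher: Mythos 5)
Your proof is correct and is exactly the argument the paper has in mind: the paper states this lemma without proof, calling it ``a straightforward consequence of standard Gromov-compactness and positivity of intersection'' (citing \cite[Lemma 3.31]{GP21}). Your write-up --- the uniform energy bound from $[\omega]$ being proportional to $\mathrm{PD}[D]$ and $A\cdot D=1$, positivity of intersection forcing a single non-constant component, the stability count eliminating ghost trees, and the reduction of the $\bbG\bbW^{S^1}$ case to the $\bbG\bbW$ case via properness of the circle-bundle projection --- supplies precisely the details the paper omits.
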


Let $u\in\widetilde{\bbG\bbW}$; we will require two families of Fredholm operators. First, consider a Cauchy Riemann operator 
    \begin{equation}
    D^{\bbG\bbW,\calL_m}_u:W^{1,2}(\bbC P^1;u^*\calL_m)\to L^2(\bbC P^1;u^*\calL_m).
    \end{equation}
Clearly, these define a family of Fredholm operators $D^{\bbG\bbW,\calL_m}$ on $\widetilde{\bbG\bbW}$.
    
\begin{lem}
We have a canonical isomorphism of real virtual bundles
    \begin{equation}
    \ind D^{\bbG\bbW,\calL_m}\cong\eval_{+\infty}^*\calL_m+\underline{\bbR}^{2m}.
    \end{equation}
\end{lem}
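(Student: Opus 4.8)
The plan is to run the $k=1$ analogue of the proof of Lemma~\ref{lem:hybridmarkedthimbles}, with the thimble replaced by the closed sphere $\bbC P^1$; the only structural change is that, $\bbC P^1$ being closed, there is no negative cylindrical end to cap off, and only the single interior point $+\infty$ (where $u$ meets $D$) to deal with. First I would fix an auxiliary point $p\in S_DM$ over $u(+\infty)$, with the usual proviso — as in the proofs of Proposition~\ref{prop:canonicaltransverseoperator}, Lemma~\ref{lem:index-theoretic}, and Lemma~\ref{lem:hybridmarkedthimbles} — that every isomorphism of index bundles produced below is canonical, so the final answer is independent of this choice (equivalently, independent of a trivialisation of $u^*\calL_m$ over a transverse push-off of a small loop about $+\infty$). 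Since $u\big(\bbC P^1-\{+\infty\}\big)\subset X$, the fixed $s_D$ already trivialises $u^*\calL_m$ over $\bbC P^1-\{+\infty\}$, and $u^*\calL_m$ has degree $m$ by the normalisation $\mu_m=2m$.

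I would then glue $D^{\bbG\bbW,\calL_m}_u$ at the interior point $+\infty$ to the capping disk operator $D^{\calL_{m,\bbR}}_p$ on $\bbD$ with values in $v_p^*\calL_m$ and clockwise totally real boundary condition $\gamma_p^*\calL_{m,\bbR}$ — the same operator used in the proof of Lemma~\ref{lem:hybridmarkedthimbles}, for which the residue short exact sequence there (as in the proof of Proposition~\ref{prop:canonicaltransverseoperator}) gives a canonical isomorphism $\ind D^{\calL_{m,\bbR}}_p\cong\bbR^{1-2m}$. The glued domain is a disk, and the excision/deformation bookkeeping of Lemma~\ref{lem:index-theoretic} applies: gluing at an interior point introduces the correction term $\Delta^\perp\cong u^*\calL_m\vert_{+\infty}=\eval_{+\infty}^*\calL_m$, so on one hand the index of the glued operator is $\ind D^{\bbG\bbW,\calL_m}+\ind D^{\calL_{m,\bbR}}_p-\eval_{+\infty}^*\calL_m$, while on the other hand — computing it directly over the disk by the same residue method that produced $\ind D^{\calL_{m,\bbR}}_p$, and using $\calL_{m,\bbR}\vert_X\cong\underline{\bbR}$ (legitimate since $S_DM\subset X$) — it is canonically $\underline{\bbR}$. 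Equating and solving for $\ind D^{\bbG\bbW,\calL_m}$ yields the claimed canonical isomorphism $\ind D^{\bbG\bbW,\calL_m}\cong\eval_{+\infty}^*\calL_m+\underline{\bbR}^{2m}$.

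As a cross-check I would also run the more transparent sheaf-theoretic argument: over the fixed domain $\bbC P^1$ there is a short exact sequence $0\to\scrO_{\bbC P^1}\to u^*\calL_m\to Q\to 0$, where the first map is multiplication by the appropriate power of $u^*s_D$ and $Q$ is a skyscraper at $+\infty$; pushing forward over $\widetilde{\bbG\bbW}$, the left term contributes the constant $\underline{\bbC}$, the term $Q$ contributes the $m$-jet bundle $\bigoplus_{j=0}^{m-1}\eval_{+\infty}^*\calL_m\otimes(T^*_{+\infty}\bbC P^1)^{\otimes j}\cong\eval_{+\infty}^*\calL_m^{\oplus m}$ (the last isomorphism after fixing a local coordinate at the fixed point $+\infty$, canonical up to a contractible choice), and the first derived pushforwards vanish, giving $\ind D^{\bbG\bbW,\calL_m}\cong\underline{\bbC}+\eval_{+\infty}^*\calL_m^{\oplus m}$; this matches the form in the statement once one notes that $u^*s_D$ identifies $\eval_{+\infty}^*\calL_D$ with a power of the fixed line $T_{+\infty}\bbC P^1$, so $\eval_{+\infty}^*\calL_m$ is stably trivial over $\widetilde{\bbG\bbW}$. (For $m<0$ one stabilises first, or simply uses the gluing argument, which is insensitive to the sign of $m$.)

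The hard part is not conceptual but bookkeeping: tracking the capping operator's index, the interior-gluing correction term $\Delta^\perp$, and the Euler characteristic of the glued disk against the paper's clockwise boundary-orientation convention near $D$; and — most importantly — verifying that the outcome does not depend on the trivialisation of $u^*\calL_m$ near $+\infty$ (equivalently, on the auxiliary $p$), which holds precisely because every intermediate index-bundle isomorphism is canonical, exactly as emphasised in the proof of Lemma~\ref{lem:index-theoretic}. I would present whichever of the two arguments fits the surrounding conventions most closely.
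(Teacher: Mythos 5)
Your main (gluing) argument is exactly the paper's intended proof: the paper's proof of this lemma is literally ``appropriately modify the proof of Lemma \ref{lem:hybridmarkedthimbles},'' and your first two paragraphs carry out precisely that modification --- drop the negative-end cap $D^{\calL_m,+}_x$, glue the single clockwise capping operator $D^{\calL_{m,\bbR}}_p$ (of index $\bbR^{1-2m}$) at $+\infty$ with interior correction $-\Delta^\perp\cong-\eval_{+\infty}^*\calL_m$, identify the glued disk operator's index with $\underline{\bbR}$ via $\calL_{m,\bbR}\vert_{S_DM}\cong\underline{\bbR}$, and solve. I would drop or rework the sheaf-theoretic ``cross-check,'' though: as written it outputs $\underline{\bbC}+\eval_{+\infty}^*\calL_m^{\oplus m}$ (and the skyscraper's length is really $m\kappa$, not $m$, unless $\kappa=1$), which agrees with $\eval_{+\infty}^*\calL_m+\underline{\bbR}^{2m}$ only after a nontrivial identification of $\eval_{+\infty}^*\calL_m$ over $D$ that your parenthetical does not actually establish --- the gluing argument is the one that matches the paper's conventions.
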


\begin{proof}
Appropriately modify the proof of Lemma \ref{lem:hybridmarkedthimbles}.
\end{proof}

Second, consider a Cauchy Riemann operator
    \begin{equation}
    \Phi^{\bbG\bbW}:W^{1,2}(\bbC P^1;\bbC^d)\to L^2(\bbC P^1;\bbC^d).
    \end{equation}
Clearly, these define a constant family of Fredholm operators $\Phi^{\bbG\bbW}$ on $\widetilde{\bbG\bbW}$ such that 
    \begin{equation}
    \ind\Phi^{\bbG\bbW}\cong\underline{\bbR}^{2d}.
    \end{equation}
    
We may now construct a twisted stable framing of $\bbG\bbW^{S^1}$. For notational convenience, we define
    \begin{align}
    T^{\bbG\bbW}&\equiv TM+\underline{\bbC}^d-N_DM-\sum_{\nu=1}^\alpha F_\nu^{\oplus 1-2n_\nu}, \\
    \widetilde{T}^{\bbG\bbW}&\equiv\underline{\bbR}^{-2d-2m}+\sum_{\nu=1}^\alpha F_\nu.
    \end{align}
    
\begin{rem}
Observe, $V^{\bbG\bbW}=\widetilde{T}^{\bbG\bbW}+T^{\bbG\bbW}$.
\end{rem}

Let $u\in\widetilde{\bbG\bbW}$. Consider the glued together operator 
    \begin{equation}
    \big(D^{\bbG\bbW}_u\oplus\Phi^{\bbG\bbW}\oplus D^{\bbG\bbW,\calL_m}_u\big)\#D^\Lambda_{\eneval_{+\infty}(u)}.
    \end{equation}
Our various index bundle computations, and the fact we have an identification $T\widetilde{\bbG\bbW}\cong T\bbG\bbW^{S^1}+\underline{\bbR}$, show we have the following canonical isomorphism of real virtual bundles: 
    \begin{equation}
    T\bbG\bbW^{S^1}+\underline{\bbR}\cong\eneval_{+\infty}^*\widetilde{T}^{\bbG\bbW}+\eval_{+\infty}^*T^{\bbG\bbW}.
    \end{equation}
    
\subsection{Partially-incident (PI) marked thimbles}\label{subsec:pimarkedthimbles}
Let $J^\ell_k$ be admissible for marked $H^\ell$-thimbles. For any $x\in\chi(X;H^\ell)$ and any subset $\bfJ\subset\{1,\ldots,k\}$, we consider the moduli space $\scrR^{H^\ell,J^\ell_k}_\bfJ(x)$ of \emph{$k$-marked $\bfJ$-partially-incident (PI) $H^\ell$-thimbles}, i.e., maps $u:\Sigma_k\to M$ satisfying
    \begin{equation}
    \begin{cases}
    \big(du-X_\ell\otimes\eta_k\big)^{0,1}=0 & \\
    \lim_{s\to-\infty}u(\epsilon^-(s,t))=x(t) & \\
    u^{-1}(D)=\big\{z_j:j\in\bfJ^c\equiv\{1,\ldots,k\}-\bfJ\big\} & \\
    \textrm{$u$ intersects $D$ transversely at $z\in\{z_j:j\in\bfJ^c\}$} &
    \end{cases}
    ,
    \end{equation}
see Figure \ref{fig:pimarkedthimble}. Again, we would like to emphasize that the marked points are fixed.
    
\begin{warning}
Reader beware, a $k$-marked $\bfJ$-PI $H^\ell$-thimble intersects $D$ at marked points in $\bfJ^c$!
\end{warning}

\begin{rem}
Of course, $\scrR^{H^\ell,J^\ell_k}_\emptyset(x)=\scrR^{H^\ell,J^\ell_k}(x)$.
\end{rem}

\begin{figure}[ht]
%% Creator: Inkscape 1.4.2 (f4327f4, 2025-05-13), www.inkscape.org
%% PDF/EPS/PS + LaTeX output extension by Johan Engelen, 2010
%% Accompanies image file '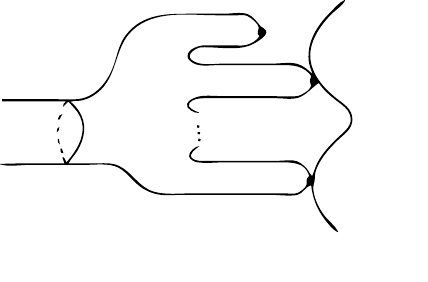' (pdf, eps, ps)
%%
%% To include the image in your LaTeX document, write
%%   \input{<filename>.pdf_tex}
%%  instead of
%%   \includegraphics{<filename>.pdf}
%% To scale the image, write
%%   \def\svgwidth{<desired width>}
%%   \input{<filename>.pdf_tex}
%%  instead of
%%   \includegraphics[width=<desired width>]{<filename>.pdf}
%%
%% Images with a different path to the parent latex file can
%% be accessed with the `import' package (which may need to be
%% installed) using
%%   \usepackage{import}
%% in the preamble, and then including the image with
%%   \import{<path to file>}{<filename>.pdf_tex}
%% Alternatively, one can specify
%%   \graphicspath{{<path to file>/}}
%% 
%% For more information, please see info/svg-inkscape on CTAN:
%%   http://tug.ctan.org/tex-archive/info/svg-inkscape
%%
\begingroup%
  \makeatletter%
  \providecommand\color[2][]{%
    \errmessage{(Inkscape) Color is used for the text in Inkscape, but the package 'color.sty' is not loaded}%
    \renewcommand\color[2][]{}%
  }%
  \providecommand\transparent[1]{%
    \errmessage{(Inkscape) Transparency is used (non-zero) for the text in Inkscape, but the package 'transparent.sty' is not loaded}%
    \renewcommand\transparent[1]{}%
  }%
  \providecommand\rotatebox[2]{#2}%
  \newcommand*\fsize{\dimexpr\f@size pt\relax}%
  \newcommand*\lineheight[1]{\fontsize{\fsize}{#1\fsize}\selectfont}%
  \ifx\svgwidth\undefined%
    \setlength{\unitlength}{204.43249079bp}%
    \ifx\svgscale\undefined%
      \relax%
    \else%
      \setlength{\unitlength}{\unitlength * \real{\svgscale}}%
    \fi%
  \else%
    \setlength{\unitlength}{\svgwidth}%
  \fi%
  \global\let\svgwidth\undefined%
  \global\let\svgscale\undefined%
  \makeatother%
  \begin{picture}(1,0.65678217)%
    \lineheight{1}%
    \setlength\tabcolsep{0pt}%
    \put(0,0){\includegraphics[width=\unitlength,page=1]{pimarkedthimble.pdf}}%
    \put(0.4835991,0.5711816){\color[rgb]{0,0,0}\makebox(0,0)[lt]{\lineheight{1.25}\smash{\begin{tabular}[t]{l}$z_1$\end{tabular}}}}%
    \put(0.4879669,0.4576207){\color[rgb]{0,0,0}\makebox(0,0)[lt]{\lineheight{1.25}\smash{\begin{tabular}[t]{l}$z_2$\end{tabular}}}}%
    \put(0.4923347,0.23049846){\color[rgb]{0,0,0}\makebox(0,0)[lt]{\lineheight{1.25}\smash{\begin{tabular}[t]{l}$z_k$\end{tabular}}}}%
    \put(0.03517829,0.33677984){\color[rgb]{0,0,0}\makebox(0,0)[lt]{\lineheight{1.25}\smash{\begin{tabular}[t]{l}$x$\end{tabular}}}}%
    \put(0.81094303,0.09771132){\color[rgb]{0,0,0}\makebox(0,0)[lt]{\lineheight{1.25}\smash{\begin{tabular}[t]{l}$D$\end{tabular}}}}%
  \end{picture}%
\endgroup%

\caption{This is a $k$-marked $\{2,\ldots,k\}$-PI thimble, $k\geq2$.}
\label{fig:pimarkedthimble}
\end{figure}

Again, we see that, for generic data, $\scrR^{H^\ell,J^\ell_k}_\bfJ(x)$ is a smooth manifold whose tangent bundle is classified by the index bundle of the family of surjective Fredholm operators 
    \begin{equation}
    D^{\scrR,\bfJ}\equiv\big\{D^{\scrR,\bfJ}_u:W^{2,2}_\bfJ(\Sigma_k;u^*TM)\to W^{1,2}(\Sigma_k;\Lambda^{0,1}_{\Sigma_k}\otimes u^*TM)\big\},
    \end{equation}
where $W^{2,2}_\bfJ(\Sigma_k;u^*TM)$ is the subspace of $W^{2,2}$-sections of $u^*TM$ over $\Sigma_k$ with a constraint to lie in $u^*TD$ at $z\in\{z_j:\in\bfJ^c\}$.

We will fix (if $1\in\bfJ^c$) an element in $S_{z_1}\Sigma_k$ in order to define an enhanced evaluation map at $z_1$:
    \begin{equation}
    \eneval_{z_1}:\scrR^{H^\ell,J^\ell_k}_\bfJ(x)\to S_DM.
    \end{equation}
    
Let $u\in\scrR^{H^\ell,J^\ell_k}_\bfJ(x)$; we will require a single family of Fredholm operators. Consider a Cauchy Riemann operator 
    \begin{equation}
    D_{\calL_m,\bfJ,u}:W^{1,2}(\Sigma_k;u^*\calL_m)\to L^2(\Sigma_k;u^*\calL_m)
    \end{equation}
with asymptotic operator $\partial_t+\tau\cdot\identity$ at the negative puncture; here, we use our fixed identification $\calL_m\vert_X\cong\underline{\bbC}$ given by our choice of $s_D$ to define the asymptotic operator. Clearly, these define a family of Fredholm operators $D_{\calL_m,\bfJ}$ on $\scrR^{H^\ell,J^\ell_k}_\bfJ(x)$.

\begin{lem}
We have a canonical isomorphism of real virtual bundles
    \begin{equation}
    \ind D_{\calL_m,\bfJ}\cong\eval_{z_1}^*\calL_m^{\oplus\abs{\bfJ^c}}+\underline{\bbR}^{1+\abs{\bfJ^c}(2m-2)}.
    \end{equation}
\end{lem}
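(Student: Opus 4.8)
The plan is to adapt the proof of Lemma \ref{lem:hybridmarkedthimbles} essentially verbatim; the only substantive difference is that, by definition (and the Warning), a $\bfJ$-PI thimble meets $D$ precisely at the marked points indexed by $\bfJ^c$, so it is $\abs{\bfJ^c}$ auxiliary disk operators --- not $k$ of them --- that get glued in, while the marked points indexed by $\bfJ$ play no role at all.

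Concretely, I would reuse the two auxiliary families from the proof of Lemma \ref{lem:hybridmarkedthimbles}: the constant family $D^{\calL_m,+}_x$ on $\Theta_+$ with asymptotic operator $\partial_t+\tau\cdot\identity$ at the positive puncture and boundary condition $x^*\calL_{m,\bbR}$, which has $\ind D^{\calL_m,+}_x\cong0$; and, for each $j\in\bfJ^c$, the disk operator $D^{\calL_{m,\bbR}}_{p_j}$ on $v_{p_j}^*\calL_m$ with clockwise totally real boundary conditions $\gamma_{p_j}^*\calL_{m,\bbR}$, where $p_j\in S_DM\vert_{u(z_j)}$ and $p_1=\eneval_{z_1}(u)$ when $1\in\bfJ^c$; as in the proof of Proposition \ref{prop:canonicaltransverseoperator} we have $\ind D^{\calL_{m,\bbR}}_{p_j}\cong\bbR^{1-2m}$. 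I would then form the glued operator $D^{\calL_m,+}_x\#D_{\calL_m,\bfJ,u}\#\oplus_{j\in\bfJ^c}D^{\calL_{m,\bbR}}_{p_j}$, gluing the half-cylinder at the negative puncture and the $j$-th disk at $z_j$ for $j\in\bfJ^c$, and compute its index bundle in the same two ways as in Lemma \ref{lem:hybridmarkedthimbles}.

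On one hand, each interior gluing at $z_j$, $j\in\bfJ^c$, forces sections to agree there and hence subtracts the complex fiber $\eval_{z_1}^*\calL_m$, so the glued operator has index bundle canonically $\ind D_{\calL_m,\bfJ,u}+\bbR^{\abs{\bfJ^c}(1-2m)}-\eval_{z_1}^*\calL_m^{\oplus\abs{\bfJ^c}}$. On the other hand, Lemma \ref{lem:index-theoretic} (applied after the usual stabilization) identifies this same index bundle with $\calL_{m,\bbR}\vert_{\eneval_{z_1}(u)}^{\oplus 1-\abs{\bfJ^c}}$, which is $\underline{\bbR}^{1-\abs{\bfJ^c}}$ via the trivialization $\calL_{m,\bbR}\vert_X\cong\underline{\bbR}$ coming from $s_D$ --- exactly Lemma \ref{lem:hybridmarkedthimbles} with $k$ replaced by $\abs{\bfJ^c}$, since the extra marked points in $\bfJ$ do not change the topology of the glued domain. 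Equating and solving for $\ind D_{\calL_m,\bfJ,u}$ gives $\eval_{z_1}^*\calL_m^{\oplus\abs{\bfJ^c}}+\underline{\bbR}^{1-\abs{\bfJ^c}}-\bbR^{\abs{\bfJ^c}(1-2m)}=\eval_{z_1}^*\calL_m^{\oplus\abs{\bfJ^c}}+\underline{\bbR}^{1+\abs{\bfJ^c}(2m-2)}$, and since every identification along the way is canonical it upgrades to a family isomorphism over $\scrR^{H^\ell,J^\ell_k}_\bfJ(x)$.

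I do not anticipate any genuine obstacle. The one point requiring care --- and the only way the argument differs from Lemma \ref{lem:hybridmarkedthimbles} --- is that the marked points $z_j$ with $j\in\bfJ$ carry no tangency constraint for $D_{\calL_m,\bfJ,u}$ and have no auxiliary disk glued at them, so they contribute to neither side of the computation; beyond that, one just has to keep the bookkeeping of $\bfJ$ versus $\bfJ^c$ straight and preserve the clockwise orientation of all totally real boundary conditions throughout, exactly as in the earlier lemmas, so that the final isomorphism is independent of the auxiliary choices of the $p_j$.
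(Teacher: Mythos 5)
Your proposal is correct and is exactly the argument the paper intends: the paper's proof is literally ``Appropriately modify the proof of Lemma \ref{lem:hybridmarkedthimbles},'' and your modification --- gluing the disk operators $D^{\calL_{m,\bbR}}_{p_j}$ only at the marked points indexed by $\bfJ^c$ and running the two index computations of that lemma with $k$ replaced by $\abs{\bfJ^c}$ --- is the intended one.
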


\begin{proof}
Appropriately modify the proof of Lemma \ref{lem:hybridmarkedthimbles}.
\end{proof}

We may now construct a twisted stable framing of $\scrR^{H^\ell,J^\ell_k}_\bfJ(x)$. Again, let $u\in\scrR^{H^\ell,J^\ell_k}_\bfJ(x)$. Consider the glued together operator 
    \begin{equation}
    T_{\frakF,x}\#\big(D^{\scrR,\bfJ}_u\oplus\Phi^\scrR\oplus D_{\calL_m,\bfJ}\big)\#\oplus_{j\in\bfJ^c}D^\Lambda_{p_j},
    \end{equation}
where $p_j\in S_DM\vert_{u(z_j)}$ (with $p_1=\eneval_{z_1}(u)$ if $1\in\bfJ^c$). Again, our argument will ultimately be independent of the auxiliary choice of $p_j$. Our various index bundle computations show we have the following canonical isomorphism of real virtual bundles:
\begin{itemize}
\item if $1\in\bfJ$:
    \begin{equation}\label{eqn:auxredefining1}
    T\scrR^{H^\ell,J^\ell_k}_\bfJ(x)+\ind T_{\frakF,x}\cong\eval_{z_1}^*(\widetilde{T}_{\abs{\bfJ^c}}+T_{\abs{\bfJ^c}});
    \end{equation}
\item if $1\in\bfJ^c$:
    \begin{equation}
    T\scrR^{H^\ell,J^\ell_k}_\bfJ(x)+\ind T_{\frakF,x}\cong\eneval_{z_1}^*\widetilde{T}_{\abs{\bfJ^c}}+\eval_{z_1}^*T_{\abs{\bfJ^c}}.
    \end{equation}
\end{itemize}

\subsection{Compactifying marked PI thimbles}
In the present article, we will need to consider two Gromov-compactifications of $\scrR^{H^\ell,J^\ell_k}_\bfJ(x)$. 

The first compactification, denoted $\overline{\scrR}^{H^\ell,J^\ell_k}_\bfJ(x)$, we call the \emph{naive compactification}. Again, we have an energy estimate (which holds when $\epsilon_\ell$ is sufficiently small):
    \begin{equation}
    E_\topological(u)\approx\abs{\bfJ^c}-\overline{w}(x)\Bigg(1-\dfrac{\epsilon^2_\ell}{2}\Bigg).
    \end{equation}

\begin{prop}\label{prop:naivecompactification}
For $\overline{w}(x)<\lambda_\ell$ and $\epsilon_\ell$ sufficiently small, the moduli space $\scrR^{H^\ell,J^\ell_k}_\bfJ(x)$ admits a Gromov-compactification $\overline{\scrR}^{H^\ell,J^\ell_k}_\bfJ(x)$ by allowing
\begin{enumerate}
\item breakings at non-divisorial 1-periodic orbits such that the corresponding cylinder components are completely contained in $X$
\item and bubbling, at any $z\in\{z_j:j\in\bfJ^c\}$, of a 1-pointed relative sphere representing a spherical homology class $A$ satisfying $A\cdot D=1$.
\end{enumerate}
See Figure \ref{fig:convergence}.
\end{prop}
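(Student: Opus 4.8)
The plan is to run the same three-step scheme used for Proposition \ref{prop:lowenergycompactification}, the only genuinely new ingredient being the bubbling now permitted at the constrained marked points. First I would invoke standard Gromov compactness for the closed manifold $M$: since the domain $\Sigma_k$ is rigid --- its conformal structure and the marked points $z_1,\ldots,z_k$ are fixed, and the only non-compact region is the negative cylindrical end $\epsilon^-$ --- the Gromov limit of a non-convergent sequence in $\scrR^{H^\ell,J^\ell_k}_\bfJ(x)$ consists of a principal component on $\Sigma_k$, possibly with a finite chain of Floer cylinders splitting off at $\epsilon^-$, together with finitely many trees of non-constant $J^\ell_k$-holomorphic sphere bubbles glued at interior points of $\Sigma_k$ and of the cylinders. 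Next I would exclude breaking along divisorial $1$-periodic orbits exactly as in \cite[Lemma 4.14]{GP21} (the maximum-principle argument near $D$ already used for Proposition \ref{prop:lowenergycompactification}); this forces every broken-off cylinder to lie in $X$, asymptotic to non-divisorial orbits of the same weighted winding number as $x$, and hence to contribute nothing to the intersection number with $D$.

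The heart of the argument is then a positivity-of-intersection count combined with the energy estimate $E_\topological(u)\approx\abs{\bfJ^c}-\overline{w}(x)(1-\epsilon_\ell^2/2)$. Since an admissible $J^\ell_k$ preserves $TD$, the divisor is $J^\ell_k$-holomorphic and positivity of intersection applies to every holomorphic piece of the limit; moreover a non-constant sphere bubble cannot be disjoint from $D$, for otherwise it would map into $X$, where $\omega=d\theta$ is exact, forcing zero energy and hence constancy --- so every non-constant bubble $v$ satisfies $v_*[\bbC P^1]\cdot D\geq 1$. Now $[u_n]\cdot D=\abs{\bfJ^c}$ is a homological invariant, constant along the sequence, so in the limit the total intersection with $D$, summed over all components, equals $\abs{\bfJ^c}$; since each constrained point $z_j$, $j\in\bfJ^c$, already forces its local contribution (principal component together with any bubble tree sitting over $z_j$) to be at least $1$, these $\abs{\bfJ^c}$ contributions exhaust the intersection number. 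I would conclude that $D$ is met in the limit only at points $z_j$ with $j\in\bfJ^c$, that no sphere bubbling occurs anywhere else (any such bubble would over-count the intersection with $D$), and that over each $z_j$ where the principal component does not itself meet $D$, stability of the stable map together with the intersection budget forces the bubble tree to reduce to a single non-constant sphere $v$ with $v_*[\bbC P^1]\cdot D=1$ attached directly at $z_j$ --- precisely a $1$-pointed relative sphere representing a spherical class $A$ with $A\cdot D=1$. Finally, the energy estimate together with the uniform positive lower bound on the energy of such a sphere controls, once $\epsilon_\ell$ is sufficiently small, for which $\bfJ$ (equivalently, for which values of $\abs{\bfJ^c}-\overline{w}(x)$) such bubbling can actually occur; the same count applied on the exact Floer cylinders and at the negative puncture rules out any further bubbling there, which exhausts the possible limits.

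I expect the step requiring the most care --- the one place where the geometry near $D$ enters essentially --- to be the exclusion of divisorial breaking: there one must combine the small-energy estimate, the fact that $H^\ell$ and $J^\ell_k$ are adapted to $D$, and a maximum principle along the lines of \cite[Lemma 4.14]{GP21}. Once that is in place the remainder is bookkeeping with intersection multiplicities, essentially identical to the analysis behind Proposition \ref{prop:lowenergycompactification} and to the Cieliebak--Mohnke / Ganatra--Pomerleano treatment of bubbling against a smooth divisor; the one subtlety worth keeping in mind is that, because the marked points on the thimble are fixed, there is no domain degeneration and so the bubble trees are pinned down entirely by stability and the intersection count.
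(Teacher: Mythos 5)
Your proposal is correct and follows essentially the same route as the paper's proof: standard Gromov compactness in the compact manifold $M$, exclusion of divisorial breaking via \cite[Lemma 4.14]{GP21}, and then positivity of intersection together with the conserved homological count $[u]\cdot D=\abs{\bfJ^c}$ (each constrained point contributing at least $1$) to force exactly one simple relative sphere of intersection number $1$ per degenerating marked point and to rule out multiply covered bubbles, bubbles in or disjoint from $D$, and bubbling elsewhere. The only cosmetic difference is that you also invoke the energy estimate, which the paper's argument for the naive compactification does not need (unlike the low-energy case, here sphere bubbling is genuinely allowed and the intersection count alone does the work).
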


\begin{proof}
Since $M$ is compact, standard Gromov-compactness shows that there is a Gromov-compactification $\overline{\scrR}^{H^\ell,J^\ell_k}_\bfJ(x)$ of $\scrR^{H^\ell,J^\ell_k}_\bfJ(x)$ whose elements consist of a marked PI thimble followed by a sequence of Floer trajectories together with various trees of sphere bubbles glued at various points. First, the argument of \cite[Lemma 4.14]{GP21} shows no breaking at divisorial 1-periodic orbits may occur. Second, positivity of intersection shows no bubbling of a sphere bubble contained completely inside of $D$ can occur. In particular, any element of $\overline{\scrR}^{H^\ell,J^\ell_k}_\bfJ(x)$ has a well-defined intersection number with $D$ which we see must equal $\abs{\bfJ^c}$; hence, we cannot bubble a multiply covered sphere. (Recall, our marked points on the thimble are fixed; hence, no sphere bubbling due to domain degenerations can occur.) Therefore, again using positivity of intersection, we see the only possible accumulation points of a sequence in $\scrR^{H^\ell,J^\ell_k}_\bfJ(x)$ are of the form specified in the statement of this proposition.
\end{proof}

In order to describe the second compactification, we will actually have to digress to discuss an auxiliary compactification, denoted $\overline{\scrR}^{H^\ell,J^\ell_k}_{\bfJ,\mathrm{aux}}(x)$. Recall, given a sequence $[u_\nu]\in\scrR^{H^\ell,J^\ell_k}_\bfJ(x)$ which Gromov-converges to $[u]\in\overline{\scrR}^{H^\ell,J^\ell_k}_\bfJ(x)$, a part of the definition of Gromov-convergence is that the domains of $[u_\nu]$ Gromov-converge to the domain of $[u]$ in the usual Deligne-Mumford compactification of stable genus 0 curves with marked points. Moreover, Proposition \ref{prop:naivecompactification} shows that any sphere bubbles of $[u]$ are determined up to the $\bbC^*$-action. Instead of modeling the Gromov-convergence of the domains on the Gromov-convergence of the usual Deligne-Mumford compactification, we could use the real Deligne-Mumford compactification of stable genus 0 curves with marked points. (Recall, the real Deligne-Mumford compactification is a real-oriented blow-up of the usual Deligne-Mumford compactification along the strict normal crossings divisor corresponding to singular stable curves.) Then, any sphere bubbles in the corresponding Gromov-limit of $[u_\nu]$ are determined up to the $\bbR$-action.

\begin{prop}\label{prop:auxcompactification}
For $\overline{w}(x)<\lambda_\ell$ and $\epsilon_\ell$ sufficiently small, the moduli space $\scrR^{H^\ell,J^\ell_k}_\bfJ(x)$ admits a Gromov-compactification $\overline{\scrR}^{H^\ell,J^\ell_k}_{\bfJ,\mathrm{aux}}(x)$ by allowing 
\begin{enumerate}
\item breakings at non-divisorial 1-periodic orbits such that the corresponding cylinder components are completely contained in $X$
\item and bubbling, at any $z\in\{z_j:j\in\bfJ^c\}$, of an enhanced 1-pointed relative sphere representing a spherical homology class $A$ satisfying $A\cdot D=1$.
\end{enumerate}
See Figure \ref{fig:convergence}.
\end{prop}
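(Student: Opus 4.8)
The plan is to deduce this from Proposition~\ref{prop:naivecompactification} by changing only the model for how the domains degenerate. First I would recall the output of the naive compactification: by Proposition~\ref{prop:naivecompactification}, any sequence $[u_\nu]\in\scrR^{H^\ell,J^\ell_k}_\bfJ(x)$ has, after passing to a subsequence, a usual Gromov limit consisting of a marked PI thimble, finitely many Floer trajectories breaking off at non-divisorial $1$-periodic orbits whose cylinder components lie in $X$, and, at some of the marked points $z_j$ with $j\in\bfJ^c$, a single non-constant sphere bubble which is a $1$-pointed relative sphere of class $A$ with $A\cdot D=1$. (Exactness of $X$ kills nonconstant bubble components in $X$; positivity of intersection together with $A\cdot D=1$ rules out bubble components inside $D$ and multiply covered bubbles; \cite[Lemma 4.14]{GP21} rules out divisorial breaking; and since the marked points are fixed there is no bubbling off a $\bfJ^c$-marked point, so each bubble tree is a single $1$-pointed relative sphere.) In the usual Gromov limit each such bubble is recorded only modulo the $\bbC^*$-automorphisms of its domain fixing the node and the point mapping to $D$, i.e.\ as a point of $\bbG\bbW(A)$.

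Next I would bring in the real Deligne--Mumford compactification of stable genus $0$ marked curves recalled just before the statement, namely the real oriented blow-up of the usual Deligne--Mumford space along its boundary normal-crossings divisor: it is compact, it maps properly and surjectively onto the usual Deligne--Mumford space, it is a diffeomorphism over the locus of smooth curves, and over a once-nodal curve the fiber is the circle of unit tangent directions at the node (locally the complex gluing parameter $w$ at the node is traded for polar coordinates $(\rho,\vartheta)\in[0,\epsilon)\times S^1$). I would then define $\overline{\scrR}^{H^\ell,J^\ell_k}_{\bfJ,\mathrm{aux}}(x)$ to be the space of Gromov limits obtained by remembering the limiting angular coordinate $\vartheta$ at each node in this real blow-up rather than forgetting it. By construction this makes $\overline{\scrR}^{H^\ell,J^\ell_k}_{\bfJ,\mathrm{aux}}(x)$ the total space of a bundle over $\overline{\scrR}^{H^\ell,J^\ell_k}_\bfJ(x)$ whose fiber over a configuration with $r$ sphere bubbles is a product of $r$ circles (and a point elsewhere); compactness is then immediate from Proposition~\ref{prop:naivecompactification} and compactness of the circle fibers, and over the open stratum this space agrees with $\scrR^{H^\ell,J^\ell_k}_\bfJ(x)$.

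The crux --- and the step I expect to need the most care --- is identifying precisely what the extra angular data records about a bubble. Here I would run the standard rescaling analysis near a nodal degeneration: the bubble arises as a limit of rescalings $u_\nu(R_\nu\,\cdot\,)$ near $z_j$ with a residual $\bbC^*=\bbR_{>0}\times S^1$ ambiguity in the rescaling factor $R_\nu$, and the argument of $R_\nu$ is pinned down (up to a fixed shift) by the argument of the domain gluing parameter $w_\nu$; hence prescribing the limiting $\vartheta\in S^1$ fixes exactly the $S^1$-part of the rescaling ambiguity, which is precisely the rotation subgroup of $\bbC^*$ acting on the bubble while fixing its two special points $0$ (the node) and $+\infty$ (the point on $D$). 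Therefore the refined limit determines each sphere bubble only up to the remaining $\bbR_{>0}\cong\bbR$-action, i.e.\ as an element of $\bbG\bbW^{S^1}(A)=\widetilde{\bbG\bbW}(A)/\bbR$ --- an enhanced $1$-pointed relative sphere. Matching these two descriptions (the angular coordinate in the real blow-up of the domain versus the $\bbC^*/\bbR$-action on the bubble) is the only genuinely non-formal point; everything else is Proposition~\ref{prop:naivecompactification} together with the formal properties of real oriented blow-ups of Deligne--Mumford spaces. Finally I would read off that the degenerations of $\overline{\scrR}^{H^\ell,J^\ell_k}_{\bfJ,\mathrm{aux}}(x)$ are exactly (1) Floer breakings as in the naive compactification and (2) bubbling of enhanced $1$-pointed relative spheres of class $A$ with $A\cdot D=1$ at the $\bfJ^c$-marked points, as claimed.
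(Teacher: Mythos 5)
Your proposal is correct and follows essentially the same route as the paper: both pass to the real Deligne--Mumford compactification of the domains and observe that remembering the relative phase (angular) parameter at each node cuts the reparameterization ambiguity of a bubble from $\bbC^*$ down to $\bbR$, so that bubbles are recorded as elements of $\bbG\bbW^{S^1}$, with compactness inherited from Proposition \ref{prop:naivecompactification}. The only point you leave implicit --- the ``fixed shift'' needed to turn a relative phase parameter into an actual element of $S_0\bbC P^1$ --- is handled in the paper by fixing auxiliary tangent rays $\theta_j\in S_{z_j}\Sigma_k$ at the marked points, with the observation that these choices are immaterial for $j\geq 2$.
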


\begin{proof}
The main point is to show how to define the parameterization of a sphere bubble in the Gromov-limit $[u]$ of a sequence $[u_\nu]\in\scrR^{H^\ell,J^\ell_k}_\bfJ(x)$ in order to actually define $\overline{\scrR}^{H^\ell,J^\ell_k}_{\bfJ,\mathrm{aux}}(x)$ (since we are only quotienting by the $\bbR$-action). Let $C$ be a stable genus 0 curve with marked points in the real Deligne-Mumford compactification with irreducible components $\{C_1,\ldots,C_s\}$. Recall, what differentiates $C$ as an element of the real Deligne-Mumford compactification, as opposed to an element of the usual Deligne-Mumford compactification, is the following: if $z_{ij}$ is a node connecting $C_i$ to $C_j$, then we decorate $z_{ij}$ with a choice of \emph{relative phase parameter} 
    \begin{equation}
    \theta\in\big(S_{z_{ij}}C_i\times S_{z_{ij}}C_j\big)/S^1,
    \end{equation}
where $S^1$ acts diagonally by rotation.

We have already fixed (if $1\in\bfJ^c$) an element in $S_{z_1}\Sigma_k$. We now fix an auxiliary choice of element $\theta_j\in S_{z_j}\Sigma_k$, $j\in\{1,\ldots,k\}$, which agrees with the choice at $1$ if $1\in\bfJ^c$; ultimately, the auxiliary choices for $j\in\{2,\ldots,k\}$ will not matter, cf. Remark \ref{rem:auxcompactification}. Now, consider a sequence $[u_\nu]\in\scrR^{H^\ell,J^\ell_k}_\bfJ(x)$ with Gromov-limit $[\widetilde{u}]\in\overline{\scrR}^{H^\ell,J^\ell_k}_\bfJ(x)$; we assume a sphere bubble $[v^j]\in\bbG\bbW$ appears at $z_j$. For each choice of $\phi\in S_0\bbC P^1$, we may choose a representative $v^j_\phi\in[v^j]$ such that 
    \begin{equation}
    [v^j_\phi]\neq[v^j_{\phi'}]\in\bbG\bbW^{S^1}.
    \end{equation}
In order for $[u_\nu]$ to converge to $[\widetilde{u}]$ in $\overline{\scrR}^{H^\ell,J^\ell_k}_\bfJ(x)$, we require that there exists a sequence of stabilizations $C_\nu$ of the domains of $[u_\nu]$ resp. a stabilization $\widetilde{C}$ of the domain of $[u]$ such that $C_\nu$ converges to $\widetilde{C}$ in the corresponding usual Deligne-Mumford compactification. Observe, there is an irreducible component $\widetilde{C}_j$ of $\widetilde{C}$ which is used as the domain of $[v^j]$. Now, instead consider $C_\nu$ as a sequence in the associated real Deligne-Mumford compactification via our choice of $\theta_j$'s; we have that $C_\nu$ converges to an element $C$. By our previous discussion, $C$ differs from $\widetilde{C}$ by a choice of relative phase parameters, i.e., under the projection from the real Deligne-Mumford compactification to the usual Deligne-Mumford compactification collapsing the $S^1$-fibers, we see $C$ maps to $\widetilde{C}$. In particular, there is an irreducible component $C_j$ of $C$ corresponding to $\widetilde{C}_j$. Let $z_j$ be the node connecting $C_j$ to $C$; we now use the relative phase parameter
    \begin{equation}
    \big[(\theta_j,\phi)\big]\in\big(S_{z_j}\Sigma_k\times S_{z_j}C_j\big)/S^1
    \end{equation}
to define the parameterization $[v^j_\phi]$ we will use to construct the Gromov-limit $[u]$ of $[u_\nu]$ in $\overline{\scrR}^{H^\ell,J^\ell_k}_{\bfJ,\mathrm{aux}}(x)$. In particular, we have now defined $\overline{\scrR}^{H^\ell,J^\ell_k}_{\bfJ,\mathrm{aux}}(x)$; moreover, it is clear that the only possible accumulation points of a sequence in $\overline{\scrR}^{H^\ell,J^\ell_k}_{\bfJ,\mathrm{aux}}(x)$ are of the form specified in the statement of this proposition (this is because the corresponding statements hold in Proposition \ref{prop:naivecompactification}).
\end{proof}

There is a map 
    \begin{equation}
    \varphi_\mathrm{aux}:\overline{\scrR}^{H^\ell,J^\ell_k}_{\bfJ,\mathrm{aux}}(x)\to\overline{\scrR}^{H^\ell,J^\ell_k}_\bfJ(x)
    \end{equation}
induced by the map 
    \begin{equation}
    \bbG\bbW^{S^1}\to\bbG\bbW.
    \end{equation}
The second compactification, which we call the \emph{enhanced compactification}, is the quotient 
    \begin{equation}
   \overline{\scrR}^{H^\ell,J^\ell_k}_{\bfJ,S^1}(x)\equiv\overline{\scrR}^{H^\ell,J^\ell_k}_{\bfJ,\mathrm{aux}}(x)/\sim,
    \end{equation}
where $[u]^{S^1,\mathrm{aux}}\sim[u']^{S^1,\mathrm{aux}}$ if 
\begin{enumerate}
\item $[u]^{S^1,\mathrm{aux}}$ and $[u']^{S^1,\mathrm{aux}}$ are contained in the same fiber of $\varphi_\mathrm{aux}$
\item and the sphere bubble at $z_1$ of $[u]^{S^1,\mathrm{aux}}$ equals the sphere bubble at $z_1$ of $[u']^{S^1,\mathrm{aux}}$ in $\bbG\bbW^{S^1}$ (if such a sphere bubble exists).
\end{enumerate}
See Figure \ref{fig:convergence}.

\begin{rem}\label{rem:auxcompactification}
It is the enhanced compactification that we are interested in. In particular, we may define an enhanced evaluation map on $\scrR^{H^\ell,J^\ell_k}_\bfJ(x)$ at any of the marked points. However, these enhanced evaluation maps will not extend to $\overline{\scrR}^{H^\ell,J^\ell_k}_\bfJ(x)$ because, here, sphere bubbles are only well-defined up to the entire $\bbC^*$-action. Meanwhile, these enhanced evaluation maps will extend to $\overline{\scrR}^{H^\ell,J^\ell_k}_{\bfJ,\mathrm{aux}}(x)$. We only require the enhanced evaluation at $z_1$, this is why we further quotient to $\overline{\scrR}^{H^\ell,J^\ell_k}_{\bfJ,S^1}(x)$ (also, this is why we mentioned in the proof of Proposition \ref{prop:auxcompactification} that our auxiliary choices would not matter). In particular, in $\overline{\scrR}^{H^\ell,J^\ell_k}_{\bfJ,S^1}(x)$, sphere bubbles at $z_1$ are well-defined up to the $\bbR$-action while sphere bubbles at other marked points are well-defined up to the $\bbC^*$-action; hence, the former is codimension 1 while the latter is codimension 2.
\end{rem}

\begin{figure}[ht]
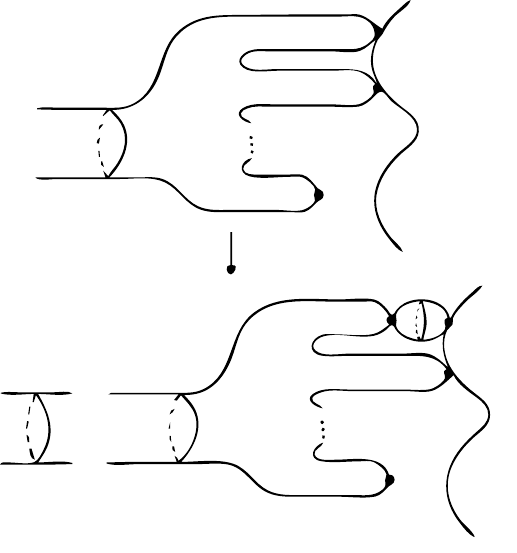
\caption{This picture is multi-purpose. In the context of the naive compactification, the sphere bubble at $z_1$ is well-defined up to the $\bbC^*$-action (as would be other sphere bubbles at other marked points). In the context of the auxiliary compactification, the sphere bubble at $z_1$ is well-defined up to the $\bbR$-action (as would be other sphere bubbles at other marked points). Finally, in the context of the enhanced compactification, the sphere bubble at $z_1$ is well-defined up to the $\bbR$-action (however, other sphere bubbles at other marked points are only well-defined up to the entire $\bbC^*$-action).}
\label{fig:convergence}
\end{figure}

We now move on to investigating the smooth structure of our various Gromov-compactifications of $\scrR^{H^\ell,J^\ell_k}_\bfJ(x)$.

\begin{prop}\label{prop:smoothstructure1}
$\overline{\scrR}^{H^\ell,J^\ell_k}_{\bfJ,\mathrm{aux}}(x)$ can be given the structure of a compact smooth manifold with corners whose codimension 1 boundary is enumerated by gluing maps of the form
    \begin{align}
    \overline{\scrR}^{H^\ell,J^\ell_k}_{\bfJ,\mathrm{aux}}(y)\times\bbF^{H^\ell,J^\ell}(y,x)&\to\overline{\scrR}^{H^\ell,J^\ell_k}_{\bfJ,\mathrm{aux}}(x), \\
    \overline{\scrR}^{H^\ell,J^\ell_k}_{\bfJ\cup\{j'\},\mathrm{aux}}(x)\times_X\bbG\bbW^{S^1}&\to\overline{\scrR}^{H^\ell,J^\ell_k}_{\bfJ,\mathrm{aux}}(x),
    \end{align}
where the fiber product is over $\eval_{z_{j'}}\times\eval_0$.
\end{prop}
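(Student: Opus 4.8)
The plan is to run the usual gluing-and-charts argument, treating separately the two kinds of codimension $1$ degeneration, and to isolate the one genuinely new feature --- the appearance of the real-oriented blow-up in the sphere-bubbling boundary. I would first dispose of the Floer-breaking boundary: near a configuration in $\overline{\scrR}^{H^\ell,J^\ell_k}_{\bfJ,\mathrm{aux}}(y)\times\bbF^{H^\ell,J^\ell}(y,x)$ (a marked PI thimble, possibly carrying sphere bubbles at marked points, asymptotic to $y$, followed by a Floer trajectory from $y$ to $x$) the gluing is performed at the negative cylindrical end $\epsilon^-$, hence inside $X$ and entirely away from $D$, so the pregluing and Newton-iteration package is verbatim that of \cite[Section 6]{Lar21} (cf.\ \cite[Section 8]{PS24b}, \cite[Section 6]{PS25c}), exactly as in the proof of Proposition \ref{prop:smoothstructure0}; this produces a smooth collar $\big(\overline{\scrR}^{H^\ell,J^\ell_k}_{\bfJ,\mathrm{aux}}(y)\times\bbF^{H^\ell,J^\ell}(y,x)\big)\times[0,\epsilon)$ with the usual gluing-length coordinate.

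The substantive part is the sphere-bubbling boundary. Here I would fix $j'\in\bfJ^c$ and a configuration of $\overline{\scrR}^{H^\ell,J^\ell_k}_{\bfJ\cup\{j'\},\mathrm{aux}}(x)\times_X\bbG\bbW^{S^1}$ --- a marked PI thimble $u'$, now with $z_{j'}$ a free marked point landing in $X$, together with an enhanced $1$-pointed relative sphere $[v]$ satisfying $\eval_{z_{j'}}(u')=\eval_0([v])$ --- and glue the bubble on at the \emph{fixed} interior marked point $z_{j'}$. The underlying analysis is the standard gluing of a $J^\ell$-holomorphic sphere at an interior point, upgraded to respect the incidence/tangency condition along $D$ following \cite[Section 6]{CM07} and \cite[Section 4.4]{GP21}; mutual transversality of $\eval_{z_{j'}}$ and $\eval_0$ and transversality of the constrained problem hold for generic $J^\ell$ ($z$-independent near $z_{j'}$), and the relevant fibered products are compact since $M$ is compact and $\eval_0\colon\bbG\bbW^{S^1}\to X$ is proper. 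The naive gluing parameter is a small $\rho\in\bbC$, with the $\bbC^{*}$ reparametrizing the bubble acting on $\rho$ through the inverse character, so that after passing to $\bbG\bbW=\widetilde{\bbG\bbW}/\bbC^{*}$ the bubbled locus $\{\rho=0\}$ has codimension $2$ --- matching Remark \ref{rem:auxcompactification}. To obtain the auxiliary compactification I would instead record $|\rho|\in[0,\epsilon)$ and $\arg\rho\in S^1$ separately: the radial subgroup $\bbR\subset\bbC^{*}$ still acts freely (rescaling the bubble's domain, inverse-rescaling $|\rho|$), whereas $S^1_{\arg\rho}$ is precisely the residual circle of $\bbG\bbW^{S^1}=\widetilde{\bbG\bbW}/\bbR$ over $\bbG\bbW$. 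This is exactly the relative-phase-parameter bookkeeping already set up in the proof of Proposition \ref{prop:auxcompactification}, and it is literally the real-oriented blow-up of the Deligne--Mumford gluing coordinate along the nodal divisor; after it, $|\rho|$ becomes a genuine boundary defining function and one reads off the smooth collar $\big(\overline{\scrR}^{H^\ell,J^\ell_k}_{\bfJ\cup\{j'\},\mathrm{aux}}(x)\times_X\bbG\bbW^{S^1}\big)\times[0,\epsilon)$, the boundary stratum itself being a manifold because $\bbG\bbW^{S^1}\to\bbG\bbW$ is a free $S^1$-bundle.

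Finally I would assemble the local models into a global structure. By positivity of intersection, ampleness of $D$, and $A\cdot D=1$, every bubble that occurs is non-constant, simple, and meets $D$ transversally in a single point (cf.\ the proofs of Propositions \ref{prop:naivecompactification} and \ref{prop:auxcompactification}), so there are no multiply-covered or ghost bubbles to handle, and since $z_1,\dots,z_k$ are fixed there is no domain stabilization on the thimble component. The codimension $\geq 2$ strata --- several Floer breakings, bubbles at distinct marked points, or mixtures thereof --- are obtained by iterating the two gluing constructions above, and associativity and compatibility of gluing (again as in \cite[Section 6]{Lar21}) promote the charts to an atlas with corners exhibiting $\overline{\scrR}^{H^\ell,J^\ell_k}_{\bfJ,\mathrm{aux}}(x)$ as a compact $\langle k\rangle$-manifold; compactness was already established in Proposition \ref{prop:auxcompactification}, and the listed gluing maps account for all of the codimension $1$ boundary.

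I expect the main obstacle to be the second step: making the real-oriented-blow-up gluing rigorous --- namely verifying that, after separating the modulus and argument of $\rho$, the function $|\rho|$ is a bona fide smooth boundary coordinate, that the resulting smooth structure is independent of the auxiliary choices of relative phase parameters, and that it is compatible with the Floer-breaking charts at the codimension $2$ corners where a bubble at $z_{j'}$ and a breaking at $\epsilon^-$ collide, together with uniformity (in families) of the incidence-constraint transversality near $D$. The Floer-breaking step, by contrast, is essentially a citation of \cite{Lar21}.
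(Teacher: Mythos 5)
Your proposal is correct and follows essentially the same route as the paper: the Floer-breaking boundary is handled verbatim by the cylindrical-end gluing of \cite[Section 6]{Lar21} since it takes place in $X$, and the sphere-bubbling boundary becomes codimension $1$ precisely because the bubble is \emph{enhanced} --- retaining the $S^1$-parameter converts the complex gluing parameter $\rho$ into the polar pair $(\abs{\rho},\arg\rho)$, which is exactly the paper's observation that the interior-point gluing "reduces to the gluing analysis of cylindrical ends." Your version simply spells out in the language of the real-oriented blow-up what the paper compresses into a two-sentence citation.
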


\begin{proof}
This follows as in \cite[Section 6]{Lar21} (also, cf. \cite[Section 8]{PS24b} and \cite[Section 6]{PS25c}). First, when considering the gluing of a Floer trajectory to a marked PI thimble, since the gluing is occurring in the complement of the divisor, the gluing analysis is exactly the same as the gluing analysis of cylindrical ends in \emph{loc. cit.} Second, when considering the gluing of an enhanced 1-pointed sphere to a marked PI thimble, since the sphere is enhanced (i.e., we do not quotient by the $S^1$-action), the gluing analysis reduces to the gluing analysis of cylindrical ends in \emph{loc. cit.}
\end{proof}

For notational convenience, we define 
    \begin{align}
    \partial_{\bfJ,yx}&\equiv\overline{\scrR}^{H^\ell,J^\ell_k}_{\bfJ,\mathrm{aux}}(y)\times\bbF^{H^\ell,J^\ell}(y,x), \\
    \partial^{S^1}_{\bfJ\cup\{j'\},x}&\equiv\overline{\scrR}^{H^\ell,J^\ell_k}_{\bfJ\cup\{j'\},\mathrm{aux}}(x)\times_X\bbG\bbW^{S^1}.
    \end{align}
We have the following two basic relations. First, two short exact sequences of real vector bundles
    \begin{align}
    0\to\underline{\bbR}\to &T\overline{\scrR}^{H^\ell,J^\ell_k}_{\bfJ,\mathrm{aux}}(x)\vert_{\partial_{\bfJ,yx}}\to T\overline{\scrR}^{H^\ell,J^\ell_k}_{\bfJ,\mathrm{aux}}(y)\times T\bbF^{H^\ell,J^\ell}(y,x)\to0, \\
    0\to\underline{\bbR}\to &T\overline{\scrR}^{H^\ell,J^\ell_k}_{\bfJ,\mathrm{aux}}(x)\vert_{\partial^{S^1}_{\bfJ\cup\{j'\},x}}\to T\overline{\scrR}^{H^\ell,J^\ell_k}_{\bfJ\cup\{j'\},\mathrm{aux}}(x)\times_{TX}T\bbG\bbW^{S^1}\to 0
    \end{align}
given by taking a collar neighborhood of a codimension 1 boundary stratum. Second, two short exact sequences of real vector bundles 
    \begin{align}
    0\to\underline{\bbR}\to&\ind D\overline{\partial}_{H^\ell,J^\ell}\to T\bbF^{H^\ell,J^\ell}(y,x)\vert_{\operatorname{int}\bbF^{H^\ell,J^\ell}(y,x)}\to0, \\
    0\to\underline{\bbR}\to&\ind D^{\bbG\bbW}\to T\bbG\bbW^{S^1}\vert_{\operatorname{int}\bbG\bbW^{S^1}}\to0
    \end{align}
given by the translational direction. There are straightforward relations when passing to higher codimension boundary strata.

\begin{prop}
There is an extension of $\ind D^{\scrR,\bfJ}$ to $\overline{\scrR}^{H^\ell,J^\ell_k}_{\bfJ,\mathrm{aux}}(x)$ whose restriction to the interior of a codimension 1 boundary stratum is of the form
    \begin{align}
    \ind D^{\scrR,\bfJ}\vert_{\operatorname{int}\partial_{\bfJ,yx}}&=\ind D^{\scrR,\bfJ}\vert_{\operatorname{int}\partial_{\bfJ,yx}}+\ind\overline{\partial}_{H^\ell,J^\ell}\vert_{\operatorname{int}\partial_{\bfJ,yx}}, \\
    \ind D^{\scrR,\bfJ}\vert_{\operatorname{int}\partial^{S^1}_{\bfJ\cup\{j'\},x}}&=\ind D^{\scrR,\bfJ\cup\{j'\}}\vert_{\operatorname{int}\overline{\scrR}^{H^\ell,J^\ell_k}_{\bfJ\cup\{j'\},\mathrm{aux}}(x)}\times_{TX}\ind D^{\bbG\bbW}\vert_{\operatorname{int}\bbG\bbW^{S^1}}.
    \end{align}
The natural diagrams associated to codimension 1 boundary strata,
    \begin{equation}
    \begin{tikzcd}[column sep=small, center picture]
    & & \underline{\bbR}\arrow[d] \\
    & \ind D^{\scrR,\bfJ}\vert_{\operatorname{int}\partial_{\bfJ,yx}}\arrow[r,"\sim"]\arrow[d,equals] & T\overline{\scrR}^{H^\ell,J^\ell_k}_{\bfJ,\mathrm{aux}}(x)\vert_{\operatorname{int}\partial_{\bfJ,yx}}\arrow[d] \\
    \underline{\bbR}\arrow[r] & \ind D^{\scrR,\bfJ}\vert_{\operatorname{int}\partial_{\bfJ,yx}}+\ind\overline{\partial}_{H^\ell,J^\ell}\vert_{\operatorname{int}\partial_{\bfJ,yx}}\arrow[r] & T\overline{\scrR}^{H^\ell,J^\ell_k}_{\bfJ,\mathrm{aux}}(y)\vert_{\operatorname{int}\partial_{\bfJ,yx}}\times T\bbF^{H^\ell,J^\ell}(y,x)\vert_{\operatorname{int}\partial_{\bfJ,yx}}
    \end{tikzcd}
    \end{equation}
and 
    \begin{equation}
    \begin{tikzcd}[column sep=small, center picture]
    & & \underline{\bbR}\arrow[ddd] \\
    & & \\ 
    & & \\
    & \ind D^{\scrR,\bfJ}\vert_{\operatorname{int}\partial^{S^1}_{\bfJ\cup\{j'\},x}}\arrow[r,"\sim"]\arrow[d,equals] & T\overline{\scrR}^{H^\ell,J^\ell_k}_{\bfJ,\mathrm{aux}}(x)\vert_{\operatorname{int}\partial^{S^1}_{\bfJ\cup\{j'\},x}}\arrow[d] \\ 
    \underline{\bbR}\arrow[r] & \ind D^{\scrR,\bfJ\cup\{j'\}}\vert_{\operatorname{int}\overline{\scrR}^{H^\ell,J^\ell_k}_{\bfJ\cup\{j'\},\mathrm{aux}}(x)}\times_{TX}\ind D^{\bbG\bbW}\vert_{\operatorname{int}\bbG\bbW^{S^1}}\arrow[r] & T\overline{\scrR}^{H^\ell,J^\ell_k}_{\bfJ\cup\{j'\},\mathrm{aux}}(x)\vert_{\operatorname{int}\overline{\scrR}^{H^\ell,J^\ell_k}_{\bfJ\cup\{j'\},\mathrm{aux}}(x)}\times_{TX}T\bbG\bbW^{S^1}\vert_{\operatorname{int}\bbG\bbW^{S^1}},
    \end{tikzcd}
    \end{equation}
commute. Moreover, the natural diagrams associated to higher codimension boundary strata commute.
\end{prop}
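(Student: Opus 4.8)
The plan is to run, for each of the two families of codimension $1$ boundary strata, the same index--gluing argument that proves Propositions \ref{prop:morseindex} and \ref{prop:floerindex}, following \cite[Section 7]{Lar21} (see also \cite[Section 8]{PS24b} and \cite[Section 6]{PS25c}). The point of departure is the proof of Proposition \ref{prop:smoothstructure1}, which showed that both types of gluing map --- gluing a Floer trajectory to a marked PI thimble, and gluing an enhanced $1$-pointed relative sphere to a marked PI thimble --- are analytically modelled on the standard gluing of cylindrical ends; consequently the degeneration of the linearized operators near the corresponding strata is governed by the same local models, and the asserted index splittings and commuting diagrams are obtained by manipulating the collar and translational short exact sequences exactly as in \emph{loc. cit.}

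First I would treat the strata $\partial_{\bfJ,yx}$. Here the gluing takes place entirely in $X = M-D$, so the degeneration of $D^{\scrR,\bfJ}$ is literally the cylindrical--end degeneration of \emph{loc. cit.}: near such a stratum the glued operator is a canonical deformation of $D^{\scrR,\bfJ}_{u_0}$ glued along the neck to the linearized Floer operator $D\overline{\partial}_{H^\ell,J^\ell}$, the pre-gluing exact sequence produces the asserted splitting of $\ind D^{\scrR,\bfJ}\vert_{\operatorname{int}\partial_{\bfJ,yx}}$, and the collar short exact sequence together with the translational short exact sequence of the broken trajectory assemble into the first commuting square exactly as in Proposition \ref{prop:floerindex}. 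The extra constraints at the marked points of $\bfJ^c$ are carried along unchanged, since they are imposed away from the neck.

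Next I would treat the strata $\partial^{S^1}_{\bfJ\cup\{j'\},x}$, where an enhanced relative sphere bubbles off at $z_{j'}$. The crucial observation, already used in Propositions \ref{prop:auxcompactification} and \ref{prop:smoothstructure1}, is that passing to the real Deligne--Mumford compactification and fixing a relative phase parameter at the emerging node quotients the bubble only by $\bbR$ rather than $\bbC^*$, so analytically the node is a neck rather than a puncture-with-rotation. Hence near this stratum the glued operator is a canonical deformation of $D^{\scrR,\bfJ\cup\{j'\}}_{u_0}$ glued to $D^{\bbG\bbW}_{v^{j'}}$ along the node, with sections required to agree at the node in $T_pX$, $p=\eval_{z_{j'}}(u_0)=\eval_0(v^{j'})$; the pre-gluing exact sequence identifies its index with the fibre product $\ind D^{\scrR,\bfJ\cup\{j'\}}\times_{TX}\ind D^{\bbG\bbW}$, which is the second asserted splitting. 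In this picture the translational short exact sequence $0\to\underline{\bbR}\to\ind D^{\bbG\bbW}\to T\bbG\bbW^{S^1}\to 0$ of the sphere plays exactly the role of the translational direction of a Floer trajectory, so the second diagram commutes for the same reason the first does.

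The higher--codimension strata are then handled by iteration: every boundary stratum of $\overline{\scrR}^{H^\ell,J^\ell_k}_{\bfJ,\mathrm{aux}}(x)$ is an iterated product and fibre product built from marked--PI--thimble, Floer--trajectory, and enhanced--sphere moduli, and the two index splittings above are associative and mutually compatible within the gluing-of-operators formalism, so the associated diagrams commute. The one genuinely non-formal ingredient is the analytic claim that gluing an \emph{enhanced} sphere at a fixed marked point behaves like gluing a cylindrical end --- I expect this to be the main obstacle --- but it is exactly the content already extracted in the proof of Proposition \ref{prop:smoothstructure1}, and once it is granted the operator-level statements reduce to the same short-exact-sequence manipulations as in the Morse and Floer cases.
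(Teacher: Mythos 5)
Your proposal is correct and takes essentially the same route as the paper: the paper's proof is a one-line appeal to \cite[Section 7]{Lar21} on the grounds that the gluing analysis of Proposition \ref{prop:smoothstructure1} is identical, and your write-up simply makes explicit the two points that justify that appeal — the Floer-breaking strata live in $X$ and are literal cylindrical-end gluings, while the enhanced-sphere strata reduce to cylindrical-end gluings precisely because the real Deligne--Mumford/relative-phase enhancement quotients the bubble only by $\bbR$. No gaps; this is the intended argument, spelled out.
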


\begin{proof}
Again, this follows as in \cite[Section 7]{Lar21} (also, cf. \cite[Section 8]{PS24b} and \cite[Section 6]{PS25c}) since the gluing analysis of Proposition \ref{prop:smoothstructure1} is identical.
\end{proof}

\begin{rem}
In words, the previous proposition is simply saying that the collar directions of a gluing map associated to a boundary stratum are identified with the translational directions in the direct sum of index bundles of that boundary stratum.
\end{rem}

\begin{cor}\label{cor:tsfaux}
Our constructed twisted stable framing on $\scrR^{H^\ell,J^\ell_k}_\bfJ(x)$ extends to a twisted stable framing on $\overline{\scrR}^{H^\ell,J^\ell_k}_{\bfJ,\mathrm{aux}}(x)$ which, when restricted to a codimension 1 boundary stratum, agrees with the already constructed twisted stable framing on that boundary stratum.
\end{cor}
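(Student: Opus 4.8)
The plan is to follow the proof of Corollary~\ref{corollary:extendingtsf} essentially verbatim; the only genuinely new ingredient is the treatment of the sphere-bubbling strata. The structural point, as always, is that every twisted stable framing in play is \emph{defined} by summing index bundles of explicit glued Cauchy Riemann operators, so proving that two such framings agree on a boundary stratum reduces to recognizing the sum of the two ``factor'' operators as the single glued operator cutting out the framing on the ambient moduli space, after a canonical deformation supported away from all constraints and asymptotics. By Proposition~\ref{prop:smoothstructure1} there are exactly two families of codimension $1$ strata.

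On the Floer-breaking strata $\overline{\scrR}^{H^\ell,J^\ell_k}_{\bfJ,\mathrm{aux}}(y)\times\bbF^{H^\ell,J^\ell}(y,x)$ the relevant gluing occurs in the complement of $D$, so the argument is word-for-word that of Corollary~\ref{corollary:extendingtsf}: one glues $T_{\frakF,x}\#(D(\overline{\partial}_{H^\ell,J^\ell})_{u_1}\oplus\Phi^{\overline{\partial}})\#T_{\frakF,y}^\vee$ to $T_{\frakF,y}\#(D^{\scrR,\bfJ}_{u_2}\oplus\Phi^\scrR\oplus D_{\calL_m,\bfJ,u_2})\#\oplus_{j\in\bfJ^c}D^\Lambda_{p_j}$, uses the canonical isomorphism~\eqref{eqn:floerframingaux} to cancel $T_{\frakF,y}^\vee$ against $T_{\frakF,y}$, and canonically deforms $(D(\overline{\partial}_{H^\ell,J^\ell})_{u_1}\oplus\Phi^{\overline{\partial}})\#(D^{\scrR,\bfJ}_{u_2}\oplus\Phi^\scrR\oplus D_{\calL_m,\bfJ,u_2})$ into $D^{\scrR,\bfJ}_{u_1\#u_2}\oplus\Phi^\scrR\oplus D_{\calL_m,\bfJ,u_1\#u_2}$, which is exactly the operator prescribing the framing on $\overline{\scrR}^{H^\ell,J^\ell_k}_{\bfJ,\mathrm{aux}}(x)$ at $u_1\#u_2$.

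On the sphere-bubbling strata $\overline{\scrR}^{H^\ell,J^\ell_k}_{\bfJ\cup\{j'\},\mathrm{aux}}(x)\times_X\bbG\bbW^{S^1}$ (with $j'\in\bfJ^c$) I would, at an interior point $(u_2,[v])$ glued at the node $z_{j'}$ (which evaluates into $X$), sum the index bundles of the two factor operators $T_{\frakF,x}\#(D^{\scrR,\bfJ\cup\{j'\}}_{u_2}\oplus\Phi^\scrR\oplus D_{\calL_m,\bfJ\cup\{j'\},u_2})\#\oplus_{j\in(\bfJ\cup\{j'\})^c}D^\Lambda_{p_j}$ and $(D^{\bbG\bbW}_v\oplus\Phi^{\bbG\bbW}\oplus D^{\bbG\bbW,\calL_m}_v)\#D^\Lambda_{\eneval_{+\infty}(v)}$, and recognize the sum, via the interior-gluing analysis of Lemma~\ref{lem:index-theoretic}, as the index bundle of a single glued operator on the nodal domain obtained from $\Sigma_k$ by attaching a sphere at $z_{j'}$. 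Here the matching-at-the-node conditions for the $u^*TM$-, $\underline{\bbC}^d$- and $\calL_m$-summands produce the corrections $-TM\vert_{z_{j'}}$, $-\underline{\bbC}^d\vert_{z_{j'}}$ and $-\calL_m\vert_{z_{j'}}\cong-\underline{\bbR}^2$ (using $\calL_m\vert_X\cong\underline{\bbC}$ at the node), the translational $\underline{\bbR}$ distinguishing $T\widetilde{\bbG\bbW}$ from $T\bbG\bbW^{S^1}$ becomes the collar direction of the stratum, and the $-TX$ of the fibre product over $\eval_{z_{j'}}\times\eval_0$ cancels $-TM\vert_{z_{j'}}$. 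Using $\bfJ^c=(\bfJ\cup\{j'\})^c\sqcup\{j'\}$, the factor $D^\Lambda_{\eneval_{+\infty}(v)}$ supplies the missing $D^\Lambda_{p_{j'}}$ with $p_{j'}=\eneval_{z_{j'}}(u_2\#v)=\eneval_{+\infty}(v)$, and parallel transport along $u_2\#v$ matches the $\eval_{+\infty}$-pullbacks of the sphere factor with the $\eval_{z_1}$-pullbacks appearing in the framing of $\overline{\scrR}^{H^\ell,J^\ell_k}_{\bfJ,\mathrm{aux}}(x)$; a last canonical deformation then produces $T_{\frakF,x}\#(D^{\scrR,\bfJ}_{u_2\#v}\oplus\Phi^\scrR\oplus D_{\calL_m,\bfJ,u_2\#v})\#\oplus_{j\in\bfJ^c}D^\Lambda_{p_j}$, as required. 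Agreement over higher codimension strata follows by the same argument applied to the corresponding iterated gluings, and this coherence is what makes the framing well-defined on all of $\overline{\scrR}^{H^\ell,J^\ell_k}_{\bfJ,\mathrm{aux}}(x)$ in the first place.

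The hard part will be the bookkeeping in the sphere-bubbling case: one must check that the surplus trivial summands carried by $\Phi^{\bbG\bbW}$ and $D^{\bbG\bbW,\calL_m}$ (relative to their thimble counterparts $\Phi^\scrR$ and $D_{\calL_m,\bullet}$), together with the collar and fibre-product directions, are consumed precisely by the matching-at-the-node corrections, so that $V^{\bbG\bbW}=\widetilde{T}^{\bbG\bbW}+T^{\bbG\bbW}$ patches cleanly onto the jump $V_{\abs{\bfJ^c}}-V_{\abs{\bfJ^c}-1}$ between the thimble framings; equivalently, one must verify that these canonical deformations are compatible with the real-oriented-blow-up gluing parameters chosen in Proposition~\ref{prop:auxcompactification} to define $\overline{\scrR}^{H^\ell,J^\ell_k}_{\bfJ,\mathrm{aux}}(x)$, which is exactly what reduces the sphere-gluing analysis, as in Proposition~\ref{prop:smoothstructure1}, to the cylindrical-end gluing analysis of \cite{Lar21}. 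Everything else is a routine transcription of arguments already in hand.
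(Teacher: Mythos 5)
Your argument is correct and follows the same route as the paper's proof: the Floer-breaking strata are handled by the argument of Corollary~\ref{corollary:extendingtsf}, and on the sphere-bubbling strata one adds the index bundles of the two prescribed glued operators (minus the diagonal coming from the fiber product over $\eval_{z_{j'}}\times\eval_0$), identifies the result with the index bundle of the single operator glued at the node, and canonically deforms to $D^{\scrR,\bfJ}_{u_1\#u_2}\oplus\Phi^\scrR\oplus D_{\calL_m,\bfJ,u_1\#u_2}$ with $D^\Lambda_{p_{j'}}\equiv D^\Lambda_{\eneval_{+\infty}(u_2)}$. Your explicit bookkeeping of the node corrections for the $\underline{\bbC}^d$- and $\calL_m$-summands is a welcome elaboration of what the paper leaves implicit, but it is not a different argument.
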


\begin{proof}
This follows essentially because our twisted stable framings (in Subsections \ref{subsec:enhancedspheres} and \ref{subsec:pimarkedthimbles}) were constructed via gluing various Fredholm operators. First, we consider a codimension 1 boundary stratum of the form 
    \begin{equation}
    \overline{\scrR}^{H^\ell,J^\ell_k}_{\bfJ,\mathrm{aux}}(y)\times\bbF^{H^\ell,J^\ell}(y,x)\to\overline{\scrR}^{H^\ell,J^\ell_k}_{\bfJ,\mathrm{aux}}(x);
    \end{equation}
we appropriately modify the proof of Corollary \ref{corollary:extendingtsf}.

Second, we consider a codimension 1 boundary stratum of the form 
    \begin{equation}
    \overline{\scrR}^{H^\ell,J^\ell_k}_{\bfJ\cup\{j'\},\mathrm{aux}}(x)\times_X\bbG\bbW^{S^1}\to\overline{\scrR}^{H^\ell,J^\ell_k}_{\bfJ,\mathrm{aux}}(x).
    \end{equation}
Observe, we have a canonical isomorphism of real virtual bundles 
    \begin{equation}
    \ind D^{\scrR,\bfJ\cup\{j'\}}\times_{TX}\ind D^{\bbG\bbW}\cong\ind D^{\scrR,\bfJ\cup\{j'\}}+\ind D^{\bbG\bbW}-\Delta,
    \end{equation}
where $\Delta\subset(\eval_{z_j'}\times_X\eval_0)^*(TX\times TX)$ is the diagonal. Let 
    \begin{equation}
    \big(u_1,[u_2]\big)\in\operatorname{int}\big(\overline{\scrR}^{H^\ell,J^\ell_k}_{\bfJ\cup\{j'\},\mathrm{aux}}(x)\times_X\bbG\bbW^{S^1}\big).
    \end{equation}
The prescribed twisted stable framing at $\big(u_1,[u_2]\big)$ is achieved by considering the two glued together operators 
    \begin{align}
    T_{\frakF,x}\#\big(D^{\scrR,\bfJ\cup\{j'\}}_{u_1}\oplus\Phi^\scrR\oplus D_{\calL_m,\bfJ\cup\{j'\},u_1}\big)&\#\oplus_{j\in(\bfJ\cup\{j'\})^c}D^\Lambda_{p_j}, \\
    \big(D^{\bbG\bbW}_{u_2}\oplus\Phi^{\bbG\bbW}\oplus D^{\bbG\bbW,\calL_m}_{u_2}\big)&\#D^\Lambda_{\eneval_{+\infty}(u_2)},
    \end{align}
and then adding together their index bundles minus $\Delta$. Observe, the sum of the index bundles of the two aforementioned glued together operators minus $\Delta$ is canonically isomorphic to the index bundle of the glued together operator 
    \begin{multline}
    T_{\frakF,x}\#\big(D^{\scrR,\bfJ\cup\{j'\}}_{u_1}\oplus\Phi^\scrR\oplus D_{\calL_m,\bfJ\cup\{j'\},u_1}\big)\# \\
    \big(D^{\bbG\bbW}_{u_2}\oplus\Phi^{\bbG\bbW}\oplus D^{\bbG\bbW,\calL_m}_{u_2}\big)\#\oplus_{j\in \bfJ^c}D^\Lambda_{p_j},
    \end{multline}
where $D^\Lambda_{p_{j'}}\equiv D^\Lambda_{\eneval_{+\infty}(u_2)}$. The corollary follows by observing the glued together operator 
    \begin{equation}
    \big(D^{\scrR,\bfJ\cup\{j'\}}_{u_1}\oplus\Phi^\scrR\oplus D_{\calL_m,\bfJ\cup\{j'\},u_1}\big)\#\big(D^{\bbG\bbW}_{u_2}\oplus\Phi^{\bbG\bbW}\oplus D^{\bbG\bbW,\calL_m}_{u_2}\big)
    \end{equation}
may be canonically deformed to the operator
    \begin{equation}
    D^{\scrR,\bfJ}_{u_1\#u_2}\oplus\Phi^\scrR\oplus D_{\calL_m,\bfJ,u_1\#u_2},
    \end{equation}
where $u_1\#u_2$ is the gluing of $u_1$ and $u_2$.
\end{proof}

As before, there is a map 
    \begin{equation}
    \varphi_{S^1}:\overline{\scrR}^{H^\ell,J^\ell_k}_{\bfJ,\mathrm{aux}}(x)\to\overline{\scrR}^{H^\ell,J^\ell_k}_{\bfJ,S^1}(x)
    \end{equation}
induced by the map 
    \begin{equation}
    \bbG\bbW^{S^1}\to\bbG\bbW.
    \end{equation}
If we restrict $\varphi_{S^1}$ to the interior of a codimension $r$ boundary stratum consisting only of $r$ enhanced 1-pointed sphere bubbles, $\varphi_{S^1}$ exhibits the interior of that boundary stratum as a smooth $(S^1)^r$-bundle over its image.

\begin{prop}\label{prop:smoothstructure2}
$\overline{\scrR}^{H^\ell,J^\ell_k}_{\bfJ,S^1}(x)$ can be given the structure of a compact smooth manifold with corners whose codimension 1 boundary is enumerated by gluing maps of the form
    \begin{align}
    \overline{\scrR}^{H^\ell,J^\ell_k}_{\bfJ,S^1}(y)\times\bbF^{H^\ell,J^\ell}(y,x)&\to\overline{\scrR}^{H^\ell,J^\ell_k}_{\bfJ,S^1}(x), \\
    \overline{\scrR}^{H^\ell,J^\ell_k}_{\bfJ\cup\{1\},S^1}(x)\times_X\bbG\bbW^{S^1}&\to\overline{\scrR}^{H^\ell,J^\ell_k}_{\bfJ,S^1}(x)
    \end{align}
(where the second form of a codimension 1 boundary stratum only appears if $1\in\bfJ^c$).
\end{prop}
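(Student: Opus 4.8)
The plan is to deduce this from the smooth structure on the auxiliary compactification (Proposition~\ref{prop:smoothstructure1}) by recognizing $\varphi_{S^1}$, in the gluing charts used there, as a product of polar-coordinate blow-down maps; equivalently, $\overline{\scrR}^{H^\ell,J^\ell_k}_{\bfJ,S^1}(x)$ is obtained from $\overline{\scrR}^{H^\ell,J^\ell_k}_{\bfJ,\mathrm{aux}}(x)$ by blowing down the sphere-bubble strata at the marked points other than $z_1$. Concretely, fix $p$ in the interior of a boundary stratum of $\overline{\scrR}^{H^\ell,J^\ell_k}_{\bfJ,\mathrm{aux}}(x)$ consisting of $b$ Floer breakings, $\delta\in\{0,1\}$ bubbles at $z_1$ (so $\delta=1$ forces $1\in\bfJ^c$), and bubbles at $z_{j_1},\dots,z_{j_a}$ with all $j_i\neq1$. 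The gluing analysis of \cite{Lar21} (cf.\ \cite{PS24b,PS25c}), exactly as invoked for Proposition~\ref{prop:smoothstructure1}, exhibits a neighborhood of $p$ as a product $V\times\bigl([0,\epsilon)\times S^1\bigr)^{a+\delta}\times[0,\epsilon)^{b}$, where: $V$ is a product of charts on the relevant lower moduli spaces of PI thimbles, of Floer cylinders, and of the sphere-bubble spaces $\bbG\bbW$ (one per bubble; note $\bbG\bbW$, not $\bbG\bbW^{S^1}$), a manifold with corners once the data is generic so that all evaluation maps are transverse; the $b$ factors $[0,\epsilon)$ are the breaking-gluing parameters; and each pair $[0,\epsilon)\times S^1$ records the gluing radius and the relative phase of a bubble-node, the phase being --- by our fixed choice of auxiliary tangent rays $\theta_j$, cf.\ the proof of Proposition~\ref{prop:auxcompactification} --- the enhancement circle, i.e.\ the $\bbG\bbW^{S^1}\to\bbG\bbW$ fibre at that bubble (so combining $V$ with these circles recovers the stratum descriptions of Proposition~\ref{prop:smoothstructure1}).

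By the definition of $\overline{\scrR}^{H^\ell,J^\ell_k}_{\bfJ,S^1}(x)$ as a quotient of $\overline{\scrR}^{H^\ell,J^\ell_k}_{\bfJ,\mathrm{aux}}(x)$, the map $\varphi_{S^1}$ forgets precisely the enhancement of each sphere bubble other than the $z_1$-bubble; hence in the chart above it is $\mathrm{id}_V\times\beta^{\times a}\times\mathrm{id}_{([0,\epsilon)\times S^1)^\delta}\times\mathrm{id}_{[0,\epsilon)^b}$, where $\beta\colon[0,\epsilon)\times S^1\to\bbD_\epsilon:=\{t\in\bbC:|t|<\epsilon\}$ is the blow-down $(\rho,\vartheta)\mapsto\rho e^{i\vartheta}$. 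Thus $\varphi_{S^1}$ is a proper quotient map, a diffeomorphism off the $z_j$-bubble strata with $j\neq1$, and it identifies $\overline{\scrR}^{H^\ell,J^\ell_k}_{\bfJ,S^1}(x)$ near $\varphi_{S^1}(p)$ with $V\times\bbD_\epsilon^{a}\times\bigl([0,\epsilon)\times S^1\bigr)^{\delta}\times[0,\epsilon)^{b}$, manifestly a manifold with corners: the $a$ disks $\bbD_\epsilon$ are interior directions, so the $z_j$-bubble strata ($j\neq1$) drop to codimension $2$, while the boundary comes only from the $b$ breaking-collars, the $\delta$ copies of $[0,\epsilon)$ from the $z_1$-bubble, and the corners already carried by $V$. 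Transition maps between two such charts are smooth, being obtained from the (smooth) transitions of the auxiliary atlas by conjugating the collapsed $S^1$-factors through $\beta$ and using naturality of the real-oriented blow-up; equivalently, they are the transition maps of the usual Deligne-Mumford gluing charts at the $z_j$-nodes ($j\neq1$), glued to the real Deligne-Mumford charts at the $z_1$-node and the Floer breakings --- the concrete local model for $\overline{\scrR}^{H^\ell,J^\ell_k}_{\bfJ,S^1}(x)$. Compactness is inherited from $\overline{\scrR}^{H^\ell,J^\ell_k}_{\bfJ,\mathrm{aux}}(x)$ via the continuous surjection $\varphi_{S^1}$.

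Finally, reading off the codimension-$1$ boundary: of the faces of $\overline{\scrR}^{H^\ell,J^\ell_k}_{\bfJ,\mathrm{aux}}(x)$ listed in Proposition~\ref{prop:smoothstructure1}, the Floer-breaking faces $\overline{\scrR}^{H^\ell,J^\ell_k}_{\bfJ,\mathrm{aux}}(y)\times\bbF^{H^\ell,J^\ell}(y,x)$ map diffeomorphically onto $\overline{\scrR}^{H^\ell,J^\ell_k}_{\bfJ,S^1}(y)\times\bbF^{H^\ell,J^\ell}(y,x)$ under $\varphi_{S^1}$; the $z_1$-bubble face $\overline{\scrR}^{H^\ell,J^\ell_k}_{\bfJ\cup\{1\},\mathrm{aux}}(x)\times_X\bbG\bbW^{S^1}$ (present iff $1\in\bfJ^c$) maps diffeomorphically onto $\overline{\scrR}^{H^\ell,J^\ell_k}_{\bfJ\cup\{1\},S^1}(x)\times_X\bbG\bbW^{S^1}$; and each face $\overline{\scrR}^{H^\ell,J^\ell_k}_{\bfJ\cup\{j'\},\mathrm{aux}}(x)\times_X\bbG\bbW^{S^1}$ with $j'\neq1$ is collapsed by its enhancement circle onto the codimension-$2$ stratum $\overline{\scrR}^{H^\ell,J^\ell_k}_{\bfJ\cup\{j'\},S^1}(x)\times_X\bbG\bbW$. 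This is exactly the asserted list of codimension-$1$ gluing maps, and the higher-codimension strata are obtained in the same way. The one genuinely delicate point --- and the main obstacle --- is the chart-overlap compatibility of collapsing the enhancement circle of a $z_j$-bubble ($j\neq1$) with the passage from the real to the usual Deligne-Mumford gluing parameter, i.e.\ that the real Deligne-Mumford compactification blows down smoothly onto the usual one, node by node; this is standard, but is where the substance of the argument lies.
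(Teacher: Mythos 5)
Your strategy is correct in outline but takes a genuinely different route from the paper. The paper does not work in local gluing charts at all: it observes that $\varphi_{S^1}$ restricted to (the interior of) a bubble stratum is an $S^1$-bundle over its image, invokes the classical fact that $O(2)\to\operatorname{Diff}(S^1)$ is a homotopy equivalence to \emph{fill} each such $S^1$-bundle (at the marked points $z_j$ with $j\neq1$) to a $\bbD$-bundle with structure group $O(2)$, and then induces the smooth structure on the quotient by collapsing the fibers of this filling, choosing the fillings compatibly on higher-codimension strata. Your approach instead identifies $\varphi_{S^1}$ chartwise with a product of polar blow-downs $\beta:(\rho,\vartheta)\mapsto\rho e^{i\vartheta}$ and reads the smooth structure off the blown-down charts. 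What the paper's soft argument buys is that it never has to verify smoothness of transition functions after blow-down: the smooth structure is simply \emph{defined} by the chosen disk-bundle filling, at the cost of being non-canonical. What your argument buys, if completed, is a concrete local model (complex Deligne--Mumford gluing parameters at the interior nodes $z_j$, $j\neq1$).

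The point you flag at the end as "standard" is, however, exactly where the two arguments diverge and where yours is not yet complete. A general diffeomorphism of $[0,\epsilon)\times S^1$ (fixing $\{0\}\times S^1$ setwise) does \emph{not} descend through $\beta$ to a diffeomorphism of $\bbD_\epsilon$ --- smoothness at the origin requires the boundary circle to be acted on by rotations and the radial derivative to be controlled --- so you cannot conclude smoothness of the quotient atlas merely from smoothness of the auxiliary atlas plus "naturality of the real-oriented blow-up." You would need to show either (a) that the gluing analysis at interior sphere-bubble nodes can be organized from the start with a genuinely complex gluing parameter whose transition functions are smooth in that parameter (the usual Deligne--Mumford picture for closed nodes), with the auxiliary compactification then being its real-oriented blow-up; or (b) that the structure group of the enhancement circles reduces to $O(2)$ so that the blow-down is well defined --- which is precisely the content of the paper's $O(2)\to\operatorname{Diff}(S^1)$ step. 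Either completion is reasonable, but as written the substance of the proposition is deferred rather than proved.
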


\begin{proof}
The proof follows from the classical fact that 
    \begin{equation}
    O(2)\to\operatorname{Diff}(S^1)
    \end{equation}
is a homotopy equivalence, i.e., the total space of an $S^1$-bundle with structure group $\operatorname{Diff}(S^1)$ can always be filled to be the total space of a $\bbD$-bundle with structure group $O(2)$. In particular, if we restrict $\varphi_{S^1}$ to the interior of a codimension 1 boundary stratum corresponding to bubbling an enhanced 1-pointed sphere bubble, then we may induce a smooth structure on its image by choosing a filling and collapsing the fibers. Moreover, we may choose these fillings compatibly on higher codimension boundary strata, hence the smooth manifold with corners structure. Note, we do not fill in $S^1$-bundles associated to bubbling an enhanced 1-pointed sphere bubble at $z_1$.
\end{proof}

\begin{cor}\label{cor:tsfs1}
The twisted stable framing on $\overline{\scrR}^{H^\ell,J^\ell_k}_{\bfJ,\mathrm{aux}}(x)$ descends to a twisted stable framing on $\overline{\scrR}^{H^\ell,J^\ell_k}_{\bfJ,S^1}(x)$.
\end{cor}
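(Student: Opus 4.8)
The plan is to push the twisted stable framing forward along $\varphi_{S^1}$, using that $\varphi_{S^1}$ is a diffeomorphism away from the codimension $1$ boundary strata carrying a sphere bubble at a marked point $z_{j'}$ with $j'\neq1$, and is, near those strata, a real-oriented blowdown. First I would record the local model. Near such a bubbling stratum, $\overline{\scrR}^{H^\ell,J^\ell_k}_{\bfJ,\mathrm{aux}}(x)$ is identified (using the $O(2)$-filling from Proposition \ref{prop:smoothstructure2}, which is exactly what makes the quotient smooth) with $[0,\infty)_r\times S^1_\theta\times B$, and $\overline{\scrR}^{H^\ell,J^\ell_k}_{\bfJ,S^1}(x)$ with $\bbC\times B$, where $B$ is (locally) the codimension $2$ stratum $\overline{\scrR}^{H^\ell,J^\ell_k}_{\bfJ\cup\{j'\},S^1}(x)\times_X\bbG\bbW$, and $\varphi_{S^1}(r,\theta,b)=(re^{i\theta},b)$. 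Trivializing $\varphi_{S^1}^*T\bbC\cong\underline{\bbC}$ by the standard frame, the bundle map $\partial_r\mapsto e^{i\theta}$, $\partial_\theta\mapsto ie^{i\theta}$, $\operatorname{id}$ on $TB$, is a genuine isomorphism $T\overline{\scrR}^{H^\ell,J^\ell_k}_{\bfJ,\mathrm{aux}}(x)\xrightarrow{\sim}\varphi_{S^1}^*T\overline{\scrR}^{H^\ell,J^\ell_k}_{\bfJ,S^1}(x)$ over the whole local model, including $\{r=0\}$; on the interior it equals $d\varphi_{S^1}\circ\operatorname{diag}(1,1/r)$, and since $\operatorname{diag}(1,1/r)$ is positive-definite it is homotopic there, through bundle isomorphisms, to $d\varphi_{S^1}$. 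Patching these local isomorphisms with $d\varphi_{S^1}$ over the interior, and repeating over higher-codimension bubbling strata (whose local models are products of the above), produces a canonical, up to contractible choice, stable isomorphism $T\overline{\scrR}^{H^\ell,J^\ell_k}_{\bfJ,\mathrm{aux}}(x)\cong\varphi_{S^1}^*T\overline{\scrR}^{H^\ell,J^\ell_k}_{\bfJ,S^1}(x)$.

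Next I would note that the \emph{target} of the twisted stable framing on $\overline{\scrR}^{H^\ell,J^\ell_k}_{\bfJ,\mathrm{aux}}(x)$ is already $\varphi_{S^1}$-pulled back: $\ind T_{\frakF,x}$ depends only on $x$, and $\widetilde{T}_{\abs{\bfJ^c}}$ resp. $T_{\abs{\bfJ^c}}$ occur pulled back from $S_DM$ resp. $X$ along $\eneval_{z_1}$ (when $1\in\bfJ^c$) and $\eval_{z_1}$, which are unchanged by bubbling at a marked point $z_{j'}$ with $j'\neq1$ and hence factor through $\varphi_{S^1}$. Combining with the previous paragraph, the defining isomorphism of the twisted stable framing on $\overline{\scrR}^{H^\ell,J^\ell_k}_{\bfJ,\mathrm{aux}}(x)$ descends to $\overline{\scrR}^{H^\ell,J^\ell_k}_{\bfJ,S^1}(x)$ as soon as it is constant along the collapsed $S^1$-fibers of $\varphi_{S^1}$. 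This is where the construction via gluing Fredholm operators (Corollary \ref{cor:tsfaux}) is used: along such a fiber the sphere bubble $u_2\in\bbG\bbW^{S^1}$ is reparametrized by the residual rotation, so each operator entering the construction — $D^{\bbG\bbW}_{u_2}$, $D^{\bbG\bbW,\calL_m}_{u_2}$, $D^\Lambda_{\eneval_{+\infty}(u_2)}$, the diagonal $\Delta$ (cut out by $\bbC^*$-invariant evaluations), and all the gluings — is carried to itself up to precomposition with that rotation; the resulting identifications of index bundles, hence the framing, are therefore invariant along the fiber. Finally one checks the descended isomorphism restricts correctly on the codimension $1$ strata of $\overline{\scrR}^{H^\ell,J^\ell_k}_{\bfJ,S^1}(x)$ listed in Proposition \ref{prop:smoothstructure2}: on the Floer-breaking strata $\overline{\scrR}^{H^\ell,J^\ell_k}_{\bfJ,S^1}(y)\times\bbF^{H^\ell,J^\ell}(y,x)$ and the $z_1$-bubbling strata $\overline{\scrR}^{H^\ell,J^\ell_k}_{\bfJ\cup\{1\},S^1}(x)\times_X\bbG\bbW^{S^1}$ the map $\varphi_{S^1}$ is a diffeomorphism in the relevant directions, so compatibility is inherited from Corollary \ref{cor:tsfaux} by induction on the codimension (equivalently, on $\abs{\bfJ^c}$).

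The main obstacle I expect is not conceptual but organizational: verifying that the ``rotate-the-bubble'' $S^1$-action genuinely carries the entire glued package of Fredholm operators of Corollary \ref{cor:tsfaux} to itself — so that the twisted stable framing is honestly $S^1$-basic — and that this holds compatibly with the $O(2)$-filling choices of Proposition \ref{prop:smoothstructure2} and simultaneously across all higher-codimension strata. Everything else reduces to the standard ``collar directions $=$ translational directions in the sum of index bundles'' manipulation already used throughout Section \ref{sec:sgwo}, now transported across the blowdown $\varphi_{S^1}$.
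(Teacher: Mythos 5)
Your proposal is correct and is essentially the paper's argument, just spelled out in much more detail: the paper's proof consists precisely of the observation that the framing is constant along the collapsed $S^1$-fibers --- because $\ind D^\Lambda_{\eneval_{+\infty}(u)}$ is canonically independent of the point $\eneval_{+\infty}(u)$ in a fiber of $S_DM\to D$ by Proposition \ref{prop:canonicaltransverseoperator} --- so that the twisted stable framing extends over each $\bbD$-filling from Proposition \ref{prop:smoothstructure2} and hence descends. Your explicit blowdown local model and the ``$S^1$-basic'' check via rotating the bubble are a legitimate (if more elaborate) packaging of the same point.
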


\begin{proof}
This is immediate from the construction since, if $[u],[u']\in\bbG\bbW^{S^1}$ are in the same fiber of the map $\bbG\bbW^{S^1}\to\bbG\bbW$, then we have a canonical isomorphism of real virtual vector spaces 
    \begin{equation}
    \ind D^\Lambda_{\eneval_{+\infty}(u)}\cong\ind D^\Lambda_{\eneval_{+\infty}(u')}
    \end{equation}
by Proposition \ref{prop:canonicaltransverseoperator}. I.e., the twisted stable framing of $\overline{\scrR}^{H^\ell,J^\ell_k}_{\bfJ,\mathrm{aux}}(x)$ extends over each $\bbD$-filling; the corollary follows.
\end{proof}

\subsection{Hybrid enhanced spheres}
For any $a\in\crit(f_S)$, we consider the moduli space $\bbG\bbW^{S^1}(a)$ of \emph{hybrid enhanced 1-pointed relative spheres}, i.e., the standard (partial) Gromov-bordification of 
    \begin{equation}
    \eneval_{+\infty}^{-1}\big(W^s(a;f_S)\big),\;\;\eneval_{+\infty}:\bbG\bbW^{S^1}\to S_DM
    \end{equation}
given by allowing breaking at critical points, see Figure \ref{fig:hybridenhancedsphere}.

\begin{figure}[ht]
%% Creator: Inkscape 1.4.2 (f4327f4, 2025-05-13), www.inkscape.org
%% PDF/EPS/PS + LaTeX output extension by Johan Engelen, 2010
%% Accompanies image file '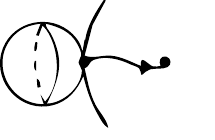' (pdf, eps, ps)
%%
%% To include the image in your LaTeX document, write
%%   \input{<filename>.pdf_tex}
%%  instead of
%%   \includegraphics{<filename>.pdf}
%% To scale the image, write
%%   \def\svgwidth{<desired width>}
%%   \input{<filename>.pdf_tex}
%%  instead of
%%   \includegraphics[width=<desired width>]{<filename>.pdf}
%%
%% Images with a different path to the parent latex file can
%% be accessed with the `import' package (which may need to be
%% installed) using
%%   \usepackage{import}
%% in the preamble, and then including the image with
%%   \import{<path to file>}{<filename>.pdf_tex}
%% Alternatively, one can specify
%%   \graphicspath{{<path to file>/}}
%% 
%% For more information, please see info/svg-inkscape on CTAN:
%%   http://tug.ctan.org/tex-archive/info/svg-inkscape
%%
\begingroup%
  \makeatletter%
  \providecommand\color[2][]{%
    \errmessage{(Inkscape) Color is used for the text in Inkscape, but the package 'color.sty' is not loaded}%
    \renewcommand\color[2][]{}%
  }%
  \providecommand\transparent[1]{%
    \errmessage{(Inkscape) Transparency is used (non-zero) for the text in Inkscape, but the package 'transparent.sty' is not loaded}%
    \renewcommand\transparent[1]{}%
  }%
  \providecommand\rotatebox[2]{#2}%
  \newcommand*\fsize{\dimexpr\f@size pt\relax}%
  \newcommand*\lineheight[1]{\fontsize{\fsize}{#1\fsize}\selectfont}%
  \ifx\svgwidth\undefined%
    \setlength{\unitlength}{102.82471064bp}%
    \ifx\svgscale\undefined%
      \relax%
    \else%
      \setlength{\unitlength}{\unitlength * \real{\svgscale}}%
    \fi%
  \else%
    \setlength{\unitlength}{\svgwidth}%
  \fi%
  \global\let\svgwidth\undefined%
  \global\let\svgscale\undefined%
  \makeatother%
  \begin{picture}(1,0.64910155)%
    \lineheight{1}%
    \setlength\tabcolsep{0pt}%
    \put(0,0){\includegraphics[width=\unitlength,page=1]{hybridenhancedsphere.pdf}}%
    \put(0.54578686,0.01542843){\color[rgb]{0,0,0}\makebox(0,0)[lt]{\lineheight{1.25}\smash{\begin{tabular}[t]{l}$D$\end{tabular}}}}%
    \put(0.84221103,0.32588402){\color[rgb]{0,0,0}\makebox(0,0)[lt]{\lineheight{1.25}\smash{\begin{tabular}[t]{l}$a$\end{tabular}}}}%
  \end{picture}%
\endgroup%

\caption{This is a hybrid enhanced 1-pointed relative sphere.}
\label{fig:hybridenhancedsphere}
\end{figure}

\begin{rem}
Observe, we use the term \emph{partial Gromov-bordification} since we have only added some boundary to the moduli space; however, $\bbG\bbW^{S^1}(a)$ is not necessarily compact -- the evaluation map $\eval_0:\bbG\bbW^{S^1}(a)\to X$ is only proper.
\end{rem}

This is, for generic data, a smooth manifold with corners. By splitting the short exact sequence
    \begin{equation}
    0\to T\bbG\bbW^{S^1}(a)\to T\bbG\bbW^{S^1}\to T\overline{W}^u(a;f_S)\to0,
    \end{equation}
we may endow $\bbG\bbW^{S^1}(a)$ with a twisted stable framing
    \begin{equation}\label{eqn:auxredefining2}
    T\bbG\bbW^{S^1}(a)+\underline{\bbR}\cong\underline{\bbR}^{-I(a)}+\eneval_{+\infty}^*\widetilde{T}^{\bbG\bbW}+\eval_{+\infty}^*T^{\bbG\bbW}
    \end{equation}
which, when restricted to a codimension 1 boundary stratum, agrees with the already constructed twisted stable framing on that boundary stratum.

\subsection{Hybrid marked PI thimbles}
For any: $x\in\chi_k(X;H^\ell)$ with $k\geq1$, $a\in\crit(f_S)$, and $\bfJ$ with $1\in\bfJ^c$; we consider the moduli space $\overline{\scrR}^{H^\ell,J^\ell_k}_{\bfJ,S^1}(a,x)$ of \emph{hybrid $k$-marked $\bfJ$-PI $H^\ell$-thimbles}, i.e., the standard Gromov-compactification of 
    \begin{equation}
    \eneval_{z_1}^{-1}\big(W^s(a;f_S)\big),\;\;\eneval_{z_1}:\overline{\scrR}^{H^\ell,J^\ell_k}_{\bfJ,S^1}(x)\to S_DM
    \end{equation}
given by allowing breaking at critical points, see Figure \ref{fig:hybridpimarkedthimble}.

\begin{figure}[ht]
%% Creator: Inkscape 1.4.2 (f4327f4, 2025-05-13), www.inkscape.org
%% PDF/EPS/PS + LaTeX output extension by Johan Engelen, 2010
%% Accompanies image file '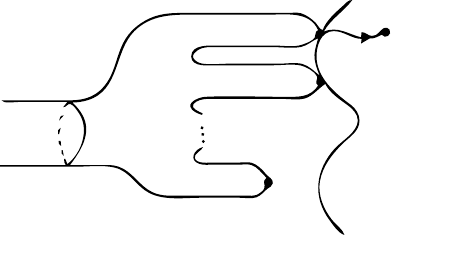' (pdf, eps, ps)
%%
%% To include the image in your LaTeX document, write
%%   \input{<filename>.pdf_tex}
%%  instead of
%%   \includegraphics{<filename>.pdf}
%% To scale the image, write
%%   \def\svgwidth{<desired width>}
%%   \input{<filename>.pdf_tex}
%%  instead of
%%   \includegraphics[width=<desired width>]{<filename>.pdf}
%%
%% Images with a different path to the parent latex file can
%% be accessed with the `import' package (which may need to be
%% installed) using
%%   \usepackage{import}
%% in the preamble, and then including the image with
%%   \import{<path to file>}{<filename>.pdf_tex}
%% Alternatively, one can specify
%%   \graphicspath{{<path to file>/}}
%% 
%% For more information, please see info/svg-inkscape on CTAN:
%%   http://tug.ctan.org/tex-archive/info/svg-inkscape
%%
\begingroup%
  \makeatletter%
  \providecommand\color[2][]{%
    \errmessage{(Inkscape) Color is used for the text in Inkscape, but the package 'color.sty' is not loaded}%
    \renewcommand\color[2][]{}%
  }%
  \providecommand\transparent[1]{%
    \errmessage{(Inkscape) Transparency is used (non-zero) for the text in Inkscape, but the package 'transparent.sty' is not loaded}%
    \renewcommand\transparent[1]{}%
  }%
  \providecommand\rotatebox[2]{#2}%
  \newcommand*\fsize{\dimexpr\f@size pt\relax}%
  \newcommand*\lineheight[1]{\fontsize{\fsize}{#1\fsize}\selectfont}%
  \ifx\svgwidth\undefined%
    \setlength{\unitlength}{215.50366019bp}%
    \ifx\svgscale\undefined%
      \relax%
    \else%
      \setlength{\unitlength}{\unitlength * \real{\svgscale}}%
    \fi%
  \else%
    \setlength{\unitlength}{\svgwidth}%
  \fi%
  \global\let\svgwidth\undefined%
  \global\let\svgscale\undefined%
  \makeatother%
  \begin{picture}(1,0.5607993)%
    \lineheight{1}%
    \setlength\tabcolsep{0pt}%
    \put(0,0){\includegraphics[width=\unitlength,page=1]{hybridpimarkedthimble.pdf}}%
    \put(0.46619488,0.47839802){\color[rgb]{0,0,0}\makebox(0,0)[lt]{\lineheight{1.25}\smash{\begin{tabular}[t]{l}$z_1$\end{tabular}}}}%
    \put(0.46276228,0.36425632){\color[rgb]{0,0,0}\makebox(0,0)[lt]{\lineheight{1.25}\smash{\begin{tabular}[t]{l}$z_2$\end{tabular}}}}%
    \put(0.46619488,0.14636735){\color[rgb]{0,0,0}\makebox(0,0)[lt]{\lineheight{1.25}\smash{\begin{tabular}[t]{l}$z_k$\end{tabular}}}}%
    \put(0.89081687,0.46724128){\color[rgb]{0,0,0}\makebox(0,0)[lt]{\lineheight{1.25}\smash{\begin{tabular}[t]{l}$a$\end{tabular}}}}%
    \put(0.7843994,0.01067585){\color[rgb]{0,0,0}\makebox(0,0)[lt]{\lineheight{1.25}\smash{\begin{tabular}[t]{l}$D$\end{tabular}}}}%
    \put(0.03776115,0.25097364){\color[rgb]{0,0,0}\makebox(0,0)[lt]{\lineheight{1.25}\smash{\begin{tabular}[t]{l}$x$\end{tabular}}}}%
  \end{picture}%
\endgroup%

\caption{This is a hybrid $k$-marked $\{1,\ldots,k-1\}$-PI thimble, $k\geq2$.}
\label{fig:hybridpimarkedthimble}
\end{figure}

This is, for generic data, a compact smooth manifold with corners whose codimension 1 boundary is enumerated by gluing maps of the form 
    \begin{align}
    \overline{\scrR}^{H^\ell,J^\ell_k}_{\bfJ,S^1}(a,y)\times\bbF^{H^\ell,J^\ell}(y,x)&\to\overline{\scrR}^{H^\ell,J^\ell_k}_{\bfJ,S^1}(a,x), \\
    \overline{\scrR}^{H^\ell,J^\ell_k}_{\bfJ\cup\{1\},S^1}(x)\times_X\bbG\bbW^{S^1}(a)&\to\overline{\scrR}^{H^\ell,J^\ell_k}_{\bfJ,S^1}(a,x), \\
    \bbD\bbM_{S_DM}(a,b)\times\overline{\scrR}^{H^\ell,J^\ell_k}_{\bfJ,S^1}(b,x)&\to\overline{\scrR}^{H^\ell,J^\ell_k}_{\bfJ,S^1}(a,x).
    \end{align}
By splitting the short exact sequence 
    \begin{equation}
    0\to T\overline{\scrR}^{H^\ell,J^\ell_k}_{\bfJ,S^1}(a,x)\to T\overline{\scrR}^{H^\ell,J^\ell_k}_{\bfJ,S^1}(x)\to T\overline{W}^u(a;f_S)\to0,
    \end{equation}
we may endow $\overline{\scrR}^{H^\ell,J^\ell_k}_{\bfJ,S^1}(a,x)$ with a twisted stable framing 
    \begin{equation}
    T\overline{\scrR}^{H^\ell,J^\ell_k}_{\bfJ,S^1}(a,x)+\ind T_{\frakF,x}\cong\underline{\bbR}^{-I(a)}+\eneval_{z_1}^*\widetilde{T}_{\abs{\bfJ^c}}+\eval_{z_1}^*T_{\abs{\bfJ^c}}
    \end{equation}
which, when restricted to a codimension 1 boundary stratum, agrees with the already constructed twisted stable framing on that boundary stratum.

\subsection{The Puppe connecting map}
As mentioned in Subsection \ref{subsec:outline}, we have the cofiber sequence
    \begin{equation}
    \frakF^\Lambda_{k-1}\to\frakF^\Lambda_k\to\frakF^\Lambda_{k,k-1},
    \end{equation}
and a sufficient condition for this cofiber sequence to split is that the associated Puppe connecting map,
    \begin{equation}
    \frakF^\Lambda_{k,k-1}\to\Sigma\frakF^\Lambda_{k-1},
    \end{equation}
is null-homotopic. In this subsection, we will prove Theorem \ref{thm:main} by producing a sufficient condition for all of the Puppe connecting maps associated to the weight filtration of $\frakF^\Lambda$ to be null-homotopic via the vanishing of a certain ``spectral Gromov-Witten obstruction''.

\subsubsection{A model for the cofiber}
To start, we will require the general flow-categorical model for the cofiber of a (framed) flow bimodule; therefore, let us recall a bit of the general theory. If $\bbB:\bbX\to\bbY$ is a flow bimodule from a flow category $\bbX$ to a flow category $\bbY$, the cofiber of $\bbB$ may be explicitly defined as the flow category $\bbC_\bbB$ with objects $\ob\bbC_{\bbB}\equiv\ob\bbX\amalg\ob\bbY$ and morphisms
    \begin{equation}
    \bbC_\bbB(x,y)\equiv\begin{cases}
    \bbX(x,y), & x,y\in\ob\bbX \\
    \bbY(x,y), & x,y\in\ob\bbY \\
    \bbB(x,y), & x\in\ob\bbX,y\in\ob\bbY \\
    \emptyset, & \mathrm{otherwise}
    \end{cases}
    .
    \end{equation}
The flow bimodule $\bbI:\bbY\to\bbC_\bbB$ representing the inclusion is defined as 
    \begin{equation}
    \bbI(x,y)\equiv\begin{cases}
    s\bbY(x,y), & x,y\in\ob\bbY \\
    \emptyset, & \mathrm{otherwise}
    \end{cases}
    ,
    \end{equation}
where $s\bbY(x,y)$ is the compact smooth manifold with corners used in the definition of the identity flow bimodule $s\bbY:\bbY\to\bbY$ (also known as the ``diagonal flow bimodule'', cf. \cite[Subsection 6.2]{AB24}). The flow bimodule $\bbP:\bbC_\bbB\to\Sigma\bbX$ representing the Puppe connecting map is defined as 
    \begin{equation}
    \bbP(x,y)\equiv\begin{cases}
    s\bbX(x,y), & x,y\in\ob\bbX \\
    \emptyset, & \mathrm{otherwise}
    \end{cases}
    .
    \end{equation}
The obvious modifications are made for framed flow categories, cf. \cite[Sections 7.4 \& 7.5]{AB24}.

\subsubsection{Redefining the spectral low-energy log PSS morphism}
We now return to the special case considered in the present article. The inclusion framed flow bimodules 
    \begin{equation}
    \bbI^\ell_k:\bbF^{H^\ell,J^\ell,\Lambda}_{k-1}\to\bbF^{H^\ell,J^\ell,\Lambda}_k
    \end{equation}
may be defined using constant Floer continuation data connecting $(H^\ell,J^\ell)$ to itself, i.e., 
    \begin{equation}
    \bbI^\ell_k(x,y)\equiv\begin{cases}
    \overline{\frakc}_{\ell,\ell}(x,y), & x,y\in\chi_{k-1}(X;H^\ell) \\
    \emptyset, & \mathrm{otherwise}
    \end{cases}
    .
    \end{equation}
In order to differentiate, we will write $\widetilde{\chi}_{\leq k-1}(X;H^\ell)$ for the subset 
    \begin{equation}
    \chi_{\leq k-1}(X;H^\ell)\subset\chi_{\leq k}(X;H^\ell).
    \end{equation}
The cofiber $\bbC_{\bbI^\ell_k}$ may now be identified as the framed flow category with objects $\ob\bbC_{\bbI^\ell_k}\equiv\chi_{\leq k-1}(X;H^\ell)\amalg\chi_{\leq k}(X;H^\ell)$ and morphisms 
    \begin{equation}
    \bbC_{\bbI^\ell_k}(x,y)\equiv\begin{cases}
    \bbF^{H^\ell,J^\ell}(x,y), & x,y\in\chi_{\leq k-1}(X;H^\ell) \\
    \bbF^{H^\ell,J^\ell}(x,y), & x,y\in\chi_{\leq k}(X;H^\ell) \\
    \overline{\frakc}_{\ell,\ell}(x,y), & x\in\chi_{\leq k-1}(X;H^\ell),y\in\widetilde{\chi}_{\leq k-1}(X;H^\ell) \\
    \emptyset, & \mathrm{otherwise}
    \end{cases}
    .
    \end{equation} 
Now, the Puppe connecting framed flow bimodule 
    \begin{equation}
    \bbP^\ell_k:\bbC_{\bbI^\ell_k}\to\Sigma\bbF^{H^\ell,J^\ell,\Lambda}_{k-1}
    \end{equation}
may be identified by using constant Floer continuation data connecting $(H^\ell,J^\ell)$ to itself, i.e.,
    \begin{equation}
    \bbP^\ell_k(x,y)\equiv\begin{cases}
    \Sigma^\flow\overline{\frakc}_{\ell,\ell}(x,y), & x,y\in\chi_{\leq k-1}(X;H^\ell) \\
    \emptyset, & \mathrm{otherwise}
    \end{cases}
    ,
    \end{equation}
where $\Sigma^\flow\overline{\frakc}_{\ell,\ell}(y,x)$ denotes the compact smooth manifold with corners $\overline{\frakc}_{\ell,\ell}(x,y)$ together with its standard stable framing \eqref{eqn:floercontinuationframing} stabilized by adding $\underline{\bbR}$ to both sides (i.e., we need to stabilize in order for the codomain of $\bbP^\ell_k$ to be $\Sigma\bbF^{H^\ell,J^\ell,\Lambda}_{k-1}$).

We may now ``redefine'' the low-energy log PSS morphism as a framed flow bimodule
    \begin{equation}
    \lelogpss^{k,\ell}:\bbD\bbM_{S_DM,V_{\overline{w}(k)}}\to\bbC_{\bbI^\ell_k},\;\;k\geq1
    \end{equation}
via
    \begin{equation}
    \lelogpss^{k,\ell}(a,x)\equiv\begin{cases}
    \overline{\scrR}^{H^\ell,J^\ell_{\overline{w}(k)}}(a,x), & x\in\chi_k(X;H^\ell) \\
    \overline{\scrR}^{H^\ell,J^\ell_{\overline{w}(k)}}_{\{1\},S^1}(x)\times_X\bbG\bbW^{S^1}(a), & \mathrm{otherwise}
    \end{cases}
    ,
    \end{equation}
where $a\in\crit(f_S)$. The following result is straightforward to see.

\begin{lem}
We have a commutative diagram
    \begin{equation}
    \begin{tikzcd}[row sep=tiny]
    & \bbC_{\bbI^\ell_k} \arrow[dd,"\sim"] \\
    \bbD\bbM_{S_DM,V_{\overline{w}(k)}}\arrow[ur,"\lelogpss^{k,\ell}"]\arrow[dr,"\lelogpss^{k,\ell}",swap] & \\
    & \bbF^{H^\ell,J^\ell,\Lambda}_{k,k-1}
    \end{tikzcd}
    ,
    \end{equation}
where the equivalence $\bbC_{\bbI^\ell_k}\xrightarrow{\sim}\bbF^{H^\ell,J^\ell,\Lambda}_{k,k-1}$ is the obvious one realizing $\bbF^{H^\ell,J^\ell,\Lambda}_{k,k-1}$ as the cofiber of $\bbI^\ell_k$.
\end{lem}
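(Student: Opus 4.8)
The plan is to make the equivalence $\bbC_{\bbI^\ell_k}\xrightarrow{\sim}\bbF^{H^\ell,J^\ell,\Lambda}_{k,k-1}$ explicit and then simply trace the redefined $\lelogpss^{k,\ell}$ through it. Since the constant Floer continuation data presents $\overline{\frakc}_{\ell,\ell}$ as a model for the diagonal (identity) framed flow bimodule of $\bbF^{H^\ell,J^\ell,\Lambda}_{k-1}$, the part of $\bbC_{\bbI^\ell_k}$ spanned by the first copy of $\chi_{\leq k-1}(X;H^\ell)$ together with $\widetilde{\chi}_{\leq k-1}(X;H^\ell)\subset\chi_{\leq k}(X;H^\ell)$, linked by the $\overline{\frakc}_{\ell,\ell}$-morphisms, is a mapping cone on an identity and hence contractible; collapsing it is precisely the standard equivalence between the mapping-cone model $\bbC_{\bbI^\ell_k}$ and the quotient model $\bbF^{H^\ell,J^\ell,\Lambda}_{k,k-1}$. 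I would record this equivalence as the framed flow bimodule $\bbC_{\bbI^\ell_k}\to\bbF^{H^\ell,J^\ell,\Lambda}_{k,k-1}$ that retains the winding-$k$ orbits $\chi_k(X;H^\ell)$ and whose only nonempty morphism spaces are the diagonal ones $s\bbF^{H^\ell,J^\ell,\Lambda}_{k,k-1}(x,y)$ with $x,y\in\chi_k(X;H^\ell)$.

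Next I would compose: the morphism space of this equivalence composed with $\lelogpss^{k,\ell}$ at a pair $(a,x)$ with $a\in\crit(f_S)$ and $x\in\chi_k(X;H^\ell)$ is assembled from $\lelogpss^{k,\ell}(a,c)$ glued to the diagonal morphism space from $c$ to $x$, and the latter is nonempty only for $c\in\chi_k(X;H^\ell)$, where it is the diagonal; so the composite returns $\lelogpss^{k,\ell}(a,x)=\overline{\scrR}^{H^\ell,J^\ell_{\overline{w}(k)}}(a,x)$. This is exactly the moduli space of hybrid $\overline{w}(k)$-marked low-energy $H^\ell$-thimbles used to define the original $\lelogpss^{k,\ell}$ in the subsection preceding Proposition \ref{prop:he}, and it carries the same twisted stable framing, since on both sides that framing is produced by gluing the same collection of Fredholm operators (the transverse operators $D^\Lambda$, the thimble operators, and the auxiliary $\Phi$- and $\calL_m$-operators), cf. Corollary \ref{corollary:extendingtsf}. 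Meanwhile the ``otherwise'' morphism spaces $\overline{\scrR}^{H^\ell,J^\ell_{\overline{w}(k)}}_{\{1\},S^1}(x)\times_X\bbG\bbW^{S^1}(a)$, which are attached to the objects of winding $<k$ making up the collapsed cone, contribute nothing. I therefore conclude the composite coincides with the original $\lelogpss^{k,\ell}$, up to the canonical homotopy inherent in Abouzaid--Blumberg's composition of framed flow bimodules, which is the asserted commutativity.

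The step I expect to require real care --- and the reason the lemma is only ``straightforward'' rather than immediate --- is confirming that the two-case formula really does assemble into a framed flow bimodule into $\bbC_{\bbI^\ell_k}$ compatibly with the prescribed gluing structure, so that collapsing the cone is a legitimate operation on it. Concretely, this means matching, face by face, the codimension-$1$ boundaries of $\overline{\scrR}^{H^\ell,J^\ell_{\overline{w}(k)}}(a,x)$ and of $\overline{\scrR}^{H^\ell,J^\ell_{\overline{w}(k)}}_{\{1\},S^1}(x)\times_X\bbG\bbW^{S^1}(a)$ with the gluing maps defining $\bbC_{\bbI^\ell_k}$ and $\lelogpss^{k,\ell}$, and checking the corresponding compatibility of twisted stable framings. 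The needed inputs are already in place: the boundary enumerations come from Propositions \ref{prop:smoothstructure0} and \ref{prop:smoothstructure2} and the hybrid marked PI thimble discussion, and the framing compatibilities from Corollaries \ref{corollary:extendingtsf} and \ref{cor:tsfs1} together with the $S_DM$-fiberwise invariance of $\ind D^\Lambda$ in Proposition \ref{prop:canonicaltransverseoperator}. The main obstacle is thus organizational, namely the bookkeeping of Remark \ref{rem:auxcompactification}: one must keep straight that, in the enhanced compactification, a sphere bubble at $z_1$ (quotiented only by $\bbR$) is codimension $1$ while sphere bubbles at the other marked points (quotiented by all of $\bbC^*$) are codimension $2$, so that the unique winding-dropping codimension-$1$ face is the one landing in the cone direction of $\bbC_{\bbI^\ell_k}$, as the redefinition requires.
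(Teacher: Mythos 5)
Your argument is correct and is precisely the verification the paper leaves implicit when it declares the lemma ``straightforward to see'': the equivalence collapses the contractible cone built from the diagonal bimodule $\overline{\frakc}_{\ell,\ell}$, the composite retains only the winding-$k$ objects where the morphism spaces are literally the hybrid marked-thimble moduli $\overline{\scrR}^{H^\ell,J^\ell_{\overline{w}(k)}}(a,x)$ with their operator-glued framings, and the codimension bookkeeping of Remark \ref{rem:auxcompactification} is exactly what makes the two-case formula a legitimate framed flow bimodule into $\bbC_{\bbI^\ell_k}$. No gaps.
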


Finally, the composition
    \begin{equation}
    \bbD\bbM_{S_DM,V_{\overline{w}(k)}}\xrightarrow{\lelogpss^{k,\ell}}\bbC_{\bbI_k}\xrightarrow{\bbP^\ell_k}\Sigma\bbF^{H^\ell,J^\ell,\Lambda}_{k-1}
    \end{equation}
is seen to be the framed flow bimodule defined by
    \begin{equation}
    \Big(\bbP^\ell_k\circ\lelogpss^{k,\ell}\Big)(a,x)=\Sigma^\flow\Big(\overline{\scrR}^{H^\ell,J^\ell_k}_{\{1\},S^1}(x)\times_X\bbG\bbW^{S^1}(a)\Big),
    \end{equation}
where, again, $\Sigma^\flow\Big(\overline{\scrR}^{H^\ell,J^\ell_k}_{\{1\},S^1}(x)\times_X\bbG\bbW^{S^1}(a)\Big)$ denotes the compact smooth manifold with corners $\overline{\scrR}^{H^\ell,J^\ell_k}_{\{1\},S^1}(x)\times_X\bbG\bbW^{S^1}(a)$ together with its twisted stable framing given by taking the twisted stable framing on the fiber product, induced from \eqref{eqn:auxredefining1} and \eqref{eqn:auxredefining2}, stabilized by adding $\underline{\bbR}$ to both sides.

\subsubsection{Auxiliary hybrid marked PI thimbles}
For any $x\in\chi_k(X;H^\ell)$ with $k\geq1$ and $b\in\crit(f_X)$, we consider the moduli space $\overline{\scrR}^{H^\ell,J^\ell_{\overline{w}(x)}}_{\{1\},S^1}(b,x)$ of \emph{auxiliary hybrid $\{1\}$-PI $\overline{w}(x)$-marked $H^\ell$-thimbles}, i.e., the standard Gromov-compactification of
    \begin{equation}
    \eval_{z_1}^{-1}\big(W^s(b;f_X)\big),\;\;\eval{z_1}:\overline{\scrR}^{H^\ell,J^\ell_{\overline{w}(x)}}_{\{1\},S^1}(x)\to X
    \end{equation}
given by allowing breaking at critical points, see Figure \ref{fig:auxhybridpimarkedthimble}. 

\begin{figure}[ht]
%% Creator: Inkscape 1.4.2 (f4327f4, 2025-05-13), www.inkscape.org
%% PDF/EPS/PS + LaTeX output extension by Johan Engelen, 2010
%% Accompanies image file '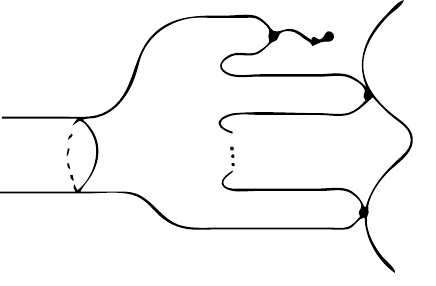' (pdf, eps, ps)
%%
%% To include the image in your LaTeX document, write
%%   \input{<filename>.pdf_tex}
%%  instead of
%%   \includegraphics{<filename>.pdf}
%% To scale the image, write
%%   \def\svgwidth{<desired width>}
%%   \input{<filename>.pdf_tex}
%%  instead of
%%   \includegraphics[width=<desired width>]{<filename>.pdf}
%%
%% Images with a different path to the parent latex file can
%% be accessed with the `import' package (which may need to be
%% installed) using
%%   \usepackage{import}
%% in the preamble, and then including the image with
%%   \import{<path to file>}{<filename>.pdf_tex}
%% Alternatively, one can specify
%%   \graphicspath{{<path to file>/}}
%% 
%% For more information, please see info/svg-inkscape on CTAN:
%%   http://tug.ctan.org/tex-archive/info/svg-inkscape
%%
\begingroup%
  \makeatletter%
  \providecommand\color[2][]{%
    \errmessage{(Inkscape) Color is used for the text in Inkscape, but the package 'color.sty' is not loaded}%
    \renewcommand\color[2][]{}%
  }%
  \providecommand\transparent[1]{%
    \errmessage{(Inkscape) Transparency is used (non-zero) for the text in Inkscape, but the package 'transparent.sty' is not loaded}%
    \renewcommand\transparent[1]{}%
  }%
  \providecommand\rotatebox[2]{#2}%
  \newcommand*\fsize{\dimexpr\f@size pt\relax}%
  \newcommand*\lineheight[1]{\fontsize{\fsize}{#1\fsize}\selectfont}%
  \ifx\svgwidth\undefined%
    \setlength{\unitlength}{208.86145164bp}%
    \ifx\svgscale\undefined%
      \relax%
    \else%
      \setlength{\unitlength}{\unitlength * \real{\svgscale}}%
    \fi%
  \else%
    \setlength{\unitlength}{\svgwidth}%
  \fi%
  \global\let\svgwidth\undefined%
  \global\let\svgscale\undefined%
  \makeatother%
  \begin{picture}(1,0.65131381)%
    \lineheight{1}%
    \setlength\tabcolsep{0pt}%
    \put(0,0){\includegraphics[width=\unitlength,page=1]{auxhybridpimarkedthimble.pdf}}%
    \put(0.0260355,0.27984415){\color[rgb]{0,0,0}\makebox(0,0)[lt]{\lineheight{1.25}\smash{\begin{tabular}[t]{l}$x$\end{tabular}}}}%
    \put(0.52998208,0.55234525){\color[rgb]{0,0,0}\makebox(0,0)[lt]{\lineheight{1.25}\smash{\begin{tabular}[t]{l}$z_1$\end{tabular}}}}%
    \put(0.54153297,0.41961711){\color[rgb]{0,0,0}\makebox(0,0)[lt]{\lineheight{1.25}\smash{\begin{tabular}[t]{l}$z_2$\end{tabular}}}}%
    \put(0.54153297,0.16052637){\color[rgb]{0,0,0}\makebox(0,0)[lt]{\lineheight{1.25}\smash{\begin{tabular}[t]{l}$z_{\overline{w}(k)}$\end{tabular}}}}%
    \put(0.78745005,0.54916248){\color[rgb]{0,0,0}\makebox(0,0)[lt]{\lineheight{1.25}\smash{\begin{tabular}[t]{l}$b$\end{tabular}}}}%
    \put(0.92358215,0.00682843){\color[rgb]{0,0,0}\makebox(0,0)[lt]{\lineheight{1.25}\smash{\begin{tabular}[t]{l}$D$\end{tabular}}}}%
  \end{picture}%
\endgroup%

\caption{This is an auxiliary hybrid marked $\{1\}$-PI thimble.}
\label{fig:auxhybridpimarkedthimble}
\end{figure}

This is, for generic data, a compact smooth manifold with corners. By splitting the short exact sequence 
    \begin{equation}
    0\to T\overline{\scrR}^{H^\ell,J^\ell_{\overline{w}(x)}}_{\{1\},S^1}(b,x)\to T\overline{\scrR}^{H^\ell,J^\ell_{\overline{w}(x)}}_{\{1\},S^1}(x)\to T\overline{W}^u(b;f_X)\to0,
    \end{equation}
we may endow $\overline{\scrR}^{H^\ell,J^\ell_{\overline{w}(x)}}_{\{1\},S^1}(b,x)$ with a twisted stable framing 
    \begin{equation}
    T\overline{\scrR}^{H^\ell,J^\ell_{\overline{w}(x)}}_{\{1\},S^1}(b,x)+\ind T_{\frakF,x}\cong\underline{\bbR}^{-I(b)}+\eval_{z_1}^*\widetilde{T}_{\overline{w}(x)-1}+\eval_{z_1}^*T_{\overline{w}(x)-1}
    \end{equation}
which, when restricted to a codimension 1 boundary stratum, agrees with the already constructed twisted stable framing on that boundary stratum.  In particular, the moduli spaces of auxiliary hybrid $\{1\}$-PI $\overline{w}(x)$-marked $H^\ell$-thimbles assemble into a framed flow bimodule 
    \begin{equation}
    \bbB^{\ell,k}_2:\Sigma\bbD\bbM_{X,V_{\overline{w}(k)-1}}\to\Sigma\bbF^{H^\ell,J^\ell,\Lambda}_{k-1};
    \end{equation}
this is, after taking a colimit, one of the two desired maps for the diagram \eqref{eqn:outlinediagram}.
    
\subsubsection{Auxiliary hybrid enhanced spheres}\label{subsubsec:auxenhancedspheres}
For any $a\in\crit(f_S)$ and $b\in\crit(f_X)$, we consider the moduli space $\bbG\bbW^{S^1}(a,b)$ of \emph{auxiliary hybrid enhanced 1-pointed relative spheres}, i.e., the standard Gromov-compactification of 
    \begin{equation}
    \eval_0^{-1}\big(W^u(b;f_X)\big),\;\;\eval_0:\bbG\bbW^{S^1}(a)\to X
    \end{equation}
given by allowing breaking at critical points, see Figure \ref{fig:auxenhancedsphere}.

\begin{figure}[ht]
%% Creator: Inkscape 1.4.2 (f4327f4, 2025-05-13), www.inkscape.org
%% PDF/EPS/PS + LaTeX output extension by Johan Engelen, 2010
%% Accompanies image file '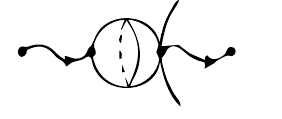' (pdf, eps, ps)
%%
%% To include the image in your LaTeX document, write
%%   \input{<filename>.pdf_tex}
%%  instead of
%%   \includegraphics{<filename>.pdf}
%% To scale the image, write
%%   \def\svgwidth{<desired width>}
%%   \input{<filename>.pdf_tex}
%%  instead of
%%   \includegraphics[width=<desired width>]{<filename>.pdf}
%%
%% Images with a different path to the parent latex file can
%% be accessed with the `import' package (which may need to be
%% installed) using
%%   \usepackage{import}
%% in the preamble, and then including the image with
%%   \import{<path to file>}{<filename>.pdf_tex}
%% Alternatively, one can specify
%%   \graphicspath{{<path to file>/}}
%% 
%% For more information, please see info/svg-inkscape on CTAN:
%%   http://tug.ctan.org/tex-archive/info/svg-inkscape
%%
\begingroup%
  \makeatletter%
  \providecommand\color[2][]{%
    \errmessage{(Inkscape) Color is used for the text in Inkscape, but the package 'color.sty' is not loaded}%
    \renewcommand\color[2][]{}%
  }%
  \providecommand\transparent[1]{%
    \errmessage{(Inkscape) Transparency is used (non-zero) for the text in Inkscape, but the package 'transparent.sty' is not loaded}%
    \renewcommand\transparent[1]{}%
  }%
  \providecommand\rotatebox[2]{#2}%
  \newcommand*\fsize{\dimexpr\f@size pt\relax}%
  \newcommand*\lineheight[1]{\fontsize{\fsize}{#1\fsize}\selectfont}%
  \ifx\svgwidth\undefined%
    \setlength{\unitlength}{134.57669115bp}%
    \ifx\svgscale\undefined%
      \relax%
    \else%
      \setlength{\unitlength}{\unitlength * \real{\svgscale}}%
    \fi%
  \else%
    \setlength{\unitlength}{\svgwidth}%
  \fi%
  \global\let\svgwidth\undefined%
  \global\let\svgscale\undefined%
  \makeatother%
  \begin{picture}(1,0.42183789)%
    \lineheight{1}%
    \setlength\tabcolsep{0pt}%
    \put(0,0){\includegraphics[width=\unitlength,page=1]{auxenhancedsphere.pdf}}%
    \put(-0.00506144,0.19136887){\color[rgb]{0,0,0}\makebox(0,0)[lt]{\lineheight{1.25}\smash{\begin{tabular}[t]{l}$b$\end{tabular}}}}%
    \put(0.87943967,0.19672921){\color[rgb]{0,0,0}\makebox(0,0)[lt]{\lineheight{1.25}\smash{\begin{tabular}[t]{l}$a$\end{tabular}}}}%
    \put(0.67305631,0.01178821){\color[rgb]{0,0,0}\makebox(0,0)[lt]{\lineheight{1.25}\smash{\begin{tabular}[t]{l}$D$\end{tabular}}}}%
  \end{picture}%
\endgroup%

\caption{This is an auxiliary enhanced sphere.}
\label{fig:auxenhancedsphere}
\end{figure}

This is, for generic data, a compact smooth manifold with corners. By splitting the short exact sequence 
    \begin{equation}
    0\to T\bbG\bbW^{S^1}(a,b)\to T\bbG\bbW^{S^1}(a)\to T\overline{W}^s(b;f_X)\to0,
    \end{equation}
we may endow $\bbG\bbW^{S^1}(a,b)$ with a twisted stable framing 
    \begin{equation}
    T\bbG\bbW^{S^1}(a,b)+\eval_0^*TX+\underline{\bbR}^{-I(b)}\cong\underline{\bbR}^{-I(a)-1}+\eneval_{+\infty}^*\widetilde{T}^{\bbG\bbW}+\eval_{+\infty}^*T^{\bbG\bbW}.
    \end{equation}
By rearranging and stabilizing, we may rewrite this as 
    \begin{multline}\label{eq:tsfaux1}
    T\bbG\bbW^{S^1}(a,b)+\underline{\bbR}^{-I(b)}+\eval_0^*V_{\overline{w}(k)-1}\cong \\
    \underline{\bbR}^{-I(a)-1}+\eneval_{+\infty}^*\widetilde{T}^{\bbG\bbW}+\eval_{+\infty}^*T^{\bbG\bbW}+\eval_0^*(V_{\overline{w}(k)-1}-TX).
    \end{multline}
Meanwhile, we may endow $\bbG\bbW^{S^1}(a,b)$ with a twisted stable framing using the gluing map $\overline{\scrR}^{H^\ell,J^\ell_{\overline{w}(x)}}_{\{1\},S^1}(x)\times_X\bbG\bbW^{S^1}(a)\to\overline{\scrR}^{H^\ell,J^\ell_{\overline{w}(x)}}(a,x)$:
    \begin{multline}\label{eq:tsfaux2}
    T\bbG\bbW^{S^1}(a,b)+\underline{\bbR}^{-I(b)}+\eval_{z_1,\{1\}}^*V_{\overline{w}(k)-1}\cong \\
    \underline{\bbR}^{-I(a)-1}+\eneval_{z_1,\emptyset}^*\widetilde{T}_{\overline{w}(k)}+\eval_{z_1,\emptyset}^*T_{\overline{w}(k)}
    \end{multline}
(here, we have notated the domain of the (enhanced) evaluation map in the subscript). The equivalence of the two twisted stable framings \eqref{eq:tsfaux1} and \eqref{eq:tsfaux2} follows by an operator gluing argument completely analogous to the one used in the proof of Corollary \ref{cor:tsfaux}. In particular, using \eqref{eq:tsfaux2}, the moduli spaces of auxiliary hybrid enhanced 1-pointed relative spheres assemble into a framed flow bimodule
    \begin{equation}
    \bbB^k_1:\bbD\bbM_{S_DM,V_{\overline{w}(k)}}\to\Sigma\bbD\bbM_{X,V_{\overline{w}(k)-1}};
    \end{equation}
this is one of the two desired maps for the diagram \eqref{eqn:outlinediagram}. Meanwhile, using \eqref{eq:tsfaux1}, the moduli spaces of auxiliary hybrid enhanced 1-pointed relative spheres assemble into an equivalent framed flow bimodule
    \begin{equation}\label{eqn:spectralgwobstruction}
    \calG\calW:\bbD\bbM_{S_DM,V^{\bbG\bbW}-TX}\to\Sigma\bbD\bbM_X.
    \end{equation}

\begin{rem}
Here, we have used the fact that we have an identification
    \begin{equation}
    V_k\cong V^{\bbG\bbW}+V_{k-1}-TX.
    \end{equation}
\end{rem}

\begin{defin}\label{defin:spectralgwobstruction}
We refer to the element 
    \begin{equation}
    \calG\calW\in\pi_0\Big((S_DM)^{-V^{\bbG\bbW}+TX}\wedge\Sigma\frakD X_+\Big)
    \end{equation}
determined by \eqref{eqn:spectralgwobstruction} as a \emph{spectral Gromov-Witten obstruction}.
\end{defin}

\subsubsection{Putting it all together, part 1}
Consider the composition
    \begin{equation}
    \bbB^{\ell,k}\equiv\bbB^{\ell,k}_2\circ\bbB^k_1:\bbD\bbM_{S_DM,V_{\overline{w}(k)}}\to\Sigma\bbF^{H^\ell,J^\ell,\Lambda}_{k-1};
    \end{equation}
this is a framed flow bimodule via defining
    \begin{equation}
    \bbB^{\ell,k}(a,x)\equiv\dfrac{\coprod_{b\in\crit(f_X)}\bbG\bbW^{S^1}(a,b)\times\overline{\scrR}^{H^\ell,J^\ell_{\overline{w}(x)}}_{\{1\},S^1}(b,x)}{\sim},
    \end{equation}
where the equivalence relation identifies the images
    \begin{equation}
    \begin{tikzcd}[column sep=-20ex]
    & \bbG\bbW^{S^1}(a,b)\times\bbD\bbM_X(b,b')\times\overline{\scrR}^{H^\ell,J^\ell_{\overline{w}(x)}}_{\{1\},S^1}(b',x)\arrow[dl]\arrow[dr] & \\
    \bbG\bbW^{S^1}(a,b')\times\overline{\scrR}^{H^\ell,J^\ell_{\overline{w}(x)}}_{\{1\},S^1}(b',x) & & \bbG\bbW^{S^1}(a,b)\times\overline{\scrR}^{H^\ell,J^\ell_{\overline{w}(x)}}_{\{1\},S^1}(b,x).
    \end{tikzcd}
    \end{equation}
Observe, implicit in this statement is that $\bbB^{\ell,k}(a,x)$ is a compact smooth manifold with corners with codimension 1 boundary strata enumerated by gluing maps of the form
    \begin{align}
    \bbD\bbM_{S_DM}(a,a')\times\bbB^{\ell,k}(a',x)&\to\bbB^{\ell,k}(a,x), \\
    \bbB^{\ell,k}(a,x')\times\bbF^{H^\ell,J^\ell}_{k-1}(x',x)&\to\bbB^{\ell,k}(a,x)
    \end{align}
which, moreover, can be given a twisted stable framing which, when restricted to a codimension 1 boundary stratum, agrees with the already constructed twisted stable framing on that boundary stratum, cf. \cite[Section 5]{AB24}.\footnote{Note, these claims about $\bbB^{\ell,k}(a,x)$ are what is needed to prove the inner-horn filling condition associated to a 2-simplex in $\flow^\fr$.} For completeness, we note
    \begin{equation}
    T\bbB^{\ell,k}(a,x)+\underline{\bbR}+\ind T_{\frakF,x}\cong\underline{\bbR}^{-I(a)}+\eneval_{+\infty}^*\widetilde{T}_{\overline{w}(k)}+\eval_{+\infty}^*T_{\overline{w}(k)}.
    \end{equation}

\begin{lem}\label{lem:hcomm}
$\bbP^\ell_k\circ\lelogpss^{k,\ell}$ is homotopic to $\bbB^{\ell,k}$ as framed flow bimodules.
\end{lem}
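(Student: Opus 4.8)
The plan is to exhibit a homotopy of framed flow bimodules between $\bbP^\ell_k\circ\lelogpss^{k,\ell}$ and $\bbB^{\ell,k}$ by inserting a negative gradient trajectory of $f_X$ of variable length between the evaluation point $z_1$ of a $\{1\}$-PI thimble and the evaluation point $0$ of an enhanced sphere. Recall from the computations just above that $\bbP^\ell_k\circ\lelogpss^{k,\ell}$ sends $(a,x)$, with $a\in\crit(f_S)$ and $x\in\chi_{\leq k-1}(X;H^\ell)$, to $\Sigma^\flow\big(\overline{\scrR}^{H^\ell,J^\ell_{\overline{w}(k)}}_{\{1\},S^1}(x)\times_X\bbG\bbW^{S^1}(a)\big)$ --- the fibre product being over $\eval_{z_1}$ and $\eval_0$ --- whereas $\bbB^{\ell,k}=\bbB^{\ell,k}_2\circ\bbB^k_1$ sends $(a,x)$ to $\big(\coprod_{b\in\crit(f_X)}\bbG\bbW^{S^1}(a,b)\times\overline{\scrR}^{H^\ell,J^\ell_{\overline{w}(x)}}_{\{1\},S^1}(b,x)\big)/\!\sim$, in which (following the notation of the preceding subsections) $\bbG\bbW^{S^1}(a,b)$ and $\overline{\scrR}^{H^\ell,J^\ell_{\overline{w}(x)}}_{\{1\},S^1}(b,x)$ are by definition the Gromov-compactifications of $\eval_0^{-1}(W^u(b;f_X))$ and $\eval_{z_1}^{-1}(W^s(b;f_X))$. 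Thus the assertion is an instance of the standard fact that the two models for the composition of correspondences along $X$ --- the direct fibre product over the evaluation maps to $X$, and the broken model built from the Morse--Smale pair $(f_X,g_X)$ --- are canonically homotopic (compare the treatment of the diagonal flow bimodule in \cite[Subsection 6.2]{AB24}); we realise the homotopy geometrically.

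For $a\in\crit(f_S)$, $x\in\chi_{\leq k-1}(X;H^\ell)$ and a parameter $R\in[0,1]$ identified with the compactified length $T\in[0,+\infty]$ (with $R=0\leftrightarrow T=0$), let $\mathcal{N}^{\ell,k}(a,x)$ be the moduli space of triples $(v,\gamma,u)$, where $v\in\bbG\bbW^{S^1}(a)$, $u\in\overline{\scrR}^{H^\ell,J^\ell_{\overline{w}(k)}}_{\{1\},S^1}(x)$, and $\gamma\colon[0,T]\to X$ is a (possibly constant) negative gradient trajectory of $f_X$ with $\gamma(0)=\eval_0(v)$ and $\gamma(T)=\eval_{z_1}(u)$. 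I would first verify, by the arguments already used for Propositions \ref{prop:lowenergycompactification}, \ref{prop:naivecompactification} and \ref{prop:auxcompactification}, that the Gromov-compactification $\overline{\mathcal{N}}^{\ell,k}(a,x)$ is a compact smooth manifold with corners: the energy estimate, positivity of intersection, and the Ganatra--Pomerleano argument of \cite[Lemma 4.14]{GP21} once more forbid sphere bubbling into $D$, multiply-covered bubbles, and breaking at divisorial $1$-periodic orbits, so the only degenerations are those of the moduli spaces already treated, together with degenerations of the inserted segment $\gamma$. By construction the face $\{R=0\}$ is the direct fibre product $\overline{\scrR}^{H^\ell,J^\ell_{\overline{w}(k)}}_{\{1\},S^1}(x)\times_X\bbG\bbW^{S^1}(a)$, and the face $\{R=1\}$ is, after the usual identification of the strata in which a trajectory of $f_X$ breaks at an intermediate critical point, exactly $\coprod_b\bbG\bbW^{S^1}(a,b)\times\overline{\scrR}^{H^\ell,J^\ell_{\overline{w}(x)}}_{\{1\},S^1}(b,x)/\!\sim$ (the stable/unstable conventions at the two pieces being forced by the already-fixed definitions of $\bbG\bbW^{S^1}(a,b)$ and $\overline{\scrR}^{H^\ell,J^\ell_{\overline{w}(x)}}_{\{1\},S^1}(b,x)$); the remaining codimension $1$ faces, present for all $R$, are the flow-bimodule faces --- breaking of $u$ at non-divisorial $1$-periodic orbits, and breaking of $a$ at critical points of $f_S$ --- which are precisely those needed for $\overline{\mathcal{N}}^{\ell,k}$ to define a framed flow bimodule $\bbD\bbM_{S_DM,V_{\overline{w}(k)}}\to\Sigma\bbF^{H^\ell,J^\ell,\Lambda}_{k-1}$ parametrised by $[0,1]$.

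It remains to equip $\overline{\mathcal{N}}^{\ell,k}(a,x)$ with a (compatibly stabilised) twisted stable framing restricting at $\{R=0\}$ and $\{R=1\}$ to those of $\bbP^\ell_k\circ\lelogpss^{k,\ell}$ and $\bbB^{\ell,k}$. This is carried out by the operator-gluing recipe used throughout Section \ref{sec:sgwo}: one glues $T_{\frakF,x}$, the linearised operator of $u$ together with $\Phi^\scrR$ and $D_{\calL_m,\{1\}}$, the canonical transverse operators $D^\Lambda$ at the marked points of $u$ on $D$, the linearised operator of $v$ together with $\Phi^{\bbG\bbW}$, $D^{\bbG\bbW,\calL_m}$ and $D^\Lambda_{\eneval_{+\infty}(v)}$, and --- for the inserted gradient segment --- the Morse-type operators of the Morse homotopy subsection, whose glued index is canonically trivial (via the canonical isomorphisms $\ind T_{\frakM,a}^\vee+\ind T_{\frakM,a}\cong0$). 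Hence the index bundle, and with it the associated twisted stable framing, is canonically unchanged along the $[0,1]$-family; that its restrictions to the two endpoint faces coincide with the prescribed framings is exactly the content of Corollaries \ref{corollary:extendingtsf}, \ref{cor:tsfaux} and \ref{cor:tsfs1}, together with the equivalence of the twisted stable framings \eqref{eq:tsfaux1} and \eqref{eq:tsfaux2} and the bundle identity $V_k\cong V^{\bbG\bbW}+V_{k-1}-TX$. Therefore $\overline{\mathcal{N}}^{\ell,k}$ is a homotopy of framed flow bimodules from $\bbP^\ell_k\circ\lelogpss^{k,\ell}$ to $\bbB^{\ell,k}$; passing to the colimit over $\ell$ yields the lemma.

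I expect the main obstacle to be the analysis of the face $\{R=1\}$: identifying the degeneration of the length-$T\to\infty$ gradient segment with the broken model on the nose, \emph{including} the equivalence relation $\sim$ that concatenates the strata in which a trajectory of $f_X$ slides off through an intermediate critical point, and checking that for large finite $T$ no gradient trajectory of $f_X$ can escape the Liouville domain (which follows since $\eval_0$ is proper on $\bbG\bbW^{S^1}(a)$, $\eval_{z_1}$ is proper on $\overline{\scrR}^{H^\ell,J^\ell_{\overline{w}(k)}}_{\{1\},S^1}(x)$, and $\grad f_X$ points strictly outwards along $\partial\widehat{X}_{\overline{\epsilon}}$). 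One also needs the transversality of the direct fibre product at $\{R=0\}$ for generic auxiliary data. Each of these is a routine repetition of an argument already given earlier in the section, which is why I only sketch them here.
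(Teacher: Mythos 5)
Your proposal follows essentially the same route as the paper: the paper's homotopy $\bbW(a,x)$ is exactly the moduli space of configurations in which a finite-length negative gradient segment of $f_X$ is inserted between $\eval_{z_1}$ and $\eval_0$ (realized there as a fiber product over $[0,+\infty]\times X$ using the time-$s$ and time-$(-s)$ flows $\varphi_{\pm s}$), with the $s=0$ face giving $\bbP^\ell_k\circ\lelogpss^{k,\ell}$, the $s=+\infty$ face giving $\bbB^{\ell,k}$, and the framing statement checked via the canonical identification of $T\bbW(a,x)$ with the fiber product of tangent bundles minus the diagonal.

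One concrete correction: your boundary conditions $\gamma(0)=\eval_0(v)$, $\gamma(T)=\eval_{z_1}(u)$ orient the inserted negative gradient segment the wrong way. Since $\bbG\bbW^{S^1}(a,b)$ is cut out by $\eval_0^{-1}\big(W^u(b;f_X)\big)$ and $\overline{\scrR}^{H^\ell,J^\ell_{\overline{w}(x)}}_{\{1\},S^1}(b,x)$ by $\eval_{z_1}^{-1}\big(W^s(b;f_X)\big)$, the broken configuration at $T=+\infty$ must consist of a trajectory flowing \emph{forward} from $\eval_{z_1}(u)$ into $b$ and one emanating from $b$ and arriving at $\eval_0(v)$; i.e.\ the segment runs from $\eval_{z_1}(u)$ to $\eval_0(v)$, not the reverse. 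As written, your $\{R=1\}$ face would produce $\eval_0^{-1}\big(W^s(b;f_X)\big)\times\eval_{z_1}^{-1}\big(W^u(b;f_X)\big)$, which is not $\bbB^{\ell,k}(a,x)$. Swapping the two endpoints fixes this, and the rest of the argument (boundary strata, compactness, and the operator-gluing verification of the twisted stable framings) then goes through as in the paper.
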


\begin{proof}
In order to prove the claim, we wish to build a compact smooth manifold with corners $\bbW(a,x)$ with codimension 1 boundary strata enumerated by gluing maps of the form 
    \begin{align}
    \Big(\bbP^\ell_k\circ\lelogpss^{k,\ell}\Big)(a,x)&\to\bbW(a,x), \\
    \bbB^{\ell,k}(a,x)&\to\bbW(a,x), \\
    \bbW(a,x')\times\bbF^{H^\ell,J^\ell}_{k-1}(x',x)&\to\bbW(a,x), \\
    \bbD\bbM_{S_DM}(a,a')\times\bbW(a',a)&\to\bbW(a,x),
    \end{align}
together with a twisted stable framing which, when restricted to a codimension 1 boundary stratum, agrees with the already constructed twisted stable framing on that boundary stratum; we proceed as follows.\footnote{I.e., we build a framed flow 2-simplex, cf. \cite[Definition 4.19]{AB24}.}

\begin{figure}[ht]
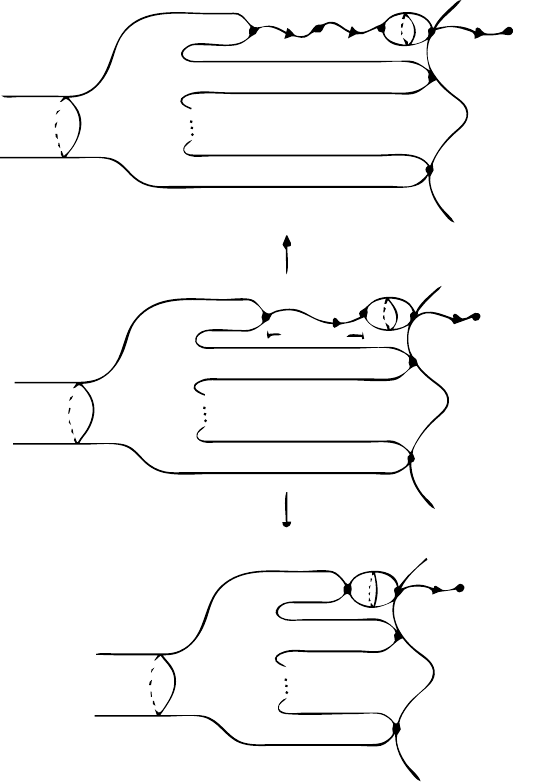
\caption{This is a schematic picture of the proof of Lemma \ref{lem:hcomm}.}
\label{fig:bordismargument}
\end{figure}

Let $\varphi_s$ denote the flow of $-\nabla f_X$. Consider the following two maps: 
    \begin{align}
    \eval_{z_1,s}:[0,+\infty]\times\overline{\scrR}^{H^\ell,J^\ell_{\overline{w}(x)}}_{\{1\},S^1}(x)&\to[0,+\infty]\times X \\
    \big(s,[u]\big)&\mapsto\big(s,(\varphi_s\circ\eval_{z_1})(u)\big) \nonumber
    \end{align}
and
    \begin{align}
    \eval_{0,s}:[0,+\infty]\times\bbG\bbW^{S^1}(a)&\to[0,+\infty]\times X \\
    \big(s,[u]\big)&\mapsto\big(s,(\varphi_{-s}\circ\eval_0)(u)\big). \nonumber
    \end{align}
We define
    \begin{equation}
    \bbW(a,x)\equiv\Big([0,+\infty]\times\overline{\scrR}^{H^\ell,J^\ell_{\overline{w}(x)}}_{\{1\},S^1}(x)\Big)\times_{[0,+\infty]\times X}\Big([0,+\infty]\times\bbG\bbW^{S^1}(a)\Big),
    \end{equation}
where the fiber product is over $\eval_{z_1,s}\times\eval_{0,s}$. This is, for generic data, a compact smooth manifold with corners with the desired codimension 1 boundary strata. (Note, the $s=0$ boundary corresponds to $\Big(\bbP^\ell_k\circ\lelogpss^{k,\ell}\Big)(a,x)$ while the $s=+\infty$ boundary corresponds to $\bbB^{\ell,k}(a,x)$.) See Figure \ref{fig:bordismargument}.

It remains to verify the claim about the twisted stable framing. But this follows by direct inspection after we observe that we have a canonical isomorphism of real virtual bundles 
    \begin{align}
    T\bbW(a,b)&\cong T\Big([0,+\infty]\times\overline{\scrR}^{H^\ell,J^\ell_{\overline{w}(x)}}_{\{1\},S^1}(x)\Big)\times_{T([0,+\infty]\times X)}T\Big([0,+\infty]\times\bbG\bbW^{S^1}(a)\Big) \\
    &\cong\underline{\bbR}+T\overline{\scrR}^{H^\ell,J^\ell_{\overline{w}(x)}}_{\{1\},S^1}(x)+\underline{\bbR}+T\bbG\bbW^{S^1}(a)-\Delta^s, \nonumber
    \end{align}
where 
    \begin{equation}
    \Delta^s\subset(\eval_{z_1,s}\times_{[0,+\infty]\times X}\eval_{0,s})^*\big(T([0,+\infty]\times X)\times T([0,+\infty]\times X)\big)
    \end{equation}
is the diagonal.
\end{proof}

Clearly, $\bbB^{\ell,k}$ is compatible with the Floer continuation framed flow bimodules. The upshot is that we have the desired commutative diagram \eqref{eqn:outlinediagram}:
    \begin{equation}
    \begin{tikzcd}
    \frakD(S_DM)^{-V_{\overline{w}(k)}}\arrow[r,"\bbB^k_1"]\arrow[d,"\lelogpss^k"] & \Sigma\frakD X^{-V_{\overline{w}(k)-1}}\arrow[d,"\bbB^k_2"] \\
    \frakF^\Lambda_{k,k-1}\arrow[r] & \Sigma\frakF^\Lambda_{k-1},
    \end{tikzcd}
    \end{equation}
where $\bbB^k_2\equiv\varinjlim_\ell\bbB^{\ell,k}_2$ and the bottom arrow is the Puppe connecting map. Since the left arrow is a homotopy equivalence, this completes the proof of Theorem \ref{thm:main}; we spell out its proof for completeness.

\begin{proof}[Proof of Theorem \ref{thm:main}]
Suppose 
    \begin{equation}
    \calG\calW\in\pi_0\Big((S_DM)^{-V^{\bbG\bbW}+TX}\wedge\Sigma\frakD X_+\Big)
    \end{equation}
is 0, then $\bbB^k_1$ is null-homotopic for all $k\geq1$ by Lemma \ref{lem:hcomm}. Since $\lelogpss^k$ is a homotopy equivalence (cf. Proposition \ref{prop:he}), this means each Puppe connecting map is null-homotopic, i.e., the weight filtration of $\frakF^\Lambda$ splits into its associated graded; this proves the theorem.
\end{proof}

\subsubsection{Putting it all together, part 2}\label{subsubsec:v2}
We are now in a position to prove Theorem \ref{thm:main2}. As already said, we have that $\calG\calW$ equivalently gives an element of 
    \begin{equation}
    \pi_0\Big((S_DM)^{-V_{\overline{w}(1)}}\wedge\Sigma\frakD X^{-V_0}\Big)=H_0\Big(\Sigma\frakD X^{-V_0};(S_DM)^{-V_{\overline{w}(1)}}\Big).
    \end{equation}
In particular, $\calG\calW$ may be viewed as the image of the coevaluation 
    \begin{equation}
    \bbS\to(S_DM)^{-V_{\overline{w}(1)}}\wedge\frakD(S_DM)^{-V_{\overline{w}(1)}}
    \end{equation}
under the map 
    \begin{equation}
    H_0\Big(\frakD(S_DM)^{-V_{\overline{w}(1)}};(S_DM)^{-V_{\overline{w}(1)}}\Big)\to H_0\Big(\Sigma\frakD X^{-V_0};(S_DM)^{-V_{\overline{w}(1)}}\Big)
    \end{equation}
induced by $\calG\calW$ itself.

\begin{proof}[Proof of Theorem \ref{thm:main2}]
Consider the Puppe connecting map at the lowest action-level:
    \begin{equation}
    \begin{tikzcd}
    \frakD(S_DM)^{-V_{\overline{w}(1)}}\arrow[r,"\calG\calW"]\arrow[d,"\sim"] & \Sigma\frakD X^{-V_0}\arrow[d,"\sim"] \\
    \frakF^\Lambda_{1,0}\arrow[r] & \Sigma\frakF^\Lambda_0\arrow[r] & \Sigma\frakF^\Lambda_1;
    \end{tikzcd}
    \end{equation}
of course, the bottom row of the previous diagram is a cofiber sequence. We now pass to homology with $\frakE\equiv(S_DM)^{-V_{\overline{w}(1)}}$-coefficients and obtain the following long exact sequence:
    \begin{equation}
    \cdots\to H_0\Big(\frakD(S_DM)^{-V_{\overline{w}(1)}};\frakE\Big)\to H_0\Big(\Sigma\frakD X^{-V_0};\frakE\Big)\to H_0\Big(\Sigma\frakF^\Lambda_1;\frakE\Big)\to\cdots.
    \end{equation}
As we have already noted, $\calG\calW$ is null-homotopic if and only if the image of the coevaluation 
    \begin{equation}
    \bbS\to(S_DM)^{-V_{\overline{w}(1)}}\wedge\frakD(S_DM)^{-V_{\overline{w}(1)}}
    \end{equation}
under the map 
    \begin{equation}
    H_0\Big(\frakD(S_DM)^{-V_{\overline{w}(1)}};\frakE\Big)\to H_0\Big(\Sigma\frakD X^{-V_0};\frakE\Big)
    \end{equation}
vanishes (since the image of the coevaluation is $\calG\calW$ itself). Independent of whether or not $\calG\calW$ vanishes, we see that, by exactness, the image of $\calG\calW$ under the map 
    \begin{equation}
    H_0\Big(\Sigma\frakD X^{-V_0};\frakE\Big)\to H_0\Big(\Sigma\frakF^\Lambda_1;\frakE\Big)
    \end{equation}
always vanishes. Therefore, the image of $\calG\calW$ under the composition 
    \begin{equation}
    H_0\Big(\Sigma\frakD X^{-V_0};\frakE\Big)\to H_0\Big(\Sigma\frakF^\Lambda_1;\frakE\Big)\to H_0\Big(\Sigma\frakF^\Lambda;\frakE\Big),
    \end{equation}
which is equal to the (suspension of the) PSS morphism 
    \begin{equation}
    H_0\Big(\Sigma\frakD X^{-V_0};\frakE\Big)\to H_0\Big(\Sigma\frakF^\Lambda;\frakE\Big),
    \end{equation}
always vanishes.

We now suppose the existence of exact Lagrangians $L_1,\ldots,L_\mu\subset X$ with the properties described in the statement of the present theorem. Now, $T^*L_\rho$ is a Liouville subdomain of $X$, therefore, we have a Viterbo restriction map 
    \begin{equation}
    \frakF^\Lambda\to\frakF^{\Lambda\vert_{T^*L_\rho}}\implies H_0\Big(\Sigma\frakF^\Lambda;\frakE\Big)\to H_0\Big(\Sigma\frakF^{\Lambda\vert_{T^*L_\rho}};\frakE\Big).
    \end{equation}
We now have the following commutative diagram: 
    \begin{equation}
    \begin{tikzcd}
    & H_0\Big(\frakD(S_DM)^{-V_{\overline{w}(1)}};\frakE\Big)\arrow[d] \\ 
    \bigoplus_{\rho=1}^\mu H_0\Big(\Sigma\frakD L^{-V_0}_\rho;\frakE\Big)\arrow[d,hook] & H_0\Big(\Sigma\frakD X^{-V_0};\frakE\Big)\arrow[d]\arrow[l,hook'] \\
    \bigoplus_{\rho=1}^\mu H_0\Big(\Sigma\frakF^{\Lambda\vert_{T^*L_\rho}};\frakE\Big) & H_0\Big(\Sigma\frakF^\Lambda;\frakE\Big)\arrow[l],
    \end{tikzcd}
    \end{equation}
where the bottom commuting square is simply the fact that the PSS morphism followed by Viterbo restriction commutes with pullback followed by the PSS morphism. Now, following $\calG\calW$ through the composition 
    \begin{equation}
    H_0\Big(\Sigma\frakD X^{-V_0};\frakE\Big)\to H_0\Big(\Sigma\frakF^\Lambda;\frakE\Big)\to H_0\Big(\Sigma\frakF^{\Lambda\vert_{T^*L_\rho}};\frakE\Big)
    \end{equation}
always yields 0. Meanwhile, if we assume $\calG\calW$ does not vanish, then following $\calG\calW$ through the composition 
    \begin{equation}
    H_0\Big(\Sigma\frakD X^{-V_0};\frakE\Big)\hookrightarrow\bigoplus_{\rho=1}^\mu H_0\Big(\Sigma\frakD L^{-V_0}_\rho;\frakE\Big)\hookrightarrow \bigoplus_{\rho=1}^\mu H_0\Big(\Sigma\frakF^{\Lambda\vert_{T^*L_\rho}};\frakE\Big)
    \end{equation}
yields an immediate contradiction; the theorem follows.
\end{proof}

\section{Computations}\label{sec:computations}
This section is dedicated to computations.

\subsection{Splittings}\label{subsec:splittings}
\subsubsection{Warm-ups}\label{part:warm-up}
We begin with a straightforward lemma.

\begin{lem}\label{lem:main}
Let $(M,D)$ satisfy parts (1) and (2) of Assumption \ref{assu:main}. Suppose, in addition, that $(M,D)$ satisfies:
\begin{enumerate}
\item there does not exist any (pseudo)-holomorphic sphere $u:\bbC P^1\to M$ satisfying 
    \begin{equation}
    u_*[\bbC P^1]\cdot D=1;
    \end{equation}
\item and $X$ admits a stable $\bbR$-polarization $\Lambda$: 
    \begin{equation}
    TX\oplus\underline{\bbC}^d\cong\Lambda\otimes_\bbR\underline{\bbC};
    \end{equation}
\end{enumerate}
then we have that
\begin{equation}
    \frakF^\Lambda\simeq\frakD X^{-U_0}\vee\bigvee_{k\geq1}\frakD(S_DM)^{-U_{\overline{w}(k)}},
    \end{equation}
where 
    \begin{equation}
    U_k\equiv\underline{\bbR}^{2dk}+TM^{\oplus k}+\Lambda^{\oplus 1-k}-(\ind D^\Lambda)^{\oplus k}-\underline{\bbR}^d-N_DM^{\oplus 2k}.
    \end{equation}
(Recall, $D^\Lambda$ is our family of canonical transverse operators, cf. Defnition \ref{defin:canonicaltransverseoperator})
    
\end{lem}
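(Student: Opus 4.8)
The plan is to show that hypothesis (1) makes the entire argument of Sections 3--5 go through using only the stable $\bbR$-polarization $\Lambda$ of $X$, with no appeal to part (3) of Assumption \ref{assu:main}: the complex decomposition on $M$ entered there solely to control index theory at sphere bubbles, and under (1) there are no sphere bubbles in any of the relevant moduli spaces.

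First I would redo the index theory with $\Lambda$ in place of $\bigoplus_\nu F_\nu\otimes\calL_{n_\nu,\bbR}$ and with no $\calL_m$-factor at all. Adapting Definition \ref{defin:canonicaltransverseoperator}, the canonical transverse operator $D^\Lambda_p$ becomes the Cauchy--Riemann operator on $v_p:\bbD\to D_DM$ valued in $v_p^*(TM\oplus\underline{\bbC}^{d+1})$, with totally real boundary conditions $\gamma_p^*\Lambda$ (which makes sense since $\gamma_p\subset X$) and the constraint at $0$ to lie in $(TD\oplus\underline{\bbC}^{d+1})\vert_{\pi_S(p)}$. Arguing as in Proposition \ref{prop:canonicaltransverseoperator} --- the constraint at $0$ contributes $-N_DM$, and $\Lambda$ is spin so Lemma \ref{lem:index-theoretic} applies after stabilizing to rank $\geq 3$ --- its index bundle $\ind D^\Lambda$ is pulled back from $D$ and of the shape $-N_DM+(\text{a bundle built from }\Lambda)$; this is precisely the virtual bundle appearing in the formula for $U_k$. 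Running the marked-thimble constructions of Section 3 verbatim then produces twisted stable framings of $\scrR^{H^\ell,J^\ell_k}(x)$ whose twist is $U_k$: the only changes relative to Section 3 are that the $\calL_m$-operator $D_{\calL_m,k}$ disappears, that gluing the $k$ canonical transverse operators $D^\Lambda_{p_j}$ at the marked points contributes $-(\ind D^\Lambda)^{\oplus k}$, and that bookkeeping the constraints together with the normal-jet directions of the enhanced evaluation produces the $-N_DM^{\oplus 2k}$, $\underline{\bbR}^{2dk}$, $-\underline{\bbR}^d$, $TM^{\oplus k}$, and $\Lambda^{\oplus 1-k}$ summands.

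Next, because the low-energy compactification $\overline{\scrR}^{H^\ell,J^\ell_{\overline{w}(x)}}(x)$ already involves only Floer breaking (Proposition \ref{prop:lowenergycompactification} rules out sphere bubbling via the energy estimate), Corollary \ref{corollary:extendingtsf} and the construction of the bimodules $\lelogpss^{k,\ell}$ carry over word for word, yielding $\lelogpss^k:\frakD(S_DM)^{-U_{\overline{w}(k)}}\to\frakF^\Lambda_{k,k-1}$; and since the proof of Proposition \ref{prop:he} uses only orientability of the twist (which $U_{\overline{w}(k)}$ has, all of $TM$, $\Lambda$, $\ind D^\Lambda$, $N_DM$ being orientable) together with the ring-isomorphism property of GP's low-energy log PSS morphism (which is independent of Assumption \ref{assu:main}(3)), each $\lelogpss^k$ is a homotopy equivalence. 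Then, for the splitting itself: the spectral Gromov--Witten obstruction $\calG\calW$ is assembled from the moduli spaces $\bbG\bbW^{S^1}(a,b)$ of auxiliary hybrid enhanced $1$-pointed relative spheres, which fiber over the space $\bbG\bbW$ of $J$-holomorphic spheres $u:\bbC P^1\to M$ with $u_*[\bbC P^1]\cdot D=1$; hypothesis (1) forces $\bbG\bbW$, hence $\bbG\bbW^{S^1}$, $\bbG\bbW^{S^1}(a)$, and $\bbG\bbW^{S^1}(a,b)$, to be empty, so $\calG\calW=0$ (equivalently, no sphere-bubble strata appear in the naive, auxiliary, or enhanced compactifications of the PI-thimble moduli spaces, so Assumption \ref{assu:main}(3), which entered Section 5 only to frame those strata, is never invoked). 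The proof of Theorem \ref{thm:main} then runs verbatim: each $\bbB^k_1$ is null, so Lemma \ref{lem:hcomm} and the diagram \eqref{eqn:outlinediagram} make every Puppe connecting map $\frakF^\Lambda_{k,k-1}\to\Sigma\frakF^\Lambda_{k-1}$ null-homotopic, and the weight filtration of $\frakF^\Lambda$ splits into $\frakD X^{-U_0}$ (for $k=0$, via $\lelogpss^0$ and the usual PSS morphism) and the pieces $\frakD(S_DM)^{-U_{\overline{w}(k)}}$ for $k\geq1$.

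I expect the main obstacle to be the first step --- verifying that the index bundles re-derived purely from $\Lambda$ really assemble into the stated $U_k$, in particular pinning down the exact multiplicities of $\underline{\bbR}$, $N_DM$, and $TM$ (which in Section 3 were partly absorbed into the $\calL_m$- and $E_\nu$-bookkeeping). This is a routine, if fiddly, adaptation of Section 3; the homotopy-theoretic content is entirely unchanged.
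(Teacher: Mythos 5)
Your argument is correct in substance, but it takes a genuinely different and heavier route than the paper. The paper does not invoke the obstruction machinery of Section \ref{sec:sgwo} at all: under hypothesis (1), the argument of Proposition \ref{prop:naivecompactification} shows that the Gromov compactification of \emph{every} marked thimble moduli space $\scrR^{H^\ell,J^\ell_k}(x)$ --- including the high-energy ones with $\overline{w}(x)<\overline{w}(k)$, not just the low-energy ones --- involves only Floer breaking, so the hybrid moduli spaces assemble directly into a single framed flow bimodule
\begin{equation*}
\frakS^\ell:\bbD\bbM_{X,U_0}\amalg\coprod_{k\geq1}\bbD\bbM_{S_DM,U_{\overline{w}(k)}}\to\bbF^{H^\ell,J^\ell,\Lambda}
\end{equation*}
landing in the \emph{full} Floer flow category. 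This is a filtered map whose associated graded recovers the low-energy log PSS morphism, hence (by Proposition \ref{prop:he}) is an equivalence on each graded piece and therefore an equivalence; the splitting is immediate because the source is already a wedge. You instead run the proof of Theorem \ref{thm:main} and observe that $\calG\calW$ is assembled from moduli spaces of spheres with $A\cdot D=1$, which hypothesis (1) forces to be empty. That is also valid --- $\bbB^k_1$ is the empty bimodule, so by Lemma \ref{lem:hcomm} every Puppe connecting map is null --- but it drags along the PI-thimble and enhanced-compactification apparatus only to watch it trivialize, whereas the paper's route produces an explicit geometric splitting map in one step. Two small corrections to your index-theoretic setup: hypothesis (2) of the lemma has $\underline{\bbC}^d$, not $\underline{\bbC}^{d+1}$ (there is no $\calL_m$ summand to trivialize over $X$); and the assertion that $\ind D^\Lambda$ is pulled back from $D$ is precisely what Proposition \ref{prop:canonicaltransverseoperator} proves \emph{using} Assumption \ref{assu:main}(3) --- without that decomposition one keeps $\ind D^\Lambda$ unevaluated (as the statement of $U_k$ does) and must separately identify the contributions $\ind D^\Lambda_{p_j}$ at $z_2,\ldots,z_k$ with the pullback under $\eneval_{z_1}$; the same point is implicit in the framing identities the paper writes down, so this is bookkeeping rather than a gap.
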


\begin{proof}
Since the moduli spaces $\scrR^{H^\ell,J^\ell_k}(x)$ do not have sphere bubbling in their Gromov-compactifications by assumption, we may directly build a framed flow bimodule 
    \begin{equation}
    \frakS^\ell:\bbD\bbM_{X,U_0}\amalg\coprod_{k\geq1}\bbD\bbM_{S_DM,U_{\overline{w}(k)}}\to\bbF^{H^\ell,J^\ell,\Lambda}
    \end{equation}
via defining $\frakS^\ell(a,x)$ as the standard Gromov-compactification of:
\begin{itemize}
\item when $a\in\ob\bbD\bbM_{X,U_0}$,
    \begin{equation}
    \eval_{+\infty}^{-1}\big(W^s(a;f_X)\big),\;\;\eval_{+\infty}:\overline{\scrR}^{H^\ell,J^\ell_0}(x)\to X;
    \end{equation}
\item and when $a\in\ob\bbD\bbM_{S_DM,U_{\overline{w}(k)}}$,
    \begin{equation}
    \eneval_{z_1}^{-1}\big(W^s(a;f_S)\big),\;\;\eneval_{z_1}:\overline{\scrR}^{H^\ell,J^\ell_k}(x)\to S_DM,\;\;k\geq1;
    \end{equation}
\end{itemize}
given by allowing breaking at critical points. The crux is noticing, in this case, we have the following twisted stable framings: 
\begin{itemize}
\item when $k=0$,
    \begin{equation}
    T\overline{\calR}^{H^\ell,J^\ell_k}(x)+\ind T_{\frakF,x}\cong\eval_{+\infty}^*\Lambda-\underline{\bbR}^d;
    \end{equation}
\item and when $k\geq1$,
    \begin{multline}
    T\overline{\calR}^{H^\ell,J^\ell_k}(x)+\ind T_{\frakF,x}\cong \\
    \eneval_{z_1}^*\big(\Lambda^{\oplus 1-k}-(\ind D^\Lambda)^{\oplus k}\big)+\eval_{z_1}^*\big(TM^{\oplus k}+\underline{\bbC}^{dk}-N_DM^{\oplus 2k}\big)-\underline{\bbR}^d.
    \end{multline}
\end{itemize}
Observe, the associated graded of the map $\frakS\equiv\varinjlim_\ell\frakS^\ell$ recovers the low-energy log PSS morphism. In particular, $\frakS$ is a map of filtered spectra, where each filtration is bounded below and exhaustive, which is a homotopy equivalence on each associated graded; thus, $\frakS$ is itself a homotopy equivalence.
\end{proof}

An immediate application of Lemma \ref{lem:main} is the following.

\begin{prop}\label{prop:warmup1}
Let $M$ be $\bbC P^{n+1}$ and $D$ a smooth hypersurface of degree at least 2. Suppose $X$ has a stable $\bbR$-polarization: $TX\oplus\underline{\bbC}^d\cong\Lambda\otimes_\bbR\underline{\bbC}$. We have that
    \begin{equation}
    \frakF^\Lambda\simeq\frakD X^{-U_0}\vee\bigvee_{k\geq1}\frakD(S_DM)^{-U_{\overline{w}(k)}},\;\;\overline{w}(k)=\deg(D)k,
    \end{equation}
where 
    \begin{equation}
    U_k=\underline{\bbR}^{2dk}+TM^{\oplus k}+\Lambda^{\oplus 1-k}-(\ind D^\Lambda)^{\oplus k}-\underline{\bbR}^d-N_DM^{\oplus 2k}.
    \end{equation}
\end{prop}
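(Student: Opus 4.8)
The plan is to deduce the proposition directly from Lemma \ref{lem:main}, so all the work lies in checking that $(M,D)=(\bbC P^{n+1},D)$ satisfies its hypotheses. Parts (1) and (2) of Assumption \ref{assu:main} are automatic: $\bbC P^{n+1}$ is a smooth complex projective variety, and a smooth hypersurface of positive degree is an ample divisor. The second hypothesis of Lemma \ref{lem:main}, that $X$ carries a stable $\bbR$-polarization $\Lambda$, is exactly the standing assumption of the proposition. Hence the only condition that needs an argument is the first hypothesis of Lemma \ref{lem:main}: that no (pseudo)holomorphic sphere $u\colon\bbC P^1\to\bbC P^{n+1}$ satisfies $u_*[\bbC P^1]\cdot D=1$.

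This is precisely where the assumption $\deg(D)\geq 2$ enters, and the verification is elementary. Since $H_2(\bbC P^{n+1};\bbZ)\cong\bbZ$ is generated by the class $[\ell]$ of a projective line, and $D$ is Poincar\'e dual to $\deg(D)$ times the hyperplane class, one has $[\ell]\cdot D=\deg(D)$. Every continuous map $u\colon\bbC P^1\to\bbC P^{n+1}$ has $u_*[\bbC P^1]=d[\ell]$ for some $d\in\bbZ$, so $u_*[\bbC P^1]\cdot D=d\cdot\deg(D)$; as $\deg(D)\geq 2$, this integer is never $1$. (For an honest $J$-holomorphic sphere one additionally has $d\geq 0$ by positivity of symplectic area, but that is not even needed here.) So the hypothesis holds --- in fact we are in the ``simpler'' situation pointed out in the remark after Assumption \ref{assu:main}, which is why part (3) of that assumption plays no role.

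Invoking Lemma \ref{lem:main} then yields
\[
\frakF^\Lambda\simeq\frakD X^{-U_0}\vee\bigvee_{k\geq1}\frakD(S_DM)^{-U_{\overline{w}(k)}},\qquad U_k=\underline{\bbR}^{2dk}+TM^{\oplus k}+\Lambda^{\oplus 1-k}-(\ind D^\Lambda)^{\oplus k}-\underline{\bbR}^d-N_DM^{\oplus 2k},
\]
which is the asserted statement. The one remaining bookkeeping point is to identify the weighted winding numbers: in general $\overline{w}(k)=\kappa k$, and for this pair $\kappa=\deg(D)$, so that $\overline{w}(k)=\deg(D)\,k$. I would justify $\kappa=\deg(D)$ by a direct computation of $\kappa$ from the Liouville data near $D$ --- equivalently, by observing that the action filtration on $SH^*(X;\bbZ)$ is supported on the non-negative integer multiples of $\deg(D)$, as one sees from the log cohomology model $H^*_{\mathrm{log}}(\bbC P^{n+1},D)$ together with $N_DM=\scrO(\deg D)\vert_D$, so that the only shells that appear have weight $\deg(D),2\deg(D),\dots$. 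There is no real obstacle anywhere: the proposition is a corollary of Lemma \ref{lem:main}, the sole geometric input follows from $\deg(D)\geq 2$ and $H_2(\bbC P^{n+1};\bbZ)\cong\bbZ$, and the only step meriting care is the identification $\overline{w}(k)=\deg(D)\,k$.
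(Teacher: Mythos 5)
Your proof is correct and is exactly the paper's intended argument: the proposition is stated there as an immediate application of Lemma \ref{lem:main}, and the only substantive hypothesis to check is that no holomorphic sphere meets $D$ with intersection number $1$, which you verify correctly via $H_2(\bbC P^{n+1};\bbZ)\cong\bbZ$ and $\deg(D)\geq 2$. The one soft spot is the identification $\overline{w}(k)=\deg(D)\,k$, which you assert rather than derive; this is an indexing convention tied to the choice of polarizing data $(\calL_D,s_D)$ and the resulting $\kappa$, and your appeal to the log cohomology model does not by itself pin down $\kappa$, but this does not affect the substance of the splitting.
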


\begin{rem}\label{rem:cotangentbundleprojectivespace}
In the case that $D$ is degree 2, it is well known we may identify $X$ as $T^*\bbR P^n$ with its standard Weinstein structure. In particular, we may apply Proposition \ref{prop:warmup1} to obtain a splitting of $\frakF^\Lambda$. However, this splitting is not particularly surprising since Cohen resp. Bauer-Crabb-Spreafico have constructed a stable splitting of $\calL \bbR P^n$ via algebro-topological methods, cf. \cite{BCS01,Coh87}.
\end{rem}

\begin{rem}\label{rem:stablepolarization}
It is not always the case that, under the hypotheses of Proposition \ref{prop:warmup1}, $X$ admits a stable $\bbR$-polarization; this is a purely topological problem. For instance, their are obstructions -- every odd degree Chern class must be 2-torsion. Let us restrict to $n\geq3$. By using the Gysin sequence, we may compute the restriction 
    \begin{equation}
    \mathrm{res}:H^2(\bbC P^{n+1};\bbZ)\cong\bbZ\to H^2(X;\bbZ)\cong\bbZ/\deg(D)\bbZ
    \end{equation}
as the mod $\deg(D)$ reduction map. In particular, 
    \begin{equation}
    c_1(X)=(n+2)\mathrm{res}(H),
    \end{equation}
where $H$ is the hyperplane class. For example, if $n=3$ and $\deg(D)=3$, then 
    \begin{equation}
    c_1(X)=2\mathrm{res}(H)\mod 3;
    \end{equation}
clearly, this is not 2-torsion. For other examples, cf. \cite{AGLW25}. One thing to observe is that, for a Weinstein manifold, admitting a stable $\bbR$-polarization is equivalent to admitting a \emph{positive arboreal skeleton}, cf. \cite{AGEN22}.
\end{rem}

Another immediate application of Lemma \ref{lem:main} is the following.

\begin{prop}\label{prop:warmup2}
Let $M$ be a smooth hypersurface in $\bbC P^{n+1}$ of degree at least 2 and $D$ the intersection of $M$ and a smooth hypersurface of degree at least 2. Suppose $X$ has a stable $\bbR$-polarization: $TX\oplus\underline{\bbC}^d\cong\Lambda\otimes_\bbR\underline{\bbC}$. We have that
    \begin{equation}
    \frakF^\Lambda\simeq\frakD X^{-U_0}\vee\bigvee_{k\geq1}\frakD(S_DM)^{-U_{\overline{w}(k)}},\;\;\overline{w}(k)=\deg(D)k,
    \end{equation}
where 
    \begin{equation}
    U_k=\underline{\bbR}^{2dk}+TM^{\oplus k}+\Lambda^{\oplus 1-k}-(\ind D^\Lambda)^{\oplus k}-\underline{\bbR}^d-N_DM^{\oplus 2k}.
    \end{equation}
\end{prop}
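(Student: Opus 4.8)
The plan is to derive the statement directly from Lemma \ref{lem:main}: the only thing to do is verify its three hypotheses for the pair $(M,D)$, and each of them is elementary, so the argument is essentially formal.

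\emph{Parts (1) and (2) of Assumption \ref{assu:main}.} Part (1) holds since $M$ is a smooth complex $n$-dimensional projective variety by construction. For part (2), write $D = M \cap Z$ with $Z \subset \bbC P^{n+1}$ a hypersurface of degree $e \equiv \deg(D) \geq 2$; then $D$ is smooth by hypothesis, and $\scrO_M(D) = \scrO_{\bbC P^{n+1}}(e)\vert_M$ is ample, being the restriction to a closed subvariety of an ample line bundle. Normalizing the polarization data $(\calL_D, s_D)$ so that $\operatorname{div}(s_D) = e\cdot D$ (for instance by taking $s_D$ to be the $e$-th power of the restriction to $M$ of the defining equation of $Z$) gives $\kappa = \deg(D)$, hence $\overline{w}(k) = \kappa k = \deg(D)\, k$ as in the statement.

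\emph{Hypothesis (1) of Lemma \ref{lem:main}.} There is no (pseudo)holomorphic sphere $u : \bbC P^1 \to M$ with $u_*[\bbC P^1] \cdot D = 1$. This is purely cohomological: with $i : M \hookrightarrow \bbC P^{n+1}$ the inclusion and $H \in H^2(\bbC P^{n+1};\bbZ)$ the hyperplane class, one has $[D] = i^*[Z] = e\, i^*H$ in $H^2(M;\bbZ)$, so that
\[
A \cdot D \;=\; \langle e\, i^*H,\, A \rangle \;=\; e\,\langle i^*H, A\rangle \;\in\; e\bbZ
\]
for every $A \in H_2(M;\bbZ)$. Since $e \geq 2$ this is never equal to $1$; applying it to $A = u_*[\bbC P^1]$ rules out such a sphere for any compatible almost complex structure. (This is precisely where the hypothesis $\deg(Z) \geq 2$ enters: for a hyperplane section, lines contained in $M$ could meet $D$ transversely in a single point.)

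\emph{Hypothesis (2) of Lemma \ref{lem:main}} is exactly the standing assumption of the proposition, namely that $X$ admits a stable $\bbR$-polarization $TX \oplus \underline{\bbC}^d \cong \Lambda \otimes_\bbR \underline{\bbC}$. With all three hypotheses in place, Lemma \ref{lem:main} applies verbatim and yields the claimed splitting, with the bundles $U_k$ as displayed and weights $\overline{w}(k) = \deg(D)\, k$. I do not foresee any genuine obstacle here; the only mild bookkeeping is fixing the polarization normalization so that $\kappa = \deg(D)$, and the assumption $\deg(M) \geq 2$ plays no role in the argument beyond ensuring that this family is not already subsumed by Proposition \ref{prop:warmup1} (which is the case $\deg(M) = 1$).
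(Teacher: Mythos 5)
Your proposal is correct and matches the paper exactly: the paper offers no proof beyond calling the proposition an ``immediate application'' of Lemma \ref{lem:main}, and the content you supply --- the divisibility argument $A\cdot D = e\langle i^*H,A\rangle\in e\bbZ$ ruling out spheres meeting $D$ once, together with the normalization $\kappa=\deg(D)$ and the observation that the stable $\bbR$-polarization is assumed --- is precisely the intended verification of its hypotheses.
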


\subsubsection{Cotangent bundles of spheres}
In this part, we will prove the following.

\begin{prop}\label{prop:spheresplitting} 
Let $M$ be the standard projective quadric hypersurface $\{z_0^2=z_1^2+\cdots+z_{n+1}^2\}$ in $\bbC P^{n+1}$ and $D$ the hyperplane section $M\cap\{z_0=0\}$. In this case, $X$ is Weinstein equivalent to $T^*S^n$. Let $\Lambda$ be induced by 
    \begin{equation}
    TM\oplus\underline{\bbC}\oplus\scrO_{\bbC P^{n+1}}(2)\vert_M\cong\scrO_{\bbC P^{n+1}}(1)\vert_M^{\oplus n+2}.
    \end{equation}
We have that
    \begin{equation}
    \frakF^\Lambda\simeq\frakD(S^n)^{-V_0}\vee\bigvee_{k\geq1}\frakD(S_DM)^{-V_k},
    \end{equation}
where 
    \begin{equation}
    V_k=\underline{\bbR}^n+TM^{\oplus k}-N_DM^{\oplus k}.
    \end{equation}
\end{prop}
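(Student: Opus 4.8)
The plan is to read off the data of Assumption~\ref{assu:main} from the given isomorphism, apply Theorem~\ref{thm:main}, and reduce the claim to the vanishing of the spectral Gromov--Witten obstruction $\calG\calW$. First I would verify Assumption~\ref{assu:main}: combining the (smoothly split) normal bundle sequence $0\to TM\to T\bbC P^{n+1}|_M\to\scrO_{\bbC P^{n+1}}(2)|_M\to 0$ with the Euler sequence of $\bbC P^{n+1}$ gives $TM\oplus\underline{\bbC}\oplus\scrO_{\bbC P^{n+1}}(2)|_M\cong\scrO_{\bbC P^{n+1}}(1)|_M^{\oplus n+2}$, so the hypothesis holds with $d=1$, $m=2$, $\alpha=n+2$, $n_\nu=1$ and $F_\nu=\underline{\bbR}$ (trivial, hence oriented and spin). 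Since $s_D=z_0|_M$ cuts out $D$ with multiplicity $1$ we have $\kappa=1$, so $\overline{w}(k)=k$; substituting into the formula for $V_k$ in Theorem~\ref{thm:main} yields (after cancelling trivial summands) exactly $V_k=\underline{\bbR}^n+TM^{\oplus k}-N_DM^{\oplus k}$. Finally, normalizing $z_0=1$ exhibits $X$ as the affine quadric $\{w_1^2+\cdots+w_{n+1}^2=1\}$, Weinstein equivalent to $T^*S^n$, and identifies $S_DM$ (the circle bundle of $\scrO_D(1)$) with the link of the affine cone over $D$, i.e. with the unit cotangent bundle $ST^*S^n$; in particular $X\simeq S^n$. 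It therefore remains to show $\calG\calW=0$.

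The geometric input is a completely explicit description of the relevant moduli spaces. Since $D=H|_M$, the only spherical class $A$ with $A\cdot D=1$ is the line class, and every such relative sphere is an embedded line $\ell\subset Q^n$ meeting $D$ transversely in one point $p_\infty=\ell\cap D$. Thus $\bbG\bbW$ is the space of pairs $(\ell,p_0)$ with $\ell$ a line not contained in $D$ and $p_0\in\ell\setminus\{p_\infty\}$, with $\eval_0(\ell,p_0)=p_0\in X$ and $\eval_{+\infty}(\ell,p_0)=p_\infty\in D$. The enhanced version tracks a phase of the parametrization at $\infty$, and a short computation with the $\bbC^*/\bbR=S^1$ action on parametrizations shows that $\eneval_{+\infty}$ restricts to a diffeomorphism on each $S^1$-fibre of $\bbG\bbW^{S^1}\to\bbG\bbW$ onto $S_DM|_{p_\infty}$. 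Hence $\bbG\bbW^{S^1}=\bbG\bbW\times_D S_DM$ is the pullback of the circle bundle $\pi_S\colon S_DM\to D$ along $\eval_{+\infty}$, with $\eneval_{+\infty}$ the pullback map; moreover $\eval_0$ is $S^1$-invariant and factors through $\bbG\bbW$. Equivalently, there is a pullback square whose right vertical map is $\pi_S\times\mathrm{id}_X\colon S_DM\times X\to D\times X$ and whose bottom map is $(\eval_{+\infty},\eval_0)\colon\bbG\bbW\to D\times X$, with $\bbG\bbW^{S^1}$ the top-left corner. The same structure persists after imposing the Morse-flow incidence conditions defining $\bbG\bbW^{S^1}(a,b)$ and taking the compactifications of Section~\ref{sec:sgwo}.

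Given this, $\calG\calW$ should factor through an umkehr (Spanier--Whitehead dual of a transfer) for the circle bundle $S_DM\to D$: the pullback-square structure lets one write $\calG\calW$ as a framed flow bimodule $\bbD\bbM_{S_DM,\,V^{\bbG\bbW}-TX}\to\bbD\bbM_{D,\,?}\to\Sigma\bbD\bbM_X$, where the first arrow is the umkehr of $\pi_S$ and the second is built from $\bbG\bbW$ alone via $(\eval_{+\infty},\eval_0)\colon\bbG\bbW\to D\times X$. A key auxiliary computation here is that $V^{\bbG\bbW}-TX$ restricts to a \emph{trivial} bundle on $S_DM$ (on $D$ one has $V^{\bbG\bbW}=TD+\underline{\bbR}^{2n}$ after the $N_DM$-terms cancel, and $TX|_{S_DM}=TS_DM\oplus\underline{\bbR}=\pi_S^*TD\oplus\underline{\bbR}^2$), so the twist does not obstruct this factorization. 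The composite then vanishes because the transfer of the $S^1$-bundle $\pi_S$ dies under the relevant pairing with $\bbG\bbW$ — concretely, one is looking at a Becker--Gottlieb-type transfer for a bundle with fibre $S^1$, which is annihilated after applying the projection since $\chi(S^1)=0$. An alternative, cheaper, route is to invoke Theorem~\ref{thm:main2} with $\mu=1$ and $L_1=S^n$ the zero section of $T^*S^n=X$: hypothesis~(1) is automatic as $L_1\hookrightarrow X$ is a homotopy equivalence, and hypothesis~(2) holds because the spectral PSS morphism $\frakD(S^n)^{-V_0}\to\frakF^{\Lambda}$ is split injective — under the spectral Viterbo isomorphism it is the inclusion of constant loops $S^n\hookrightarrow\calL S^n$, a stable retract.

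I expect the main obstacle to be establishing the above factorization \emph{at the level of framed flow bimodules} rather than merely on underlying spaces: one must check that the twisted stable framing on $\bbG\bbW^{S^1}(a,b)$ constructed in Section~\ref{sec:sgwo} (via operator gluing, involving $T_{\frakF,x}$, the $D^{\bbG\bbW}$-, $\Phi^{\bbG\bbW}$- and $D^\Lambda$-operators, and the auxiliary enhanced compactification resolving sphere-bubble orbifold points) is compatibly the one pulled back through the $S^1$-transfer, so that the homotopy $\calG\calW\simeq 0$ is realized by a genuine null-homotopy of framed flow bimodules compatible with Floer continuation. Once this is in place, Theorem~\ref{thm:main} gives $\frakF^\Lambda\simeq\frakD X^{-V_0}\vee\bigvee_{k\geq1}\frakD(S_DM)^{-V_k}$ with $V_k$ as above and $X\simeq S^n$, which is the assertion of the proposition.
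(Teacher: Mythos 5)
Your reduction to the vanishing of $\calG\calW$ is correct: the data $d=1$, $m=2$, $F_\nu=\underline{\bbR}$, $n_\nu=1$, $\kappa=1$ does verify Assumption \ref{assu:main} and gives $V_k=\underline{\bbR}^n+TM^{\oplus k}-N_DM^{\oplus k}$, and the identification of $\bbG\bbW$ with the space of pointed lines and of $\bbG\bbW^{S^1}$ with the pullback $\bbG\bbW\times_D S_DM$ is also right. But the central vanishing mechanism you propose does not work. First, a sanity check: the pullback structure $\bbG\bbW^{S^1}\cong\bbG\bbW\times_D S_DM$ holds for \emph{every} pair $(M,D)$ in the paper's setup (the residual $S^1$-reparametrization always sweeps out the full fiber of $S_DM$ under $\eneval_{+\infty}$ at a transverse intersection), so any purely formal argument built on it would prove $\calG\calW=0$ unconditionally and render Theorem \ref{thm:main} vacuous. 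Second, the wrong-way map that actually appears when you factor the correspondence $S_DM\leftarrow\bbG\bbW\times_D S_DM\to X$ through $D$ is the Gysin/integration-over-the-fiber map $\pi_{S!}$ of the circle bundle (degree $-1$, surjective onto $\ker(\cup\,e(N_DM))$, hence nonzero), not the Becker--Gottlieb transfer; the identity $\chi(S^1)=0$ controls the composite $\Sigma^\infty\pi_S\circ\tau$, which never arises here. The resulting composite $\eval^{\bbG\bbW}_{0!}\circ\eval_{+\infty}^*\circ\pi_{S!}$ has no formal reason to vanish. The paper instead exploits the specific geometry of the quadric: it chooses $f_X$ with exactly two critical points so that $\Sigma\frakD X_+$ splits as $\Sigma^{-n-1}\unit\vee\Sigma^{-1}\unit$, kills the index-$n$ component by the fixed-point-free involution $u\mapsto\sigma_{\bbC P^{n+1}}\circ u\circ\sigma_{\bbC P^1}$ (the symmetry $z_0\mapsto -z_0$), checked to be compatible with the operator-theoretic framings, and kills the minimum component by exhibiting $\bbG\bbW^{S^1}(b_{\mathrm{min}})\cong S_DM\vert_{D'}$ as the twisted stably framed boundary of $\sigma(D-U)$ inside the space of enhanced lines in the ambient $\bbC P^{n+1}$ relative to $H$, via Lemma \ref{lem:unorientedbordism} and an explicit interpolating family of lines.

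Your fallback via Theorem \ref{thm:main2} with $L_1$ the zero section is closer to viable but, as stated, leans on two inputs the paper does not supply. Hypothesis (2) requires injectivity of the spectral PSS morphism into $\frakF^{\Lambda\vert_{T^*L_1}}=\frakF^\Lambda$ itself with $\frakE=(S_DM)^{-V_1}$-coefficients; deducing this from Cohen's splitting of $\calL S^n$ needs (a) the compatibility of the \emph{spectral} PSS morphism with the spectral Viterbo isomorphism, identifying it with the inclusion of constant loops at the level of $\frakE$-homology, which is not established in the paper or its cited form of the Viterbo theorem, and (b) the identification $\Lambda\simeq\Lambda_{\std}$ up to stabilization for all $n$ — the paper verifies this only for $n=2$ via $\pi_2BO(4)=0$, and for general $n$ distinct stable $\bbR$-polarizations can yield distinct Floer homotopy types (cf.\ the discussion around \cite{BB25}). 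Without those, the injectivity you invoke is essentially the statement being proven, so this route as written is circular in practice even if not in principle.
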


\begin{rem}
Again, this splitting is not particularly surprising since Cohen has constructed a stable splitting of $\calL S^n$ via algebro-topological methods, cf. \cite{Coh87}. In the following part, we will reconcile Cohen's stable splitting of $\calL S^2$ with our splitting of $\frakF^\Lambda$.
\end{rem}

Of course, we proceed as follows; we are trying to show the element
    \begin{equation}
    \calG\calW\in\pi_0\Big((S_DM)^{-V^{\bbG\bbW}+T(T^*S^n)}\wedge\Sigma\frakD(T^*S^n)_+\Big),\;\;V^{\bbG\bbW}=\underline{\bbR}^{2n}+TM-N_DM
    \end{equation}
vanishes. I.e., we are trying to show the map 
    \begin{equation}
    \calG\calW:\bbD\bbM_{S_DM,V^{\bbG\bbW}-TX}\to\bbD\bbM_{T^*S^n,-\underline{\bbR}}
    \end{equation}
is null-homotopic. Since $T^*S^n$ is homotopy equivalent to $S^n$, we may choose $f_X$ such that 
    \begin{equation}
    \crit(f_X)=\{b_\mathrm{min},b\},
    \end{equation}
where $b_\mathrm{min}$ is the minimum and $I(b)=n$; in fact, we can obtain $f_X$ by pulling back the standard height function on $S^n$ and adding a function that goes to $+\infty$ in the fiber direction. Hence, we see that $\bbD\bbM_{T^*S^n,-\underline{\bbR}}$ splits into the disjoint union of two framed flow categories, each one consisting of a single object and no morphisms: 
    \begin{equation}
    \bbD\bbM_{T^*S^n,-\underline{\bbR}}\cong\Sigma^{-n-1}\unit\amalg\Sigma^{-1}\unit.
    \end{equation}
Thus, it suffices to show that $\calG\calW$ becomes null-homotopic after projecting to each factor, i.e., we have two cases.

For the first case, we will rely on the following lemma. 

\begin{lem}
Suppose $\bbB:\bbX\to\bbY$ is a framed flow bimodule such that each $\bbB(x,y)$ admits a fixed point free involution
    \begin{equation}
    T_{xy}:\bbB(x,y)\to\bbB(x,y)
    \end{equation}
compatible with (1) gluing in the obvious way and (2) the equipped (twisted) stable framing, then $\bbB$ is null-homotopic as a map of framed flow categories.
\end{lem}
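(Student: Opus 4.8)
The plan is to build an explicit null-homotopy of $\bbB$ directly on the level of framed flow bimodules, modeled on the mapping cylinder of the quotient double cover. First I would fix, for each pair of objects $(x,y)$, the free double cover $\pi_{xy}\colon\bbB(x,y)\to\overline{\bbB}(x,y)\equiv\bbB(x,y)/T_{xy}$ of compact smooth manifolds with corners, and set
\begin{equation}
\bbW(x,y)\equiv\bbB(x,y)\times_{\bbZ/2}[-1,1],
\end{equation}
with $\bbZ/2$ acting diagonally by $T_{xy}$ on the first factor and by $s\mapsto-s$ on the second. Since $T_{xy}$ is fixed-point free the diagonal action is free, so $\bbW(x,y)$ is a compact smooth manifold with corners, namely the mapping cylinder of $\pi_{xy}$. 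Its boundary strata are: the flow-categorical strata inherited from $\partial\bbB(x,y)$ (see below), a single copy $\bbB(x,y)\cong(\bbB(x,y)\times\{-1,1\})/(\bbZ/2)$ at the ``ends'' $|s|=1$, and nothing else, since the central slice $\overline{\bbB}(x,y)$ is folded into the interior. Thus $\bbW$ interpolates, over the parameter $s$, from $\bbB$ to the empty (zero) bimodule, and hence --- once framed --- it is precisely a null-homotopy of $\bbB$, i.e.\ a framed flow $2$-simplex in the sense of \cite[Definition 4.19]{AB24}, with the obvious modifications of \cite[Sections 7.4 \& 7.5]{AB24}.

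Next I would record the flow-categorical structure. Compatibility of the $T_{xy}$ with the gluing maps of $\bbB$ means that under $\bbX(x,x')\times\bbB(x',y)\to\partial\bbB(x,y)$ and $\bbB(x,y')\times\bbY(y',y)\to\partial\bbB(x,y)$ the involution restricts to $\mathrm{Id}\times T_{x'y}$ resp.\ $T_{xy'}\times\mathrm{Id}$; since the $[-1,1]$-factor is untouched, the $\bbW(x,y)$ acquire gluing maps $\bbX(x,x')\times\bbW(x',y)\to\bbW(x,y)$ and $\bbW(x,y')\times\bbY(y',y)\to\bbW(x,y)$, while the ``$|s|=1$'' strata assemble into the copy of $\bbB$ itself. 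So the $\bbW(x,y)$ form a flow bimodule realizing the desired homotopy; it remains to make it a \emph{framed} one.

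The remaining, and main, step is therefore to equip each $\bbW(x,y)$ with a twisted stable framing restricting to the given one of $\bbB(x,y)$ on the $|s|=1$ face and agreeing with the already-constructed framings on all boundary strata. Here I would use compatibility of $T_{xy}$ with the equipped twisted stable framing: the stable framing of $T\bbB(x,y)$ and its twisting data (which, being pulled back via the evaluation maps to objects of $\bbX,\bbY$, carries the trivial $\bbZ/2$-action) are $\bbZ/2$-equivariant, hence descend across the free quotient $\bbB(x,y)\times[-1,1]\to\bbW(x,y)$. Concretely, writing $p\colon\bbW(x,y)\to\overline{\bbB}(x,y)$ for the collapse and $L$ for the tautological real line bundle of $\pi_{xy}$, one has $T\bbW(x,y)\cong p^*\bigl(T\overline{\bbB}(x,y)\oplus L\bigr)$, and one combines the descended framing of $\overline{\bbB}(x,y)$ with the canonical trivialization $\pi_{xy}^*L\cong\underline{\bbR}$ along the $|s|=1$ face. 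Verifying that this descended framing matches the already-constructed ones on codimension-$1$ strata is an operator-gluing bookkeeping of exactly the type carried out in Corollary \ref{cor:tsfaux} and Lemma \ref{lem:hcomm}.

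I expect the genuine difficulty to be confined to this last step: keeping track of the twist line bundle $L$ and confirming that it stays within the shape of twists allowed for a framed flow bimodule in the Abouzaid--Blumberg formalism --- equivalently, that $L$ is pulled back through the relevant evaluation data, as happens in the situations of interest where the involution acts by a free rotation of the circle bundle $S_DM$, so that $L$ is the tautological line bundle of a double cover of $S_DM$ composed with the enhanced evaluation map. Granting this compatibility, the $\bbW(x,y)$ constitute a null-homotopy of $\bbB$, and the lemma follows.
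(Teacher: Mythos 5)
Your construction is essentially identical to the paper's proof: the paper also takes the mapping cylinder of the projection $\bbB(x,y)\to\bbB(x,y)/T_{xy}$ (your $\bbB(x,y)\times_{\bbZ/2}[-1,1]$), observes that its only non-flow-categorical codimension~1 face is a single copy of $\bbB(x,y)$, and descends the twisted stable framing by $\bbZ/2$-equivariance to obtain the null-homotopy. If anything your write-up is more explicit than the paper's one-line argument, since you isolate the tautological line bundle $L$ of the double cover inside $T\bbW(x,y)\cong p^*(T\overline{\bbB}(x,y)\oplus L)$ as the one point genuinely requiring care when matching the descended framing against the shape of twists allowed for a framed flow bimodule.
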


\begin{proof}
We have that the mapping cylinder $\bbM(x,y)$ of the projection $\bbB(x,y)\to\bbB(x,y)/T_{xy}$ is a smooth manifold with corners equipped with a (twisted) stable framing whose codimension 1 boundary strata are enumerated by gluing maps of the form 
    \begin{align}
    \bbX(x,z)\times\bbM(z,y)&\to\bbM(x,y) \\
    \bbM(x,z)\times\bbY(z,y)&\to\bbM(x,y) \\
    \bbB(x,y)&\to\bbM(x,y)
    \end{align}
such that the equipped (twisted) stable framing restricts to the already defined (twisted) stable framing on that boundary stratum. This is precisely saying that $\bbB$ is null-homotopic as a map of framed flow categories.
\end{proof}

We consider the projection onto the first factor,
    \begin{equation}
    \calG\calW_1:\bbD\bbM_{S_DM,V^{\bbG\bbW}-TX}\to\Sigma^{-n-1}\unit,
    \end{equation}
defined by
    \begin{equation}
    \calG\calW_1(a,*)\equiv\bbG\bbW^{S^1}(a,b).
    \end{equation}
Given any $[u]\in\bbG\bbW^{S^1}$, we have that $u$ is of the form
    \begin{align}
    u:\bbC P^1&\to\bbC P^{n+1} \\
    [x,y]&\mapsto\big[p_0(x,y):\cdots:p_{n+1}(x,y)\big], \nonumber
    \end{align}
where 
\begin{itemize}
\item $p_j(x,y)$ is a homogeneous polynomial of degree 1, 
\item $p_0(0,1)=0$, after identifying $+\infty$ with $[0:1]$,
\item and 
    \begin{equation}
    p_0(x,y)^2=p_1(x,y)^2+\cdots+p_{n+1}(x,y)^2.
    \end{equation}
\end{itemize}
Let $\sigma_{\bbC P^{n+1}}:\bbC P^{n+1}\to\bbC P^{n+1}$ denote the map 
    \begin{equation}
    [z_0:z_1:\cdots:z_{n+1}]\mapsto[-z_0:z_1:\cdots:z_{n+1}];
    \end{equation}
let $\sigma_{\bbC P^1}:\bbC P^1\to\bbC P^1$ denote the map 
    \begin{equation}
    [x:y]\mapsto[-x:y].
    \end{equation}
We denote by $\sigma(u)$ the map
    \begin{equation}
    \sigma_{\bbC P^{n+1}}\circ u\circ\sigma_{\bbC P^1}:\bbC P^1\to M.
    \end{equation}
A local computation in coordinates shows that 
    \begin{equation}
    \eneval_{+\infty}(u)=\eneval_{+\infty}\big(\sigma(u)\big).
    \end{equation}
In particular, we have constructed a fixed point free involution 
    \begin{equation}
    T_{a*}:\calG\calW_1(a,*)\to\calG\calW_1(a,*)
    \end{equation}
which is compatible with gluing; it remains to show $T_{a*}$ is compatible with our constructed twisted stable framing. But this is straightforward. Our constructed twisted stable framing at $u$ is defined by considering the index bundle of the glued together operator 
    \begin{equation}
    \big(D^{\bbG\bbW}_u\oplus\Phi^{\bbG\bbW}\oplus D^{\bbG\bbW,\scrO_{\bbC P^{n+1}}(2)\vert_M}_u\big)\#D^\Lambda_{\eneval_{+\infty}(u)},
    \end{equation}
where we recall the definition of $D^{\bbG\bbW,\scrO_{\bbC P^{n+1}}(2)\vert_M}_u$ given in Subsection \ref{subsec:enhancedspheres}. Meanwhile, the twisted stable framing at $u$ induced by $\sigma$ is defined by first using the isomorphism 
    \begin{equation}
    \ind D^{\bbG\bbW}_u\cong\ind D^{\bbG\bbW}_{\sigma(u)},
    \end{equation}
induced by $\sigma$, and then considering the index bundle of the glued together operator
    \begin{equation}
    \big(D^{\bbG\bbW}_{\sigma(u)}\oplus\Phi^{\bbG\bbW}\oplus D^{\bbG\bbW,\scrO_{\bbC P^{n+1}}(2)\vert_M}_{\sigma(u)}\big)\#D^\Lambda_{\eneval_{+\infty}({\sigma(u)})}
    \end{equation}
and pulling back via $\sigma$. We simply compare operator by operator. First, we have already established 
    \begin{equation}
    D^\Lambda_{\eneval_{+\infty}(u)}=D^\Lambda_{\eneval_{+\infty}({\sigma(u)})}.
    \end{equation}
Second, we have that 
    \begin{equation}
    u^*\scrO_{\bbC P^{n+1}}(2)\vert_M\cong\scrO_{\bbC P^1}(2)\cong\sigma(u)^*\scrO_{\bbC P^{n+1}}(2)\vert_M,
    \end{equation}
hence, we may canonically deform $D^{\bbG\bbW,\scrO_{\bbC P^{n+1}}(2)\vert_M}_u$ to $D^{\bbG\bbW,\scrO_{\bbC P^{n+1}}(2)\vert_M}_{\sigma(u)}$. It follows that $T_{a*}$ is compatible with our constructed twisted stable framing; thus, $\calG\calW_1$ is null-homotopic.

For the second case, we will rely on the following lemma.

\begin{lem}\label{lem:unorientedbordism}
Let $L\to M$ be an ample complex line bundle with associated circle bundle $C\to M$. Suppose $\sigma:M\to L$ is a section which is transverse to 0, then 
    \begin{equation}
    \big[C\vert_Z\to C\big]\in\Omega^\mathrm{MO}_*(C)
    \end{equation}
vanishes, where $Z\equiv\sigma^{-1}(0)$ and $\Omega^\mathrm{MO}_*(\cdot)$ denotes unoriented bordism.
\end{lem}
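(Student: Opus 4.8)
The plan is to produce an explicit geometric null-bordism of the singular manifold $C|_Z\to C$, exploiting the fact that the pullback of $L$ to the circle bundle $C$ is canonically trivial. Write $\pi\colon C\to M$ for the projection. First I would observe that $\pi^*L\to C$ carries the tautological section $s_{\mathrm{taut}}$, sending a unit vector $w\in L_m$ (i.e.\ a point of $C$) to $w$ itself; since $\lvert s_{\mathrm{taut}}\rvert\equiv 1$ this section is nowhere zero, so it trivializes $\pi^*L$ as a complex line bundle over $C$. At the same time, $\pi^*\sigma$ is a section of $\pi^*L$ which is transverse to the zero section (being the pullback of the transverse section $\sigma$ along the submersion $\pi$), and whose zero locus is precisely $\pi^{-1}(Z)=C|_Z$.

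Next I would use the trivialization $s_{\mathrm{taut}}$ to regard $\pi^*\sigma$ as a smooth function $f\colon C\to\bbC$ with $f^{-1}(0)=C|_Z$ and $0$ a regular value, and then join $f$ to the constant function $1$ by a smooth homotopy $F\colon[0,1]\times C\to\bbC$ with $F(0,\cdot)=f$ and $F(1,\cdot)\equiv 1$ (for instance the affine homotopy $t\mapsto (1-t)f+t$, which is automatically nowhere zero for $t$ near $1$ by compactness of $C$). After a generic perturbation of $F$ rel the two ends I may assume $0$ is a regular value of $F$ as well. Since $F(0,\cdot)=f$ is transverse to $0$ and $F(1,\cdot)$ misses $0$, the preimage $W\equiv F^{-1}(0)$ is then a compact smooth manifold with boundary, properly embedded in $[0,1]\times C$, with $\partial W=\{0\}\times C|_Z$. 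Composing the inclusion $W\hookrightarrow[0,1]\times C$ with the projection $[0,1]\times C\to C$ yields a map $W\to C$ which restricts on $\partial W$ to the inclusion $C|_Z\hookrightarrow C$; this is exactly a null-bordism, so $[C|_Z\to C]=0$ in $\Omega^{\mathrm{MO}}_*(C)$.

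The only delicate point is the transversality bookkeeping: one must arrange $0$ to be a regular value of $F$ on the interior $(0,1)\times C$ while holding the endpoint maps fixed, so that $W$ is genuinely a manifold with boundary and $W\cap\partial([0,1]\times C)=\{0\}\times C|_Z$. This is the standard relative parametric transversality theorem, and it is especially benign here because the target $\bbC$ has real dimension $2$ and $F$ may be moved freely away from the two ends; compactness of $C$ (hence of $[0,1]\times C$, and of $W$ as a closed subset) is what makes the resulting bordism legitimate. I would also remark that ampleness of $L$ plays no role in the argument.
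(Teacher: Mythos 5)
Your proof is correct, and it is essentially the paper's argument in different packaging: the paper's one-line proof takes the normalized section $\sigma/\abs{\abs{\sigma}}$ over $M-Z$ and observes that its graph compactifies to a manifold with boundary $C\vert_Z$ mapping to $C$, and your $W=F^{-1}(0)$ for the affine homotopy is precisely that graph (up to a sign and a reparametrization of the radial coordinate $t=\abs{\abs{\sigma}}/(1+\abs{\abs{\sigma}})$). Your transversality formulation has the minor advantage of making the smooth manifold-with-boundary structure automatic rather than asserted, and you are right that ampleness plays no role.
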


\begin{proof}
Consider the section 
    \begin{equation}
    \dfrac{\sigma}{\abs{\abs{\sigma}}}:D-Z\to C;
    \end{equation}
this extends to a manifold with boundary $C\vert_Z$.
\end{proof}

We consider the projection onto the second factor,
    \begin{equation}
    \calG\calW_1:\bbD\bbM_{S_DM,V^{\bbG\bbW}-TX}\to\Sigma^{-1}\unit,
    \end{equation}
defined by
    \begin{equation}
    \calG\calW_1(a,*)\equiv\bbG\bbW^{S^1}(a,b_\mathrm{min}).
    \end{equation}
The argument is a bit long-winded, and so we ask the reader to bear with us; we will essentially construct our various twisted stably framed null-bordisms of $\bbG\bbW^{S^1}(a,b_\mathrm{min})$ by exhibiting the bounding twisted stably framed manifolds as submanifolds of the moduli space of enhanced 1-pointed spheres in the ambient $\bbC P^{n+1}$ relative to $H\equiv\{z_0=0\}$.

\begin{claim}
There exists a section 
    \begin{equation}
    \sigma:D\to\scrO_{\bbC P^{n+1}}(1)\vert_D,
    \end{equation}
which is transverse to 0, such that 
    \begin{equation}
    \eneval_{+\infty}:\bbG\bbW^{S^1}(b_\mathrm{min})\to S_DM\vert_{D'}
    \end{equation}
where $D'\equiv\sigma^{-1}(0)$, is a diffeomorphism.
\end{claim}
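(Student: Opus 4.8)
The plan is to identify $\bbG\bbW^{S^1}(b_{\min}) \equiv \eval_0^{-1}\big(W^u(b_{\min};f_X)\big) = \eval_0^{-1}(\{b_{\min}\})$ with a classical space of lines in the quadric $M$ and then read off both $D'$ and the enhanced evaluation map geometrically. First, one uses the integrable complex structure on $\bbC P^{n+1}$, which restricts to an admissible $J$ on $M$ that is moreover regular for lines: the moduli space of parameterized $J$-holomorphic spheres $u$ with $u_*[\bbC P^1]\cdot D = 1$ and $u^{-1}(D)=\{+\infty\}$ consists exactly of the linear isomorphisms $\bbC P^1 \xrightarrow{\sim} \ell$ onto lines $\ell\subset M$ (for $n\geq 3$ the only spherical class $A$ with $A\cdot D=1$ is the line class; for $n=2$, where $M\cong\bbC P^1\times\bbC P^1$, both rulings occur and the argument below applies verbatim to each). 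Since $A\cdot D=1$ is minimal, positivity of intersection forbids any bubbling in the relevant compactifications, so $\bbG\bbW^{S^1}(b_{\min})$ is precisely the (compact) space of lines $\ell\subset M$ through $b_{\min}$, each equipped with the residual $\bbC^*/\bbR\cong S^1$ worth of parameterizations fixing the images of $0$ and $+\infty$; under $\eval_0$ every such line maps to $b_{\min}$.

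Next, I would choose $f_X$ so that its minimum $b_{\min}$ is a point of $X$ for which the projective tangent hyperplane $T_{b_{\min}}M\subset\bbC P^{n+1}$ meets $D$ in a smooth subvariety; this is a generic condition on $b_{\min}$ (the locus of $p\in M$ for which $T_pM\cap D$ is singular is a proper closed subset, by a dimension count), so we may arrange it. Let $\sigma\in H^0\big(D,\scrO_{\bbC P^{n+1}}(1)|_D\big)$ be the restriction to $D$ of the linear form cutting out $T_{b_{\min}}M$, so that $D'\equiv\sigma^{-1}(0)=D\cap T_{b_{\min}}M$ is, by construction, a smooth hyperplane section of $D$ and $\sigma$ is transverse to $0$. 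Now recall the classical picture: the lines of $M$ through $b_{\min}$ form a smooth quadric $Q^{n-2}$, and their union is the quadric cone $\overline C = M\cap T_{b_{\min}}M$ with vertex $b_{\min}$. Since $b_{\min}\in X$ lies off the hyperplane $\{z_0=0\}\supset D$, projection from the vertex $b_{\min}$ cuts $\{z_0=0\}$ once along each ruling line, so $\overline C\cap\{z_0=0\} = M\cap T_{b_{\min}}M\cap\{z_0=0\} = D\cap T_{b_{\min}}M = D'$; consequently, sending a line $\ell\ni b_{\min}$ to its intersection point $\ell\cap D$ — a single transverse point, since $[\ell]\cdot[D]=1$ and $\ell\not\subset D$ — defines a diffeomorphism $Q^{n-2}\xrightarrow{\sim} D'$.

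Finally, I would upgrade this to the enhanced level. The forgetful map $\bbG\bbW^{S^1}(b_{\min})\to\bbG\bbW(b_{\min})\cong Q^{n-2}$ is the unit $S^1$-bundle $\widetilde{\bbG\bbW}/\bbR\to\widetilde{\bbG\bbW}/\bbC^*$, and $S_DM|_{D'}\to D'$ is likewise an $S^1$-bundle, and I claim $\eneval_{+\infty}$ is a bundle map over the diffeomorphism $Q^{n-2}\cong D'$ above which restricts to a diffeomorphism on each fiber, hence is itself a diffeomorphism. Over a line $\ell$ with $\ell\cap D = x$, the fiber of $\bbG\bbW^{S^1}(b_{\min})$ consists of the linear parameterizations $u\colon\bbC P^1\to\ell$ with $u(0)=b_{\min}$ and $u(+\infty)=x$, taken modulo $\bbR_{>0}$; on this family the first normal jet of $u$ at $+\infty$ evaluated on the fixed ray in $S_{+\infty}\bbC P^1$ scales by the residual $\bbC^*$-action, so its image in the fiber $S_DM|_x=(N_xD-\{0\})/\bbR_{>0}$ under $\eneval_{+\infty}$ traverses the whole circle exactly once. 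Thus $\eneval_{+\infty}\colon\bbG\bbW^{S^1}(b_{\min})\to S_DM|_{D'}$ is a smooth bijection which restricts to a diffeomorphism on the fibers of $S^1$-bundles over a diffeomorphism of bases, and is therefore a diffeomorphism. The main obstacle, and the step requiring the most care, is exactly this last one: checking that the enhanced evaluation is genuinely a \emph{smooth} diffeomorphism rather than merely a continuous bijection, and carrying out the affine-chart bookkeeping for lines whose intersection with $D$ occurs ``at infinity'' in a given chart; by contrast, the underlying projective geometry determining $D'$ and the cone of lines through $b_{\min}$ is classical and robust.
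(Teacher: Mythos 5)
Your proof is correct and follows essentially the same route as the paper: the paper computes in affine coordinates that the lines through $b_\mathrm{min}=(1,0,\ldots,0)$ are parameterized by $\{v:\sum v_i^2=0\}$ and hit $D$ exactly along the quadric $D'=\{[0:0:v_2:\cdots:v_{n+1}]:\sum v_i^2=0\}$, which is your $D\cap T_{b_\mathrm{min}}M$ cut out by a linear section. Your coordinate-free phrasing via the tangent-hyperplane cone, and the explicit check that the residual $S^1$ of parameterizations sweeps the fiber of $S_DM$ once under $\eneval_{+\infty}$, are faithful to (and slightly more detailed than) the paper's argument.
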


\begin{proof}[Proof of claim]
We have that 
    \begin{equation}
    X=\big\{1=z_1^2+\cdots+z_{n+1}^2\big\}\subset\bbC^{n+1}.
    \end{equation}
For simplicity, we assume $b_\mathrm{min}=(1,0,\ldots,0)$. We have that the lines on $X$ through $b_\mathrm{min}$ are of the form 
    \begin{equation}
    t\mapsto (1,0,\ldots,0)+t(0,v_2,\ldots,v_{n+1}),\;\;v_2^2+\cdots+v_{n+1}^2=0,
    \end{equation}
where $t\in\bbC P^1$. Moreover, the point at $+\infty$ is $[0:0:v_2:\cdots:v_{n+1}]$, i.e., $\bbG\bbW^{S^1}(b_\mathrm{min})/S^1$ is diffeomorphic to 
    \begin{equation}
    D'\equiv\big\{[0:0:v_2:\cdots:v_{n+1}]:v_2^2+\cdots+v_{n+1}^2=0\big\}\subset D
    \end{equation}
via $\eval_0$. Observe, $D'$ is a quadric of complex dimension $n-2$ contained in $D$, hence, transversely cut out by a section 
    \begin{equation}
    \sigma:D\to\scrO_{\bbC P^{n+1}}(1)\vert_D;
    \end{equation}
the claim follows.
\end{proof}

By the previous claim, we may define a smooth family $\{\ell^{D'}_z\}_{z\in D'}$  of lines in $M$ parameterized by $D'$; namely, let $\ell_z$ be the unique line in $M$ containing $b_\mathrm{min}$ and $z$. Observe, no line in this family is contained in $H$ by construction. Meanwhile, we have the following identifications: 
    \begin{equation}
    N_DM=\scrO_{\bbC P^{n+1}}(1)\vert_D=N_H\bbC P^{n+1}\vert_D;
    \end{equation}
in particular, we have the identification 
    \begin{equation}
    S_DM=S_H\bbC P^{n+1}\vert_D.
    \end{equation}
The previous claim shows we have defined a non-zero section
    \begin{equation}
    \sigma:D-D'\to N_DM\vert_{D-D'}=N_H\bbC P^{n+1}\vert_{D-D'}.
    \end{equation}
Hence, we may define a smooth family $\{\ell^{D-D'}_z\}_{z\in D-D'}$ of lines in $\bbC P^{n+1}$ parameterized by $D-D'$; namely, let $\ell_z$ be the unique line in $\bbC P^{n+1}$ containing $\sigma(z)/\abs{\abs{\sigma(z)}}$ and $z$. Observe, no line in this family is contained in $H$ by construction. The following is the first crucial claim.

\begin{claim}
Let $U\subset D$ be a tubular neighborhood of $D'$ and denote by $\{\ell^{D-U}_z\}_{z\in D-U}$ the smooth family given by restricting $\{\ell^{D-D'}_z\}_{z\in D-D'}$ to $D-U$. We may define a smooth family $\{\ell_z\}_{z\in D}$ of lines in $\bbC P^{n+1}$ parameterized by $D$ such that 
    \begin{equation}
    \ell_z=
    \begin{cases}
    \ell^{D'}_z, & z\in D' \\
    \ell^{D-U}_z, & z\in D-U
    \end{cases}
    .
    \end{equation}
Moreover, $\ell_z\not\subset H$ for all $z\in D$.
\end{claim}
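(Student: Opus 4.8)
The plan is to recognize the space of projective lines in $\bbC P^{n+1}$ that pass through a given point $z\in D$ and are \emph{not} contained in $H$ as the fiber over $z$ of an honest rank-$n$ complex vector bundle $\calV\to D$. Granting this, the family $\{\ell_z\}_{z\in D}$ we seek is nothing but a smooth section of $\calV$ whose restriction to $D'$ is $\{\ell^{D'}_z\}$ and whose restriction to $D-U$ is $\{\ell^{D-U}_z\}$, and such a section exists because smooth sections of a vector bundle always extend off a closed subset.

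First I would fix coordinates so that $H=\{z_0=0\}$, write $\bbC P^{n+1}=\bbP(\bbC\oplus V)$ with $V\cong\bbC^{n+1}$ the span of $z_1,\dots,z_{n+1}$ (so $H=\bbP(V)$), and observe that for $[v]\in\bbP(V)$ a line $\ell\ni[0:v]$ with $\ell\not\subset H$ corresponds to a $2$-plane $P\subset\bbC\oplus V$ containing $\bbC(0,v)$ together with some vector $(1,w)$, $w\in V$; such a $P$ is determined by and determines the class $[w]\in V/\bbC v$. Thus the lines in question are exactly the fibers of $\calV\equiv(\underline{V}/\tau)\vert_D$, where $\tau\subset\underline{V}$ is the tautological line subbundle over $\bbP(V)=H$ (equivalently, $\calV$ is the restriction to $D$ of the universal quotient bundle of $H$). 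Under this identification, $\ell^{D'}_z$ --- the line through $b_\mathrm{min}$ and $z=[0:v]$, where $b_\mathrm{min}=[1:w_\mathrm{min}]$ lies in the affine chart $\{z_0\neq0\}$ --- becomes the section $[0:v]\mapsto[w_\mathrm{min}]\in V/\bbC v$, which is visibly smooth; and $\{\ell^{D-U}_z\}$ becomes a smooth section of $\calV\vert_{D-U}$ by the smoothness already established for $\{\ell^{D-D'}_z\}$ in the preceding claim.

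Next, since $U$ is an open tubular neighborhood of $D'$, the subsets $D'$ and $D-U$ are disjoint and closed in $D$, so the two sections above glue to a smooth section of $\calV$ over the closed set $D'\sqcup(D-U)$. I would then extend this to a global smooth section of $\calV$ over $D$ in the standard way: trivialize $\calV$ over a finite cover of $D$, extend chartwise using bump functions and the Tietze extension theorem, and patch with a partition of unity --- no obstruction arises because the fibers of $\calV$ are vector spaces. The resulting section is the desired family $\{\ell_z\}_{z\in D}$: it restricts to $\ell^{D'}_z$ on $D'$ and to $\ell^{D-U}_z$ on $D-U$ by construction, and each line it produces is transverse to $H$ --- in particular $\ell_z\not\subset H$ for every $z\in D$ --- since $\calV$ parameterizes, by design, only lines not contained in $H$.

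The argument is essentially formal once the vector-bundle description of ``lines through $z$, not contained in $H$'' is in place, so the only step requiring genuine care --- the main, if mild, obstacle --- is checking that the two given families really are \emph{smooth} sections of $\calV$ in these terms, i.e.\ that the above identification is compatible with the original definitions of $\ell^{D'}_z$ and $\ell^{D-U}_z$. The former is immediate (it is an algebraic family), and the latter reduces to the smoothness of $\{\ell^{D-D'}_z\}$ recorded earlier.
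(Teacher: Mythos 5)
Your proposal is correct and follows essentially the same route as the paper: both identify the lines through $z\in D$ not contained in $H$ as the fiber over $z$ of a bundle with fiber $\bbC P^n-\bbC P^{n-1}\cong\bbC^n$ and reduce the claim to extending a section from the closed set $D'\sqcup(D-U)$. Your refinement --- exhibiting this bundle concretely as the universal quotient bundle $\underline{V}/\tau$ so that the extension is done by partition of unity rather than by abstract obstruction theory for contractible fibers --- is a legitimate and slightly more constructive packaging of the same argument.
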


\begin{proof}[Proof of claim]
Given any $z\in D$, we have that the space of lines in $\bbC P^{n+1}$ through $z$ is given by 
    \begin{equation}
    \bbP(T_z\bbC P^{n+1})\cong\bbC P^n;
    \end{equation}
moreover, we have that the space of lines in $H$ through $z$ is given by 
    \begin{equation}
    \bbP\big(T_zH\big)\cong\bbC P^{n-1}.
    \end{equation}
Thus, the space of lines in $\bbC P^{n+1}$ through $z$ which are not contained in $H$ is contractible: 
    \begin{equation}
    \bbC P^n-\bbC P^{n-1}\cong\bbC^n.
    \end{equation}
Now, finding a smooth family $\{\ell_z\}_{z\in D}$ of lines in $\bbC P^{n+1}$ parameterized by $D$ of the form specified in the statement of this claim is straightforward since the problem is equivalent to finding a smooth global section of a fiber bundle over $D$, with contractible fibers, which agrees with already defined sections over $D'$ and $D-U$ (i.e., we may find such a global section).
\end{proof}

Let $\bbG\bbW^{S^1}_{n+1,n}$ be the moduli space of enhanced 1-pointed spheres for the pair $(\bbC P^{n+1},H)$, i.e., maps $u:\bbC P^1\to\bbC P^{n+1}$ satisfying 
    \begin{equation}
    \begin{cases}
    \overline{\partial}u=0 & \\
    u_*[\bbC P^1]=1 & \\
    u^{-1}(H)=\{+\infty\} &
    \end{cases}
    \end{equation}
modulo $\bbR$-translations. Also, let $\bbG\bbW_{n+1,n}\equiv\bbG\bbW^{S^1}_{n+1,n}/S^1$. We may define a map 
    \begin{equation}
    \varphi_{D'}:D'\to\bbG\bbW_{n+1,n}
    \end{equation}
by sending $z\in D'$ to the unique element $\varphi_{D'}(z)\in\bbG\bbW_{n+1,n}$ satisfying (1) its image is $\ell_z$ and (2)
    \begin{equation}
    \begin{cases}
    \eval_0\big(\varphi_{D'}(z)\big)=b_\mathrm{min} \\
    \eval_{+\infty}\big(\varphi_{D'}(z)\big)=z
    \end{cases}
    .
    \end{equation}
Analogously, we may define a map 
    \begin{equation}
    \varphi_{D-U}:D-U\to\bbG\bbW_{n+1,n}
    \end{equation}
by sending $z\in D-U$ to the unique element $\varphi_{D-U}(z)\in\bbG\bbW_{n+1,n}$ satisfying (1) its image is $\ell_z$ and (2)
    \begin{equation}
    \begin{cases}
    \eval_0\big(\varphi_{D-U}(z)\big)=\sigma(z)/\abs{\abs{\sigma(z)}} \\
    \eval_{+\infty}\big(\varphi_{D-U}(z)\big)=z
    \end{cases}
    .
    \end{equation}
The following is the second crucial claim.

\begin{claim}
We may define a map 
    \begin{equation}
    \varphi:D\to\bbG\bbW_{n+1,n}
    \end{equation}
such that 
\begin{enumerate}
\item $\varphi\vert_{D'}=\varphi_{D'}$,
\item $\varphi\vert_{D-U}=\varphi_{D-U}$, 
\item $\varphi(z)$ has image $\ell_z$ for all $z\in D$.
\end{enumerate}
\end{claim}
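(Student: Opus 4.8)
The plan is to recognize $\bbG\bbW_{n+1,n}$ as the total space of an affine bundle over the space of projective lines in $\bbC P^{n+1}$ avoiding $H$, pull this bundle back along the smooth family $\{\ell_z\}_{z\in D}$ furnished by the previous claim, and then build $\varphi$ as a section of the pulled-back bundle extending the two given partial sections $\varphi_{D'}$ and $\varphi_{D-U}$; the crucial structural point is that $\varphi_{D'}$ and $\varphi_{D-U}$ are defined on \emph{disjoint} closed subsets of $D$, so no compatibility between them has to be checked.

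First I would unwind the definition $\bbG\bbW_{n+1,n}=\bbG\bbW^{S^1}_{n+1,n}/S^1=\widetilde{\bbG\bbW}_{n+1,n}/\bbC^\ast$. A degree-$1$ map $u\colon\bbC P^1\to\bbC P^{n+1}$ with $u^{-1}(H)=\{+\infty\}$ is an isomorphism onto its image line $\ell=u(\bbC P^1)$, which satisfies $\ell\not\subset H$ and meets $H$ in the single point $u(+\infty)=\ell\cap H$; quotienting by the remaining reparametrizations (those fixing $0$ and $+\infty$, i.e.\ $\bbC^\ast$) leaves exactly the data of $\ell$ together with the point $p\equiv u(0)\in\ell\smallsetminus(\ell\cap H)$. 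Hence
    \begin{equation}
    \bbG\bbW_{n+1,n}\;\cong\;\bigl\{(\ell,p)\ :\ \ell\subset\bbC P^{n+1}\text{ a line},\ \ell\not\subset H,\ p\in\ell\smallsetminus(\ell\cap H)\bigr\},
    \end{equation}
with $\eval_0(\ell,p)=p$ and $\eval_{+\infty}(\ell,p)=\ell\cap H$, and the ``image'' map $(\ell,p)\mapsto\ell$ exhibits $\bbG\bbW_{n+1,n}$ as a fiber bundle over the space of lines avoiding $H$ whose fiber over $\ell$ is the affine line $\ell\smallsetminus(\ell\cap H)\cong\bbC$.

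Next I would pull this fiber bundle back along $z\mapsto\ell_z$ to obtain a smooth fiber bundle $\pi\colon E\to D$ with fiber $\ell_z\smallsetminus(\ell_z\cap H)$ over $z$; local triviality is immediate since, the family $\{\ell_z\}$ being smooth, $\bigcup_{z\in D}\ell_z$ is a $\bbC P^1$-bundle over $D$, $z\mapsto\ell_z\cap H$ is a smooth section of it, and $E$ is its fiberwise complement, with fiber $\bbC$. By construction a section of $\pi$ is precisely a map $D\to\bbG\bbW_{n+1,n}$ carrying each $z$ to an element with image $\ell_z$, and the matching conditions recorded in the previous claim say exactly that $\varphi_{D'}$ is a section of $\pi$ over $D'$ and $\varphi_{D-U}$ is a section of $\pi$ over $D-U$. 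Since $U$ is a tubular neighborhood of $D'$ we have $D'\subset U$, so $D'$ and $D-U$ are disjoint closed subsets of $D$, and the two partial sections assemble (with no compatibility condition) into a single section $\varphi_\partial$ of $\pi$ over the closed set $D'\amalg(D-U)\subset D$.

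Finally I would extend $\varphi_\partial$ to a global section $\varphi\colon D\to E$: an affine bundle over a manifold (equivalently a torsor over a vector bundle) is trivial, and in any case a section over a closed subset extends over the whole base, since the obstructions lie in $H^{i+1}\bigl(D,\,D'\amalg(D-U);\,\pi_i(\bbC)\bigr)=0$ (or, concretely, one extends by a partition-of-unity argument taking affine combinations inside the convex fibers, smoothing if needed). The resulting $\varphi$ then satisfies $\varphi|_{D'}=\varphi_{D'}$, $\varphi|_{D-U}=\varphi_{D-U}$, and $\varphi(z)$ has image $\ell_z$ for all $z$, proving the claim. I expect the only points demanding care to be the identification of $\bbG\bbW_{n+1,n}$ with an affine bundle over the space of lines and the verification that $\pi$ is a smooth locally trivial bundle; the disjointness of the gluing regions is automatic from $D'\subset U$, and once these are in place the extension step is purely soft.
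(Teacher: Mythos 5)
Your proposal is correct, and it repackages the paper's argument in a cleaner, more structural way. The paper's own proof is an explicit by-hand construction: it revisits the interpolation of the line families $\{\ell^U_z\}$ and $\{\ell^{D-D'}_z\}$ from the previous claim and pins down the marked point of the interpolating element by introducing the auxiliary lines $L_z$ joining $\eval_0(\varphi_U(z))$ to $\sigma(z)/\abs{\abs{\sigma(z)}}$. You instead make explicit the identification $\bbG\bbW_{n+1,n}\cong\{(\ell,p):\ell\not\subset H,\ p\in\ell-(\ell\cap H)\}$, observe that the image map is an affine $\bbC$-bundle over the space of lines not contained in $H$, and reduce the claim to extending a section of the pullback bundle over $D$ from the disjoint closed sets $D'$ and $D-U$ — which is soft by partition of unity in the convex fibers. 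Both proofs ultimately rest on the same fact (the elements of $\bbG\bbW_{n+1,n}$ with prescribed image $\ell_z$ form a contractible, indeed affine, space), but your version decouples the extension of $\varphi$ from the specific interpolation used to build $\{\ell_z\}$ and avoids the paper's slightly delicate uniqueness assertion for the element with $\eval_0\in L_z$. The one point to keep honest in your soft extension step is that extending a section from a closed set requires first extending it to an open neighborhood (local trivializations plus a retraction onto $D'$, resp. a collar for $D-U$, handle this), since the section $z\mapsto b_\mathrm{min}$ does not literally make sense as a section of $E$ off of $D'$; this is routine but worth a sentence.
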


\begin{proof}[Proof of claim]
This is a consequence of the proof of the previous claim. When gluing the two families $\{\ell^{D'}_z\}$ and $\{\ell^{D-D'}_z\}$ together, we essentially extend $\{\ell^{D'}_z\}$ to a family $\{\ell^U_z\}$ and use contractibility of the fiber to linearly interpolate between 
    \begin{equation}
    \ell^U_z\;\;\textrm{and}\;\;\ell^{D-D'}_z,\;\; \forall z\in U.
    \end{equation}
Now, extend $\varphi_{D'}$ resp. $\varphi_{D-U}$ to a map 
    \begin{equation}
    \varphi_U:U\to\bbG\bbW_{n+1,n}\;\;\textrm{resp.}\;\;\varphi_{D-D'}:D-D'\to\bbG\bbW_{n+1,n}
    \end{equation}
with the obvious properties and consider the line $L_z$ in $\bbC P^{n+1}$ connecting 
    \begin{equation}
    \eval_0\big(\varphi_U(z)\big)\;\;\textrm{to}\;\;\eval_0\big(\varphi_{D-D'}(z)\big)=\sigma(z)/\abs{\abs{\sigma(z)}},\;\;z\in U.
    \end{equation}
When performing the linear interpolation between $\ell^U_z$ and $\ell^{D-D'}_z$, we may fix the parameterization of the interpolating line to satisfy the requirement that its evaluation at 0 lives in $L_z$. Given $z\in U$, we define $\varphi(z)$ to be the unique element $\varphi(z)\in\bbG\bbW_{n+1,n}$ satisfying (1) its image is $\ell_z$ and (2)
    \begin{equation}
    \begin{cases}
    \eval_0\big(\varphi(z)\big)\in L_z \\
    \eval_{+\infty}\big(\varphi(z)\big)=z
    \end{cases}
    ;
    \end{equation}
the claim follows.
\end{proof}

We may define a lift of $\varphi_{D-U}$, 
    \begin{equation}
    \begin{tikzcd}
    & \bbG\bbW^{S^1}_{n+1,n}\arrow[d] \\
    D-U\arrow[ur,"\Phi_{D-U}"]\arrow[r,"\varphi_{D-U}"] & \bbG\bbW_{n+1,n},
    \end{tikzcd}
    \end{equation}
by sending $z\in D-U$ to the unique element $\Phi_{D-U}(z)\in\bbG\bbW^{S^1}_{n+1,n}$ satisfying (1) its image is $\ell_z$ and (2) 
    \begin{equation}
    \begin{cases}
    \eval_0\big(\Phi_{D-U}(z)\big)=\eval_0\big(\varphi_{D-U}(z)\big) \\
    \eval_{+\infty}\big(\Phi_{D-U}(z)\big)=z \\
    \eneval_{+\infty}\big(\Phi_{D-U}(z)\big)=\sigma(z)/\abs{\abs{\sigma(z)}}
    \end{cases}
    .
    \end{equation}
Since $\sigma$ vanishes along $D'$, we see $\Phi_{D-U}$ does not extend across $D'$. We may equivalently define $\Phi_{D-U}$ as a map 
    \begin{align}
    \sigma(D-U)&\to\bbG\bbW^{S^1}_{n+1,n} \\
    \big(z,\sigma(z)/\abs{\abs{\sigma(z)}}\big)&\mapsto\Phi_{D-U}(z). \nonumber
    \end{align}
Now, by Lemma \ref{lem:unorientedbordism}, we have that $\sigma(D-U)$ compactifies to a manifold with boundary given by 
    \begin{equation}
    S_DM\vert_{D'}\cong\bbG\bbW^{S^1}(b_\mathrm{min}).
    \end{equation}
We extend $\Phi_{D-U}$ to a map $\Phi$ on the compactification by sending $(z,v)\in S_DM\vert_{D'}$ to the unique element $\Phi(u)\in\bbG\bbW^{S^1}_{n+1,n}$ satisfying (1) its image is $\ell_z$ and (2) 
    \begin{equation}
    \begin{cases}
    \eval_0\big(\Phi(z)\big)=b_\mathrm{min} \\
    \eval_{+\infty}\big(\Phi(z)\big)=z \\
    \eneval_{+\infty}\big(\Phi(z)\big)=v
    \end{cases}
    .
    \end{equation}
The upshot is that we have exhibited the compactification of $\sigma(D-U)$, to a manifold with boundary $S_DM\vert_{D'}$, as a compact submanifold with boundary of $\bbG\bbW^{S^1}_{n+1,n}$.

\begin{claim}
There exists a twisted stable framing on $\sigma(D-U)$ which extends our constructed twisted stable framing on $\bbG\bbW^{S^1}(b_\mathrm{min})$, i.e., $\bbG\bbW^{S^1}(b_\mathrm{min})$ is twisted stably framed null-bordant.
\end{claim}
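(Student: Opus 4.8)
The plan is to build the desired twisted stable framing on $W$ (the compactification of $\sigma(D-U)$, with $\partial W\cong S_DM|_{D'}\cong\bbG\bbW^{S^1}(b_\mathrm{min})$ by the first claim) by pulling back a twisted stable framing from the ambient moduli space $\bbG\bbW^{S^1}_{n+1,n}$ along $\Phi$ and adding the normal bundle of the embedding. First I would equip $\bbG\bbW^{S^1}_{n+1,n}$ with a twisted stable framing by repeating the construction of Subsection~\ref{subsec:enhancedspheres} for the pair $(\bbC P^{n+1},H)$: here $\bbC P^{n+1}-H\cong\bbC^{n+1}$ is Weinstein, one takes $\calL_D=\scrO_{\bbC P^{n+1}}(1)$ and the honest $\bbR$-polarization coming from the Euler sequence $T\bbC P^{n+1}\oplus\underline{\bbC}\cong\scrO_{\bbC P^{n+1}}(1)^{\oplus n+2}$, so Assumption~\ref{assu:main} is satisfied and (after stabilizing) Lemma~\ref{lem:index-theoretic} applies. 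This produces canonical isomorphisms of real virtual bundles $T\bbG\bbW^{S^1}_{n+1,n}+\underline{\bbR}\cong\eneval_{+\infty}^*\widetilde{T}_{\mathrm{amb}}+\eval_{+\infty}^*T_{\mathrm{amb}}$, with $T_{\mathrm{amb}}$, $\widetilde{T}_{\mathrm{amb}}$ the analogues of $T^{\bbG\bbW}$, $\widetilde{T}^{\bbG\bbW}$ built from $T\bbC P^{n+1}$ and $N_H\bbC P^{n+1}$.

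Next, $\Phi$ exhibits $W$ as a compact submanifold with boundary of $\bbG\bbW^{S^1}_{n+1,n}$ (over $\partial W$ the lines $\ell_z$ lie in $M$, so there $\Phi$ is the inclusion induced by $M\subset\bbC P^{n+1}$; over the interior the lines escape $M$, which is exactly why the framing can only be extended in the ambient moduli space). I would compute the normal bundle $\nu_\Phi\equiv\nu(W,\bbG\bbW^{S^1}_{n+1,n})$ using the identifications $N_M\bbC P^{n+1}\cong\scrO_{\bbC P^{n+1}}(2)|_M$, $N_H\bbC P^{n+1}|_D\cong N_DM$, $N_DH\cong\scrO_{\bbC P^{n+1}}(2)|_D$, $N_{D'}D\cong\scrO_{\bbC P^{n+1}}(1)|_{D'}$, and the fact that $\eneval_{+\infty}$ identifies $W$ with the manifold-with-boundary $\bar W\subset S_DM$ from the proof of Lemma~\ref{lem:unorientedbordism}; the upshot is that $\nu_\Phi$ is a virtual sum of bundles pulled back along $\eval_0$, $\eval_{+\infty}$, $\eneval_{+\infty}$. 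Combining, the isomorphism $TW+\nu_\Phi+\underline{\bbR}\cong T\bbG\bbW^{S^1}_{n+1,n}|_W+\nu_\Phi$ rearranges into a twisted stable framing of $W$ of the same shape as \eqref{eq:tsfaux1}, with twisting bundles pulled back from $D$ via $\pi_S$.

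The substantial step is checking that over $\partial W=\bbG\bbW^{S^1}(b_\mathrm{min})$ the framing just produced agrees with the one of Subsection~\ref{subsec:enhancedspheres} for $(M,D)$. Since our twisted stable framings are assembled by gluing Cauchy--Riemann operators, this is an operator-gluing comparison of exactly the type carried out in Corollaries~\ref{corollary:extendingtsf} and~\ref{cor:tsfaux}: for a line $u$ contained in $M$ one has $u^*T\bbC P^{n+1}\cong u^*TM\oplus\scrO_{\bbC P^1}(2)$, and the $\scrO_{\bbC P^1}(2)$-factor is precisely the operator $D^{\bbG\bbW,\calL_m}_u$ with $m=2$ already appearing in the $(M,D)$-construction --- this is exactly why $\Lambda$ was chosen to be induced by $TM\oplus\underline{\bbC}\oplus\scrO_{\bbC P^{n+1}}(2)|_M\cong\scrO_{\bbC P^{n+1}}(1)|_M^{\oplus n+2}$. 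Canonically deforming the ambient glued operator into (the $(M,D)$ glued operator)$\,\oplus D^{\bbG\bbW,\calL_m}_u$, together with the identification $N_H\bbC P^{n+1}|_D=N_DM$ and Proposition~\ref{prop:canonicaltransverseoperator} for the canonical transverse operators (whose index bundles then agree), shows the two framings coincide over $\partial W$; absorbing $\nu_\Phi$ accounts for the remaining trivial and pulled-back summands relating \eqref{eq:tsfaux1} to the $(M,D)$-framing on $\bbG\bbW^{S^1}(b_\mathrm{min})$. Finally, taking $\bbW(a,*)\equiv\overline{\eneval_{+\infty}^{-1}(W^s(a;f_S))}\subset W$ for $a\in\crit(f_S)$ assembles these into a null-homotopy of $\calG\calW_1$ compatible with the $\bbD\bbM_{S_DM}$-morphisms, which together with the first case gives that $\calG\calW$ is null-homotopic and hence, by Theorem~\ref{thm:main}, the claimed splitting. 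The main obstacle is the boundary bookkeeping --- pinning down $\nu_\Phi$ and verifying that the ambient $(\bbC P^{n+1},H)$-framing restricts correctly to the $(M,D)$-framing --- but this is structurally identical to the operator-gluing/index-bundle arguments already deployed throughout Section~\ref{sec:sgwo}.
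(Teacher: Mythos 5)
Your proposal is correct and follows essentially the same route as the paper: frame the ambient moduli space $\bbG\bbW^{S^1}_{n+1,n}$ using the Euler-sequence polarization $\widetilde{\Lambda}$ on $\bbC P^{n+1}-H$, transport this to the null-bordism $\Phi$ while carrying along its normal bundle, and verify agreement on the boundary by splitting the ambient linearized operator as $D^{\bbG\bbW}_u\oplus D^{\bbG\bbW,\scrO_{\bbC P^{n+1}}(2)\vert_M}_u$ and matching the canonical transverse operators via $N_H\bbC P^{n+1}\vert_D=N_DM$. The only cosmetic difference is that you propose to identify the normal bundle $\nu_\Phi$ explicitly as a sum of pulled-back bundles, whereas the paper simply carries $N_\Phi\bbG\bbW^{S^1}_{n+1,n}$ through the commutative diagram of canonical isomorphisms.
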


\begin{proof}[Proof of claim]
First, we would like to point out the following. Recall, the stable $\bbR$-polarization $\Lambda$ on $X$ is induced by the relation 
    \begin{equation}
    TM\oplus\underline{\bbC}\oplus\scrO_{\bbC P^{n+1}}(2)\vert_M\cong\scrO_{\bbC P^{n+1}}(1)\vert_M^{\oplus n+2}.
    \end{equation}
Meanwhile, we may induce a stable $\bbR$-polarization $\widetilde{\Lambda}$ on $\widetilde{X}\equiv\bbC P^{n+1}-H$ by the relation
    \begin{equation}
    T\bbC P^{n+1}\oplus\underline{\bbC}\cong\scrO_{\bbC P^{n+1}}(1)^{\oplus n+2}.
    \end{equation}
Observe, $TM\oplus\scrO_{\bbC P^{n+1}}(2)\vert_M\cong T\bbC P^{n+1}\vert_M$. In particular, $\Lambda=\widetilde{\Lambda}\vert_X$.

Consider $[u]\in\bbG\bbW^{S^1}_{n+1,n}$ in the image of $\Phi$ (in the following, we will abuse notation and also denote by $\Phi$ the image of the compactification of $\sigma(D-U)$ in $\bbG\bbW^{S^1}_{n+1,n}$; no confusion should arise). We have that the linearization at $u$ is an operator
    \begin{equation}
    D^{\bbG\bbW_{n+1,n}}_u:W^{2,2}_{+\infty}(\bbC P^1;u^*T\bbC P^{n+1})\to W^{1,2}(\bbC P^1;u^*T\bbC P^{n+1}).
    \end{equation}
Let $\widetilde{\bbG\bbW}_{n+1,n}$ be defined essentially as $\bbG\bbW^{S^1}_{n+1,n}$ but without modding out by $\bbR$-translations. We may construct a twisted stable framing of $\Phi$ at $[u]$ by using the identifications 
    \begin{equation}
    \begin{cases}
    T\widetilde{\bbG\bbW}_{n+1,n}\cong T\bbG\bbW^{S^1}_{n+1,n}+\underline{\bbR} \\
    T\bbG\bbW^{S^1}_{n+1,n}\vert_\Phi\cong T\Phi\oplus N_\Phi\bbG\bbW^{S^1}_{n+1,n}
    \end{cases}
    \end{equation}
and considering the index bundle of the glued together operator
    \begin{equation}
    \big(D^{\bbG\bbW_{n+1,n}}_u\oplus\Phi^{\bbG\bbW}\big)\#D^{\widetilde{\Lambda}}_{\eneval_{+\infty}(u)}.
    \end{equation}
This yields a canonical isomorphism of real virtual vector spaces: 
    \begin{multline}
    \ind D^{\bbG\bbW_{n+1,n}}_u+\ind\Phi^{\bbG\bbW}+\ind D^{\widetilde{\Lambda}}_{\eneval_{+\infty}(u)}+ \\
    N_H\bbC P^{n+1}\vert_{u(+\infty)}^{\oplus2}-(T\bbC P^{n+1}\oplus\underline{\bbC})\vert_{\eval_{+\infty}(u)}\cong\widetilde{\Lambda}_{\eneval_{+\infty}(u)}.
    \end{multline}
A computation shows we have the following twisted stable framing:
    \begin{equation}
    T\Phi+N_\Phi\bbG\bbW^{S^1}_{n+1,n}+\underline{\bbR}\cong\eval_{+\infty}^*\big(T\bbC P^{n+1}-N_H\bbC P^{n+1}\big)+\underline{\bbR}^{2(n+2)},
    \end{equation}
cf. Subsection \ref{subsec:enhancedspheres}. In particular, this induces a twisted stable framing on $\partial\Phi$:
    \begin{equation}
    T\partial\Phi+N_{\partial\Phi}\Phi+N_\Phi\bbG\bbW^{S^1}_{n+1,n}\vert_{\partial\Phi}+\underline{\bbR}\cong\eval_{+\infty}^*\big(T\bbC P^{n+1}-N_H\bbC P^{n+1}\big)+\underline{\bbR}^{2(n+2)}.
    \end{equation}
The claim is that this twisted stable framing on $\partial\Phi$ agrees with our already constructed twisted stable framing on $\bbG\bbW^{S^1}(b_\mathrm{min})$ under the equivalence
    \begin{equation}
    \partial\Phi\cong\bbG\bbW^{S^1}(b_\mathrm{min}).
    \end{equation}

Consider $[u]\in\partial\Phi$. We have that the linearization at $u$ thought of as a map to $M$ is an operator
    \begin{equation}
    D^{\bbG\bbW}_u:W^{2,2}_{+\infty}(\bbC P^1;u^*TM)\to W^{1,2}(\bbC P^1;u^*TM).
    \end{equation}
Our constructed twisted stable framing of $\bbG\bbW^{S^1}(b_\mathrm{min})$ at $[u]$ is given by using the identification
    \begin{equation}
    T\widetilde{\bbG\bbW}\cong T\bbG\bbW^{S^1}+\underline{\bbR}
    \end{equation}
and considering the index bundle of the glued together operator 
    \begin{equation}
    \big(D^{\bbG\bbW}_u\oplus\Phi^{\bbG\bbW}\oplus D^{\bbG\bbW,\scrO_{\bbC P^{n+1}}(2)\vert_M}_u\big)\#D^{\widetilde{\Lambda}}_{\eneval_{+\infty}(u)}.
    \end{equation}
This yields a canonical isomorphism of real virtual vector spaces: 
\begin{multline}
    \ind D^{\bbG\bbW}_u+\ind\Phi^{\bbG\bbW}+\ind D^{\bbG\bbW,\scrO_{\bbC P^{n+1}}(2)\vert_M}_u+\ind D^\Lambda_{\eneval_{+\infty}(u)}+ \\
    N_DM\vert_{u(+\infty)}^{\oplus2}-\big(TM\oplus\underline{\bbC}\oplus\scrO_{\bbC P^{n+1}}(2)\vert_M\big)\vert_{\eval_{+\infty}(u)}\cong\Lambda_{\eneval_{+\infty}(u)}.
    \end{multline}
Again, a computation shows we have the following twisted stable framing:
    \begin{equation}
    T\bbG\bbW^{S^1}(b_\mathrm{min})+\eval_0^*TX+\underline{\bbR}\cong\eval^*_{+\infty}(TM-N_DM)+\underline{\bbR}^{2(n+2)-4},
    \end{equation}
cf. Part \ref{subsubsec:auxenhancedspheres}. Meanwhile, we have that the linearization at $u$ thought of as a map to $\bbC P^{n+1}$ is an operator
    \begin{equation}
    D^{\bbG\bbW_{n+1,n}}_u:W^{2,2}_{+\infty}(\bbC P^1;u^*T\bbC P^{n+1})\to W^{1,2}(\bbC P^1;u^*T\bbC P^{n+1}).
    \end{equation}
Recall, we have that 
    \begin{equation}
    u^*T\bbC P^{n+1}\cong u^*\big(TM\oplus\scrO_{\bbC P^{n+1}}(2)\vert_M\big);
    \end{equation}
moreover, $D^{\bbG\bbW_{n+1,n}}_u$ has a condition on sections at $+\infty$ to lie in $T_{u(+\infty)}H$. In particular, we may decompose $D^{\bbG\bbW_{n+1,n}}_u$ into the direct sum of two operators:
    \begin{equation}
    D^{\bbG\bbW_{n+1,n}}_u=D^{\bbG\bbW}_u\oplus D^{\bbG\bbW,\scrO_{\bbC P^{n+1}}(2)\vert_M}_u,
    \end{equation}
where we recall $D^{\bbG\bbW}_u$ has a condition on sections at $+\infty$ to lie in $T_{u(+\infty)}D$. Meanwhile, consider our family of canonical transverse operators $D^{\widetilde{\Lambda}}$ resp. $D^\Lambda$ on $S_H\bbC P^{n+1}$ resp. $S_DM$. By the identification 
    \begin{equation}
    N_DM=N_H\bbC P^{n+1}\vert_D,
    \end{equation}
we have the following canonical isomorphism of real virtual bundles:
    \begin{equation}
    \ind D^{\widetilde{\Lambda}}\vert_{S_H\bbC P^{n+1}\vert_D}\cong\ind D^\Lambda.
    \end{equation}
Hence, we now have the following commutative diagram of canonical isomorphisms of real virtual bundles:
    \begin{equation}
    \begin{tikzcd}[column sep=-10ex, center picture]
    & (T\bbC P^{n+1}-N_H\bbC P^{n+1})\vert_{\eval_{+\infty}(u)}+\bbR^{2(n+2)} \\
    & \widetilde{\Lambda}_{\eneval_{+\infty}(u)}-\ind\Phi^{\bbG\bbW}-\ind D^{\widetilde{\Lambda}}_{\eneval_{+\infty}(u)}-N_H\bbC P^{n+1}\vert_{u(+\infty)}^{\oplus2}+(T\bbC P^{n+1}\oplus\underline{\bbC})\vert_{\eval_{+\infty}(u)}\arrow[u,"\sim"]\arrow[ddd,"\sim",bend left=77] \\
    T_u\partial\Phi+N_{\partial\Phi}\Phi\vert_u+N_\Phi\bbG\bbW^{S^1}_{n+1,n}\vert_u+\bbR\arrow[r,"\sim"] & \ind D^{\bbG\bbW_{n+1,n}}_u\arrow[d,"\sim"]\arrow[u,"\sim"] \\
    T_u\bbG\bbW^{S^1}(b_\mathrm{min})+T_{\eval_0(u)}X+\bbR^7\arrow[r,"\sim"] & \ind\big(D^{\bbG\bbW}_u\oplus D^{\bbG\bbW,\scrO_{\bbC P^{n+1}}(2)\vert_M}_u\big)\arrow[u]\arrow[d,"\sim"] \\
    & \Lambda_{\eneval_{+\infty}(u)}-\ind\Phi^{\bbG\bbW}-\ind D^\Lambda_{\eneval_{+\infty}(u)}-N_DM\vert_{u(+\infty)}^{\oplus2}+\big(TM\oplus\underline{\bbC}\oplus\scrO_{\bbC P^{n+1}}(2)\vert_M\big)\vert_{\eval_{+\infty}(u)}\arrow[d,"\sim"]\arrow[uuu,bend right=77] \\
    & (TM-N_DM)_{\eval_{+\infty}(u)}+\bbR^{2(n+2)}.
    \end{tikzcd}
    \end{equation}
The previous diagram requires an explanation. The chain of arrows connecting the top resp. bottom expression on the left to the top resp. bottom of the diagram is our constructed twisted stable framing (in the case of the bottom expression on the left going to the bottom, it is our constructed twisted stable framing stabilized by $\underline{\bbR}^6$). Meanwhile, the curved arrow expresses that the two twisted stable framings on $\partial\Phi\cong\bbG\bbW^{S^1}(b_\mathrm{min})$ are compatible; in particular, this isomorphism is constructed by using identifications resp. canonical isomorphisms term-by-term: 
    \begin{align}
    \widetilde{\Lambda}_{\eneval_{+\infty}(u)}&=\Lambda_{\eneval_{+\infty}(u)} \\
    \ind\Phi^{\bbG\bbW}&=\ind\Phi^{\bbG\bbW} \\
    \ind D^{\widetilde{\Lambda}}_{\eneval_{+\infty}(u)}&\cong\ind D^\Lambda_{\eneval_{+\infty}(u)} \\
    N_H\bbC P^{n+1}\vert_{u(+\infty)}^{\oplus2}&=N_DM\vert_{u(+\infty)}^{\oplus2} \\
    (T\bbC P^{n+1}\oplus\underline{\bbC})\vert_{\eval_{+\infty}(u)}&\cong\big(TM\oplus\underline{\bbC}\oplus\scrO_{\bbC P^{n+1}}(2)\vert_M\big)\vert_{\eval_{+\infty}(u)};
    \end{align}
this proves the claim.
\end{proof}

Finally, this shows $\calG\calW_2$ is null-homotopic, and completes the proof of Proposition \ref{prop:spheresplitting}, as follows. We have constructed a ``global'' twisted framed null-bordism of $\bbG\bbW^{S^1}(b_\mathrm{min})$; but observe, this global twisted framed-null bordism is compatible with $\eneval_{+\infty}$, i.e., we constructed a twisted framed null-bordism of the map 
    \begin{equation}
    \eneval_{+\infty}:\bbG\bbW^{S^1}(b_\mathrm{min})\to S_DM.
    \end{equation}
In particular, we immediately induce twisted framed null-bordisms of the various $\bbG\bbW^{S^1}(a,b_\mathrm{min})$ compatible with gluing.

\subsubsection{A closer look at the 2-sphere}
In this part, we will perform a sanity check of Theorem \ref{thm:main} by showing Proposition \ref{prop:spheresplitting}, in the special case $n=2$, plus the spectral Viterbo isomorphism recovers Cohen's stable splitting of $(\calL S^2)_+$: 
    \begin{equation}
    (\calL S^2)_+\simeq S^0\vee\bigvee_{k\geq1}S^1_+\wedge_{\bbZ/k}S^k\simeq S^0\vee(S^1\vee S^2)\vee\Sigma\bbR P^2\vee(S^3\vee S^4)\vee\Sigma^3\bbR P^2\vee\cdots.
    \end{equation}

Recall, the following is the spectral Viterbo isomorphism, cf. \cite[Theorem 1.2]{Bla1} for example.

\begin{thm}[Spectral Viterbo isomorphism]
Let $Q$ be a closed smooth manifold and $(T^*Q,\omega_\std=-d\theta_\std)$ its cotangent bundle with the standard Liovuille structure. Equip $T^*Q$ with its standard stable $\bbR$-polarization $\Lambda_\std$ given by the horizontal distribution of $T(T^*Q)$ induced by a choice of Riemannian metric on $Q$. We have that the Floer homotopy type of $T^*Q$ with respect to $\Lambda_\std$ is equivalent to the Thom spectrum of (the pullback of) $-TQ$ to $\calL Q$:
    \begin{equation}
    \Sigma^{-n}\frakF^{\Lambda_\std}_{T^*Q}\simeq\calL Q^{-TQ}.
    \end{equation}
\end{thm}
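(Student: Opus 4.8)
The plan is to realize both sides of the claimed equivalence as the stable homotopy type attached to one and the same (twisted) flow category --- namely a Morse-type flow category of the energy functional on the free loop space $\calL Q$ --- and then to invoke the Abouzaid--Blumberg correspondence $\flow^\fr\simeq\spectra$. First I would fix convenient Floer data on $T^*Q$: a fiberwise-quadratic Hamiltonian $H(q,p)=\tfrac12\abs{p}^2$ (for the chosen metric), convexified so as to be linear at infinity and hence admissible, perturbed by a $C^2$-small time-dependent term so that its $1$-periodic orbits are nondegenerate, together with the almost complex structure induced by the Levi--Civita connection. With this choice the $1$-periodic orbits of $H$ are in bijection with critical points of a correspondingly perturbed energy functional $E\colon\calL Q\to\bbR$ (the unperturbed non-constant closed geodesics occur in $S^1$-families, which the perturbation breaks, exactly as on the Floer side), and $\deg$ corresponds to the Morse index of $E$ up to the conventional shift.

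The analytic core is then the Abbondandolo--Schwarz / Salamon--Weber comparison: via an adiabatic-limit argument and the attendant $C^0$- and elliptic estimates, the compactified moduli spaces $\bbF^{H^\ell,J^\ell}(y,x)$ of Floer cylinders are diffeomorphic, as $\langle k\rangle$-manifolds with corners, to the compactified moduli spaces of negative gradient flow lines of $E$ on $\calL Q$, compatibly with the gluing maps enumerating the codimension-$1$ boundary strata. This upgrades to an equivalence of unstructured flow categories between $\bbF^{H^\ell,J^\ell}$ and the Morse flow category $\bbD\bbM^{E}_{\calL Q}$ of the energy functional, and it intertwines the Floer continuation maps with the Morse continuation maps, so passing to the colimit over $\ell$ causes no trouble. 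Note that although $\calL Q$ is infinite-dimensional, the formalism of \cite{AB24}, which builds framings from Fredholm operators asymptotic to the objects rather than from literal index shifts $\underline{\bbR}^{-I(a)}$, applies verbatim; at this level the Morse and Floer constructions are formally identical.

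It remains to match the $\Lambda_\std$-twisted stable framing of $\bbF^{H^\ell,J^\ell,\Lambda_\std}$ constructed in Subsection~\ref{subsec:spectralSH} with the canonical framing of $\bbD\bbM^{E}_{\calL Q}$ that realizes $\calL Q^{-TQ}$. Both framings are assembled by gluing Fredholm operators asymptotic to the orbits, so the comparison reduces to a pointwise index computation. On the Floer side, the framing at an orbit $x$ is controlled by $\ind T_{\frakF,x}$, the index bundle of a Cauchy--Riemann operator on the half-cylinder with totally real boundary condition $x^*\Lambda_\std$; since $\Lambda_\std$ is the horizontal distribution, its restriction to $x$ is the pullback of $TQ$ along $\gamma=\mathrm{ev}\circ x$, and a computation parallel to Proposition~\ref{prop:canonicaltransverseoperator} identifies $\ind T_{\frakF,x}$ with a virtual bundle pulled back from $-TQ$ along $\gamma$, up to the fixed shifts. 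On the Morse side, Cohen's realization \cite{Coh87} shows the index bundles of the linearized $E$-gradient operators are pulled back from $-TQ$ via the evaluation $\mathrm{ev}_0\colon\calL Q\to Q$, and this is precisely the twist producing $\calL Q^{-TQ}$. Transporting the two computations across the flow-category equivalence --- and checking their compatibility on codimension-$1$ (hence, inductively, all higher) boundary strata by the canonical operator-gluing isomorphisms, exactly as in the proof of Corollary~\ref{corollary:extendingtsf} --- yields an identification of the two coherent systems of stable framings; applying $\flow^\fr\simeq\spectra$ then gives $\frakF^{\Lambda_\std}_{T^*Q}\simeq\calL Q^{-TQ}$.

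The hard part will be carrying the Abbondandolo--Schwarz comparison beyond the $0$- and $1$-dimensional moduli spaces that suffice for chain-level statements, all the way to a coherent diffeomorphism of every moduli space and every corner stratum, and then verifying that the two \emph{a priori} different coherent systems of stable framings agree up to the contractible space of choices. The index-bundle bookkeeping --- in particular the interplay between the auxiliary stabilizing bundle $\underline{\bbC}^{d+1}$, the boundary condition $\Lambda_\std$, and the $(-TQ)$-twist on the loop space --- is where the delicate work lies.
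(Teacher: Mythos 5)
This theorem is not proved in the paper at all: it is recalled verbatim from \cite{Bla1} (``cf.\ [Theorem 1.2]{Bla1}'') and used as a black box in the $\calL S^2$ sanity check, so there is no in-paper argument to compare your proposal against. Judged on its own terms, your outline is the standard expected strategy (Abbondandolo--Schwarz/Salamon--Weber comparison upgraded from chain level to flow categories, followed by a framing comparison), and the overall architecture is sound.

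That said, as a proof it has two genuine gaps, both located exactly where you flag ``the hard part.'' First, the claim that the framework of \cite{AB24} applies ``verbatim'' to a Morse flow category of the energy functional on the infinite-dimensional space $\calL Q$ is not justified: to produce an honest framed flow category whose realization is $\calL Q^{-TQ}$ one must either pass to finite-dimensional approximations (Milnor's broken-geodesic models, as in \cite{Coh87}) or develop infinite-dimensional Morse theory with corners and coherent framings from scratch; neither is automatic, and the target $\calL Q^{-TQ}$ only acquires meaning as a spectrum once this is done. Second, the assertion that the compactified Floer moduli spaces are diffeomorphic \emph{as $\langle k\rangle$-manifolds with corners, compatibly with all gluing maps}, to the corresponding Morse moduli spaces is the entire analytic content of the theorem; the Abbondandolo--Schwarz adiabatic argument as it exists in the literature produces bijections (or at best identifications of zero- and one-dimensional strata up to sign), and promoting it to a coherent smooth identification of all strata is a substantial piece of gluing analysis, not a corollary of the chain-level statement. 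The framing comparison in your third paragraph is fine modulo these two points, since it does reduce to canonical operator-gluing isomorphisms of the kind used in Corollary \ref{corollary:extendingtsf}; but without the first two steps the proposal remains a plan rather than a proof.
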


\begin{rem}
Observe, the previous statement of the spectral Viterbo isomorphism differs from \cite[Theorem 1.2]{Bla1} up to a ($\pm n$)-fold shift only as a choice of convention, cf. Remark \ref{rem:shiftconvention}.
\end{rem}
    
First, we note that the unit sphere bundle of $S^2$ is $\bbR P^3$. In particular, $\bbR P^3$ is stably framed and has trivial 0-th (real) $K$-theory; hence, Proposition \ref{prop:spheresplitting} shows 
    \begin{equation}
    \frakF^\Lambda\simeq S^2_+\vee\bigvee_{k\geq1}\Sigma^{2k-1}\bbR P^3_+.
    \end{equation}
Using the stable homotopy equivalence $\bbR P^3\simeq\bbR P^2\vee S^3$, cf. \cite[Theorem (C)]{Mil85}, we may rewrite this as
    \begin{equation}
    \frakF^\Lambda\simeq S^2_+\vee\bigvee_{k\geq1}\Sigma^{2k-1}\bbR P^2\vee S^{2k+2}\vee S^{2k-1}.
    \end{equation}
Observe, this $\Lambda$ is the one induced by Proposition \ref{prop:spheresplitting}: 
    \begin{equation}
    T(T^*S^2)\oplus\underline{\bbC}^2\cong\underline{\bbC}^4. 
    \end{equation}
Meanwhile, since $S^2$ is stably framed, the spectral Viterbo isomorphism shows 
    \begin{equation}
    \frakF^{\Lambda_\std}\simeq(\calL S^2)_+.
    \end{equation}
Observe, $\Lambda$ resp. $\Lambda_\std+\underline{\bbR}^2$ determines a map
    \begin{equation}
    T(T^*S^2)\to BO(4).
    \end{equation}
Finally, since $\pi_2BO(4)=0$, we see 
    \begin{equation}
    \frakF^\Lambda\simeq\frakF^{\Lambda_\std}; 
    \end{equation}
hence, our stable splitting of $(\calL S^2)_+$ agrees with Cohen's.

\subsubsection{Even degree del Pezzo surfaces}
In this part, let $M$ be the blow-up of $\bbC P^2$ at $\ell$ generic points $Z\equiv\{p_1,\ldots,p_\ell\}$, where $1\leq\ell\leq8$, and $D$ a generic smooth representative of the Poincar\'e dual of the anticanonical bundle $\calL\to M$. (For concreteness, we may take $D$ to be the strict transform of a generic smooth plane cubic passing through $Z$. Topologically, $D$ is a 2-torus.) Recall, the degree of $M$ is equal to $9-\ell$.

\begin{claim}
Suppose $\ell$ is odd, then there exists a totally real subbundle $\Lambda\subset TM\oplus\calL_{-1}$ such that 
    \begin{equation}
    TM\oplus\calL_{-1}\cong\Lambda\otimes_\bbR\underline{\bbC},
    \end{equation}
where we recall $\calL_{-1}=\scrO_M(-D)$.
\end{claim}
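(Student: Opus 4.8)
The plan is to realize $TM\oplus\calL_{-1}$ as the complexification of an oriented (in fact spin) real rank-$3$ bundle $\Lambda$ over $M$; this is exactly the desired assertion, since the fibrewise inclusion $\Lambda=\Lambda\otimes 1\hookrightarrow\Lambda\otimes_\bbR\underline{\bbC}$ exhibits $\Lambda$ as a totally real subbundle of $\Lambda\otimes_\bbR\underline{\bbC}\cong TM\oplus\calL_{-1}$. The structural input I would use is that, because $M$ is a rational surface ($H^0(M;\bbZ)\cong H^4(M;\bbZ)\cong\bbZ$, $H^1(M;\bbZ)=H^3(M;\bbZ)=0$, $H^2(M;\bbZ)\cong\bbZ^{\ell+1}$), a complex vector bundle of rank $\geq 2$ over the $4$-complex $M$ is determined up to isomorphism by $c_1$ and $c_2$: indeed $B\mathrm{U}(r)$ ($r\geq 2$) is simply connected with $\pi_2=\bbZ$, $\pi_3=0$, $\pi_4=\bbZ$, so the obstruction-theoretic invariants over $M$ are precisely $(c_1,c_2)\in H^2(M;\bbZ)\oplus H^4(M;\bbZ)$, with all relevant indeterminacies killed by $H^1(M;\bbZ)=H^3(M;\bbZ)=0$. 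Thus it suffices to build a spin real rank-$3$ bundle $\Lambda$ for which $\Lambda\otimes_\bbR\underline{\bbC}$ has the same $c_1$ and $c_2$ as $TM\oplus\calL_{-1}$.

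First I would compute the target Chern classes. As $D$ is a smooth anticanonical divisor, $\calL_{-1}=\scrO_M(-D)\cong K_M$, so $c_1(\calL_{-1})=K_M=-c_1(M)$ and hence $c_1(TM\oplus\calL_{-1})=c_1(M)-c_1(M)=0$. For the second Chern class, $c_2(TM\oplus\calL_{-1})=c_2(TM)+c_1(TM)\,c_1(\calL_{-1})=\chi(M)-K_M^2$ (the line bundle $\calL_{-1}$ contributes no $c_2$); for the blow-up of $\bbC P^2$ at $\ell$ points one has $\chi(M)=3+\ell$ and $K_M^2=9-\ell$ (its degree), so $c_2(TM\oplus\calL_{-1})=2\ell-6$. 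On the other hand, for any oriented real bundle $W$ the conjugate of $W\otimes_\bbR\underline{\bbC}$ is isomorphic to $W\otimes_\bbR\underline{\bbC}$, so all odd Chern classes of a complexification are $2$-torsion; since $H^2(M;\bbZ)$ is torsion-free this forces $c_1(\Lambda\otimes_\bbR\underline{\bbC})=0$ automatically, while $c_2(\Lambda\otimes_\bbR\underline{\bbC})=-p_1(\Lambda)$. So I am reduced to producing a spin real rank-$3$ bundle $\Lambda$ with $p_1(\Lambda)=6-2\ell$.

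This is where oddness of $\ell$ enters. Via $\mathrm{Spin}(3)\cong\mathrm{SU}(2)$, spin real rank-$3$ bundles over $M$ are exactly the adjoint bundles $\mathfrak{g}_P=P\times_{\mathrm{SU}(2)}\mathfrak{su}(2)$ of principal $\mathrm{SU}(2)$-bundles $P$; computing the Chern roots of $\mathrm{End}_0$ of the associated rank-$2$ bundle gives $p_1(\mathfrak{g}_P)=-4c_2(P)$, and $w_2(\mathfrak{g}_P)=0$. Since $B\mathrm{SU}(2)\simeq\bbH P^\infty$ is $3$-connected, over the $4$-complex $M$ one has $[M,B\mathrm{SU}(2)]\cong H^4(M;\bbZ)$ via $c_2$, so $c_2(P)$ may be taken to be any integer. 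When $\ell$ is odd the integer $q\equiv(\ell-3)/2$ makes sense; choosing $P$ with $c_2(P)=q$ and setting $\Lambda\equiv\mathfrak{g}_P$ then yields $p_1(\Lambda)=-4q=6-2\ell$. By the previous paragraph $\Lambda\otimes_\bbR\underline{\bbC}$ has the same $c_1$ (namely $0$) and $c_2$ (namely $2\ell-6$) as $TM\oplus\calL_{-1}$, hence is isomorphic to it, and the claim follows upon identifying $\Lambda$ with its image in $TM\oplus\calL_{-1}$.

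The only genuine content is the divisibility fact underlying the last paragraph: $p_1$ of a \emph{spin} real $3$-plane bundle over $M$ is divisible by $4$ and realizes every multiple of $4$, which is precisely what matches the parity of $\ell$ (here $2\ell-6\equiv 0\pmod 4$ exactly when $\ell$ is odd). For even $\ell$ one would instead need $p_1\equiv 2\pmod 4$, forcing $\Lambda$ non-spin with $w_2(\Lambda)\neq 0$, and then the congruence $c_2(\Lambda\otimes_\bbR\underline{\bbC})\equiv w_2(\Lambda)^2\pmod 2$ combined with the behaviour of the Pontryagin square on $H^2(M;\bbZ/2)$ obstructs the construction; so the hypothesis that $\ell$ be odd is genuinely needed and not merely a convenience.
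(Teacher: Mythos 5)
Your proof is correct, and it takes a genuinely different route from the paper's. The paper lifts the classifying map $M\to BSU(3)$ of $TM\oplus\calL_{-1}$ (with structure group reduced to $SU(3)$ since $c_1=0$) through the fibration $BSO(3)\to BSU(3)$ with fiber related to the Wu manifold $SU(3)/SO(3)$; the only nonvanishing obstruction lives in $H^4(M;\pi_3(SU(3)/SO(3)))\cong\bbZ/4$ and is identified with $c_2(M)-c_1(M)^2\bmod 4$, computed via Noether's formula to be $2\ell-6$. You instead invoke the classification of complex bundles of rank $\geq2$ over a simply connected $4$-manifold by $(c_1,c_2)$, compute the target's Chern classes directly, and then \emph{construct} the real form as $\mathrm{ad}(P)$ for an $SU(2)$-bundle $P$ with $c_2(P)=(\ell-3)/2$. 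Both arguments bottom out in the same arithmetic, $4\mid 2\ell-6$ iff $\ell$ odd; yours is more explicit and has the bonus of producing a $\Lambda$ that is spin over all of $M$ (the paper only records $\Lambda\vert_X\cong\underline{\bbR}^3$ afterwards), while the paper's obstruction-theoretic argument avoids the low-dimensional classification and so is closer in spirit to what one would do in higher dimensions. One caveat: your closing necessity remark is overclaimed. By Dold--Whitney, oriented rank-$3$ bundles over a simply connected closed $4$-manifold are classified by $(w_2,p_1)$ subject only to $p_1\equiv\mathfrak{P}(w_2)\pmod 4$, and for even $\ell\geq2$ the lattice $\langle1\rangle\oplus\ell\langle-1\rangle$ contains classes with Pontryagin square $\equiv2\pmod4$ (e.g. the mod $2$ reduction of $[E_1]+[E_2]$), so a non-spin totally real form of $TM\oplus\calL_{-1}$ is not obviously obstructed when $\ell$ is even; what genuinely fails there is only the spin refinement that the framing constructions elsewhere in the paper require.
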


\begin{proof}[Proof of claim]
We proceed via a general obstruction theory argument. Since 
    \begin{equation}
    c_1(TM\oplus\calL_{-1})=c_1(M)-c_1(M)=0, 
    \end{equation}
we may reduce the structure group of $TM\oplus\calL_{-1}$ from $U(3)$ to $SU(3)$, i.e., $TM\oplus\calL_{-1}$ is classified by a map 
    \begin{equation}
    \varphi:M\to BSU(3).
    \end{equation}
Consider the standard fiber sequence 
    \begin{equation}
    SO(3)\to SU(3)\to SU(3)/SO(3); 
    \end{equation}
our claim is equivalent to lifting $\varphi$ to $SO(3)$. Let $M^k$ denote the $k$-skeleton of $M$; we have a lift
    \begin{equation}
    \begin{tikzcd}
    & BSO(3)\arrow[d] \\
    M^1\arrow[r,"\varphi"]\arrow[ur,"\varphi^1_\bbR",dashed] & BSU(3)
    \end{tikzcd}
    \end{equation}
since $c_1(TM\oplus\calL_{-1})=0$. The quotient $W\equiv SU(3)/SO(3)$ is known as the \emph{Wu manifold} and has homotopy groups (which are relevant to us) 
    \begin{equation}
    \pi_0(W)=\pi_1(W)=0,\;\;\pi_2(W)\cong\bbZ/2,\;\;\pi_3(W)\cong\bbZ/4.
    \end{equation}
Standard obstruction theory says the obstructions to lifting $\varphi$ lie in 
    \begin{equation}
    H^{j+1}\big(M;\pi_j(W)\big)\cong\begin{cases}
    \bbZ/4, & j=3 \\
    0, & \mathrm{otherwise}
    \end{cases}
    ,
    \end{equation}
where the isomorphism above uses the facts that $M$ is orientable and simply-connected.\footnote{Recall, every Fano variety is simply-connected.} In particular, we have a lift
    \begin{equation}
    \begin{tikzcd}
    & BSO(3)\arrow[d] \\
    M^3\arrow[r,"\varphi"]\arrow[ur,"\varphi^3_\bbR",dashed] & BSU(3).
    \end{tikzcd}
    \end{equation}
It remains to extend $\varphi^3_\bbR$ over the $4$-skeleton, but this is straightforward. Let $(D^4,a:S^3\to M^3)$ be the 4-cell of $M$. By construction, we have a real vector bundle $\Lambda^3\to M^3$ such that
    \begin{equation}
    a^*(TM\oplus\calL_{-1})\cong a^*(\Lambda^3\otimes_\bbR\underline{\bbC}).
    \end{equation}
Since the map 
    \begin{equation}
    \pi_3\big(SO(3)\big)\cong\bbZ\to\pi_3\big(SU(3)\big)\cong\bbZ
    \end{equation}
is multiplication by 4, we may choose an extension of $a^*\Lambda^3$ to $D^4$ compatible with the already defined extension of $a^*(TM\oplus\calL_{-1})$ to $D^4$ if 
    \begin{equation}
    c_2(TM\oplus\calL_{-1})=c_2(M)-c_1(M)^2=0\mod4.
    \end{equation}
The general theory of blow-ups shows 
    \begin{equation}
    c_1(M)=3\phi^*[\bbC P^1]-[E_1]-\cdots-[E_\ell], 
    \end{equation}
where $\phi:M\to\bbC P^2$ is the blow-down map and $\{E_1,\ldots,E_\ell\}$ are the exceptional divisors associated to $Z$; thus, $c_1(M)^2=9-\ell$. We see (by Noether's formula)
    \begin{equation}
    12\chi(\scrO_M)=c_2(M)+c_1(M)^2\iff12-2(9-\ell)=c_2(M)-c_1(M)^2, 
    \end{equation}
where $\chi(\scrO_M)$ is the holomorphic Euler characteristic of $M$; the claim follows.
\end{proof}

\begin{rem}
In fact, since $BSO(3)$ is 2-connected and $X$ is a 2-dimensional CW-complex, we see that $\Lambda\vert_X\cong\underline{\bbR}^3$.
\end{rem}

It is a classical fact in algebraic geometry that the moduli space of lines on $M$ is a finite number of points. These lines are the exceptional divisors together with the strict transforms of the:
\begin{itemize}
\item lines through 2 of the blown-up points,
\item conics through 5 of the blown-up points,
\item cubics through 7 of the blown-up points which are singular at 1 of them,
\item quartics through 8 of the blown-up points which are singular at 3 of them,
\item quintics through 8 of the blown-up points which are singular at 6 of them,
\item and sextics through 8 of the blown-up points which are singular at all 8 of them and contain 1 of them as a triple point.
\end{itemize}
Moreover, it is straightforward to see each such line $L_j$ intersects $D$ transversely in a single point. We define $q_j\equiv D\cap L_j$; note, these points are distinct.

We may compute:
    \begin{equation}
    \dim\bbG\bbW^{S^1}(a,b)=I(b)-I(a)-1.
    \end{equation}
Since $f_X$ is a Morse function whose negative gradient flow points strictly inwards on the boundary and $X$ is a 4-dimensional Weinstein manifold, we see
    \begin{equation}
    I(b)\in\{0,1,2\}.
    \end{equation}
Moreover, since $S_DM$ is a closed smooth 3-manifold, we see 
    \begin{equation}
    I(a)\in\{0,1,2,3\}.
    \end{equation}
We denote by $a_\mathrm{min}$ resp. $b_\mathrm{min}$ the minimum of $f_S$ resp. $f_X$.

\begin{claim}
Generically, we have that 
    \begin{equation}
    \bbG\bbW^{S^1}(a,b)\neq\emptyset\iff I(a)\in\{0,1\}\;\;\textrm{and}\;\;I(b)=2.
    \end{equation}
\end{claim}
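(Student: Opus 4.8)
The plan is to compute $\bbG\bbW^{S^1}$, $\bbG\bbW^{S^1}(a)$ and $\bbG\bbW^{S^1}(a,b)$ explicitly, taking the admissible almost complex structure defining $\widetilde{\bbG\bbW}$ to be the integrable one on $M$ (it preserves $TD$, has vanishing Nijenhuis tensor, and is regular for the relevant classes, whose $(-1)$-curve representatives are rigid and unobstructed). First I would identify the spherical classes $A\in H_2(M;\bbZ)$ with $A\cdot D=1$: since $D$ represents $-K_M$, the condition reads $-K_M\cdot A=1$, and if $A$ is represented by a rational curve, adjunction forces $A^2=-1$, so $A$ is the class of a $(-1)$-curve, i.e.\ of one of the finitely many lines $L_1,\dots,L_N$; each $L_j$ meets $D$ transversely in a single point $q_j$, and (as noted in the excerpt) the $q_j$ are distinct. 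Hence $\widetilde{\bbG\bbW}=\coprod_j\widetilde{\bbG\bbW}(L_j)$, where $\widetilde{\bbG\bbW}(L_j)$ is the space of biholomorphisms $\bbC P^1\xrightarrow{\sim}L_j$ carrying $+\infty$ to $q_j$; identifying $L_j\cong\bbC P^1$ so that $q_j\leftrightarrow\infty$, this is the affine group $\{z\mapsto az+b:a\in\bbC^*,\,b\in\bbC\}$. Dividing by the $\bbR_{>0}$-action (which rescales $a$) and using the fixed ray in $S_{+\infty}\bbC P^1$ yields a diffeomorphism $\bbG\bbW^{S^1}(L_j)\cong S_DM\vert_{q_j}\times(L_j-q_j)\cong S^1\times\bbC$ under which $\eneval_{+\infty}$ is the projection onto $S_DM\vert_{q_j}$ (recording $\arg a$) and $\eval_0$ is the projection onto $L_j-q_j$ (recording $b$); in particular $\eval_{+\infty}$ is constantly $q_j$ on the $j$-th component. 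As a sanity check, the resulting fibre-product description of $\bbG\bbW^{S^1}(a,b)$ has the expected dimension $I(b)-I(a)-1$.

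The forward implication is then a dimension count against these explicit images, after choosing $f_S,g_S$ and $f_X,g_X$ generically (note that $\bigsqcup_j S_DM\vert_{q_j}\subset S_DM$ and $\bigsqcup_j(L_j-q_j)\subset X$ are fixed before the Morse data is chosen). The image of $\eneval_{+\infty}\colon\bbG\bbW^{S^1}\to S_DM$ is the disjoint union of the $N$ circles $S_DM\vert_{q_j}$; since $\dim W^s(a;f_S)=3-I(a)$ inside the closed $3$-manifold $S_DM$ and $1+(3-I(a))<3$ when $I(a)\geq2$, generic position forces $\bbG\bbW^{S^1}(a)=\emptyset$, hence $\bbG\bbW^{S^1}(a,b)=\emptyset$, whenever $I(a)\geq 2$. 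Likewise, the image of $\eval_0\colon\bbG\bbW^{S^1}\to X$ is the union of the $N$ properly embedded surfaces $L_j-q_j$ in the $4$-manifold $X$; since $\dim W^u(b;f_X)=I(b)\leq 2$ and $2+I(b)<4$ when $I(b)\leq 1$, generic position forces $\bbG\bbW^{S^1}(a,b)=\emptyset$ for $I(b)\leq 1$, so nonemptiness requires $I(b)=2$. One must also check that the breaking strata of the Gromov-bordifications add no points: they are products of $\bbG\bbW^{S^1}(a',b')$ (with $I(a')>I(a)\geq 2$ or $I(b')<I(b)\leq 1$) with Morse-trajectory spaces, hence empty by the same counts applied inductively.

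For the converse --- which I expect to be the main obstacle --- I must produce an actual curve when $I(a)\in\{0,1\}$ and $I(b)=2$. Under the identification above, the top stratum of $\bbG\bbW^{S^1}(a,b)$ equals $\coprod_j\big(W^s(a;f_S)\cap S_DM\vert_{q_j}\big)\times\big(W^u(b;f_X)\cap(L_j-q_j)\big)$, so it suffices to find one index $j$ with both factors nonempty. When $I(a)=0$ the first factor is automatic, since $W^s(a_{\mathrm{min}};f_S)$ is open and dense, and it remains to make $W^u(b;f_X)$ meet $\bigcup_jL_j$ for every index-$2$ critical point $b$; when $I(a)=1$ one must in addition have this $L_j$ lie over a point $q_j$ whose fibre circle meets the codimension-$1$ stable manifold $W^s(a;f_S)$. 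I would achieve this not by pure genericity but by a deliberate choice of Morse data adapted to the (fixed, finite, disjoint) line configuration: starting from an arbitrary Weinstein Morse function, perform finger/handle-slide moves dragging each index-$2$ unstable disk across $\bigcup_jL_j$, and choose $f_S$ --- for instance a small perturbation of the pullback of a Morse function on the elliptic curve $D$ positioned so that its cells pass through the $q_j$ --- so that every index-$\le 1$ stable manifold of $f_S$ meets one of the fibre circles. The technical heart is to carry this out compatibly for all pairs $(a,b)$ while preserving the Morse--Smale condition, transversality of $\eval_0$ and $\eneval_{+\infty}$ to the relevant (un)stable manifolds, and the outward-pointing condition on $\partial X$; a possible shortcut for the ``meets $\bigcup_jL_j$'' part is a homological argument using that $\eval_0\colon\bbG\bbW\to X$ is proper (the lemma quoted in the excerpt), so that $\eval_{0\ast}[\bbG\bbW]$ defines a locally finite homology class whose intersection number with index-$2$ descending cycles can be computed and shown to be nonzero.
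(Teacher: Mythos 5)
Your proposal follows essentially the same route as the paper: decompose $\bbG\bbW^{S^1}$ according to the finitely many lines $L_j$ (each meeting $D$ transversely at a single point $q_j$), and rule out all index pairs other than $I(a)\in\{0,1\}$, $I(b)=2$ by exactly the dimension counts you give — the circles $S_DM\vert_{q_j}$ against $W^s(a;f_S)$ in the $3$-manifold $S_DM$, and the surfaces $L_j-q_j$ against $W^u(b;f_X)$ in the $4$-manifold $X$ — which is what the paper phrases as the two perturbation statements for $f_S$ and $f_X$. Your extra caution about the ``if'' direction is well placed, since genericity yields transversality rather than nonemptiness and one must position the Morse data deliberately as you describe; but only the ``only if'' direction is actually used in the proof of Proposition \ref{prop:delpezzo1}, so this is a refinement of the paper's sketch rather than a different argument.
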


\begin{proof}[Proof of claim]
We have seen that we have a decomposition 
    \begin{equation}
    \bbG\bbW^{S^1}(a,b)=\coprod_{L_j}\bbG\bbW^{S^1,L_j},
    \end{equation}
where $[u]\in\bbG\bbW^{S^1,L_j}$ if and only if the image of $u$ is $L_j$. By perturbing $f_S$, we may assume 
    \begin{equation}
    S_DM\vert_{q_j}\cap W^s(a;f_S)\neq\emptyset\iff I(a)\in\{0,1\};
    \end{equation}
this is a straightforward transversality argument. Thus, 
    \begin{equation}
    \bbG\bbW^{S^1}(a,b)\neq\emptyset\iff I(a)\in\{0,1\}\;\;\mathrm{and}\;\;I(b)\in\{0,1,2\}.
    \end{equation}
By perturbing $f_X$, we may assume 
    \begin{equation}
    \dim L_j\cap W^u(b;f_X)=I(b)-2;
    \end{equation}
again, this is a straightforward transversality argument. Thus, 
    \begin{equation}
    \bbG\bbW^{S^1}(a,b)\neq0\iff I(a)\in\{0,1\}\;\;\mathrm{and}\;\;I(b)=2.
    \end{equation}
\end{proof}

We may now prove the following.

\begin{prop}\label{prop:delpezzo1}
Let $M$ be the blow-up of $\bbC P^2$ at $\ell$ generic points, where $1\leq\ell\leq8$ and $\ell$ is odd, and $D$ a generic smooth representative of the Poincar\'e dual of the anticanonical bundle $\calL\to M$. In this case, we may choose $\Lambda$ such that $TM\oplus\calL_{-1}\cong\Lambda\otimes_\bbR\underline{\bbC}$. We have that
    \begin{equation}
    \frakF^\Lambda\simeq\frakD X^{-V_0}\vee\bigvee_{k\geq1}\frakD(S_DM)^{-V_k},
    \end{equation}
where 
    \begin{equation}
    V_k=\underline{\bbR}^{-1+4k}+TM^{\oplus k}+\Lambda^{\oplus 1-2k}-N_DM^{\oplus k}
    \end{equation}
\end{prop}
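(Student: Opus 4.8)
The plan is to deduce the splitting from Theorem~\ref{thm:main} by showing that the spectral Gromov--Witten obstruction $\calG\calW$ vanishes. The stable $\bbR$-polarization $\Lambda$ demanded by Theorem~\ref{thm:main} is exactly the one produced in the claim above (which also gives $\Lambda|_X\cong\underline{\bbR}^3$), and the asserted virtual bundles $V_k$ come out of the general formula upon specializing, using $V^{\bbG\bbW}=TM+\underline{\bbR}^2-N_DM$ together with $V_k\cong V^{\bbG\bbW}+V_{k-1}-TX$. So what remains is to prove that the framed flow bimodule
\[
\calG\calW\colon\bbD\bbM_{S_DM,V^{\bbG\bbW}-TX}\longrightarrow\Sigma\bbD\bbM_X,
\]
assembled from the auxiliary hybrid enhanced $1$-pointed relative spheres $\bbG\bbW^{S^1}(a,b)$, is null-homotopic.

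The first step I would carry out is to make $\bbG\bbW^{S^1}$ completely explicit. A pseudoholomorphic sphere contributing to $\bbG\bbW^{S^1}$ represents a class $A$ with $A\cdot D=1$; since $D$ is anticanonical this forces $c_1(A)=1$, so $A$ is the class of one of the finitely many lines $L_j\subset M$, and $\bbG\bbW^{S^1}=\coprod_j\bbG\bbW^{S^1,L_j}$. As each $L_j$ is rigid and admits a unique parameterization up to the reparameterizations of $\bbC P^1$ fixing the puncture, one obtains a diffeomorphism $\bbG\bbW^{S^1,L_j}\cong S^1\times(L_j\cap X)$ under which $\eval_0$ is the projection to $L_j\cap X\cong\bbC$ followed by the proper holomorphic inclusion $L_j\cap X\hookrightarrow X$, and $\eneval_{+\infty}$ is the projection to $S^1$ followed by a diffeomorphism onto the fiber circle $S_DM|_{q_j}$ of $\pi_S\colon S_DM\to D$. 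In particular $\eval_0$ factors through a contractible set and $\eneval_{+\infty}$ factors through a single fiber of $\pi_S$. Next I would invoke the index bookkeeping already established: $\bbG\bbW^{S^1}(a,b)\neq\emptyset$ only for $I(b)=2$ and $I(a)\in\{0,1\}$, and since the minimum is the only critical point of $f_X$ of index $\leq1$, the relevant compactifications involve neither sphere bubbling nor breaking at index-$\leq1$ points of $f_X$. It follows that $\calG\calW$ is detected, component by component over the index-$2$ critical points $b$ of $f_X$, by the twisted-framed bordism class of $\eneval_{+\infty}$ on $\eval_0^{-1}(W^u(b;f_X))\subset\bbG\bbW^{S^1}$, which by the previous paragraph is $\coprod_j S^1\times\bigl(L_j\cap X\cap W^u(b;f_X)\bigr)$: a finite disjoint union of circles, each mapping diffeomorphically onto a fiber of $\pi_S$.

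Finally I would build the null-bordisms. The evaluation data to $X$ extends over the fibrewise disk filling $\bbD\times\bigl(L_j\cap X\cap W^u(b;f_X)\bigr)$ because $\eval_0$ ignores the $S^1$-factor; what then remains is to produce, inside $S_DM$, a twisted-framed null-bordism of the collection of fibers of $\pi_S$ occurring in $\eval_0^{-1}(W^u(b;f_X))$. Here Lemma~\ref{lem:unorientedbordism} is the key input: the restriction of $S_DM$ to the zero locus of a generic holomorphic section of $N_DM$ over $D$ is unoriented null-bordant in $S_DM$; and since the twisted framing on a fiber of $\pi_S$ is pulled back from $D$ along $\pi_S$ (hence constant along the fiber, by Proposition~\ref{prop:canonicaltransverseoperator} and the constancy of $\eval_{+\infty}$ on $\bbG\bbW^{S^1,L_j}$), it agrees with the framing on the circles of $\eval_0^{-1}(W^u(b;f_X))$. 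So it suffices to match the signed collection of fibers over the points $q_j$ with $L_j\cap X\cap W^u(b;f_X)\neq\emptyset$ with such a zero locus, a computation in $H_2(M)$ modulo $[D]$ combined with the order of the fiber class in $H_1(S_DM)\cong\bbZ^2\oplus\bbZ/(9-\ell)$.

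\textbf{The main obstacle I expect is precisely this last matching} --- reconciling the line-intersection counts defining $\eval_0^{-1}(W^u(b;f_X))$ with the $\Lambda$-twisted framing of the zero locus of a section of $N_DM$ --- together with the verification that the resulting null-bordisms are compatible with gluing so as to assemble into an honest framed-flow-bimodule null-homotopy of $\calG\calW$ (in the sense of the mapping-cylinder lemma used in the $T^*S^n$ argument). This is also the step where the hypothesis ``$\ell$ odd'', equivalently $D\cdot D$ even, is used, exactly as in the construction of $\Lambda$.
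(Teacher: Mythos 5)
Your setup is consistent with the paper: the construction of $\Lambda$, the specialization of $V_k$ and $V^{\bbG\bbW}$, and the identification of $\bbG\bbW^{S^1}=\coprod_j\bbG\bbW^{S^1,L_j}$ with each component an $S^1$-bundle over the contractible set $L_j\cap X$, with $\eneval_{+\infty}$ landing in the single fiber $S_DM\vert_{q_j}$, all match. But the step you flag as ``the main obstacle'' is not a residual verification; it is the entire content of the proof, and as proposed it does not go through. First, the finite set $\{q_j\}$, weighted by the counts $\#\big(L_j\cap W^u(b;f_X)\big)$, depends on $b$ and on the Morse function and has no reason to be the divisor of a section of $N_DM\vert_D$ (its degree need not equal $D\cdot D=9-\ell$), so Lemma~\ref{lem:unorientedbordism} does not apply to it; and even granting an unoriented null-bordism, that is far from the compatible \emph{twisted stably framed} null-bordisms required. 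Second, your reduction only addresses the $1$-dimensional moduli spaces $\bbG\bbW^{S^1}(a_{\min},b)$; the $0$-dimensional spaces $\bbG\bbW^{S^1}(a,b)$ with $I(a)=1$, $I(b)=2$ are generically nonempty and must also be cobounded compatibly with the $\bbD\bbM_{S_DM}$-strata, and your fiberwise disk fillings produce neither these cobordisms nor that compatibility. Third, the hypothesis ``$\ell$ odd'' enters only through $c_2(M)-c_1(M)^2\equiv0\bmod4$ in the construction of $\Lambda$, not in any bordism matching.

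The paper's route is different and sidesteps all of this. Since every $\bbG\bbW^{S^1}(a,b)$ has dimension $I(b)-I(a)-1\le1$, the obstruction is controlled by the group $H_0\big((S_DM)^{-V^{\bbG\bbW}+TX}\wedge\Sigma\frakD X_+;\bbZ\big)$ together with a single framing check on circles. The paper computes $H^*(S_DM;\bbZ)$ and $H^*(X;\bbZ)$ and shows by a K\"unneth/Thom-isomorphism degree count that this group vanishes; that yields the compatible \emph{oriented} null-bordisms of all the $0$- and $1$-dimensional moduli spaces at once, with no geometry of lines needed. The only remaining issue is whether a circle component could carry the nontrivial Lie group framing; but each circle component is an entire fiber $S_DM\vert_{q_j}$, along which the twisted stable framing is constant by Proposition~\ref{prop:canonicaltransverseoperator} (cf.\ the proof of Corollary~\ref{cor:tsfs1}), so it is the bounding framing. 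Your proposal contains both ingredients of this argument in embryonic form --- the low dimensionality of the moduli spaces and the fiber-constancy of the framing --- but replaces the decisive homological computation with an unproved, and likely false as stated, divisor-matching claim.
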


\begin{proof}
The proof has two main steps. First, we will prove a cohomological splitting. Second, we will use the cohomological splitting to prove the homotopical splitting. 

Observe, the results of the present article show that if 
    \begin{equation}
    \calG\calW\wedge H\bbZ\in H_0\Big((S_DM)^{-V^{\bbG\bbW}+TX}\wedge\Sigma\frakD X_+;\bbZ\Big),
    \end{equation}
where 
    \begin{equation}
    V^{\bbG\bbW}=TM-\underline{\bbR}^2-N_DM,
    \end{equation}
vanishes, then $SH^*(X;\bbZ)$ splits. Using the K\"unneth formula and the Thom isomorphism, we may compute (we drop the dependence on $\bbZ$ for brevity):
    \begin{align}
    H_0\Big((S_DM)^{\underline{\bbR}^2+N_DM}\wedge\Sigma\frakD X_+\Big)\cong&\bigoplus_{i+j=0}H_{i-4}(S_DM)\otimes H^{1-j}(X)\oplus \\
    &\bigoplus_{i+j=-1}\operatorname{Tor}\big(H_{i-4}(S_DM),H^{1-j}(X)\big) \nonumber \\
    \cong&\bigoplus_{i-j=-5}H_i(S_DM)\otimes H^j(X)\oplus \\
    &\bigoplus_{i-j=-6}\operatorname{Tor}\big(H_i(S_DM),H^j(X)\big). \nonumber
    \end{align}
Direct inspection shows the last line vanishes since $H_*(S_DM;\bbZ)$ is concentrated in at most degrees $0,1,2,3$ and $H^*(X;\bbZ)$ is concentrated in at most degrees $0,1,2$; hence, $SH^*(X;\bbZ)$ splits into its associated graded. 

Now, we may prove the homotopical splitting.  In order to prove a homotopical splitting, we must show the moduli spaces defining the map 
    \begin{equation}
    \calG\calW:\frakD(S_DM)^{\underline{\bbR}^2+N_DM}\to\Sigma\frakD X_+
    \end{equation}
are compatibly twisted stably framed null-bordant. Let $\pi_{\leq\mu}MSO$ be the $\mu$-th Postnikov truncation of the oriented bordism spectrum $MSO$, i.e., 
    \begin{equation}
    \pi_n\big(\pi_{\leq\mu}MSO\big)=\begin{cases}
    \pi_nMSO, & n\leq\mu \\
    0, & \mathrm{otherwise}
    \end{cases}
    .
    \end{equation}
For example, 
    \begin{equation}\label{eq:truncation1}
    \pi_{\leq0}MSO=\pi_{\leq1}MSO=\pi_{\leq2}MSO=\pi_{\leq3}MSO=H\bbZ.
    \end{equation}
Thus, our cohomological splitting above shows 
    \begin{equation}\label{eq:truncation2}
    \calG\calW\wedge\pi_{\leq\mu}MSO=0,\;\;\mu\leq3.
    \end{equation}
    
Now, recall that the stable $\infty$-category of $\pi_{\leq\mu}MSO$-modules is equivalent to the stable $\infty$-category of ``$\mu$-truncated oriented flow categories'' -- the $n$-simplices of the latter essentially consists of oriented flow $n$-simplices in the usual sense, but with moduli spaces only up to dimension $\mu$ -- this fact was essentially conjectured in \cite[Conjecture 5.46 \& 5.47]{PS24b}, used in a simplified sense in \cite{Bon25}, and is essentially proven via the techniques of \cite{AB24,HO26}. The reason this notion is important in the present article is that $\calG\calW\wedge MSO$, being defined using only moduli spaces up to dimension 1, is already a 1-truncated oriented flow bimodule. In particular, \eqref{eq:truncation1}, \eqref{eq:truncation2}, and the cohomological splitting above imply that we already know the moduli spaces defining $\calG\calW$ are compatibly oriented null-bordant; it remains to show the twisted stable framings extend compatibly over these oriented null-bordisms.

This is relatively straightforward. Note, a twisted stable framing on a point resp. an interval is simply a choice of orientation. In particular, we see that the moduli spaces defining $\calG\calW$ consist of (1) twisted stably framed compact 0-manifolds and (2) twisted stably framed compact 1-manifolds with (possibly empty) boundary. Hence, the compact 1-manifolds break up into disjoint unions of intervals and circles; therefore, we only need to check that our constructed twisted stable framing on each circle component is not the non-trivial Lie group framing, i.e., that it extends over a disk. But this follows immediately by our construction of the twisted stable framing. Namely, a circle component of $\bbG\bbW^{S^1}(a_\mathrm{min},b)$, $I(b)=2$, is given as the total space of the following $S^1$-bundle:
    \begin{equation}
    S^1\to\bbG\bbW^{S^1,L_j}\to\{L_j\}.
    \end{equation}
In other words, a circle component consists of a choice of $L_j$ and the entire fiber $S_DM\vert_{q_j}$ over it, where the latter is in bijection with a choice of where the enhanced evaluation at $+\infty$ lands. Moreover, our constructed twisted stable framing involves gluing a canonical transverse operator determined by the enhanced evaluation at $+\infty$. But we have already seen the twisted stable framing given by gluing a canonical transverse operator is independent of which point in the fiber we actually use, cf. the proof of Corollary \ref{cor:tsfs1}; the proposition follows.
\end{proof}

\subsubsection{Degree 3 del Pezzo surfaces}
In this part, let $M$ be the blow-up of $\bbC P^2$ at 6 generic points and $D$ a generic smooth representative of the Poincar\'e dual of the anticanonical bundle $\calL\to M$. We may realize $M$ as a generic smooth cubic hypersurface in $\bbC P^3$ via the anticanonical embedding. In particular, $D$ is a smooth hyperplane section. Moreover, we have that
    \begin{equation}
    TM\oplus\underline{\bbC}\oplus\scrO_{\bbC P^3}(3)\vert_M\cong\scrO_{\bbC P^3}(1)\vert_M^{\oplus4}.
    \end{equation}
    
\begin{prop}\label{prop:delpezzo2}
Let $M$ be the blow-up of $\bbC P^2$ at 6 generic points and $D$ a generic smooth representative of the Poincar\'e dual of the anticanonical bundle $\calL\to M$. Let $\Lambda$ be induced by 
    \begin{equation}
    TM\oplus\underline{\bbC}\oplus\scrO_{\bbC P^3}(3)\vert_M\cong\scrO_{\bbC P^3}(1)\vert_M^{\oplus4}.
    \end{equation}
We have that
    \begin{equation}
    \frakF^\Lambda\simeq\frakD X^{-V_0}\vee\bigvee_{k\geq1}\frakD(S_DM)^{-V_k},
    \end{equation}
where 
    \begin{equation}
    V_k=\underline{\bbR}^{2-2k}+TM^{\oplus k}-N_DM^{\oplus k}
    \end{equation}
\end{prop}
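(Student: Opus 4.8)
The plan is to combine the strategies of Propositions \ref{prop:spheresplitting} and \ref{prop:delpezzo1}, exploiting the anticanonical embedding $M\hookrightarrow\bbC P^3$ as a cubic surface with $D=M\cap H$ a smooth hyperplane section. First I would record the compatibilities between $(M,D)$ and $(\bbC P^3,H)$: the normal bundle sequence $0\to TM\to T\bbC P^3|_M\to\scrO_{\bbC P^3}(3)|_M\to0$ splits smoothly, so $TM\oplus\scrO_{\bbC P^3}(3)|_M\cong T\bbC P^3|_M$; hence the polarization $\Lambda$ on $X$ is the restriction $\widetilde\Lambda|_X$ of the polarization $\widetilde\Lambda$ on $\widetilde X\equiv\bbC P^3-H$ induced by $T\bbC P^3\oplus\underline{\bbC}\cong\scrO_{\bbC P^3}(1)^{\oplus4}$, and $N_DM=\scrO_{\bbC P^3}(1)|_D=N_H\bbC P^3|_D$, so that $S_DM=S_H\bbC P^3|_D$ and, by Proposition \ref{prop:canonicaltransverseoperator}, $\ind D^\Lambda\cong\ind D^{\widetilde\Lambda}|_{S_H\bbC P^3|_D}$. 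As in the warm-ups, it then suffices to show the spectral Gromov-Witten obstruction $\calG\calW\colon\bbD\bbM_{S_DM,V^{\bbG\bbW}-TX}\to\Sigma\bbD\bbM_X$ is null-homotopic, where here $V^{\bbG\bbW}=TM-N_DM+\underline{\bbR}^2$. Since $H^*(X;\bbZ)$ is free and concentrated in degrees $0$ and $2$ (cf.\ the proof of Proposition \ref{prop:delpezzo1}) and $X$ is simply connected, $X\simeq\bigvee_{j=1}^{8}S^2$; choosing $f_X$ adapted to this I may take $\crit(f_X)=\{b_{\min}\}\cup\{b_1,\dots,b_8\}$ with $I(b_{\min})=0$ and $I(b_i)=2$, so $\bbD\bbM_{X,V^{\bbG\bbW}-TX}$, and hence $\calG\calW$, split over $\crit(f_X)$, and the $b_{\min}$-component vanishes for dimension reasons. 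It therefore remains to kill the $b_i$-components.

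Next I would pin down the relevant moduli spaces. Because $M$ is del Pezzo, $-K_M=\scrO_M(D)$ is ample, so every spherical $A$ with $A\cdot D=-K_M\cdot A=1$ is an irreducible rational curve with $A^2=-1$, i.e.\ one of the $27$ lines $L_1,\dots,L_{27}$ of the cubic surface, and no nodal or multiply-covered configurations contribute (a summand of an effective decomposition would pair nonnegatively, hence trivially, with the ample class $-K_M$). Each $L_j$ meets $D$ transversely at a point $q_j$, so $\bbG\bbW^{S^1}=\coprod_j\bbG\bbW^{S^1}([L_j])$ with $\bbG\bbW^{S^1}([L_j])\to L_j-\{q_j\}$ an $S^1$-bundle and $\eneval_{+\infty}$ carrying each fiber diffeomorphically onto $S_DM|_{q_j}$. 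Since $X$ is a Weinstein $4$-manifold ($I(b)\le2$) and $S_DM$ a closed $3$-manifold ($I(a)\le3$), the hybrid spaces $\bbG\bbW^{S^1}(a,b)$ have dimension $I(b)-I(a)-1\le1$, and a transversality argument against $W^u(b)$ and $W^s(a)$, exactly as in Proposition \ref{prop:delpezzo1}, shows they are nonempty only for $I(b)=2$ and $I(a)\in\{0,1\}$: over each $b_i$ they are finite unions of circles (when $I(a)=0$) or finite sets of points (when $I(a)=1$).

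The heart of the argument — and the main obstacle — is the construction of compatible twisted framed null-bordisms of these $\le1$-dimensional spaces. In contrast to Proposition \ref{prop:delpezzo1}, the relevant cohomological obstruction group need not vanish (a direct computation gives $H_0((S_DM)^{-V^{\bbG\bbW}+TX}\wedge\Sigma\frakD X_+;\bbZ)\cong H_1(S_DM;\bbZ)\otimes H^2(X;\bbZ)\neq0$), so the null-bordisms must be produced geometrically. For the $1$-dimensional pieces I would argue as at the end of the proof of Proposition \ref{prop:delpezzo1}: a circle component of $\bbG\bbW^{S^1}(a,b_i)$ is a fiber of $\bbG\bbW^{S^1}([L_j])\to L_j-\{q_j\}$, and its twisted framing is assembled by gluing the canonical transverse operator $D^\Lambda_{\eneval_{+\infty}(\cdot)}$; since $\ind D^\Lambda$ is pulled back from $D$ (Proposition \ref{prop:canonicaltransverseoperator}, cf.\ the proof of Corollary \ref{cor:tsfs1}), this framing is constant around the circle and therefore bounds a disk rather than carrying the Lie-group framing. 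For the $0$-dimensional pieces — and for the organization of the circles into a single null-homotopy — I would import the ambient argument of Proposition \ref{prop:spheresplitting}: realize the hybrid moduli spaces of $(M,D)$ as submanifolds of the enhanced relative sphere moduli $\bbG\bbW^{S^1}_{3,2}$ of $(\bbC P^3,H)$ and exhibit the null-bordisms there by moving the $0$-marked evaluation point off $L_j$ into $\bbC P^3$ — a generic point of $X$ lies on a contractible $\bbC P^2-\bbC P^1\cong\bbC^2$ worth of lines of $\bbC P^3$, none of them on $M$ — using Lemma \ref{lem:unorientedbordism} applied to the section $s_D$ of $N_DM=N_H\bbC P^3|_D$ to compactify the resulting open bordisms, and verifying the compatibility of the ambient and intrinsic twisted framings (as in \eqref{eq:tsfaux1}, \eqref{eq:tsfaux2}) by the operator-gluing argument of Corollary \ref{cor:tsfaux} together with $\Lambda=\widetilde\Lambda|_X$ and $\ind D^\Lambda\cong\ind D^{\widetilde\Lambda}|_{S_DM}$. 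The genuinely delicate point is that, because the lines on $M$ are isolated, one cannot simply contract a family of lines on $M$ as in Proposition \ref{prop:spheresplitting}; the $\bbC P^3$ families of lines must be chosen compatibly as the base point ranges over $W^u(b_i)$ and as the various $a$, $b_i$ degenerations are glued, which is where most of the bookkeeping lies.

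Finally, once all these null-bordisms are assembled compatibly with the codimension-$1$ boundary relations — Floer-trajectory breakings and Morse breakings at critical points of $f_S$ and $f_X$ — they exhibit each $b_i$-component of $\calG\calW$, and hence $\calG\calW$ itself, as null-homotopic; Theorem \ref{thm:main} then yields the asserted splitting, the bundles $V_k=\underline{\bbR}^{2-2k}+TM^{\oplus k}-N_DM^{\oplus k}$ being read off from the general formula with $d=1$, $m=3$, $\alpha=4$, $n_\nu=1$ and $F_\nu=\underline{\bbR}$.
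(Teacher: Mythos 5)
Your skeleton is the right one — reduce to the vanishing of $\calG\calW$, identify $\bbG\bbW^{S^1}$ with the $27$ lines of the cubic surface, and treat the $0$- and $1$-dimensional hybrid spaces separately — and your diagnosis of why Proposition \ref{prop:delpezzo1} does not port over verbatim is, as far as I can tell, correct: since $\operatorname{rank}V^{\bbG\bbW}=2n=4$ (this is forced by $\dim\bbG\bbW^{S^1}=2n+2c_1(A)-3=3$ together with the twisted framing of Subsection \ref{subsec:enhancedspheres}), the virtual bundle $-V^{\bbG\bbW}+TX$ has rank $0$ and $H_0\big((S_DM)^{-V^{\bbG\bbW}+TX}\wedge\Sigma\frakD X_+;\bbZ\big)\cong H_1(S_DM;\bbZ)\otimes H^2(X;\bbZ)\neq0$, exactly as you say. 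The paper's proof is literally ``appropriately modify Proposition \ref{prop:delpezzo1}'', whose two steps are (i) kill $\calG\calW\wedge H\bbZ$ by showing the ambient group vanishes, and (ii) upgrade the resulting oriented null-bordisms to twisted framed ones using that the framing of a circle component is pulled back through $\eneval_{+\infty}$ from a point of $D$ (cf.\ Corollary \ref{cor:tsfs1}). You keep (ii) but discard (i) entirely, and that is where your proposal breaks down.

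Your replacement for (i) — realizing the hybrid spaces inside $\bbG\bbW^{S^1}_{3,2}$ for $(\bbC P^3,H)$ and sliding $\eval_0$ off $L_j$ — is not carried out, and the part you defer to ``bookkeeping'' is the entire content of the step. The argument of Proposition \ref{prop:spheresplitting} leaned on the quadric carrying a positive-dimensional family of lines through $b_\mathrm{min}$ (the subquadric $D'$ and the section $\sigma$ cutting it out), which is what produced the bounding manifold; on the cubic the lines are isolated, there is no analogue of $D'$, and you have neither constructed the coherent null-bordisms of the $0$-dimensional spaces $\bbG\bbW^{S^1}(a,b)$ ($I(a)=1$, $I(b)=2$) nor shown that their classes die in the ambient moduli. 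In fact no ambient construction is needed. Because the Künneth $\operatorname{Tor}$-terms vanish ($H^*(X;\bbZ)$ is free), the image of $\calG\calW$ in $H_0$ is detected exactly by the signed counts of those $0$-dimensional spaces, and each such count factors through the pairing of the integral class $[a]\in H^1(S_DM;\bbZ)$ with the fiber class $[S_DM\vert_{q_j}]\in H_1(S_DM;\bbZ)$; the Gysin sequence shows the fiber class is torsion of order $D\cdot D=3$, so every such pairing — hence $\calG\calW\wedge H\bbZ$ — vanishes. This restores step (i), after which step (ii), the canonical-transverse-operator argument for the circle components of $\bbG\bbW^{S^1}(a_\mathrm{min},b)$ via Proposition \ref{prop:canonicaltransverseoperator}, goes through exactly as in Proposition \ref{prop:delpezzo1}. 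I would replace the $\bbC P^3$ interpolation with this torsion argument; as written, the core of your proof is missing.
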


\begin{proof}
Appropriately modify the proof of Proposition \ref{prop:delpezzo1}.
\end{proof}

\subsubsection{Short remarks on Conjecture \ref{conj:main}}\label{part:remarks}
The main hiccup when trying to extend the results of the present article to other geometric situations is finding sufficient conditions to construct twisted stable framings on the moduli spaces of both marked thimbles and enhanced spheres; part (3) of Assumption \ref{assu:main} addresses this.

Consider the case where $M$ is the blow-up of $\bbC P^2$ at 2 generic points and $D$ is a generic smooth representative of the anticanonical divisor of $M$. By \cite[Section 3]{BC19}, we may realize $M$ as a complete intersection of type $\big((1,0,1),(0,1,1)\big)$ in $(\bbC P^1)^2\times\bbC P^2$:
    \begin{equation}
    M=\Big\{\big([x_0:x_1],[y_0:y_1],[z_0:z_1:z_2]\big):z_0x_1=z_1x_0,z_0y_0=z_2y_1\Big\}.
    \end{equation}
In particular, $D$ is the Poincar\'e dual of the line bundle $\scrO(1,1,1)\vert_M$. We may choose $\Lambda$ to be induced by
    \begin{multline}\label{eqref:conjspec}
    TM\oplus\underline{\bbC}^3\oplus\scrO(1,0,1)\vert_M\oplus\scrO(0,1,1)\vert_M\cong \\
    \scrO(1,0,0)\vert_M^{\oplus2}\oplus\scrO(0,1,0)\vert_M^{\oplus2}\oplus\scrO(0,0,1)\vert_M^{\oplus3}.
    \end{multline}
Thus, the Floer homotopy type (associated to $\Lambda$) of the complement of $D$ exists. Unfortunately, this choice of $\Lambda$ does not fall under our setup in Assumption \ref{assu:main}. However, we observe that $D$ is equivalent as a divisor to the sum of (the pullbacks of) the three line classes in $(\bbC P^1)^2\times\bbC P^2$. The analgous statements hold for:
\begin{itemize}
\item the case where $M$ is the blow-up of $\bbC P^2$ at 4 generic points and $D$ is a generic smooth representative of the anticanonical divisor of $M$ since we may realize $M$ as a complete intersection of type
    \begin{equation}
    \big((1,0,0,0,1),(0,1,0,0,1),(0,0,1,0,1),(0,0,0,1,1)\big)
    \end{equation}
in $(\bbC P^1)^4\times\bbC P^2$ such that $D$ is the Poincar\'e dual of the line bundle $\scrO(1,1,1,1,-1)\vert_M$
\item and the case where $M$ is the blow-up of $\bbC P^2$ at 8 generic points and $D$ is a generic smooth representative of the anticanonical divisor of $M$ since we may realize $M$ as a complete intersection of type
    \begin{equation}
    \big((1,0,0,0,0,0,0,0,1),\ldots,(0,0,0,0,0,0,0,1,1)\big)
    \end{equation}
in $(\bbC P^1)^8\times\bbC P^2$ such that $D$ is the Poincar\'e dual of the line bundle $\scrO(1,\ldots,1,-1)\vert_M$.
\end{itemize}

The author believes it is possible to weaken part (3) of Assumption \ref{assu:main} to something akin to the following. 
\begin{enumerate}
\item[(4)] Suppose $D$ is equivalent as a divisor to $s_1D_1+\cdots+s_\tau D_\tau$, where $D_\gamma$ is an ample smooth divisor. For any $n'\in\bbZ$, we define $\calL^\gamma_{n'}\equiv\scrO_M(n'D_\gamma)$. There exists a non-negative integer $d\in\bbZ_{\geq0}$, integers $m_1,\ldots,m_\beta,n_1,\ldots,n_\alpha\in\bbZ$, oriented real vector bundles $F_1,\ldots,F_\alpha\to M$ whose restrictions to $X$ are spin, and an isomorphism of complex vector bundles
    \begin{equation}
    TM\oplus\underline{\bbC}^d\oplus\bigoplus_{\eta=1}^\beta\calL^{\gamma_\eta}_{m_\eta}\cong\bigoplus_{\nu=1}^\alpha E_\nu\otimes_\bbC\calL^{\gamma_\nu}_{n_\nu}.
    \end{equation}
\end{enumerate}

\begin{rem}
Perhaps one could even (among other things) allow various roots of lines bundles to appear. Motivating examples would be the geometric situations arising in Part \ref{part:warm-up}.
\end{rem}

\subsubsection{Affine part of smooth projective hypersurfaces}\label{subsubsec:affine}
In the part, we will prove the following.

\begin{prop}\label{prop:affine} 
Let $M$ be a smooth hypersurface in $\bbC P^{n+1}$ of degree $d$ at least 2 and $D$ a smooth hyperplane section. Let $\Lambda$ be induced by 
    \begin{equation}
    TM\oplus\underline{\bbC}\oplus\scrO_{\bbC P^{n+1}}(d)\vert_M\cong\scrO_{\bbC P^{n+1}}(1)\vert_M^{\oplus n+2}.
    \end{equation}
We have that
    \begin{equation}
    \frakF^\Lambda\simeq\frakD X^{-V_0}\vee\bigvee_{k\geq1}\frakD(S_DM)^{-V_k},
    \end{equation}
where 
    \begin{equation}
    V_k=\underline{\bbR}^{n+2k(2-d)}+TM^{\oplus k}-N_DM^{\oplus k}.
    \end{equation}
\end{prop}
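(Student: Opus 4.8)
The plan is to split into the cases $d=2$ and $d\geq 3$. For $d=2$, the idea is that a smooth quadric in $\bbC P^{n+1}$ together with a smooth hyperplane section is, by Witt's theorem on quadratic forms, projectively equivalent to the pair $\big(\{z_0^2=z_1^2+\cdots+z_{n+1}^2\},\{z_0=0\}\big)$ via an element of $\mathrm{PGL}(n+2,\bbC)$, which carries the polarization $TM\oplus\underline{\bbC}\oplus\scrO_{\bbC P^{n+1}}(2)\vert_M\cong\scrO_{\bbC P^{n+1}}(1)\vert_M^{\oplus n+2}$ to the analogous polarization of the standard model; since $V_k=\underline{\bbR}^{n}+TM^{\oplus k}-N_DM^{\oplus k}$ when $d=2$, the claim is then precisely Proposition~\ref{prop:spheresplitting}. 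So the substance is the case $d\geq 3$, which I would handle via Theorem~\ref{thm:main2}.

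For $d\geq 3$, I would first observe that $\frakF^\Lambda$ depends only on the Liouville deformation type of $X$ together with its stable $\bbR$-polarization, and that all smooth degree $d$ hypersurfaces in $\bbC P^{n+1}$ equipped with a smooth hyperplane section form a single connected family; hence one may replace $(M,D)$ by a convenient generic representative, for which $X=M-D$ is a smooth affine hypersurface of degree $d$ in $\bbC^{n+1}=\bbC P^{n+1}-\{z_0=0\}$. By the affine Lefschetz theorem $X$ is $(n-1)$-connected, so $H_*(X;\bbZ)$ is concentrated in degrees $0$ and $n$ with $H_n(X;\bbZ)$ free of finite rank $\mu$. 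Fixing a generic linear Lefschetz fibration on $X$, its matching cycles provide exact Lagrangian $n$-spheres $L_1,\dots,L_\mu\subset X$ whose classes span the vanishing homology, which by $(n-1)$-connectedness equals all of $H_n(X;\bbZ)$.

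It then remains to check the two hypotheses of Theorem~\ref{thm:main2} for this collection $L_1,\dots,L_\mu$. For hypothesis (2): each $L_\rho$ is a sphere, so after identifying $\frakF^{\Lambda\vert_{T^*L_\rho}}$ with an appropriate Thom twist of $\calL S^n$ via the spectral Viterbo isomorphism (cf.\ the proof of Proposition~\ref{prop:spheresplitting}), the PSS morphism is the inclusion of constant loops; this inclusion is split by the evaluation map $\calL S^n\to S^n$, hence is a split injection of spectra, and in particular stays injective after applying $H_0\big(\Sigma(-);(S_DM)^{-V_{\overline{w}(1)}}\big)$. For hypothesis (1): applying the Thom isomorphism for the (orientable, virtual) bundle $V_0$ turns the map in question into the restriction map on $(S_DM)^{-V_{\overline{w}(1)}}$-cohomology induced by $\bigvee_\rho L_{\rho,+}\to X_+$; running the Atiyah--Hirzebruch spectral sequence, this restriction is an isomorphism on the degree-$0$ line and, since $\bigoplus_\rho H_n(L_\rho;\bbZ)\to H_n(X;\bbZ)$ is surjective with free target, injective (by dualizing) on the degree-$n$ line, hence injective on $E_2$; compatibility with the $\bigvee_\rho L_\rho$-side, whose spectral sequence degenerates, forces all higher differentials on the $X$-side to vanish, so $E_2=E_\infty$ and injectivity on $E_\infty$ with a finite filtration gives the desired injectivity. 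Theorem~\ref{thm:main2} then produces the splitting, and the identification $V_k=\underline{\bbR}^{n+2k(2-d)}+TM^{\oplus k}-N_DM^{\oplus k}$ is the direct computation of the bundles $V_k$ of Theorem~\ref{thm:main} for this $\Lambda$ (here $\kappa=1$, there is a single auxiliary trivial $\bbC$-summand, all $F_\nu=\underline{\bbR}$ with $n_\nu=1$, and $m=d$).

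The hard part is the geometric step: exhibiting the exact Lagrangian $n$-spheres $L_1,\dots,L_\mu$ in $X$ and verifying that their classes span $H_n(X;\bbZ)$ — i.e.\ that a full system of vanishing/matching cycles of a Lefschetz fibration on the affine hypersurface $X$ can be realized by exact Lagrangian spheres generating the vanishing homology. Once this is in hand, the verification of the two hypotheses of Theorem~\ref{thm:main2} is the essentially formal homological computation sketched above.
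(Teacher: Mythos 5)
Your proposal is correct and follows essentially the same route as the paper: apply Theorem \ref{thm:main2} to a collection of exact Lagrangian vanishing-cycle spheres $L_1,\dots,L_\mu$ (the paper gets them from the Milnor fibration theorem, which makes $X\simeq\bigvee_\rho S^n_\rho$ and renders your AHSS argument for hypothesis (1) unnecessary), and verify hypothesis (2) via the $T^*S^n$ computation of Proposition \ref{prop:spheresplitting}. The one step you gloss over that the paper isolates as a separate claim is the verification that $\Lambda\vert_{T^*L_\rho}$ actually agrees with the polarization used in Proposition \ref{prop:spheresplitting} (via $\scrO_{\bbC P^{n+1}}(d)\vert_{S^n_\rho}\cong\underline{\bbC}\cong\scrO_{\bbC P^{n+1}}(2)\vert_{S^n_\rho}$); without this identification the appeal to the known injectivity of PSS on $T^*S^n$ is not yet justified.
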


\begin{rem}
Of course, the case $d=2$ was already proven in Proposition \ref{prop:spheresplitting}, and the case $n=2$ and $d=3$ in Proposition \ref{prop:delpezzo2}.
\end{rem}

\begin{proof}
We proceed by utilizing Theorem \ref{thm:main2}. By the Milnor fibration theorem, $X$ has the homotopy type of a bouquet of $\mu\in\bbZ_{>0}$ $n$-spheres:
    \begin{equation}
    X\simeq\bigvee_{\rho=1}^\mu S^n_\rho.
    \end{equation}
In fact, each sphere is the bouquet arises as a vanishing cycle after fixing a degeneration of $M$ to a nodal degree $d$ hypersurface; hence, we may realize each $S^n_\rho$ in the bouquet as a(n embedded) exact Lagrangian sphere $L_\rho\subset X$. Thus, $T^*L_\rho$ is a Liouville subdomain of $X$.

\begin{claim}
$\Lambda\vert_{T^*L_\rho}$ agrees with the stable $\bbR$-polarization on $T^*S^n_\rho$ induced by Proposition \ref{prop:spheresplitting}.
\end{claim}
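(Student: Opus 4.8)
The plan is to reduce the claim to a statement about $S^n$ and then eliminate an a priori ambiguity using the Picard--Lefschetz local model. Since $T^*L_\rho$ deformation retracts onto its zero section $L_\rho\cong S^n$, any stable $\bbR$-polarization of $T^*L_\rho$ inducing the canonical (symplectic) stable complex structure is determined up to homotopy by its restriction to $L_\rho$, and the homotopy classes of such polarizations form a torsor over $[S^n,U/O]=\pi_n(U/O)$. Hence it suffices to show that $\Lambda|_{T^*L_\rho}$ and the polarization $\Lambda_Q$ of Proposition~\ref{prop:spheresplitting} (induced by $TQ\oplus\underline{\bbC}\oplus\scrO_{\bbC P^{n+1}}(2)|_Q\cong\scrO_{\bbC P^{n+1}}(1)|_Q^{\oplus n+2}$ on the quadric $Q$) restrict to the same class on $S^n$. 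The conceptual point I would lean on is that both polarizations come from the \emph{same} recipe: restrict the Euler sequence $T\bbC P^{n+1}\oplus\underline{\bbC}\cong\scrO_{\bbC P^{n+1}}(1)^{\oplus n+2}$ and the adjunction relation $T\bbC P^{n+1}|_M\cong TM\oplus\scrO_{\bbC P^{n+1}}(\deg M)|_M$ to $M$, and then trivialize $\scrO_{\bbC P^{n+1}}(\deg M)|_X$ by $s_D$.

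First I would record the relevant vanishing. By the Milnor fibration theorem $X\simeq\bigvee_\rho S^n$, so $H^2(X;\bbZ)$ is torsion-free and $H^1(X;\bbZ/2)=0$ for $n\geq2$; since $s_D$ holomorphically trivializes $\scrO_{\bbC P^{n+1}}(d)|_X$, it follows that $c_1(\scrO_{\bbC P^{n+1}}(1)|_X)=0$, so $\scrO_{\bbC P^{n+1}}(1)|_X$ is topologically trivial, and the associated real line bundle $\calL_{1,\bbR}$ is trivial over $X$. The same statements hold on the affine quadric $\{z_1^2+\cdots+z_{n+1}^2=1\}\simeq S^n$. Consequently both $\Lambda|_{T^*L_\rho}$ and $\Lambda_Q$, restricted to the relevant sphere, are trivializable bundles, so each differs from the ``standard'' stable $\bbR$-polarization only by the homotopy class of a complex trivialization of $T(T^*S^n)\oplus\underline{\bbC}^N$, i.e.\ by a class in the image of $[S^n,U]\to[S^n,U/O]$ --- which for $n$ even can be nonzero, so this bookkeeping is genuinely needed.

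To pin down the remaining ambiguity I would invoke the Picard--Lefschetz picture. The sphere $L_\rho$ arises as a vanishing cycle of a degeneration of $M$ to a nodal degree-$d$ hypersurface, and a Weinstein neighbourhood of $L_\rho$ in $X$ is symplectomorphic to a Weinstein neighbourhood of the zero section in the affine quadric $X_{\mathrm{quad}}=\{\sum_{i=1}^{n+1}z_i^2=1\}$, under an identification carrying $L_\rho$ to the zero section. Over the (contractible) Milnor ball $B\subset M$ around the node, every $\scrO_{\bbC P^{n+1}}(\ast)|_B$ is trivial up to a contractible choice, so the recipe restricts over $B$ to a stable complex trivialization of $TM|_B$ together with its tautological totally real refinement, canonical up to homotopy; moreover over $B\cap X\simeq S^n$ the $s_D$-trivialization and the $B$-trivialization of $\scrO_{\bbC P^{n+1}}(\deg M)|_X$ differ by a map to $\bbC^\ast$, hence by a class in $H^1(S^n;\bbZ)=0$. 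Running the identical discussion on the quadric side and using that the two local models coincide (matching $L_\rho$ with the zero section), the two constructions produce homotopic stable $\bbR$-polarizations over a neighbourhood of $L_\rho$, hence homotopic restrictions to $S^n$, which is the asserted equality $\Lambda|_{T^*L_\rho}\simeq\Lambda_Q$.

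The main obstacle is exactly this last step: one must choose compatibly, over the common local model, all the trivializations entering the two instances of the recipe (of the $\scrO_{\bbC P^{n+1}}(1)$-powers, of the real line bundles $\calL_{\ast,\bbR}$, and of the stabilizing trivial summands) so that no class of $\pi_n(U/O)$ slips in; the only inputs required are the vanishing of $H^1(S^n;\bbZ)$ and $H^1(S^n;\bbZ/2)$ together with torsion-freeness of $H^2(S^n;\bbZ)$, but assembling these into a single coherent homotopy is where the care lies. (For $n=1$ one has $\pi_1(U/O)=\bbZ$ and the statement should be handled separately, but that case is not needed for Proposition~\ref{prop:affine}.) Once the claim is established, Proposition~\ref{prop:spheresplitting} applies verbatim to $T^*L_\rho$, which is what yields the injectivity of the PSS morphism required by hypothesis~(2) of Theorem~\ref{thm:main2}.
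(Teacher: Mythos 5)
Your proposal is correct and follows essentially the same route as the paper: both polarizations arise from the same recipe (Euler sequence plus adjunction restricted to the sphere), and the only discrepancy is which trivial line bundle ($\scrO_{\bbC P^{n+1}}(d)$ versus $\scrO_{\bbC P^{n+1}}(2)$) gets trivialized, which is harmless since trivializations over $S^n$ are unique up to homotopy. The paper's proof is a three-line version of this; your additional bookkeeping (the $\pi_n(U/O)$ torsor structure, the Picard--Lefschetz identification of the local models, and the vanishing of $H^1(S^n;\bbZ)$) makes explicit the steps the paper leaves implicit.
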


\begin{proof}[Proof of claim]
We have that $\Lambda\vert_{T^*L_\rho}$ is induced by the isomorphism 
    \begin{equation}
    T(T^*S^n_\rho)\oplus\underline{\bbC}\oplus\scrO_{\bbC P^{n+1}}(d)\vert_{S^n_\rho}\cong T\bbC P^{n+1}\vert_{S^n_\rho}\oplus\underline{\bbC}\cong\scrO_{\bbC P^{n+1}}(1)\vert_{S^n_\rho}^{\oplus n+2}. 
    \end{equation}
Let $Q_\mathrm{aff}\subset\bbC P^{n+1}$ be the subset $\{1=z_1^2+\cdots+z_{n+1}^n\}$. Meanwhile, we have the stable $\bbR$-polarization on $T^*S^n_\rho$ induced by Proposition \ref{prop:spheresplitting} is induced by the isomorphism 
    \begin{equation}
    T(T^*S^n_\rho)\oplus\underline{\bbC}\oplus\scrO_{\bbC P^{n+1}}(2)\vert_{Q_\mathrm{aff}}\cong T\bbC P^{n+1}\vert_{Q_\mathrm{aff}}\oplus\underline{\bbC}\cong\scrO_{\bbC P^{n+1}}(1)\vert_{Q_\mathrm{aff}}^{\oplus n+2}. 
    \end{equation}
The claim follows by using the chain of isomorphisms 
    \begin{equation}
    \scrO_{\bbC P^{n+1}}(d)\vert_{S^n_\rho}\cong\underline{\bbC}\cong\scrO_{\bbC P^{n+1}}(2)\vert_{S^n_\rho}.
    \end{equation}
\end{proof}

Again, let $\frakE=(S_DM)^{-V_1}$. We have an isomorphism 
    \begin{equation}
    H_0\Big(\Sigma\frakD X^{-V_0};\frakE\Big)\cong\bigoplus_{\rho=1}^\mu H_0\Big(\Sigma\frakD L_\rho^{-V_0};\frakE\Big).
    \end{equation}
Moreover, the previous claim combined with Proposition \ref{prop:spheresplitting} shows the (suspension of the) PSS morphism,
    \begin{equation}
    H_0\Big(\Sigma\frakD L_\rho^{-V_0};\frakE\Big)\to H_0\Big(\frakF^{\Lambda\vert_{T^*L_\rho}};\frakE\Big),
    \end{equation}
is injective; the proposition follows by Theorem \ref{thm:main2}.
\end{proof}

\bibliography{References}{}
\bibliographystyle{alpha.bst}
\end{document}